\documentclass[a4paper,11pt]{amsart}
\usepackage[english]{babel}
\usepackage{amsthm}
\usepackage{amssymb}
\usepackage{subfig}
\usepackage{graphicx}
\usepackage{xcolor}
\PassOptionsToPackage{hyphens}{url}
\usepackage{hyperref}
\usepackage[utf8]{inputenc}
\usepackage{fancybox}
\usepackage{caption}
\usepackage[all]{xy}
\usepackage{longtable}

\usepackage{bookmark}
\hypersetup{
     colorlinks=true,
     linkcolor=blue,
     filecolor=blue,
     citecolor = blue,
     urlcolor=cyan,
}

\newcommand{\A}{{\mathbb A}}
\newcommand{\N}{{\mathbb N}}
\newcommand{\cH}{{\mathbb H}}

\newcommand{\Z}{{\mathbb Z}}
\newcommand{\R}{{\mathbb R}}
\newcommand{\C}{{\mathbb C}}
\newcommand{\bs}{{\mathbb S}}
\newcommand{\Q}{{\mathbb Q}}
\newcommand{\bO}{{\mathbf\Omega}}
\newcommand{\CQ}{{\C^*_\Q}}
\newcommand{\SQ}{{S^{1}_{\Q}}}
\newcommand{\PQ}{\C P^{1}_\Q}
\newcommand{\bS}{\mathbb{S}}
\newcommand{\bu}{{\mathbf{u}}}
\newcommand{\bv}{{\mathbf{v}}}
\newcommand{\bw}{{\mathbf{w}}}
\newcommand{\bd}{{\mathbf{d}}}
\newcommand{\bz}{{\mathbf{z}}}
\newcommand{\I}{{\mathcal O}}
\newcommand{\La}{{\boldsymbol\Lambda}}
\newcommand{\gln}{{\text{GL}(n,\C)}}
\newcommand{\Lo}{{{\boldsymbol\Lambda}_0^\infty(G)}}
\newcommand{\Ld}{{{\boldsymbol\Lambda}^\infty(G)}}

\newtheorem{definition}{Definition}
\newtheorem{example}{Example}

\newtheorem{remark}{Remark}

\newtheorem{theorem}{Theorem}
\newtheorem{corollary}{Corollary}
\newtheorem{proposition}{Proposition}
\newtheorem{lemma}{Lemma}
\newtheorem{conjecture}{Conjecture}

\newtheorem{question}{Question}

\def\noi{\noindent}

\newcommand{\XFF}{X_{\mathrm{FF}}}
\newcommand{\BdR}{\mathbb{B}_{\mathrm{dR}}}
\newcommand{\BdRp}{\mathbb{B}^{+}_{\mathrm{dR}}}
\newcommand{\Rob}{\mathcal{R}}
\newcommand{\Qp}{\mathbb{Q}_p}
\newcommand{\Zp}{\mathbb{Z}_p}
\newcommand{\Cp}{\mathbb{C}_p}

\title[Adelic loop groups and perfectoid analogies]{Adelic Loop Groups and Perfectoid Analogies: Factorization and Holomorphic Bundles on the Adelic Projective Line}

\author{Alberto Verjovsky}
\thanks{I would like to acknowledge Proyecto {\bf PAPIIT IN103324 (DGAPA, UNAM, M\'exico)} for its financial support.}
\address{Instituto de Matem\'aticas, Unidad Cuernavaca,\\ Universidad Nacional Aut\'onoma de M\'exico,\\ Av. Universidad S/N, C.P. 62210, Cuernavaca, Morelos, M\'exico}
\email{alberto@matcuer.unam.mx}

\subjclass[2020]{22E65, 22E67, 47A68, 14H60, 14M25, 11R56, 57R30, 14G22, 11F85}

\keywords{Adeles, universal solenoid, adelic loop group, rational Fourier analysis, Laurent--Puiseux series, Wiener algebra, Wiener--Birkhoff factorization, rational partial indices, Grassmannian, Birkhoff--Grothendieck theorem, holomorphic vector bundles, lamination, perfectoid spaces, Fargues--Fontaine curve, Kedlaya slope filtration, Newton stratification, condensed mathematics}

\begin{document}

\begin{abstract}
We develop the theory of {\it adelic loop groups} on the universal
one-dimensional solenoid $S^1_\Q=(\R\times\widehat\Z)/\Z_{\mathrm{diag}}$,
the compact abelian group whose Pontryagin dual is $\Q$ rather than $\Z$.
We introduce the adelic projective line $\C P^1_\Q$, its ring of
Laurent--Puiseux series, and holomorphic vector bundles defined by
solenoidal clutching data.  We show that its Picard group is the additive
group of rational numbers,
\[
\mathrm{Pic}(\C P^1_\Q)\cong\Q.
\]
The paper proves a scalar Wiener--Birkhoff factorization theorem, a matrix
Wiener lemma, exact factorization for ordered triangular and small-norm
cocycles, a density theorem for factorable matrix loops associated with the
Wiener algebra $\mathfrak{W}_\Q$, and a Birkhoff--Grothendieck splitting
theorem in the pro-algebraic category.  The resulting framework leads to the
\emph{Solenoidal Birkhoff--Grothendieck conjecture}
(Conjecture~\ref{solenoidal-BG}), which asserts that every
$g\in\mathrm{GL}_n(\mathfrak{W}_\Q)$ admits a factorization
$g=h_-^{-1}\operatorname{diag}(\chi_{q_1},\ldots,\chi_{q_n})h_+$ with
$h_\pm\in\mathrm{GL}_n(\mathfrak{W}^\pm_\Q)$ and $q_i\in\Q$.  We also develop
the K\"ahler, Grassmannian, and Morse--Bott geometry of the adelic loop group
in the spirit of Pressley--Segal.  Finally, we relate the theory to
perfectoid geometry: the Fargues--Fontaine curve provides a non-archimedean
structural counterpart of $\C P^1_\Q$ at the level of rational slope data,
Kedlaya's slope theory supplies the $p$-adic analogue of Wiener--Birkhoff
factorization, and the Fargues--Fontaine classification serves as a proved
perfectoid model for the matrix splitting problem formulated here.  This
comparison leads to a Harder--Narasimhan reformulation of the Solenoidal
Birkhoff--Grothendieck conjecture.
\end{abstract}

\maketitle

\tableofcontents

\subsection*{Relation with earlier joint work}
Several foundational constructions used in this paper---the universal
solenoid, the adelic loop group, the K\"ahler structure, the energy
functional and its Birkhoff decomposition, the adelic Grassmannian and
Iwasawa decomposition, and the pro-algebraic Birkhoff--Grothendieck
splitting theorem---originate in joint work with Juan Manuel Burgos and were
announced in the preprint \cite{BuVe}.  They are recalled here as part of the
background and are cited at the relevant points.  The present paper develops
this framework further by introducing the Wiener-algebra theory of
\S\ref{BF} onward, formulating the Solenoidal Birkhoff--Grothendieck
Conjecture~\ref{solenoidal-BG}, proving the corresponding scalar,
triangular, small-norm, density, and pro-algebraic results, and developing
the comparison with perfectoid geometry.

\section{Preliminaries on the Adelic solenoid}\label{adelic_solenoid}

For positive integers $n\mid m$, let
$p_{n,m}:S^1	o S^1$ be the covering map
\[
 p_{n,m}(z)=z^{m/n}.
\]

This determines a projective system of covering spaces $\{S^1,p_{n,m}\}_{n,m \geq 1, n|m}$ whose projective limit is the \textsf{universal one--dimensional solenoid} or \textsf{adelic solenoid}:
\[ S^1_\Q:=\lim_{\underset{p_{n,m}}\longleftarrow}S^1. \] 
Thus $S^1_\Q$ consists of sequences $(z_n)_{n\geq1}$ with $z_n\in S^1$ such that
$p_{n,m}(z_m)=z_n$ whenever $n\mid m$.

The canonical projections of the inverse limit are the maps
$\pi_n:S^1_\Q\to S^1$ defined by
\[
\pi_n\left(\left(z_j\right)_{j\in\N}\right)=z_n 
\]

\noi and they define the solenoid topology as the initial topology of the family. The solenoid is an abelian topological group and each $\pi_n$ is an epimorphism.  In particular each $\pi_n$ is a character which determines a locally trivial $\hat{\Z}$--bundle structure where the group

\[
\hat{\Z}:=\lim_{\underset{n|m}\longleftarrow}
\left\{\Z/m\Z\,\overset{p_{n,m}}\longrightarrow\,\Z/n\Z\right\}, 
\]

\noi (where $p_{n,m}:\Z/m\Z\longrightarrow\,\Z/n\Z$ is the canonical epimorphism when $n|m$), is the profinite completion of $\Z$. Then, $\hat{\Z}$ is a compact, perfect and totally disconnected abelian topological group homeomorphic to the Cantor set. 

Since $\widehat\Z$ is the profinite completion of $\Z$, the canonical inclusion
$\Z\hookrightarrow\widehat\Z$ has dense image.
We have an inclusion $\hat\Z\overset{\phi}\to{S^1_\Q}$ and a short exact sequence:
$$0\to{\hat\Z}\overset{\phi}\rightarrow S^1_\Q\overset{\pi_1} \rightarrow S^1\to1$$

The solenoid $S^1_\Q$ can also be realized as the orbit space of the $\Q$--bundle structure $\Q \hookrightarrow \mathbb{A} \to \A/\Q$, where $\A$ is the {\bf adele group} of the rational numbers which is a locally compact Abelian group, $\Q$ is a discrete subgroup of $\A$ and $\A/\Q \cong S^1_\Q$ is a compact Abelian group (\cite{RV}). From this perspective, $\A/\Q$ can be seen as a projective limit whose $n$--th component corresponds to the unique covering of degree $n\geq 1$ of $S^1_\Q$.

By considering the properly discontinuously diagonal free action of $\Z$ on $\hat{\Z}\times\R$  given by
\[ n\cdot(x,t)=(x+n,t-n), \quad (n\in \Z, \, x\in\hat\Z, \,t\in\R)\]
the solenoid $S^1_\Q$ is identified with the orbit space
$\widehat\Z\times_{\Z}\R$, where $\Z$ acts on $\R$ by integer
translations and on $\widehat\Z$ by translations.
\begin{definition}[Base leaf]\label{def:base-leaf}
The path-connected component of the identity $1\in S^1_\Q$ is called the
\emph{base leaf}. It is a densely immersed copy of $\R$ \cite{Od}.
\end{definition}

Hence $S^1_\Q$ is a compact, connected, abelian topological group and also a one-dimensional lamination where each ``leaf" is a simply connected one-dimensional manifold, homeomorphic to the universal covering space $\R$ of $S^1$, and a typical ``transversal" is isomorphic to the Cantor group $\hat{\Z}$. The solenoid $S^1_\Q$ also has a leafwise $\mathrm{C}^\infty$ Riemannian metric (i.e., $\mathrm{C}^\infty$ along the leaves) which renders each leaf isometric to the real line with its standard metric $dx$. So, it makes sense to speak of a rigid translation along the leaves. The leaves also have a natural order equivalent to the order of the real line hence also an orientation.

Summarizing the above discussion we have the commutative diagram: 
\begin{equation}\label{diagram_I}
\xymatrix{
S^{1}_{\mathbb{Q}} = \varprojlim S^{1} & \dots \ar[r] & S^{1} \ar[r]^{p_{m,n}} & S^{1} & \dots \ar[r] & S^{1} \\
\hat{\mathbb{Z}} = \varprojlim \mathbb{Z}/n\mathbb{Z} \ar@{^{(}->}[u]^{\phi} & \dots \ar[r] & \mathbb{Z}/n\mathbb{Z} \ar@{^{(}->}[u]_{l \mapsto e^{2\pi il/n}} \ar[r]^{p_{m,n}} & \mathbb{Z}/m\mathbb{Z} \ar@{^{(}->}[u]_{l \mapsto e^{2\pi il/m}} & \dots \ar[r] & \{0\} \ar@{^{(}->}[u]_{0 \mapsto 1}
}
\end{equation}
where $\hat{\Z}$ is the adelic profinite completion of the integers and the image of the group monomorphism $\phi:(\hat{\Z},+)\rightarrow (S^{1}_{\Q},\cdot)$ is the \textsf{principal fiber}. We notice that $\pi_{n}(x)= \pi_{n}(y)$ implies $\pi_{n}(y^{-1}x)=1$ and therefore $y^{-1}x= \phi(a)$ where $a\in n\hat{\Z}$ for some $n\in\Z\subset\hat{\Z}$.

The base leaf of Definition~\ref{def:base-leaf} is the image of the
injective homomorphism $\nu:\R\to S^1_\Q$ determined by
\[
 \pi_n(\nu(t))=e^{2\pi i t/n},\qquad n\geq1.
\]
Equivalently, it is described by the commutative inverse-limit diagram
\begin{equation}\label{diagram_II}
\xymatrix{
    S^{1}_{\Q}= \varprojlim S^{1} \quad \ldots\ar[r] & S^{1} \ar[r]^{p_{m,n}}
    & S^{1} \ldots\ar[r] & S^{1} \\
    \R \ar@{^{(}->}[u]_{\nu} \quad \ldots\ar[r]^{=} & \R \ar[u]_{t\mapsto e^{2\pi it/n}} \ar[r]^{=}
    & \R \ar[u]_{t\mapsto e^{2\pi it/m}} \ldots\ar[r]^{=} & \R \ar[u]_{t\mapsto e^{2\pi it}}}.
\end{equation}
Comparing \eqref{diagram_I} and \eqref{diagram_II}, one obtains
$\nu(n)=\phi(n)$ for every $n\in\Z$.

\begin{definition}[Adelic exponential and canonical flow]\label{exp}
Let $\boldsymbol{Exp}:\R\times\hat{\Z}\rightarrow S^{1}_{\Q}$ such that 
 $\boldsymbol{Exp}(t,a)= \nu(t)\phi(a)$. Then $\boldsymbol{Exp}$ is an epimorphism.  With the quotient convention
above, its kernel is the diagonal copy
\[
\{(n,-n):n\in\Z\}\subset\R\times\widehat\Z,
\]
with the harmless sign depending on the convention for the diagonal action.
This is the adelic analogue of the ordinary exponential map
$t\mapsto e^{2\pi\boldsymbol{i}t}$.
Consider the translation flow $\hat\varphi_t$ on $\R\times\hat{\Z}$
given by $\hat\varphi_t(s,\bz)=(s+t,\bz)$. This flow commutes with the
map $(t,\bz)\mapsto(t+1,\bz+\mathbf1)$ so it defines a translation flow 
$\varphi_t:\SQ\to\SQ$ which preserves the 1-dimensional leaves of the lamination
structure of $\SQ$; it amounts to translations by elements in the base leaf. This flow is called {\bf the canonical flow} of $\SQ$.
The vector field (in the solenoidal sense) is the unit vector field along the
leaves. The flow is in fact the one-parameter group of translations by the
connected component of the identity (the base leaf). 
\end{definition}

\noi {\bf We will always treat $\SQ$ as a multiplicative abelian group.}

\begin{remark}\label{integeradeles} If $\mathbb{A} _{\mathbb{Z}}$ denotes the ring of integral adeles \cite{RV} then: 
 $\mathbb{R}\times{\hat{\mathbb {Z}}}=\mathbb{A}_{\mathbb{Z}}=
 \mathbb{R}\times\prod_{p}\mathbb{Z}_{p}$. 
 The map $\Z\overset{i}\hookrightarrow\mathbb{R}\times{\hat{\mathbb {Z}}}$, 
 $\,n\mapsto(n,\mathbf{n})$,
 where $\mathbf{n}$ corresponds to the natural inclusion of $\Z$ into $\hat\Z$,
injects $\Z$ into a discrete co-compact subgroup $\Gamma$ so that 
$\SQ=(\R\times\hat\Z)/\Gamma$. The subgroup 
$\left\{(0,\mathbf{n}): n\in\Z\right\}$ is dense in 
$\left\{0\right\}\times\hat\Z$. This implies that the canonical flow is minimal.
\end {remark}
\medskip
The previous constructions can be extended to the inverse limit of the multiplicative group $\C^{*}$ as well as to the multiplicative semigroup $\C$. In other words, if 
${\mathbf z}_n:\C\to\C$ denotes the map $z\mapsto{z^n}$ we can take the inverse limits under the partial order of divisibility of the integers of the finite coverings of $\C^*$ and the branched coverings of $\C$:   
\begin{definition}\label{C*QandCQ}
$\underset{{\mathbf z}^n}\varprojlim\, \C^*\overset{def}=\C^{*}_\Q\;$ and
$\;\underset{{\mathbf z}^n}\varprojlim \,\C\overset{def}=\C_\Q$, respectively.
\end{definition}
Since, when $n$ divides $m$, the maps $p_{n,m}$, 
$z\overset{p_{n,m}}\longrightarrow{z^{\frac{m}{n}}}$, are defined as branched coverings
of $\C P^{1}$ with branching points the north and south poles $\boldsymbol{0}$ and $\boldsymbol{\infty}$, we  can consider the inverse limit:
\[
\C P^{1}_\Q:= \varprojlim_{p_{n,m}} \left\{p_{n,m}:\C P^{1}\to\C P^{1},\, n|m\right\}.
\]
\begin{definition}[Adelic projective line]We call $\C P^{1}_\Q$ the {\bf adelic projective line}.
\end{definition}
Since the maps $p_{n,m}$ preserve the unit circle, if we restrict this inverse limit to the unit circle, which is 
the equator of the Riemann sphere we recover the ad\`elic solenoid $\SQ$. 

If we restrict the inverse system to $\C^*$, i.e. the complement of the poles,  we obtain:

\[\C ^*_\Q:= \varprojlim_{p_{n,m}} \left\{p_{n,m}:\C ^*\to\C ^*,\, n|m\right\}.\]

In the first case we obtain the topological abelian group: the \textsf{algebraic solenoidal group} $\C^{*}_\Q$. This group is laminated by densely immersed copies of $\C$. In the second case, we obtain the semigroup $\C_\Q$ which is a ramified covering of $\C$ at zero and topologically it is homeomorphic to the open cone over the adelic solenoid. The singularity of a cone over a solenoidal torus $S^{1}_{\Q}\times\ldots  S^{1}_{\Q}$ will be called a \textsf{solenoidal cusp} or simply a cusp. 

The following is the complex version of the diagram \ref{diagram_I}:
\begin{equation}\label{diagram_III}
\xymatrix{
    \mathbb{C}^{*}_{\mathbb{Q}}= \varprojlim \mathbb{C}^{*} \quad \ldots\ar[r] & \mathbb{C}^{*} \ar[r]^(0.45){p_{m,n}} & \mathbb{C}^{*} \ldots\ar[r] & \mathbb{C}^{*} \\
    \hat{\mathbb{Z}}= \varprojlim \mathbb{Z}/n\mathbb{Z} \quad \ldots\ar[r] \ar@{^{(}->}[u]^{\phi} & \mathbb{Z}/n\mathbb{Z} \ar@{^{(}->}[u]_{l\,\mapsto{e^{2\pi il/n}}} \ar[r]^(0.45){p_{m,n}} & \mathbb{Z}/m\mathbb{Z} \ar@{^{(}->}[u]_{l\,\mapsto{e^{2\pi il/m}}} \ldots\ar[r] & \{0\} \ar@{^{(}->}[u]_{0\,\mapsto 1}
}
\end{equation}

\section{Holomorphic vector bundles over $\PQ$.}\label{hol-vect-bun}

\subsection{Laurent--Puiseux series in $\C^*_\Q$}\label{lp} 
The abelian group $\CQ$ is isomorphic to the direct product of $S^{1}_{\Q}$   
with the multiplicative group of the positive reals i.e. 
$\CQ=\SQ\times\R_{>0}$. This follows because it is an inverse limit of groups over $\C^*$ and $\C^*\simeq\bs^1\times\R_{>0}$. If $z\in\CQ$ using this factorization 
$z=(\mathbf{u},t)$, with $\mathbf{u}\in\SQ,\, t\in\R_{>0}$, we will write $z=t\mathbf{u}$ (i.e. we introduce ``\emph{polar coordinates}'' in $\CQ$). 

\begin{definition}\label{disks}
If $z\in\CQ$ we define the absolute value $|z|$ of $z$ as follows: $|z|=t$ if $z=t\bu$ (in polar coordinates). If one considers 
$\C_\Q=\CQ\cup{\boldsymbol{0}}$ (the inverse limit of
the branched self-coverings $z\mapsto{z^n}$ of $\C$ with canonical projection 
$p_1:\C_Q\to\C$ and $\boldsymbol{0}=p_{1}^{-1}\left\{0\right\}$ (see definition \ref{C*QandCQ}) then we define 
$|\boldsymbol{0}|=0$.

Let $r>0$. Define the open and closed disks of radius $r$ centered at the origin as follows:
\begin{align*}
D^+_\Q(r)&=\left\{q\in\C_Q\,:\, |q|<r \,\text{or} \,\, q=\boldsymbol{0} \right\},\\
\bar{D}^+_\Q(r)&=\left\{q\in\C_Q\,:\, |q|\leq{r}\,\text{or}\,q=\boldsymbol{0} \right\}.
\end{align*}
Analogously, one defines the open and closed disks of radius $r$ centered at
$\boldsymbol{\infty}\in\PQ$, where
$\boldsymbol{\infty}=p_1^{-1}(\{\infty\})$ and $p_1:\PQ\to\C P^1$ is the canonical projection:
\begin{align*}
D^-_\Q(r)&=\left\{q\in\C_Q\,:\, |q|>r\,\,\text{or}\,\,q=\boldsymbol{\infty} \right\},\\
\bar{D}^-_\Q(r)&=\left\{q\in\C_Q\,:\, |q|\geq{r}\,\,\text{or}\,\,q=\boldsymbol{\infty} \right\}.
\end{align*}
\end{definition}
 One has the decomposition of the adelic projective line into two closed ``hemispheres":
\[
\PQ=\bar{D}^+_\Q(1)\cup\bar{D}^-_\Q(1), \quad 
\text{with ``equator''}\,\, \bar{D}^+_\Q(1)\cap\bar{D}^-_\Q(1)=\SQ. 
\]
 \noi The south and north poles are $\boldsymbol0$ and $\boldsymbol\infty$, respectively.

\subsection{Rational dilation symmetry}\label{subsec:rational-dilation-symmetry}

We recall that the Pontryagin dual $K$ of the compact abelian group
$S^{1}_{\Q}$ is the additive group of rational numbers $(\Q,+)$:
\[
K\overset{def}=
\left\{f:S^{1}_{\Q}\to\bs^1\, : \,\text{$f$ is a continuous homomorphism}  \right\}\simeq\Q.
\]
This gives the solenoidal theory an additional arithmetic symmetry which is
absent from the ordinary circle.  Since $\Q$ is given the discrete topology,
\[
\operatorname{Aut}_{\mathrm{cont}}(\Q,+)
=
\operatorname{Aut}(\Q,+)
\cong \Q^*,
\]
where $a\in\Q^*$ acts by multiplication $q\mapsto aq$.  By Pontryagin
duality, each such automorphism induces a continuous automorphism of the
universal solenoid
\[
\xi_a:S^1_{\Q}\longrightarrow S^1_{\Q}.
\]
Equivalently, if $m_a:\Q\to\Q$ denotes multiplication by $a$, then
\[
\xi_a(x)=x\circ m_a,
\qquad x\in\widehat{\Q}=S^1_{\Q}.
\]
On characters one has
\[
\chi_q(\xi_a x)=\chi_{aq}(x).
\]
Thus the group $\Q^*$ acts on the rational Fourier spectrum by dilation,
\[
q\longmapsto aq.
\]
Consequently the rational Fourier theory, the Wiener algebra
$\mathfrak W_{\Q}$, the adelic loop groups built from it, and the rational
partial indices appearing in the solenoidal Birkhoff--Grothendieck theory
carry a natural rational dilation symmetry.

There is a useful nuance.  The subgroup $\Q^*_{>0}$ preserves the order on
$\Q$ and therefore preserves the decomposition into positive and negative
rational frequencies.  Hence $\Q^*_{>0}$ preserves the Wiener--Hopf
polarization
\[
\mathfrak W_{\Q}=\mathfrak W^-_{\Q}\oplus\mathfrak W^+_{\Q}.
\]
A negative rational dilation exchanges the two halves of the polarization.
Thus the full group $\Q^*$ acts naturally on the solenoidal Fourier theory,
whereas the order-preserving subgroup $\Q^*_{>0}$ is the symmetry group
compatible with the chosen positive/negative Wiener splitting.  For the
ordinary circle the character group is $\Z$, whose automorphism group is
only $\{\pm 1\}$; the universal solenoid therefore has a genuine arithmetic
dilation symmetry not present in classical loop group theory.

For each $q\in\Q^*$, using the dual of the automorphism of $\Q$, we obtain
an automorphism $\xi_q:S^{1}_{\Q}\to\SQ$.  In polar coordinates we define
\[\
\hat\xi_q:\C^*_{\Q}\longrightarrow \C^*_{\Q},
\qquad
\hat\xi_q(\mathbf u,t)=t^q\xi_q(\mathbf u),
\]
if $z=t\mathbf u$.  To simplify notation we write briefly
$\hat\xi_q(z)=z^q$.  Therefore, taking a rational power of an element in
$\CQ$ makes sense.  Moreover, unlike the classical single-valued theory of
rational powers on $\C^*$, we have:
\begin{enumerate}
\item For any rational $q\neq0$, the map $z\mapsto z^q$ is a continuous
isomorphism of $\CQ$.
\medskip
\item For $q\neq0$, the inverse of the map $z\mapsto z^q$ is the map
$z\mapsto z^{1/q}$.
\end{enumerate}

\begin{definition}\label{chc*} For each $q\in\Q$, define the homomorphism  
$\hat\chi_q:\CQ\to\C^*$ given by the formula
 $\hat\chi_q(z)=t^q\chi_q(\bu)$,
where $\chi_q$ is the character of $\CQ$ corresponding to $q$  and $z=t\bu$
in polar coordinates.
A series $S$ of the form $S=\underset{q\in\Q}\sum\,a_q$ where $a_q\in\C^n$ means
$\overset{\infty}{\underset{n=1}\sum}\,a_{q_n}$ for some numbering of the rationals 
$\Q=\left\{q_1,q_2,\cdots,\right\}$. If the series is absolutely convergent
i.e., the series $\overset{\infty}{\underset{n=1}\sum}\,|a_{q_n}|$ converges,  then the series $S$ converges to the same vector independently of the numbering. We say that $S$ converges absolutely.
\end{definition}
\begin{definition}[{\bf Holomorphic functions}]\label{holo} A continuous function $f:D^+_\Q(r)\rightarrow\C^n$ is said to be holomorphic
if it can be developed as a power series which is absolutely convergent and it is locally uniformly convergent (i.e. convergent uniformly on compact subsets):
\[
f(z)=\sum_{q\in\Q,\,q\geq0}\,a_q\hat\chi_q(z)=\sum_{q\in\Q,\, q\geq0}\,a_qt^q\chi_q(\bu),\quad f(\boldsymbol{0})=a_0
\]
where $a_q\in\C^n$, $\chi_q$ is the character corresponding to 
$q\geq0$ and $z=t\bu$, $0\leq{t}<{r}$. 

\noi Analogously, a continuous function $f:D^-_\Q(r)\rightarrow\C^n$ is said to be holomorphic
in the disk centered at $\boldsymbol{\infty}$ if it can be developed as an absolutely and locally uniformly convergent power series:
\[
f(z)=\sum_{q\in\Q,\,q\leq0}\,a_q\hat\chi_q(z)=\sum_{q\in\Q,\, q\leq0}\,a_qt^q\chi_q(\bu),
\]
where $a_q\in\C^n$, $\chi_q$ is the character corresponding to
$q\leq0$
and $z=t\bu$, $r<t<\infty$, where $f(\boldsymbol{\infty})=a_0$.
\end{definition}

\begin{definition}For real numbers $r$, $R$ with $0\leq{r}<{R}\leq\infty$, the solenoidal open annulus is
\[
A(r,R)=\left\{q\in\CQ\,:\, r<|q|<R \right\}.
\]
Note that $A(0,\infty)=\C^*$.
\end{definition}
\begin{definition}\label{holoannulus}
Let $f:A(r,R)\to\C^n$ then f is said to be holomorphic if f can be expressed
as an absolutely and locally uniform convergent series:
\[
f(z)=\sum_{q\in\Q}\,a_q\hat\chi_q(z)=\sum_{q\in\Q}\,a_qt^q\chi_q(\bu),
\]
where $a_q\in\C^n$, $\chi_q$ is the character corresponding to
each $q\in\Q$ (both signs occur; this is the annulus case, not restricted to $q\ge0$)
and $z=t\bu$, $r<t<R$.

\noi {\it The series is required to be convergent only for points in the annulus}. 
Such a series will be called a {\bf Laurent--Puiseux series} (since $q<0$ is allowed). 

\end{definition}
\begin{definition}\label{LPPoly-holo}
If the sum is of the form
\[
f(z)=\sum_{q\in{F}}\,a_q\hat\chi_q(z)=\sum_{q\in{F}}\,a_qt^q\chi_q(\bu),
\]
where $F\subset\Q$ is a finite set, then $f$ is called a {\bf finite
Laurent--Puiseux series}.  If, in addition, $F\subset\Q_{\geq 0}$, then
$f$ is called a {\bf Laurent--Puiseux polynomial}.
 
\end{definition}

\subsection{Winding numbers}\label{wn} Let $0\leq{r}<1<R\leq\infty$. Let $f:A(r,R)\to\C^*$ be a holomorphic map then, since $A(r,R)$ retracts strongly onto $\SQ$ (as is easily seen using polar coordinates) and $\C^*$ retracts strongly onto the circle $\bS^1$ we see that
the homotopy class of $f$ is determined by the homotopy class of the map $\hat{f}$
 from $\SQ$ to $\bS^1$ given by the formula $\hat{f}(\bu)=\frac{f(\bu)}{|f(\bu)|}$.
 By Scheffer's theorem \cite{Sc} $\hat{f}$ is homotopic to a character  
 $\chi_q:\SQ\to\bS^1$ corresponding to a rational number $q\in\Q$.
 \begin{definition}\label{hwn} The rational number $q$ in the previous paragraph is called
 the {\bf winding number} of $f$ and it is denoted $w(f)$.
 \end{definition}
 \begin{definition}\label{winding_annulus_annulus}For any $r$, $R$ such that $0\leq{r}<{R}\leq\infty$ we can define the winding number of a continuous map 
 $f:A(r,R)\to\C^*$. 
Consider the restriction  of $f$ to the set 
$r'\SQ=\left\{r'\bu\,:\, \bu\in\SQ \right\}$ for any $r'$ with $r<r'<R$.  
This set is homeomorphic to $\SQ$ and therefore the homotopy class of this restriction determines
a character $\chi_q$ (independent of $r'$). Then $q\in\Q$ is called the winding number of $f$ and is is also denoted $w(f)$.
\end{definition}

\begin{remark} \label{rmkwinding2}

 Let  $f:\SQ\to\C^*$
be a map with an absolutely convergent Fourier series:
\[
f(\bu)=\sum_{q\in\Q}\,a_q\chi_q(\bu),\quad \sum_{q\in\Q}\,|a_q|<\infty. 
\]
Since $f$ is continuous it has a winding number $q\in\Q$. Then
$f$ can be factored in a unique way as
$f(\bu)=\chi_q(\bu)g(\bu)$ where $g(\bu)\neq0$, $\forall\bu\in\SQ$ and
the winding number of $g$ is 0. Furthermore $g$ can be developed into
a Fourier series with absolutely convergent coefficients.

  Analogously, if $f:A(r,R)\to\C^*$ is a holomorphic function
in the annulus $A(r;R)$ with Laurent--Puiseux development 
\[
f(z)=\sum_{q\in\Q}\,a_q\hat\chi_q(z)=\sum_{q\in\Q}\,a_qt^q\chi_q(\bu), \quad z=t\bu, \,a_q\in\C,
\]
\noi then there exists a holomorphic function $g$ and a unique character $\chi_q$ such that
$f(z)= \hat\chi_q(z)g(z)$ where $g(z)\neq0\,\,\forall z\in{A(r,R)}$ and the winding number of $g$ vanishes. 

\noi In both cases $g$ is obviously defined as follows: $g(z)=\hat\chi_{_{-q}}(z) {f(z)}$. 
\end{remark}

\begin{proposition}\label{Liouville-PQ} {\bf [Liouville's theorem and maximum principle for $\PQ$]} We recall that $\C^*_\Q$ is a Riemann surface lamination.  Its leaves are densely embedded copies of $\C$. In addition, 
each of the punctured open disks $D^+_\Q(r)-\left\{\boldsymbol0\right\}$ and
$D^-_\Q(r)-\left\{\boldsymbol\infty\right\}$ is a lamination by complex surfaces
which are densely embedded copies of the hyperbolic plane. The restriction
of a holomorphic map on one of the disks is a holomorphic
map on each leaf. It follows that if $f:\PQ\to\C$ is holomorphic when restricted
to two disks $D^+(r)$ and $D^-(R)$ ($r>R$) then $f$ must be a constant since
it is holomorphic and bounded when restricted to a leaf of $\PQ$, which is a copy of $\C$, so by Liouville's theorem is constant on each leaf.  Since each leaf is dense and $f$ is continuous $f$ is constant. Similar arguments imply:
if $f:{D}^+_\Q(1)\to\C$ attains its maximum in a point $\bz_0\in{D}^+_\Q(1)$
then $f$ is constant.

\end{proposition}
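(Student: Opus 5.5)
We prove the two assertions of Proposition~\ref{Liouville-PQ} separately. Both rest on restricting $f$ to leaves and then using density of leaves together with continuity.

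\emph{Liouville part.} Let $f:\PQ\to\C$ be holomorphic on $D^+_\Q(r)$ and on $D^-_\Q(R)$ with $R<r$; these two open sets cover $\PQ$. We proceed in four steps.
\begin{enumerate}
\item $\PQ$ is compact, being an inverse limit of copies of $\C P^1$. Since $f$ is continuous, $|f|\le M$ on $\PQ$.
\item Fix a leaf $L\subset\CQ$. It is the image of a map $\C\to\CQ$, $w\mapsto \mathbf{Exp}$-type parametrization $w=s+i\theta$, along which $\hat\chi_q$ restricts to $e^{2\pi q w}$ up to a constant of modulus one. On $L\cap D^+_\Q(r)$ the series $\sum_{q\ge0}a_q\hat\chi_q$ converges locally uniformly. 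Each term is entire in $w$, so the restriction is holomorphic in $w$. The same holds on $L\cap D^-_\Q(R)$ for the series in $q\le0$.
\item The two leafwise restrictions agree on the overlap, since both equal $f$. Hence $f|_L$ is an entire function of $w\in\C$ bounded by $M$. By the classical Liouville theorem, $f$ is constant on $L$.
\item The base leaf is dense in $\PQ$: it is dense in $\SQ$ by Remark~\ref{integeradeles}, and the radial direction gives density in $\CQ$, whose closure is $\PQ$. Its closure therefore contains every point, and continuity makes $f$ constant.
\end{enumerate}
A cleaner alternative avoids leaves. Compare the Fourier coefficients of $f|_{\SQ}$ (a compact abelian group with dual $\Q$) coming from the two expansions. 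Uniqueness of Fourier coefficients in $L^2(\SQ)$ forces $a_q=0$ for $q>0$ from the $D^-$ expansion, and for $q<0$ from the $D^+$ expansion. This leaves $f=a_0$. I would present the leafwise argument, since it is the one indicated, and mention the coefficient argument as a check.

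\emph{Maximum principle.} Suppose $|f|$ attains its maximum on $D^+_\Q(1)$ at $\bz_0$. There are two cases.
\begin{enumerate}
\item If $\bz_0\neq\boldsymbol0$, restrict $f$ to the leaf $L$ of $D^+_\Q(1)\setminus\{\boldsymbol0\}$ through $\bz_0$. In the coordinate $w$ this leaf is a half-plane $\{\mathrm{Re}\,w<0\}$, conformally the hyperbolic plane, and $f|_L$ is holomorphic there. The classical maximum principle makes $f$ constant on $L$.
\item If $\bz_0=\boldsymbol0$, use $f(\boldsymbol0)=a_0$. Integrating over $t\SQ$ with Haar measure gives $a_0=\int_{\SQ}f(t\bu)\,d\bu$ by orthogonality of characters, the mean-value property. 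Then $|a_0|=\max|f|$ forces $f\equiv a_0$ on every $t\SQ$, hence on the disk. The same averaging also handles case 1 directly on a circle $|z|=|\bz_0|$.
\end{enumerate}
In case 1, density finishes the argument. The leaf $L$ is dense in $\{|z|<1\}$: density of the canonical flow orbit in $\SQ$ (Remark~\ref{integeradeles}), combined with the full radial range along the leaf, gives this. Continuity then gives constancy on all of $D^+_\Q(1)$.

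\emph{Main obstacle.} The delicate step is justifying that a locally uniformly convergent series in $\hat\chi_q$ restricts to a genuinely holomorphic function on a leaf. This requires making the leaf parametrization $w\mapsto t\bu$ explicit and checking that $\hat\chi_q$ restricted to it equals $c_q e^{2\pi q w}$ with $|c_q|=1$. This follows from $\chi_q(\nu(\theta)\phi(a))=e^{2\pi i q\theta}\chi_q(\phi(a))$ together with the polar coordinates of Definition~\ref{disks}. Once that identity holds, uniform limits of holomorphic functions are holomorphic, and everything else is classical.
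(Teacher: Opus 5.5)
Your argument is correct and is essentially the paper's own. You restrict $f$ to a leaf (a copy of $\C$, or of a half-plane inside $D^+_\Q(1)$), apply the classical Liouville theorem or maximum modulus principle there, and conclude from density of the leaf and continuity of $f$. Your separate mean-value treatment of a maximum at $\boldsymbol0$, which lies on no leaf, usefully covers a case the paper's leafwise sketch passes over.

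Two asides need correcting, though neither affects the main proof. First, the claim that the same averaging over $|z|=|\bz_0|$ settles case 1 directly is unjustified: that Haar average equals $a_0=f(\boldsymbol0)$, not $f(\bz_0)$. Second, your Fourier-coefficient check should be run on a circle $\rho\SQ$ with $R<\rho<r$ rather than on $\SQ$ itself, unless $R<1<r$.
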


\subsection{Definition of holomorphic vector bundle over the adelic 
projective line $\C P^{1}_\Q$}\label{vb} 
A theorem that will be used in this and the following sections is Scheffer's theorem \cite{Sc}:
\begin{theorem}[\bf Scheffer's Theorem]\label{scheffer-full}
Let $\mathfrak{G}$ be a compact connected topological group, and let
$\mathfrak{H}$ be a locally compact abelian topological group. Then every
continuous map $f:\mathfrak{G}\to\mathfrak{H}$ is
homotopic to exactly one continuous homomorphism $\hat{f}:\mathfrak{G}\to\mathfrak{H}$
and the homotopy can be chosen to preserve the identities.
\end{theorem}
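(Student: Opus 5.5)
The plan is to reduce the target $\mathfrak{H}$ in stages to a circle. The circle case is handled by cohomology and Lie approximation, and uniqueness is proved directly by a lifting argument.

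\emph{Reduction of the target.} Since $\mathfrak{G}$ is connected, $f(\mathfrak{G})$ lies in $f(e)\mathfrak{H}_0$, where $\mathfrak{H}_0$ is the identity component. Replacing $f$ by $f(e)^{-1}f$ costs nothing for now; the identity condition is restored at the end. By the structure theory of locally compact abelian groups, $\mathfrak{H}_0\cong\R^k\times K$ with $K$ compact connected abelian. The $\R^k$ component of any map is null-homotopic by the straight-line homotopy, which fixes $e\mapsto 0$ once normalized. Moreover, the only continuous homomorphism $\mathfrak{G}\to\R^k$ is trivial, because its image is a compact subgroup of $\R^k$. So it suffices to treat $K$. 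Pontryagin duality gives $\widehat K$ discrete and torsion-free, and $K$ embeds in $(\bS^1)^{\widehat K}$. I would then use the Hofmann--Morris identification
\[
[X,K]\cong \operatorname{Hom}\bigl(\widehat K,\check H^1(X;\Z)\bigr)
\]
for compact $X$. Combined with $\operatorname{Hom}(\widehat K,\widehat{\mathfrak G})=\operatorname{Hom}(\mathfrak G,K)$, this reduces existence to the case $K=\bS^1$. There the claim is that $\check H^1(\mathfrak G;\Z)\cong[\mathfrak G,\bS^1]$ is identified with $\widehat{\mathfrak G}$ via the characters.

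\emph{The circle case.} Write $\mathfrak G=\varprojlim \mathfrak G/N$ over closed normal $N$ with $\mathfrak G/N$ a compact connected Lie group. Because $\bS^1$ is an ANR and $\check H^1$ commutes with inverse limits, every $f:\mathfrak G\to\bS^1$ is homotopic to some $f'\circ p_N$. For a compact connected Lie group $L=(Z_0\times[L,L])/F$ with $F$ finite, the inclusion of $Z_0$ identifies $H^1(L;\Z)=\operatorname{Hom}(\pi_1L,\Z)$ with the group of characters of $L$; the characters necessarily kill the semisimple part. This identification is compatible with the inverse system, so $f$ is homotopic to a character.

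\emph{Uniqueness and identities.} Suppose $h_1,h_2:\mathfrak G\to\mathfrak H$ are homotopic homomorphisms; after the reduction above we may take them to land in $K$. Then $\chi=h_1h_2^{-1}$ is null-homotopic, and it suffices to show this for each coordinate $\chi_\lambda:\mathfrak G\to\bS^1$. Null-homotopic maps lift, so $\chi_\lambda=\exp(2\pi i\varphi)$ for some continuous $\varphi:\mathfrak G\to\R$. The function $\varphi(xy)-\varphi(x)-\varphi(y)$ is continuous and integer-valued on the connected space $\mathfrak G\times\mathfrak G$. Normalizing $\varphi(e)=0$, it vanishes at $(e,e)$ and hence everywhere. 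So $\varphi$ is a homomorphism with compact image in $\R$, hence $\varphi=0$, and therefore $\chi=1$. For the identity clause, let $H_t$ be a homotopy from $f$ to $\hat f$. Then $t\mapsto H_t(e)$ is a path from $f(e)$ to $e$, and $H_t\cdot H_t(e)^{-1}$ is a homotopy from $f(e)^{-1}f$ to $\hat f$ that fixes the identity.

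\emph{Main obstacle.} I expect the hardest step to be making the Lie approximation and the passage from $\bS^1$ to a general solenoidal $K$ rigorous. Homotopies into the factors of $(\bS^1)^{\widehat K}$ must be chosen coherently, and $K$ need not be path-connected. My first attempt would be to avoid coordinatewise homotopies entirely and argue through the cohomological identification $[X,K]\cong\operatorname{Hom}(\widehat K,\check H^1(X;\Z))$. Its proof in turn writes $K=\varprojlim$ of finite-dimensional tori, using that $\widehat K$ is the direct limit of its finitely generated, hence free, subgroups.
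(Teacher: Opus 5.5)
The paper gives no proof of this statement; it only cites Scheffer \cite{Sc}, so your argument has to stand on its own. Most of it does. The uniqueness argument is correct: you lift a null-homotopic character to $\R$, then use connectedness of $\mathfrak G\times\mathfrak G$ and compactness of the image. The existence argument is also a sound route for maps with $f(e)=e$, provided homotopy classes are taken based.

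The genuine gap is the normalization $f\mapsto f(e)^{-1}f$. You say the identity condition is ``restored at the end''. But your last paragraph begins with a homotopy $H_t$ from $f$ to $\hat f$, which is exactly what had to be produced. What you have actually shown is $f(e)^{-1}f\simeq\hat f$. Getting back to $f$ requires a path from $f(e)$ to $e$ in $\mathfrak H$, and none need exist. In fact the statement, read literally for all $f$, is false:
\begin{itemize}
\item Take $\mathfrak H=\Z$ and $f\equiv 1$. Every homomorphism from the connected group $\mathfrak G$ is trivial, and $f$ is not homotopic to it.
\item For a connected target, take $\mathfrak H=\SQ$ and $f$ constant at a point off the base leaf (the arc component of $\boldsymbol 1$). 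A homotopy to a homomorphism would give a path from $f(e)$ to $\boldsymbol 1$.
\end{itemize}
The same defect sits in the formula you invoke. For unbased classes, $[X,K]\cong\operatorname{Hom}(\widehat K,\check H^1(X;\Z))$ already fails for $X$ a point and $K=\SQ$: the left side is $\pi_0(\SQ)\cong\widehat\Z/\Z$, which is uncountable, while the right side is $0$.

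The missing hypothesis is $f(e)=e$, which the clause ``the homotopy preserves the identities'' already presupposes. For the unbased conclusion alone, it suffices that $f(e)$ lie in the arc component of $e$. This holds for the targets $\bS^1$, $U(1)$, $\C^*$ actually used in the paper; a path $\gamma$ from $e$ to $f(e)$ then connects $f$ to $f(e)^{-1}f$ via $t\mapsto\gamma(t)^{-1}f$.

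With $f(e)=e$ your proof works using based classes throughout. The coherence problem you flag as the main obstacle also disappears, so you do not need the Hofmann--Morris formula:
\begin{enumerate}
\item By the circle case and uniqueness, each $\lambda\circ f$ with $\lambda\in\widehat K$ is homotopic to a unique character $\chi_\lambda$.
\item Uniqueness forces $\chi_{\lambda\mu}=\chi_\lambda\chi_\mu$. Hence $\hat f(x)(\lambda):=\chi_\lambda(x)$ defines a continuous homomorphism $\hat f:\mathfrak G\to K=\operatorname{Hom}(\widehat K,\bS^1)$.
\item Each $\lambda\circ(f\hat f^{-1})$ is null-homotopic. It has a \emph{unique} lift $\varphi_\lambda:\mathfrak G\to\R$ with $\varphi_\lambda(e)=0$, and uniqueness makes $\lambda\mapsto\varphi_\lambda$ additive.
\item Therefore $(x,t)\mapsto\bigl(\lambda\mapsto e^{2\pi i t\varphi_\lambda(x)}\bigr)\hat f(x)$ is a homotopy rel $e$ from $\hat f$ to $f$. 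It is continuous because $K$ carries the product topology of $(\bS^1)^{\widehat K}$.
\end{enumerate}
For the Lie step, it is cleaner to use the quotient torus $L/[L,L]$ rather than $Z_0$. Its fibre $[L,L]$ has finite $\pi_1$, so $\pi_1L\to\pi_1(L/[L,L])$ is onto with finite kernel. Hence $\operatorname{Hom}(\pi_1L,\Z)$ is pulled back from the torus, where every class is a character.
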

 First let us consider the topological facts underlying Picard's theorem for complex vector bundles of complex dimension one.
We recall that $\PQ$ is topologically
homeomorphic to the suspension $\Sigma(\SQ)$ of $\SQ$.  By standard facts about
classification of vector bundles we know that each complex line bundle over $\PQ$
is topologically equivalent to the pullback of the tautological complex line bundle $V$ over $\C{P}^\infty$, $P:V\to \C{P}^\infty$. Therefore complex vector bundles over
$\PQ$ are in one-to one correspondence with the set (actually a group)
of homotopy classes $[\PQ, \C P^\infty]$ of maps from $\PQ$ to $\C P^\infty$.
Since $\C P^\infty = K(\Z,2)$ is the classifying space for complex line bundles, one has
\[
\mathrm{Pic}_{\mathrm{top}}(\PQ)
\;=\;[\PQ,\C P^\infty]
\;\cong\;\check{H}^2(\PQ;\Z).
\]
The proalgebraic projective line satisfies $\PQ\cong\Sigma\SQ$ (the unreduced suspension of $\SQ$).
Applying the suspension isomorphism in \v{C}ech cohomology gives
\[
\check{H}^2(\PQ;\Z)
\;\cong\;
\check{H}^2(\Sigma\SQ;\Z)
\;\cong\;
\widetilde{\check{H}}^1(\SQ;\Z).
\]
For the compact abelian group $\SQ$ the Pontryagin dual is $\widehat{S^1_\Q}=\Q$ (Proposition \ref{scheffer-full}).
Since $\widetilde{\check{H}}^1(\SQ;\Z)=[\SQ,S^1]=\widehat{S^1_\Q}$, one obtains
\[
\check{H}^2(\PQ;\Z)\;\cong\;\Q.
\]
The rational first Chern class of a line bundle $L$ over $\PQ$ is its class in
$\check{H}^2(\PQ;\Z)\cong\Q$.
From this we obtain:
\begin{theorem}{\bf Topological Picard Theorem}\label{Picard}
The group, under tensor product, of equivalence classes of (topological)
complex line bundles over $\PQ$ is isomorphic to $\Q$.  Under the canonical
identification $\check{H}^2(\PQ;\Z)\cong\Q$, the first Chern class
$c_1(L)\in\check{H}^2(\PQ;\Z)$ corresponds to a rational number
$q\in\Q$, called the \emph{rational degree} of $L$.  Explicitly,
$c_1(L_q)\leftrightarrow q$ under this identification, and
\[
c_1(L_{q_1}\otimes L_{q_2})=c_1(L_{q_1})+c_1(L_{q_2}),\qquad
\deg_\Q(L_{q_1}\otimes L_{q_2})=q_1+q_2.
\]
\end{theorem}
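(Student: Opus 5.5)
The plan is to reduce the classification of topological line bundles on $\PQ$ to a homotopy computation on the equator $\SQ$, via clutching. I will do this directly rather than only quoting the chain of isomorphisms preceding the statement.

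First, cover $\PQ$ by the two closed hemispheres $\bar D^+_\Q(1)$ and $\bar D^-_\Q(1)$ of \S\ref{lp}. Each is homeomorphic to the closed cone over $\SQ$, hence contractible. A complex line bundle over a contractible compact Hausdorff (paracompact) space is trivial. Therefore every line bundle $L$ over $\PQ$ is obtained from $\bar D^\pm_\Q(1)\times\C$ by gluing along the equator through a continuous clutching function $g:\SQ\to\C^*$. Changing the trivializations on the two hemispheres multiplies $g$ by the restrictions of maps $\bar D^\pm_\Q(1)\to\C^*$. Since the hemispheres are contractible, these restrictions are null-homotopic. Conversely, homotopic clutching functions give isomorphic bundles, using the standard homotopy-invariance argument on $\SQ\times[0,1]$ with a collar of the equator. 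Hence
\[
\mathrm{Pic}_{\mathrm{top}}(\PQ)\cong[\SQ,\C^*].
\]
Moreover, the tensor product of bundles corresponds to the pointwise product of clutching functions.

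Second, identify $[\SQ,\C^*]$ with $[\SQ,S^1]$ via the retraction $z\mapsto z/|z|$. Then apply Scheffer's Theorem~\ref{scheffer-full} with $\mathfrak G=\SQ$, which is compact and connected, and $\mathfrak H=S^1$. Every continuous map is homotopic to exactly one continuous homomorphism, that is, to exactly one character $\chi_q$, $q\in\Q$. This gives a bijection $[\SQ,S^1]\to\widehat{\SQ}\cong\Q$.

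To see that it is a group isomorphism, take $f\simeq\chi_{q_1}$ and $h\simeq\chi_{q_2}$. Then $fh\simeq\chi_{q_1}\chi_{q_2}=\chi_{q_1+q_2}$, and this homomorphism is the unique one in the class of $fh$. Define $L_q$ as the bundle clutched by $\chi_q$. We get $L_{q_1}\otimes L_{q_2}\cong L_{q_1+q_2}$, and every bundle is isomorphic to a unique $L_q$. This proves $\mathrm{Pic}_{\mathrm{top}}(\PQ)\cong\Q$ and the additivity of $\deg_\Q$.

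Third, match this with the first Chern class. This step is essentially bookkeeping. The Mayer--Vietoris (suspension) connecting map $\widetilde{\check H}{}^1(\SQ;\Z)\to\check H^2(\PQ;\Z)$ sends the class of the clutching map $g$, viewed in $[\SQ,S^1]=\check H^1(\SQ;\Z)$, to $c_1(L)$. This is the same argument as for $S^2$, possibly up to a global sign convention. So $c_1(L_q)\leftrightarrow q$.

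The main obstacle is the point-set care in the first step. One must justify that the hemispheres are contractible and paracompact, so that bundles over them are trivial, and that homotopy invariance of clutching holds for these non-manifold compact spaces. I would handle this by noting that $\PQ$ is compact metrizable, since it is an inverse limit of countably many copies of $\C P^1$. Bundle theory over paracompact spaces, in particular homotopy invariance of pullbacks, then applies verbatim. The radial contraction $(t,\bu)\mapsto(st,\bu)$ in the polar coordinates of \S\ref{lp} gives the contractibility of the cones.
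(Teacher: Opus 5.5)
Your proof is correct, but it takes a more elementary route than the paper.

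\textbf{The paper's route.} It argues entirely through cohomology. It first writes $\mathrm{Pic}_{\mathrm{top}}(\PQ)=[\PQ,\C P^\infty]\cong\check H^2(\PQ;\Z)$, using $\C P^\infty=K(\Z,2)$. It then applies the \v{C}ech suspension isomorphism $\check H^2(\Sigma\SQ;\Z)\cong\widetilde{\check H}{}^1(\SQ;\Z)$ and identifies the latter with $[\SQ,S^1]$. Only at the end does it invoke Scheffer and Pontryagin duality to reach $\Q$.

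\textbf{Your route.} You carry out the suspension step geometrically, by clutching over the two contractible closed cones, and obtain $\mathrm{Pic}_{\mathrm{top}}(\PQ)\cong[\SQ,\C^*]$ directly. The only inputs are homotopy invariance of bundles over the compact metrizable space $\PQ$ and the fact that null-homotopic maps extend over cones.

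\textbf{What each buys.} Your route gives the tensor-product group law for free, since transition functions multiply. It also gives explicit representatives $L_q$ clutched by $\chi_q$, which is the formalism used later in Definition~\ref{hvb}. It avoids the representability of \v{C}ech cohomology by Eilenberg--MacLane spaces on non-CW compacta, which the paper takes for granted. The price is that the Chern-class clause is no longer automatic. In the paper the identification with $\check H^2$ is $c_1$ by construction, whereas you must check that the Mayer--Vietoris connecting map carries the clutching class to $c_1(L)$.

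\textbf{Pinning down the Chern class and sign.} Your sketch of that step is acceptable. A cleaner way to settle it, including the sign, is naturality. For $q=k/n$, the bundle $L_q$ is the pullback under $\pi_n\colon\PQ\to\C P^1$ of the bundle on $\C P^1$ clutched by $z^k$. This holds because $\pi_n$ maps hemispheres to hemispheres and $\chi_{k/n}=\chi^{S^1}_k\circ\pi_n$ on the equator. Continuity of \v{C}ech cohomology under inverse limits of compacta then gives $\check H^2(\PQ;\Z)\cong\varinjlim_n H^2(\C P^1;\Z)\cong\Q$, with the level-$n$ class $k$ going to $k/n$. This fixes $c_1(L_q)\leftrightarrow q$ with a definite sign convention and recovers the paper's direct-limit description of $\mathrm{Pic}(\PQ)$.
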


\begin{definition}[Holomorphic vector bundle and clutching cocycle]
\label{hvb}
Fix real numbers $r,R$ with $0\le r<1<R\le\infty$. The disks
$D^+_\Q(R)$ and $D^-_\Q(r)$ cover $\PQ$, with intersection $A(r,R)$.
A rank-$n$ holomorphic clutching cocycle is a holomorphic map
\[
f_{(r,R)}:A(r,R)\longrightarrow \operatorname{GL}(n,\C).
\]
In the Laurent--Puiseux category it has an expansion
\[
f_{(r,R)}(t\bu)=\sum_{q\in\Q}a_qt^q\chi_q(\bu),
\qquad r<t<R,\quad a_q\in M_n(\C).
\]
The corresponding bundle is obtained from
$D^+_\Q(R)\times\C^n\sqcup D^-_\Q(r)\times\C^n$ by
\[
(z,W)_+\sim(z,f_{(r,R)}(z)W)_-,\qquad z\in A(r,R).
\]
A holomorphic vector bundle on $\PQ$ is a bundle obtained in this way,
up to the cocycle equivalence of Definition~\ref{def:cocycle-equivalence}.
\end{definition}
Thus the construction of the bundle given in definition \ref{hvb} 
 is the standard one using only one clutching function. The fibration $\pi$ in definition \ref{hvb}  
is holomorphically trivial over each of the disks $D^+_\Q(R)$ and $D^-_\Q(r)$ 
 i.e. there are $n$ linearly independent holomorphic sections 
 (holomorphic in the sense of definition \ref{holo}). 

\begin{definition}[Equivalence of cocycles]\label{def:cocycle-equivalence} The holomorphic equivalence of two cocycles $f_{(r,R)}$
 and $f_{(r',R')}$ is the standard Čech equivalence. Let $r''=\max\left\{r,r'\right\}$ and
$R''=\min\left\{R,R'\right\}$. The cocycle $f_{(r,R)}$
is equivalent to $f_{(r',R')}$ if there
exist holomorphic maps $f_+:D^+_\Q(R'')\to{\text{GL}}(n,\C)$ and
$f_-:D^-_\Q(r'')\to{\text{GL}}(n,\C)$
such that
\[
f_{(r',R')}=f_-\,f_{(r,R)}\,f_+^{-1}
\qquad\text{on }A(r'',R'').
\]
Here $f_+$ and $f_-$ are the changes of frame on the positive and negative
charts, respectively.
(Note that $f_+$ and $f_-$ must extend holomorphically to the respective
full disks $D^+_\Q(R'')$ and $D^-_\Q(r'')$, not merely to the overlap annulus.)
Equivalent cocycles define isomorphic bundles.
\end{definition}\medskip
Theorem \ref{Picard} is also compatible with the direct-limit behaviour of line-bundle degrees under the power maps.
Consider the complex projective line $\C P^1$ and its 
proalgebraic completion $\PQ$.  The index set is the set of natural numbers $I=\N$. 
Indeed, it is clear that
\begin{equation}\label{identity}
\I_{\C P^1}\left(\frac{n'}{n}m\right)\,\cong\, p_{n, n'}^*\,\I_{\C P^1}(m)
\end{equation}
for every pair of naturals such that $n|n'$ and every integer $m$. 

We clearly have the following identity:
\begin{equation}\label{pb}
\I_{\C P^1_\Q}\left(\frac{m}{n}\right)= \pi_n^{*}\,\I_{\C P^1}(m)
\end{equation}
where $\pi_n:\C P^1_\Q\rightarrow\C P^1$ is the canonical projection of the inverse limit to the $n$-th circle stage of the projective system.

Because of the additivity of the characters, identity \ref{identity} and the fact that the pullbacks of the projections are equivariant with the tensor product up to isomorphism, we have
$$\I_{\C P^1_\Q}(q)\otimes \I_{\C P^1_\Q}(q')\cong \I_{\C P^1_\Q}(q+q')$$
for every pair of rational numbers $q$ and $q'$. In particular, \ref{pb} is well defined. This is obviously a holomorphic line bundle according to our
definition \ref{hvb}. 
\begin{remark} Here we considered line bundles which are pullback of the Hopf
bundle $\mathcal{O}(1)$. These are holomorphic in the sense defined in \ref{hvb}
above. 

\end{remark}

\begin{proof}[Compatibility with the direct-limit description]
The identities \eqref{identity} and \eqref{pb} show that ordinary integral degrees transform under a power map by multiplication by the degree of the map.  Hence their normalized values form the directed colimit
\begin{align*}
\mathrm{Pic}\!\left(\C P^1_\Q\right)
&\cong \varinjlim_{n\in\N}\,\mathrm{Pic}\!\left(\C P^1_n\right)
\cong \varinjlim_{n\in\N}\,\tfrac{1}{n}\Z\cong \Q
\end{align*}
and we have the result.
\end{proof}

\section{Adelic loop groups}\label{adelic-loop-groups} 

\bigskip

In analogy with classical loop groups
(see, for instance \cite{PS}, \cite{MG, MG1}, \cite{Se1, Pr2} ), and following the dictum of Dennis Sullivan that $\SQ$ is a ``diffuse'' version of $\bS^1$, we recall the following construction, introduced jointly with Burgos in \cite{BuVe}:
\begin{definition}[\cite{BuVe}]\label{adelic_loop} Let $G$ be a connected Lie group.
A continuous {\bf adelic loop} in $G$ is a continuous map $f:\SQ\to{G}$. The
{\bf adelic loop group} of $G$, denoted $\boldsymbol\Lambda_\Q(G)$, is defined as follows:
\[
\boldsymbol\Lambda^0_\Q(G)=\left\{f:\SQ\to{G}\,:\, f\,\text{is a continuous loop} \right\}.
\]
Then $\boldsymbol\Lambda^0_\Q(G)$ is a topological group under point-wise
multiplication and the compact-open topology.
\end{definition}

\begin{remark}[\bf On the terminology ``adelic'']
The terminology {\bf adelic} for $\SQ$, and for the loop groups built from
it, is motivated by the fact that $\SQ=(\R\times\hat\Z)/\Z_{\mathrm{diag}}$
(Definition \ref{exp}) is closely related to the classical ring of adeles
$\mathbb{A}_\Q=\R\times\prod'_p\Q_p$ of $\Q$ (see, e.g., \cite{We, T, RV})
and to adele groups attached to algebraic groups (\cite{Bo}): $\hat\Z=
\prod_p\Z_p$ is the maximal compact subring of the finite adeles
$\mathbb{A}_{\Q,f}=\prod'_p\Q_p$.  With the usual diagonal embedding of
$\Q$, the solenoid is naturally identified with the additive adele class
group $\mathbb{A}_\Q/\Q$.  We use this only as motivation for the term
``adelic''; the arguments below use the compact abelian-group and
lamination structures of $\SQ$.
\end{remark}

\begin{remark}\label{differentiable_loop} In definition \ref{adelicsmoothloopgroup} we will define the
group of ``differentiable'' loops ${\boldsymbol\Lambda}^{\infty}_\Q(G)$ consisting
of ``differentiable'' loops $f:\SQ\to{G}$ such that $f$ restricted to each leaf of the solenoidal
manifold $\SQ$ (a one dimensional copy
of $\R$ densely embedded in $\SQ$) is differentiable.
Differentiable based solenoidal groups loops form an infinite dimensional
Fr\'echet manifold (see, for instance  \cite{Fr},\cite{MG,MG1}, \cite{PS}, \cite{E}). 
 However, in the following sections we will use loops with matrix coefficients
 in the Wiener algebra and in a Fourier--Sobolev Hilbert space.  The latter is used below with care: unlike the ordinary circle Sobolev space, its unrestricted rational-frequency version is not a Banach algebra of continuous functions.
\end{remark}

We are mostly interested in the {\bf adelic loop group} $\boldsymbol\Lambda^0_\Q(\gln)$.
We recall that $\pi_1(\gln)=\Z$, detected as follows. The determinant gives a
fibration $\det:\gln\to\C^*$ with fiber $\mathrm{SL}(n,\C)$. Since $\mathrm{SL}(n,\C)$
deformation retracts onto $\mathrm{SU}(n)$ which is simply connected for $n\geq1$,
the long exact homotopy sequence gives $\pi_1(\gln)\xrightarrow{\;\det_*\;}\pi_1(\C^*)\cong\Z$
as an isomorphism. (The inclusion of the center $\text{U}(1)\hookrightarrow\gln$ via
scalar matrices $\lambda\mapsto\lambda I_n$ induces multiplication by $n$ on $\pi_1$, not
an isomorphism for $n>1$; the correct generator is detected by a single-entry rotation.)
One has the projectivization fibration $p_1:\gln\to P\gln$ with fibre $\text{U}(1)$.
 
\begin{theorem}[Special case of a theorem of Scheffer, \cite{Sc}]\label{scheffer}
Let $K$ be a compact, connected, abelian topological group and let
$h:K\to{\text{U}(1)}$ be any continuous map. Then $h$ is homotopic, through
continuous maps $K\to{\text{U}(1)}$, to a continuous homomorphism
$K\to{\text{U}(1)}$, i.e., to a character of $K$.
\end{theorem}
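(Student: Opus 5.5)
We plan to deduce the statement from the general form of Scheffer's theorem (Theorem~\ref{scheffer-full}), and we also indicate a direct argument that avoids it. For the first route, take $\mathfrak{G}=K$, which is compact and connected, and $\mathfrak{H}=\text{U}(1)$, which is a compact (hence locally compact) abelian group. Theorem~\ref{scheffer-full} then gives a continuous homomorphism $\hat h:K\to\text{U}(1)$ homotopic to $h$, and a continuous homomorphism into $\text{U}(1)$ is by definition a character. Since the homotopy in Theorem~\ref{scheffer-full} passes through continuous maps $K\to\text{U}(1)$, this proves the statement.

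For the direct argument, we first normalize so that $h(e)=1$, replacing $h$ by $h(e)^{-1}h$; this is homotopic to $h$ along a path from $1$ to $h(e)^{-1}$ in $\text{U}(1)$. Write $K=\varprojlim K_\alpha$ as an inverse limit of compact Lie groups, which are tori because $K$ is connected and abelian. By uniform continuity of $h$ on the compact group $K$, there is an $\alpha$ and a map $h_\alpha:K_\alpha\to\text{U}(1)$ with $h$ uniformly within distance $<2$ of $h_\alpha\circ\pi_\alpha$; one can obtain such an $h_\alpha$ by averaging over the fibres of $\pi_\alpha$ against Haar measure on $\ker\pi_\alpha$ and then normalizing. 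Two maps into $\text{U}(1)$ that are never antipodal are homotopic along the short geodesic, so $h\simeq h_\alpha\circ\pi_\alpha$.

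On the torus $K_\alpha=\R^k/\Z^k$, the lift of $h_\alpha$ to $\R^k$ has a lifted logarithm $\tilde h(x)=\langle m,x\rangle+\psi(x)$, where $m\in\Z^k$ records the winding numbers along the basis loops (this uses $H^1(T^k;\Z)=\Z^k$) and $\psi$ is $\Z^k$-periodic. The straight-line homotopy $\langle m,x\rangle+s\psi(x)$, $s\in[0,1]$, then deforms $h_\alpha$ to the character $e^{2\pi i\langle m,x\rangle}$. Composing with $\pi_\alpha$ yields a character of $K$ homotopic to $h$.

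The main obstacle is the approximation step, namely producing $h_\alpha$ that is genuinely close to $h$ in $\text{U}(1)$. The fibre average may vanish, so we will choose $\alpha$ fine enough that $h$ varies by less than $1/2$ on each coset of $\ker\pi_\alpha$; the average then has modulus $>1/2$ and normalizing it is harmless. Since the statement is literally a special case of Theorem~\ref{scheffer-full}, which we may assume, the first route suffices and the direct argument is only a check.
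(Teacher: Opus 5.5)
Your first route is exactly how the paper handles this statement: it gives no separate proof and records the result as the specialization of Theorem~\ref{scheffer-full} to $\mathfrak G=K$, $\mathfrak H=\text{U}(1)$, with a continuous homomorphism into $\text{U}(1)$ being a character. Your supplementary direct argument is also sound, though not needed: you approximate $h$ through a torus quotient by fibre-averaging and normalizing, and then straighten the lifted logarithm $\langle m,x\rangle+s\psi(x)$.
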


\begin{proposition} Any loop $f:\SQ\to\gln$ can be deformed to a map 
$\hat{f}:\SQ\to{\text{U}(1)}\simeq\bS^1$ into a fibre
of $p_1$. 
 \end{proposition}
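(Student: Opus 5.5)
The plan is to show that every loop $f:\SQ\to\gln$ is homotopic to a loop of the form $\bu\mapsto\chi_q(\bu)\,I_n$ for a suitable $q\in\Q$. Such a loop takes values in the scalar circle $\{\lambda I_n:|\lambda|=1\}\simeq\text{U}(1)$, which is a fibre of the projectivization $p_1:\gln\to P\gln$. The key point is that, unlike on $\bS^1$, the scalar circle is enough: characters of $\SQ$ admit $n$-th roots, since $\chi_{q/n}$ exists for every $q\in\Q$. This compensates for the fact, noted above, that scalar inclusion induces multiplication by $n$ on $\pi_1$.

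\emph{Step 1 (reduce to $\text{U}(n)$).} Polar decomposition $A=U\,(A^*A)^{1/2}$ gives a strong deformation retraction of $\gln$ onto $\text{U}(n)$. Composing $f$ with it, we may assume $f$ takes values in $\text{U}(n)$.

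\emph{Step 2 (choose $q$ from the determinant).} The map $\det f:\SQ\to\text{U}(1)$ is homotopic to a unique character $\chi_p$, $p\in\Q$, by Theorem~\ref{scheffer} together with the uniqueness part of Theorem~\ref{scheffer-full}. Set $q=p/n$. Then $\det(\chi_q I_n)=\chi_q^{\,n}=\chi_p$.

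\emph{Step 3 (the quotient loop).} Put $u=f\cdot(\chi_q I_n)^{-1}:\SQ\to\text{U}(n)$. Its determinant $\det u=\det f\cdot\chi_{-p}$ is homotopic to a homomorphism, by Theorem~\ref{scheffer-full}. That homomorphism must be the trivial character: homotopy classes of maps $\SQ\to\text{U}(1)$ form a group, and $\det f\simeq\chi_p$, so $\det u\simeq\chi_p\chi_{-p}=1$. Now $\det:\text{U}(n)\to\text{U}(1)$ is a locally trivial bundle with fibre $\mathrm{SU}(n)$, and $\SQ$ is compact metric, hence paracompact. The covering homotopy property therefore lifts the null-homotopy of $\det u$ to a homotopy from $u$ to a map $u_1:\SQ\to\mathrm{SU}(n)$.

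\emph{Step 4 (kill the $\mathrm{SU}(n)$ part).} This is the step I expect to be the main obstacle. We must show that every map $u_1:\SQ\to\mathrm{SU}(n)$ is null-homotopic.

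I would first use that $\mathrm{SU}(n)$ is a compact ANR and $\SQ=\varprojlim \bS^1$. A standard approximation argument then applies: uniformly close maps into an ANR are homotopic, and continuous functions on the inverse limit are uniformly approximated by functions pulled back from a finite stage. It shows that $u_1\simeq g\circ\pi_m$ for some $m$ and some continuous $g:\bS^1\to\mathrm{SU}(n)$. Since $\mathrm{SU}(n)$ is simply connected, $g$ is null-homotopic, hence so is $u_1$.

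Alternatively, one can argue by obstruction theory: $\SQ$ has covering dimension $1$ and $\mathrm{SU}(n)$ is connected and simply connected, so every map is null-homotopic.

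\emph{Conclusion.} We get $u\simeq I_n$, and multiplying back by $\chi_q I_n$ gives $f\simeq\chi_q I_n$. This is the desired deformation of $f$ into the $\text{U}(1)$ fibre of $p_1$; as a by-product it identifies the class of $f$ with the rational number $q=\tfrac1n w(\det f)$.
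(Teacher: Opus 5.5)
Your argument is correct, and it takes a genuinely different route from the paper's.

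\textbf{The paper's route.} The paper argues by general position. Because $\SQ$ is one-dimensional, $f$ is pushed off the real-codimension-two locus $H\subset\gln$, the preimage of the coordinate hyperplanes under $p_2\circ p_1$. The complement $\gln\smallsetminus H$ is then asserted to reduce to a $\text{U}(1)$-fibre of $p_1$. Finally, Scheffer's theorem turns the resulting circle-valued map into a character.

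\textbf{Your route.} You work homotopy-theoretically instead, in five steps. First, the polar retraction brings $f$ into $\text{U}(n)$. Second, the determinant gives $p=w(\det f)$. Third, divisibility of $\Q$ lets you choose the scalar loop $\chi_{p/n}I_n$, whose determinant $\chi_p$ is homotopic to $\det f$. Fourth, you lift through $\det:\text{U}(n)\to\text{U}(1)$. This bundle is even trivial, via the section $\lambda\mapsto\operatorname{diag}(\lambda,1,\ldots,1)$, so the lift is explicit. Fifth, $\pi_1(\mathrm{SU}(n))=0$ together with one-dimensionality of $\SQ$ kills the $\mathrm{SU}(n)$-part. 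In your approximation version of this last step, compose the finite-stage approximant with a tubular retraction onto $\mathrm{SU}(n)$; surjectivity of $\pi_m$ ensures the approximant lands in the tube.

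\textbf{Comparison.} The paper's route is shorter, but its central step is only sketched and does not hold as written. The set $\gln\smallsetminus H$ is open in $\gln$, so it has real dimension $2n^2$ and cannot be diffeomorphic to $\C^{n-1}\times\text{U}(1)$. Moreover, for $n\geq2$ such coordinate complements still carry the topology of a smaller linear group, for example $\{A_{11}\neq0\}\simeq\text{U}(1)\times\text{U}(n-1)$. So further reduction would be needed there. Your route has several advantages:
\begin{itemize}
\item it avoids this issue entirely;
\item it lands in the scalar circle $\text{U}(1)I_n$, which lies in the fibre of $p_1$ over $[I_n]$;
\item it explains why the factor-$n$ effect of the scalar inclusion on $\pi_1$ is harmless over $\SQ$;
\item as a by-product, it shows that two loops with the same determinant winding are homotopic, which is exactly the injectivity needed in Proposition~\ref{connectedcomponents}.
\end{itemize}
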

 
 \begin{proof} The proof follows from the  fact that $\SQ$ has topological dimension one. In fact, one has the fibration $p_2:P\gln\to \C P^{n-1}$ and the  
loop $f$ can be deformed to a map that 
avoids the union $H$ of the pre-images of the coordinate hyperplanes under the projection $p_2\circ{p_1}$ (since $\SQ$ is of real dimension one). The complement 
$\gln \smallsetminus H$ is diffeomorphic to 
$\C^{n-1}\times{\text{U}(1)}$ where the $\text{U}(1)$-factors are fibers of $p_1$.  By Scheffer's theorem \ref{scheffer}, applied to the compact connected abelian group $K=\SQ$, $\hat{f}$ is homotopic to a continuous group homomorphism to $\text{U}(1)\simeq\bS^1$ i.e., a character. 
\end{proof}

\noi Therefore from this proposition we have:
\begin{proposition}\label{connectedcomponents}
 The group of homotopy
classes of loops in $\La^0_\Q(\gln)$ is isomorphic to $\Q$. Thus the connected components of $\boldsymbol\Lambda^0_\Q(\gln)$ are in one-to-one
correspondence to the additive group of $\Q$. The connected component of the identity corresponds to $q=0$.
\end{proposition}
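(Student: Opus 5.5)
The plan is to identify $\pi_0(\La^0_\Q(\gln))$ with $[\SQ,\gln]$ and compute this set through the determinant, reducing it to $[\SQ,\C^*]$. Scheffer's theorem then does the rest. Two loops lie in the same path component of $\La^0_\Q(\gln)$ (compact-open topology) exactly when they are homotopic as maps $\SQ\to\gln$, because $\SQ$ is compact Hausdorff and the exponential law holds. So the set of components equals $[\SQ,\gln]$. Pointwise multiplication makes it a group, and the group structure is preserved by every map used below.

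First I construct a homomorphism $\Phi:[\SQ,\gln]\to\Q$. Take $f\mapsto\det\circ f:\SQ\to\C^*$, then retract radially to $\bS^1=\text{U}(1)$. By Theorem~\ref{scheffer}, or Theorem~\ref{scheffer-full} with $\mathfrak{H}=\C^*$, the result is homotopic to exactly one character $\chi_q$, and I set $\Phi(f)=q$. This is the winding number of Definition~\ref{hwn} applied to $\det f$. It is well defined on homotopy classes because of the uniqueness part of Theorem~\ref{scheffer-full}. It is a homomorphism because $\det$ is multiplicative and the product of homotopies $\chi_q\simeq g$, $\chi_{q'}\simeq g'$ is a homotopy $\chi_{q+q'}\simeq gg'$. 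Surjectivity is immediate: for $q\in\Q$ use the loop $\operatorname{diag}(\chi_q,1,\dots,1)$, whose determinant is $\chi_q$.

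The main obstacle is injectivity: a loop with $\det f\simeq 1$ must be null-homotopic. I would use the preceding proposition, which deforms any $f$ into a $\text{U}(1)$-fibre, together with the fibration $\mathrm{SL}(n,\C)\to\gln\xrightarrow{\det}\C^*$. Multiplying $f$ by $\operatorname{diag}(\det f^{-1},1,\dots,1)$ changes it only within its homotopy class twisted by a loop homotopic to a character, so it suffices to show that every map $\SQ\to\mathrm{SL}(n,\C)$ is null-homotopic. Since $\mathrm{SL}(n,\C)\simeq\mathrm{SU}(n)$ is $2$-connected and $\SQ$ is an inverse limit of circles of covering dimension one, I would argue as follows. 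A map $\SQ\to\mathrm{SU}(n)$ (an ANR) factors up to homotopy through some projection $\pi_m:\SQ\to S^1$, by the standard continuity of $[\,\cdot\,,\text{ANR}]$ under inverse limits of compacta. It is then null-homotopic because $\pi_1(\mathrm{SU}(n))=0$. The same factorization argument applied to $\gln$ directly gives $[\SQ,\gln]\cong\varinjlim_m[S^1,\gln]=\varinjlim(\Z\xrightarrow{\cdot k}\Z)\cong\Q$, which is a useful cross-check. I expect the delicate point to be justifying the factorization step (\v{C}ech/ANR continuity) rigorously; everything else is formal.

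Finally, the identity component contains the constant loop $I_n$, and $\det I_n=\chi_0$, so the identity component corresponds to $q=0$.
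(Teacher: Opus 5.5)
Your proof is correct, but its key step differs from the paper's. The paper relies on the preceding proposition: a general-position argument (using $\dim\SQ=1$) deforms every loop into a $\text{U}(1)$-fibre, and Scheffer's theorem then makes every loop homotopic to a character. Injectivity of $[f]\mapsto q$ is therefore automatic, and only well-definedness is argued, via Scheffer's uniqueness in $\text{U}(1)$.

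You instead define the invariant intrinsically, as the Scheffer class of $\det f$ in $[\SQ,\C^*]$. This makes well-definedness and additivity immediate. You then prove trivial kernel using three ingredients:
\begin{itemize}
\item the fibration $\mathrm{SL}(n,\C)\to\gln\to\C^*$;
\item shape-theoretic factorization, up to homotopy, of maps from an inverse limit of compacta into an ANR through a finite stage;
\item $\pi_1(\mathrm{SU}(n))=0$.
\end{itemize}

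Your route is the more robust one. The paper's general-position step is not convincing as written: the claimed identification $\gln\smallsetminus H\cong\C^{n-1}\times\text{U}(1)$ already has the wrong dimension. Moreover, the paper passes from ``$\chi_{q-q'}$ null-homotopic in $\gln$'' to ``null-homotopic in $\text{U}(1)$'', and that step tacitly needs exactly the determinant argument you supply. Your cross-check $[\SQ,\gln]\cong\varinjlim(\Z\xrightarrow{\cdot k}\Z)\cong\Q$ is by itself a complete proof.

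The factorization step you flag as delicate is standard, and for $\gln$ it can be done by hand:
\begin{itemize}
\item By Stone--Weierstrass, finite character sums are uniformly dense in $C(\SQ,M_n(\C))$, and each one factors as $\tilde p\circ\pi_m$ for some $m$.
\item If $p$ is uniformly close enough to $f$, the straight-line homotopy $(1-s)f+sp$ stays in $\gln$.
\item On $S^1$ one has $\tilde p\simeq\operatorname{diag}(z^k,1,\dots,1)$, so $f\simeq\operatorname{diag}(\chi_{k/m},1,\dots,1)$ and $\Phi(f)=k/m$.
\end{itemize}
This gives injectivity without the $\mathrm{SL}$ reduction.

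Two minor points. First, you announce that you will use the preceding proposition, but your argument never does. Second, equating connected components with homotopy classes needs local path-connectedness. This holds because $\La^0_\Q(\gln)$ is open in the Banach space $C(\SQ,M_n(\C))$.
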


\begin{proof}
The preceding proposition shows every loop $f\in\La^0_\Q(\gln)$ is homotopic to
a character $\chi_q:\SQ\to U(1)\subset\gln$ for some $q\in\Q$.  This defines the
\emph{winding map} $w:\pi_0(\La^0_\Q(\gln))\to\Q$ by $[f]\mapsto q$.

\medskip\noindent
\emph{Well-definedness and injectivity.}
If $f$ is homotopic to both $\chi_q$ and $\chi_{q'}$, then $\chi_q$ and $\chi_{q'}$
are homotopic in $\La^0_\Q(\gln)$, hence $\chi_{q-q'}=\chi_q\chi_{q'}^{-1}$ is
null-homotopic in $U(1)\subset\gln$.  By Scheffer's theorem applied to $U(1)$,
null-homotopy in $U(1)$ forces $q-q'=0$, so $q=q'$.  Thus $w$ is well-defined and
injective on $\pi_0$.

\medskip\noindent
\emph{Surjectivity.}
For each $q\in\Q$ the character $\chi_q:\SQ\to U(1)\subset\gln$ is itself a loop
with $w(\chi_q)=q$, so every element of $\Q$ is in the image of $w$.

\medskip\noindent
\emph{Homomorphism property.}
$w(fg)=w(f)+w(g)$ since the determinant-degree is additive (the determinant of
$fg$ is the product of the determinants, and winding numbers add under multiplication).
Hence $w$ is an isomorphism of groups.
\end{proof}

\medskip
\begin{definition}[{\bf Winding number}]\label{winding_annulus}
Let $h:\SQ\to{\text{U}(1)}$ be a continuous map (a loop into $U(1)$). By
Proposition \ref{connectedcomponents} (equivalently, Theorem \ref{scheffer})
$h$ is homotopic to a character $\chi_q$ for a unique $q\in\Q$. This unique
rational number $q$ is called the {\bf winding number of $h$} and is denoted
$w(h)$.
\end{definition}

\medskip
\begin{definition}[Intrinsic rational winding and determinant degree]
\label{intrinsic-degree}
Let \(h:\SQ\to\C^*\) be continuous.  Since \(\C^*\) deformation retracts
onto \(S^1\), the homotopy class of \(h\) determines an element of
\[
[\SQ,S^1]\cong \check H^1(\SQ;\Z)\cong\widehat{\SQ}=\Q.
\]
The corresponding rational number is denoted by
\[
\operatorname{wind}_{\Q}(h)\in\Q
\]
and is called the \emph{intrinsic rational winding number} of \(h\).
Equivalently, \(h/|h|\) is homotopic to a unique character \(\chi_q\), and
then \(\operatorname{wind}_{\Q}(h)=q\).

Let \(E\) be a rank-\(r\) bundle described on the two solenoidal charts by
an invertible transition cocycle \(g\).  Whenever \(g\) belongs to one of
the cocycle categories considered below (in particular, the Laurent--Puiseux
or Wiener categories), its determinant defines the determinant line bundle
\[
\det E:=\bigwedge^rE
\]
and the {\bf rational degree} is
\[
\deg_{\Q}(E):=\operatorname{wind}_{\Q}(\det g)\in\Q.
\]
This number is independent of the chosen trivializations and is additive
under direct sums:
\[
\deg_{\Q}(E\oplus F)=\deg_{\Q}(E)+\deg_{\Q}(F).
\]
\end{definition}

\begin{remark}
The rational winding number and the determinant degree are intrinsic to the
solenoid; no choice of a finite covering, denominator, or auxiliary circle is
involved in their definition.  In a category where a splitting
\[
E\cong\I(q_1)\oplus\cdots\oplus\I(q_r)
\]
is available, one necessarily has
\[
\det E\cong\I(q_1+\cdots+q_r),
\qquad
\deg_{\Q}(E)=q_1+\cdots+q_r.
\]
Thus the determinant records the total rational degree, while the full
splitting type refines it to an unordered rational \(r\)-tuple.
\end{remark}

\medskip

The paper has two complementary aims.  The first is to establish the
intrinsic adelic results proved here: scalar and triangular
Wiener--Birkhoff factorization, normalized small-norm matrix
factorization, density of factorable matrices, and the pro-algebraic
Birkhoff--Grothendieck theorem; the full matrix theorem is isolated as the
central conjecture.  The second is to develop the K\"ahler, energy,
Grassmannian, determinant-bundle, and representation-theoretic geometry
suggested by the solenoidal structure.  The passage from $\Z$ to $\Q$ is
not merely formal: it creates rational winding data and new analytic
phenomena at arbitrarily dense positive frequencies.

\subsection{Loops in the Wiener Banach algebra}
\label{Wienerloops}
\begin{definition}[Rational Wiener algebra]\label{def:rational-Wiener-algebra}
For $n\ge1$, $\mathfrak W_\Q(n)$ consists of the functions
$f:\SQ\to M_n(\C)$ with an absolutely summable Fourier expansion
\[
f(\bu)=\sum_{q\in\Q}A_q\chi_q(\bu),\qquad
\sum_{q\in\Q}\|A_q\|<\infty,
\]
where $\|\cdot\|$ is a fixed submultiplicative matrix norm. Its Banach
algebra norm is
\[
\|f\|_{\mathfrak W_\Q}=\sum_{q\in\Q}\|A_q\|.
\]
For $n=1$ we write $\mathfrak W_\Q$.
\end{definition}

Gelfand's proof (\cite{Ge1, Ge2}) of Wiener-L\'evy's  theorem (\cite{K, Le, W, Z}) is valid for any Banach algebra (\cite{BP}) and therefore applies to the group of units 
$\boldsymbol\Lambda_W(n)$ of the Banach algebra
$\mathfrak{W}_\Q(n)$,
and it implies that $f$ is a unit if and only if the image of $f$ lies in
$\text{GL}(n,\C)$ i.e., $f(\bu)$ is an invertible matrix for all 
$\bu\in S^{1}_{\Q}$.
The set
\[
\boldsymbol\Lambda_W(n)\overset{\small{def}}=
\left\{ f\in\mathfrak{W}_\Q(n)\,:\, f(\bu)\,\, 
\text{is invertible for all}\,\,   \bu\in S^{1}_{\Q}  \right\}
\]
is open in $\mathfrak{W}_\Q(n)$ and thus it is a 
{\bf Banach Lie group} (\cite{Mi}).  

\begin{definition}[Wiener loop]\label{Wl}
 An element $f\in\boldsymbol\Lambda_W(n)$ is called a {\bf Wiener loop}
 (or $l^1$-loop)
of the group $\boldsymbol\Lambda_W(n)$ (\cite{BD, CG, GK,PS}).
\end{definition}

Let 
\[
\mathfrak{W}^+_\Q(n)=
\]
\[
\left\{ f\in\mathfrak{W}_\Q(n) \,: \, 
f(\bu)=\sum_{q\in\Q}\, A_q\,\chi_q(\bu),\, u\in\SQ\,\,
\text{such that}\, A_q=0,\, \text{for}\,q<0 \right\},
\]
and
\[
\mathfrak{W}^-_\Q(n)=
\]
\[
\left\{ f\in\mathfrak{W}_\Q(n) \,: \, 
f(\bu)=\sum_{q\in\Q}\, A_q\,\chi_q(\bu),\, \, u\in\SQ\,\,
\text{such that}\, A_q=0,\, \text{for}\,q>0 \right\}.
\]

$\mathfrak{W}^+_\Q(n)$ and $\mathfrak{W}^-_\Q(n)$ are closed subalgebras
of $\mathfrak{W}_\Q(n)$ and there is a splitting of closed subalgebras:

\begin{equation}\label{wiener-splitting}
\mathfrak{W}^+_\Q(n) + \mathfrak{W}^-_\Q(n)=\mathfrak{W}_\Q(n), \quad
\mathfrak{W}^+_\Q(n) \cap \mathfrak{W}^-_\Q(n)\simeq{M_n(\C)}.
\end{equation}

\noi A function is in $\mathfrak{W}^+_\Q(n)$ if it can be extended to a
continuous function which is holomorphic in the open disk $D^+_\Q(1)$:
if $f\in\mathfrak{W}^+_\Q(n)$ is given by the series
\[
f(\bu)=\sum_{q\in\Q}\, A_q\,\chi_q(\bu),\quad u\in\SQ,\,\text{with}\,\, A_q=0,\, \text{for}\,q<0,
\]
\noi (consistent with the definition of $\mathfrak W^+_\Q(n)$ above: the
constant term $q=0$ is included, matching
$\mathfrak{W}^+_\Q(n)\cap\mathfrak{W}^-_\Q(n)\simeq M_n(\C)$ in
equation~\eqref{wiener-splitting})
define its extension $f^+$ to $D^+_\Q(1)$ by the formula
\[
f^+(z)=\sum_{q\in\Q}\, A_q\,\hat\chi_q(z)=
\sum_{q\in\Q}\, A_q\,t^q\chi_q(\bu),
\, \, z=t\bu,\, 0\leq{t}\leq1.
\]

\noi Analogously, a function is in $\mathfrak{W}^-_\Q(n)$ if it can be extended to a continuous function which is holomorphic in the open disk $D^-_\Q(1)$:
if $f\in\mathfrak{W}^-_\Q(n)$ is given by the series
\[
f(\bu)=\sum_{q\in\Q}\, A_q\,\chi_q(\bu),\quad u\in\SQ,\,\text{with}\,\, A_q=0,\, \text{for}\,q>0,
\]
\noi define its extension $f^-$ to $\bar{D}^-_\Q(1)$ by the formula
\[
f^-(z)=\sum_{q\in\Q}\, A_q\,\hat\chi_q(z)=
\sum_{q\in\Q}\, A_q\,t^q\chi_q(\bu),
\, \, z=t\bu,\, t\geq1.
\]
Note that for $q<0$ and $t\geq1$ one has $|t^q|=t^{-|q|}\leq1$, so the
series converges absolutely, with sum bounded by the Wiener norm $\sum_{q\in\Q}|A_q|$.

Hence, $\mathfrak{W}^+_\Q(n)$ 
is isomorphic to the algebra of continuous functions 
$f:\bar{D}^+_\Q(1)\to{M_n(\C)}$ such that $f$ has an absolutely convergent
Fourier series when restricted to $\SQ$ and it is holomorphic in 
the open disk $D^+_\Q(1)$. Also $\mathfrak{W}^-_\Q(n)$ 
is isomorphic to the algebra of continuous functions 
$f:\bar{D}^-_\Q(1)\to{M_n(\C)}$ such that $f$ has an absolutely convergent
Fourier series when restricted to $\SQ$ and it is holomorphic in 
the open disk $D^-_\Q(1)$.

Let $\boldsymbol\Lambda_W^+(n)$ and $\boldsymbol\Lambda_W^-(n)$ denote the group of units
of $\mathfrak{W}^+_\Q(n)$ and $\mathfrak{W}^-_\Q(n)$, respectively. In view
of the identification above,
and Wiener-L\'evy's theorem both  $\boldsymbol\Lambda_W^+(n)$ and 
$\boldsymbol\Lambda_W^-(n)$ 
consists of loops $f$ such that, when extended to the corresponding disks, their values consist of nonsingular matrices, since its elements are units. Therefore, both groups are Banach Lie groups {\cite{Mi}}.

\begin{proposition}\label{components-A_r} \mbox{}
\begin{enumerate}
\item The connected components, $\boldsymbol\Lambda_{W,q}(n)$ ($q\in\Q$) of 
$\boldsymbol\Lambda_W(n)$ are naturally indexed by the rationals, via
the winding number of the determinant map $\bu\mapsto\,\det f(\bu)$ for $f$ in a connected component. The connected component of the identity is 
$\boldsymbol\Lambda_{W,0}(n)$.
\item  $\boldsymbol\Lambda_W^+(n)\subset\boldsymbol\Lambda_{W,0}(n)$ and 
$\boldsymbol\Lambda_W^-(n)\subset\boldsymbol\Lambda_{W,0}(n)$, i.e., the winding numbers of the determinants of loops in each component are 0.
\end{enumerate}
\end{proposition}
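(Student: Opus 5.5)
Part (1) will be derived from Proposition~\ref{connectedcomponents} by passing to the Wiener topology. I will define the map $w:\boldsymbol\Lambda_W(n)\to\Q$ by $w(f)=\operatorname{wind}_\Q(\det f)$ (Definition~\ref{intrinsic-degree}). Since $f\mapsto \det f$ is continuous from $\mathfrak W_\Q(n)$ to $\mathfrak W_\Q$, and the Wiener norm dominates the sup norm, $w$ is locally constant. The reason is that if $\|h-h'\|_\infty<\min|h|$, the straight-line homotopy from $h$ to $h'$ avoids $0$, so the homotopy class in $[\SQ,\C^*]$ is unchanged. Hence each level set $\boldsymbol\Lambda_{W,q}(n):=w^{-1}(q)$ is open and closed. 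It is nonempty because $\chi_q\oplus I_{n-1}$ lies in it. It remains to show each level set is connected. Multiplying by $(\chi_{q}\oplus I_{n-1})^{-1}$ reduces this to $q=0$, since multiplication is a homeomorphism of the Banach Lie group and shifts $w$ additively.

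The main step is to show that $\boldsymbol\Lambda_{W,0}(n)$ is connected in the Wiener topology. Continuous connectedness from Proposition~\ref{connectedcomponents} is not enough, because the homotopies there are only continuous, not Wiener. I will use density of trigonometric polynomials. Truncating the Fourier series, $f$ is Wiener-close to a finite sum $P=\sum_{q\in F}A_q\chi_q$, and since the units form an open set, $P$ is a unit in the same component. A finite set $F\subset\frac1N\Z$ for some $N$, so $P=\tilde P\circ\pi_N$ with $\tilde P:S^1\to\gln$ a Laurent polynomial loop of determinant winding $0$ (winding numbers pull back by multiplication by $1/N$). For the ordinary circle it is classical that the degree-$0$ Laurent polynomial loops in $\gln$ are connected through polynomial loops. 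One way to see this is to use $\tilde P=\tilde P_-\,\mathrm{diag}(z^{k_i})\,\tilde P_+$ (Birkhoff factorization), deform $\tilde P_\pm$ to constants via $z\mapsto sz$, and handle the diagonal part with $\sum k_i=0$ through a loop of elementary operations in $\mathrm{SL}$. Alternatively, I can approximate a continuous homotopy in $\mathrm{C}^\infty(S^1)$, which embeds continuously in the classical Wiener algebra. Pulling back by $\pi_N$ is an isometry of Wiener algebras, so the resulting path stays in $\boldsymbol\Lambda_{W,0}(n)$. This step is the main obstacle: it replaces a topological homotopy by a Wiener-continuous one, and it is where the reduction to a finite stage $\pi_N$ is essential.

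For part (2), take $f\in\boldsymbol\Lambda^+_W(n)$. Its extension $f^+$ is continuous on $\bar D^+_\Q(1)$ with invertible values, so $f_s(\bu)=f^+(s\bu)$, $s\in[0,1]$, is a homotopy through maps $\SQ\to\gln$ from $f$ to the constant $f^+(\boldsymbol0)=A_0$. Moreover, $\|f_s\|_{\mathfrak W}\le\|f\|_{\mathfrak W}$, and $s\mapsto f_s$ is Wiener-continuous by dominated convergence on the coefficients. Thus $f$ is joined inside $\boldsymbol\Lambda_W(n)$ to a constant invertible matrix, which lies in the identity component because $\gln$ is connected. Hence $w(f)=0$. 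The case of $\boldsymbol\Lambda^-_W(n)$ is identical, using $f^-(s^{-1}\bu)$ and contracting to $\boldsymbol\infty$.
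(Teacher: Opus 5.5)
Your proposal is correct, but the key step, connectedness of the winding-zero part, goes by a different route from the paper's.

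The shared part is the same in both proofs: the winding map $f\mapsto\operatorname{wind}_\Q(\det f)$ and its local constancy in the Wiener topology, surjectivity via $\operatorname{diag}(\chi_q,1,\ldots,1)$, and the reduction to the winding-zero part by left translation.

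For that key step, the paper takes a logarithm of $\det f$ inside $\mathfrak W_\Q$ (a Wiener--L\'evy/Arens--Royden type input) to push $f$ into $\mathrm{SL}_n(\mathfrak W_\Q)$. It then asserts that $\mathrm{SL}_n(\mathfrak W_\Q)$ is path-connected by a Banach--Lie exponential argument. That argument by itself only shows that the identity component is generated by exponentials, not that there is a single component.

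You avoid the logarithm altogether:
\begin{itemize}
\item you approximate $f$ by a Laurent--Puiseux polynomial $P$, joined to $f$ by a segment inside the ball of units;
\item you observe that $P=\tilde P\circ\pi_N$, where $\tilde P$ is pointwise invertible because $\pi_N$ is surjective and has degree zero because windings scale by $1/N$;
\item you import the classical connectedness of the degree-zero part of $\mathrm{GL}_n(W(S^1))$ through the isometric algebra embedding $\rho_N$.
\end{itemize}
For the classical circle fact you can use either Birkhoff factorization with radial deformation of the factors, or smoothing of a continuous null-homotopy plus $C^\infty(S^1)\subset W(S^1)$ and a straight-line adjustment at the endpoints.

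What this buys is an actual proof of the connectedness that the paper only asserts. The cost is a non-intrinsic passage through one finite level $N$ and reliance on the circle case. The paper's reduction to $\mathrm{SL}_n$ is more intrinsic in spirit, but to be completed it would itself need an argument like yours.

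For part (2), the paper argues topologically from contractibility of the closed disks and homotopy invariance of the winding, then relies on part (1). Your radial dilation $f_s=\sum_{q\ge0}s^qA_q\chi_q$ is an algebra endomorphism of $\mathfrak W^+_\Q(n)$. It gives an explicit Wiener-continuous path inside $\boldsymbol\Lambda^+_W(n)$ to the constant $A_0$, so you get membership in the identity component directly rather than through part (1).
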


\begin{proof}
(1) The determinant winding number $w(\det f)\in\Q$ is a homotopy invariant, so it is
constant on connected components.  This defines a group homomorphism
$w:\pi_0(\boldsymbol\Lambda_W(n))\to\Q$.  We verify it is a bijection:

\medskip\noindent
\emph{Surjectivity.}  For each $q\in\Q$, the diagonal matrix
$\chi_q I_n\in\boldsymbol\Lambda_W(n)$ satisfies $\det(\chi_q I_n)=\chi_q^n$ with
winding number $nq$; more directly, the scalar character $\chi_q$ viewed as $\chi_q\cdot I_1
\in\boldsymbol\Lambda_W(1)$ has winding $q$.  For the matrix case, the loop
$f=\mathrm{diag}(\chi_q,1,\ldots,1)\in\boldsymbol\Lambda_W(n)$ has $\det f=\chi_q$
with winding $q$.  Hence every $q\in\Q$ is achieved.

\medskip\noindent
\emph{Injectivity (same winding $\Rightarrow$ same component).}
It suffices to show the identity component $\boldsymbol\Lambda_{W,0}(n)$ is
path-connected (the general case follows by multiplication by $\mathrm{diag}(\chi_q,1,\ldots,1)$).
A loop $f\in\boldsymbol\Lambda_{W,0}(n)$ has $\det f\in\boldsymbol\Lambda_{W,0}(1)$ with
winding $0$.  By the Wiener--L\'evy theorem (\cite{Ge1,Ge2}), $\log(\det f)$ exists
as an element of $\mathfrak{W}_\Q$, and the path
$t\mapsto f\cdot\exp\bigl(-t\,\frac{\log\det f}{n}\bigr)I_n$ ($t\in[0,1]$)
connects $f$ to a loop with determinant $1$, i.e., in $\mathrm{SL}_n(\mathfrak{W}_\Q)$.
The latter is path-connected: the exponential map from the Lie algebra
$\mathfrak{sl}_n(\mathfrak{W}_\Q)$ is surjective onto the identity component
(using $\exp(A)$ for $\|A\|$ sufficiently small and a finite product argument; see
\cite{Mi} for the general Banach Lie group argument).  Hence all elements of
$\boldsymbol\Lambda_{W,0}(n)$ are connected to the identity.

\medskip\noindent
The connected component of the identity contains the identity loop
and therefore every loop in this component has vanishing winding number.

\noi (2) Since both $\bar{D}^+_\Q(1)$ and $\bar{D}^-_\Q(1)$ are contractible (they are cones over $\boldsymbol0$ and $\boldsymbol\infty$, respectively)
it follows that the loops $\gamma^+\in\boldsymbol\Lambda_W^+(n)$ and
$\gamma^-\in\boldsymbol\Lambda_W^-(n)$ are homotopic to the constant matrices
$\gamma^+(\boldsymbol0)$ and $\gamma^-(\boldsymbol\infty)$ in $\gln$, respectively. Thus
the winding numbers of the determinants of both of these loops is 0 so that they are both contained in the identity components $\boldsymbol\Lambda_{W,0}(n)$.
\end{proof}

\begin{remark}[$\mathfrak{W}_\Q(1)$ is an admissible decomposing algebra]\label{wiener-admissible}
Write $\mathfrak{W}:=\mathfrak{W}_\Q(1)$ for the scalar case $n=1$ of the
Wiener algebra defined above, with units $\boldsymbol\Lambda_W:=\boldsymbol\Lambda_W(1)$
and positive/negative subalgebras $\mathfrak{W}^\pm:=\mathfrak{W}^\pm_\Q(1)$.
Two facts, both already established above, are exactly the hypotheses of
the Brudnyi--Rodman--Spitkovsky theory of factorization in Banach algebras
on a compact abelian group \cite{BRS}:
\begin{enumerate}
\item[(a)] ({\bf Decomposing.}) $\mathfrak{W}=\mathfrak{W}^++\mathfrak{W}^-$, by
the splitting recorded in the displayed equation following Definition
\ref{Wl} above.
\item[(b)] (\textbf{Admissible.}) The rational Laurent--Puiseux
polynomials
\[
\sum_{q\in F} a_q\chi_q,
\qquad F\subset\Q\ \text{finite},
\]
are dense in $\mathfrak W$ in the $\ell^1$-norm.  Indeed, if
\[
f=\sum_{q\in\Q}a_q\chi_q\in\mathfrak W
\]
and $\varepsilon>0$, absolute summability allows one to choose a finite
set $F\subset\Q$ such that
\[
\sum_{q\notin F}|a_q|<\varepsilon.
\]
Then
\[
\left\|
f-\sum_{q\in F}a_q\chi_q
\right\|_{\mathfrak W}
=
\sum_{q\notin F}|a_q|
<\varepsilon.
\]
Moreover, $\mathfrak W$ is inverse-closed in $C(\SQ)$: this is precisely
the Gelfand--Bochner--Phillips form of the Wiener--L\'evy theorem cited
above \cite{Ge1,Ge2,BP}, applied to the group algebra
$\ell^1(\Q)=\mathfrak W$.
\end{enumerate}
\noi Hence $\mathfrak{W}$ is, in the terminology of \cite{BRS}, an
admissible decomposing Banach algebra on the compact connected abelian
group $\SQ$ whose dual is $\Q$.
\end{remark}

\begin{theorem}[Full scalar Wiener--Birkhoff factorization]\label{wiener-scalar-factorization}
Every $f\in\boldsymbol\Lambda_W$ admits a factorization
\[
f=f_-\,\chi_q\,f_+,\qquad f_-\in\boldsymbol\Lambda_W^-,\quad f_+\in\boldsymbol\Lambda_W^+,
\]
where $q\in\Q$ is the winding number of $f$ from Proposition
\ref{components-A_r}. The number $q$ is the same for every such
factorization of $f$.
\end{theorem}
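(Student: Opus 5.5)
The plan is to reduce to the winding-zero case and then take a logarithm. Let $q=w(f)$ be the winding number of $f$, as in Remark~\ref{rmkwinding2} and Proposition~\ref{components-A_r}. Put $g=\chi_{-q}f$. Since $\chi_{-q}$ is a character, $g\in\mathfrak W$ with $\|g\|_{\mathfrak W}=\|f\|_{\mathfrak W}$, and $g$ is a unit. By Scheffer's theorem (Theorem~\ref{scheffer-full}), $g$ has winding number $0$, so $g\in\boldsymbol\Lambda_{W,0}(1)$.

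The first step is to show $g=e^{h}$ for some $h\in\mathfrak W$. By Proposition~\ref{components-A_r}(1), $g$ lies in the identity component of the commutative Banach Lie group $\boldsymbol\Lambda_W$. In a commutative unital Banach algebra, the identity component of the group of units is exactly $\exp(\mathfrak W)$. The reason is that $\exp(\mathfrak W)$ is an open subgroup, since it contains a neighborhood of $1$ via the power series logarithm. Being open, it is also closed, hence a union of components, and it is connected. This step is where commutativity is used. Alternatively, one can argue via Gelfand theory: the maximal ideal space of $\ell^1(\Q)$ is $\SQ$ (the dual of the discrete group $\Q$), and a holomorphic logarithm exists once $g$ has a continuous logarithm on $\SQ$. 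Scheffer's theorem provides that continuous logarithm, because $g/|g|$ is null-homotopic into $S^1$ and therefore lifts to $\R$. Either route gives $h\in\mathfrak W$ with $g=\exp(h)$.

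Next, split $h$ using the decomposing property of Remark~\ref{wiener-admissible}(a). Write $h=h_-+h_+$ with $h_+=\sum_{q'\ge0}c_{q'}\chi_{q'}$ and $h_-=\sum_{q'<0}c_{q'}\chi_{q'}$. Both pieces lie in $\mathfrak W^\pm$ with norms bounded by $\|h\|_{\mathfrak W}$, because the projection is simply truncation of $\ell^1$ coefficients. This is the point where the $\ell^1$ norm is essential: the Riesz projection is contractive on $\ell^1(\Q)$, which it would not be on $C(\SQ)$. Set $f_\pm=\exp(h_\pm)$. These are units of the closed subalgebras $\mathfrak W^\pm$, with inverses $\exp(-h_\pm)$. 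Then $f=\chi_q\,g=f_-\chi_q f_+$, using commutativity.

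Finally, uniqueness of $q$. Suppose $f=f_-\chi_q f_+=f'_-\chi_{q'}f'_+$. Take winding numbers of both sides. Winding numbers are additive under products, since $w$ is a homomorphism by Proposition~\ref{connectedcomponents}. By Proposition~\ref{components-A_r}(2), $w(f_\pm)=w(f'_\pm)=0$, because these elements extend to nonvanishing functions on the contractible disks. Hence $q=w(f)=q'$.

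I expect the main obstacle to be the logarithm step, that is, justifying that winding number zero in the intrinsic sense of Scheffer's theorem forces $g\in\exp(\mathfrak W)$. I would use the Gelfand-theoretic route: identify $\mathrm{Spec}(\ell^1(\Q))=\SQ$, and note that the Arens–Royden theorem gives $\boldsymbol\Lambda_W/\exp(\mathfrak W)\cong \check H^1(\SQ;\Z)\cong\Q$. This identification is realized by $w$, so the kernel of $w$ is exactly $\exp(\mathfrak W)$.
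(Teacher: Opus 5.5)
Your proof is correct, and it takes a different route from the paper's. The paper's proof is a citation. Once Remark~\ref{wiener-admissible} has checked that $\mathfrak W=\ell^1(\Q)$ is an admissible decomposing Banach algebra on the connected compact abelian group $\SQ$, it invokes \cite[Theorem~7.1(b)]{BRS} for existence and for uniqueness of the index, and identifies the middle character with the winding number only in passing.

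You instead give the classical Krein-type argument in full. You write $f=\chi_q e^{h}$ with $q=w(f)$ and $h\in\mathfrak W$, split $h$ by the contractive Fourier truncations of Remark~\ref{intrinsic-WH}, and exponentiate each half inside the closed unital subalgebras $\mathfrak W^\pm$. Uniqueness then follows from additivity of the winding homomorphism, together with the null-homotopy of units of $\mathfrak W^\pm$ through the contractible disks (Proposition~\ref{components-A_r}(2)). Your route gives explicit factors $f_\pm=\exp\bigl(P_\pm\log(\chi_{-q}f)\bigr)$ and shows exactly where commutativity and the $\ell^1$ norm are used. It also makes ``$q$ is the winding number'' true by construction. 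The paper's citation buys brevity, and it places the scalar case in the same BRS framework that also gives the matrix density theorem.

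One caution about the logarithm step: your first justification is circular in substance. For $n=1$, the paper proves the injectivity in Proposition~\ref{components-A_r}(1) simply by asserting that $\log f\in\mathfrak W_\Q$ ``by Wiener--L\'evy''. That is precisely the point at issue. Moreover, Wiener--L\'evy by itself needs a holomorphic branch of $\log$ near the range of $f$, and such a branch need not exist when the range surrounds $0$, even though the winding is zero.

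So rely on your Arens--Royden route, using $\mathrm{Spec}\,\ell^1(\Q)=\SQ$ and obtaining the continuous logarithm by homotopy lifting along $\R\to S^1$, exactly as you say. If you prefer to avoid Arens--Royden, there is an elementary substitute using the paper's own tools:
\begin{itemize}
\item Choose a Laurent--Puiseux polynomial $p$ with $\|1-g^{-1}p\|_{\mathfrak W}<1$. Then $g^{-1}p$ has a power-series logarithm, and $p$ is invertible with $w(p)=w(g)=0$.
\item Since $p$ has spectrum in a single $N^{-1}\Z$, it descends to a circle function of winding $0$.
\item That circle function has a logarithm by the classical circle theorem, transported through $\ell^1(N^{-1}\Z)\cong W(S^1)$.
\end{itemize}
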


\begin{proof}
By \cite[Theorem 7.1(b)]{BRS}, for an admissible decomposing Banach
algebra $B$ on a connected compact abelian group, every invertible scalar
element of $B$ is exactly factorable, i.e.\ the set of factorable elements
equals all of $\text{GL}_1(B)$. By Remark \ref{wiener-admissible},
$\mathfrak{W}$ satisfies these hypotheses, so every $f\in\boldsymbol\Lambda_W=\text{GL}_1(\mathfrak{W})$
factors as $f=f_-\,\chi_q\,f_+$ with $f_\pm\in\boldsymbol\Lambda_W^\pm$ and
$q\in\Q$ (the middle factor is necessarily a character of $\SQ$, since the
dual group of $\SQ$ is $\Q$). Restricting to $\SQ$ and taking the
determinant-free $n=1$ winding number of $f$ as in Proposition
\ref{components-A_r} identifies $q$ with that winding number; uniqueness
of $q$ is part of the same theorem of \cite{BRS}.
\end{proof}

\begin{remark}
Theorem \ref{wiener-scalar-factorization} is a genuinely non-circle-comparison
statement: $f$ need not have finite (or bounded-denominator) rational
Fourier support. It shows that, in the scalar case, rational degree
$q\in\Q$ is an intrinsic invariant of {\em every} invertible Wiener loop, not
merely of the Laurent--Puiseux loops reached by the rational-lattice compatibility theorem of
\S\ref{cycliccomparison}.
\end{remark}

\begin{theorem}[Density of factorable Wiener matrices]\label{wiener-density}
For every $n\geq1$, the loops $A\in\boldsymbol\Lambda_W(n)$ that admit an
exact Birkhoff-type factorization
\[
A=A_-\,\Delta_{\mathbf q}\,A_+,\qquad A_\pm\in\text{GL}\big(\mathfrak{W}^\pm_\Q(n)\big),\quad q_i\in\Q,
\]
($\Delta_{\mathbf q}$ as in Conjecture \ref{BFT}) are dense in
$\boldsymbol\Lambda_W(n)$ for the Wiener norm.
\end{theorem}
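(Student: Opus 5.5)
The plan is to approximate an arbitrary $A\in\boldsymbol\Lambda_W(n)$ in Wiener norm by a loop with finite rational Fourier support whose frequencies all lie in a single lattice $\frac1N\Z$. Such a loop factors by the classical circle Birkhoff--Grothendieck theorem, after pulling back along the $N$-th projection $\pi_N$. Two facts recorded earlier do most of the work. First, by Remark~\ref{wiener-admissible}(b), applied entrywise, finite Laurent--Puiseux matrix polynomials are dense in $\mathfrak W_\Q(n)$. Second, $\boldsymbol\Lambda_W(n)$ is open in $\mathfrak W_\Q(n)$, since it is the group of units of a Banach algebra. So, given $\varepsilon>0$, choose a finite set $F\subset\Q$ and $P=\sum_{q\in F}A_q\chi_q$ with $\|A-P\|_{\mathfrak W_\Q}<\varepsilon$, where $\varepsilon$ is small enough (for instance $\varepsilon<\|A^{-1}\|^{-1}_{\mathfrak W_\Q}$) that $P$ is again invertible in $\mathfrak W_\Q(n)$.

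Next let $N$ be a common denominator of the finite set $F$, so $F\subset\frac1N\Z$. Every character $\chi_{k/N}$ factors as $\bu\mapsto \pi_N(\bu)^k$, where $\pi_N:\SQ\to S^1$ is the canonical projection. Hence $P=\tilde P\circ\pi_N$ for a classical Laurent matrix polynomial $\tilde P(z)=\sum_k A_{k/N}z^k$ on $S^1$. Since $\pi_N$ is surjective and $P(\bu)$ is invertible for every $\bu$, $\tilde P(z)$ is invertible for every $z\in S^1$. The classical Birkhoff (Grothendieck) factorization theorem for matrix Laurent polynomials, or more generally for the classical Wiener algebra on $S^1$, then gives
\[
\tilde P=\tilde P_-\,\operatorname{diag}(z^{k_1},\dots,z^{k_n})\,\tilde P_+ ,
\]
with $\tilde P_\pm$ invertible in the classical $\pm$ Wiener algebras (indeed Laurent polynomial in $z^{\mp1}$ up to inverses) and $k_i\in\Z$.

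Composing with $\pi_N$ transports this to the solenoid. We set $A_\pm:=\tilde P_\pm\circ\pi_N$ and $\Delta_{\mathbf q}=\operatorname{diag}(\chi_{k_1/N},\dots,\chi_{k_n/N})$. It remains to check that $f\mapsto f\circ\pi_N$ is an isometric unital algebra homomorphism from the classical Wiener algebra $\ell^1(\Z)$ into $\mathfrak W_\Q(n)$. It sends the frequency $k$ to $k/N$, so it preserves the sign of frequencies and maps the classical $\pm$ subalgebras into $\mathfrak W^\pm_\Q(n)$. It also carries inverses to inverses, so $A_\pm\in \mathrm{GL}(\mathfrak W^\pm_\Q(n))$. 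Therefore $P=A_-\Delta_{\mathbf q}A_+$ is factorable and lies within $\varepsilon$ of $A$, which proves density.

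The main obstacle is making sure the classical factorization is available in the right class, namely with $\tilde P_\pm$ invertible in the closed subalgebras and not merely continuous. I would cite Gohberg--Krein for matrix functions in the classical Wiener algebra \cite{GK}, or argue directly that a Laurent matrix polynomial is a clutching function of an algebraic bundle on $\C P^1$, whose Grothendieck splitting is given by polynomial frames. A minor point is also to verify that the pullback homomorphism respects the matrix norm convention in Definition~\ref{def:rational-Wiener-algebra}, which is immediate since the map only relabels Fourier coefficients.
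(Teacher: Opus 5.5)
Your argument is correct, but it takes a different route from the paper. The paper proves the theorem in one line: it quotes \cite[Theorem~2.2]{BRS}, which gives density of factorable matrix functions in $\mathrm{GL}_n(B)$ for every admissible Banach algebra $B$ on a compact connected abelian group whose dual embeds in $\Q$, and it checks admissibility of $\mathfrak W_\Q(n)$ as in Remark~\ref{wiener-admissible}. You instead make the mechanism explicit.
\begin{itemize}
\item Density of finitely supported matrices, together with openness of $\boldsymbol\Lambda_W(n)$, reduces the problem to an invertible Laurent--Puiseux polynomial $P$.
\item The key point is that $\Q$ is locally cyclic: a finite set of frequencies lies in a single lattice $\tfrac1N\Z$, so $P=\tilde P\circ\pi_N$.
\item The map $f\mapsto f\circ\pi_N$ is an isometric, unital, sign-preserving algebra embedding of $M_n(\ell^1(\Z))$ into $\mathfrak W_\Q(n)$. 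It transports the classical Gohberg--Kre\u{\i}n factorization \cite{GK} of $\tilde P$ back to $\SQ$.
\end{itemize}
This is essentially why subgroups of $\Q$ are the good case in \cite{BRS}; for duals that are not locally cyclic the reduction is unavailable (cf.\ Remark~\ref{why-Q-exceptional}). In the paper's own terms, your proof is density of finite-support cocycles plus the circle-comparison transport of Lemma~\ref{rho-N} and Remark~\ref{classical-transport}. The citation buys generality: all admissible algebras and all subgroups of $\Q$. Your argument buys a self-contained proof with more information: the approximants have spectrum in one lattice $\tfrac1N\Z$, the indices satisfy $q_i\in\tfrac1N\Z$, and the factors are explicit pullbacks of classical ones.

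One side remark needs correcting: your alternative justification via algebraic bundles on $\C P^1$ does not work as stated. A Laurent matrix polynomial that is invertible on $S^1$ need not lie in $\mathrm{GL}_n(\C[z,z^{-1}])$; already the scalar $2-z$ vanishes at $z=2$. So $\tilde P$ is in general only a holomorphic clutching function on an annulus around $S^1$, not on $\C^*$. There the holomorphic Birkhoff--Grothendieck theorem gives factors analytic on neighbourhoods of the closed disks, which places them and their inverses in the Wiener algebras. Your primary route through Gohberg--Kre\u{\i}n is the right one and needs no such correction.
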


\begin{proof}
By \cite[Theorem 2.2]{BRS}: if the character group of a connected compact
abelian group $G$ is isomorphic to a subgroup of the additive group $\Q$,
then the factorable matrix functions are dense in $\text{GL}_n(B)$ for
every admissible Banach algebra $B$ on $G$ and every $n$. Here $G=\SQ$ has
character group exactly $\Q$, and $\mathfrak{W}_\Q(n)$ is admissible by the
same density/inverse-closedness argument as in Remark
\ref{wiener-admissible} (applied entrywise). The theorem applies verbatim.
\end{proof}

\begin{proposition}[Matrix Wiener lemma]
\label{matrix-wiener-lemma}
Let \(A\in M_n(\mathfrak W_{\Q})\).  If \(\det A(x)\ne0\) for every
\(x\in\SQ\), then
\[
A^{-1}\in M_n(\mathfrak W_{\Q}).
\]
Equivalently, pointwise invertibility on \(\SQ\) is the same as
invertibility in the rational Wiener matrix algebra.
\end{proposition}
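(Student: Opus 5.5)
The plan is to reduce to the scalar Wiener--L\'evy theorem via the adjugate (Cramer's rule). The key point is that $M_n(\mathfrak W_\Q)$ is an algebra over the commutative Banach algebra $\mathfrak W_\Q=\ell^1(\Q)$. For $A\in M_n(\mathfrak W_\Q)$:
\begin{enumerate}
\item The determinant $\det A$ is a finite sum of products of entries of $A$. Since $\mathfrak W_\Q$ is closed under multiplication (convolution of absolutely summable coefficient sequences on $\Q$, with $\|fg\|\le\|f\|\,\|g\|$), we get $\det A\in\mathfrak W_\Q$.
\item For the same reason, each entry of the adjugate $\operatorname{adj}A$, being an $(n-1)\times(n-1)$ minor, lies in $\mathfrak W_\Q$.
\item The pointwise identity $A\cdot\operatorname{adj}A=\operatorname{adj}A\cdot A=(\det A)I_n$ holds in $M_n(\mathfrak W_\Q)$. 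This is a polynomial identity over any commutative ring, and evaluation $f\mapsto f(x)$ is a ring homomorphism $\mathfrak W_\Q\to\C$.
\end{enumerate}

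Next, by hypothesis the continuous function $x\mapsto\det A(x)$ never vanishes on $\SQ$. Apply the scalar Wiener--L\'evy theorem in Gelfand's form, as recalled in Remark~\ref{wiener-admissible}, to $\ell^1(\Q)$. Its maximal ideal space is the dual group $\widehat{\Q}=\SQ$, and the Gelfand transform is evaluation of the Fourier series at points $x\in\SQ$. Hence $\det A$ is invertible in $\mathfrak W_\Q$; call the inverse $d\in\mathfrak W_\Q$. Then $B:=d\cdot\operatorname{adj}A\in M_n(\mathfrak W_\Q)$ satisfies $AB=BA=I_n$, so $B=A^{-1}$. Since $A(x)$ is invertible pointwise, $B(x)=A(x)^{-1}$ for every $x$.

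For the stated equivalence, the converse direction is immediate. If $A$ has an inverse $B$ in $M_n(\mathfrak W_\Q)$, evaluating at $x$ gives $A(x)B(x)=I_n$, so $\det A(x)\neq0$.

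The only step that carries real content is the scalar inverse-closedness, specifically identifying the Gelfand spectrum of $\ell^1(\Q)$ with $\SQ$. I would justify this through Pontryagin duality: $\Q$ is discrete, so $\ell^1(\Q)$ is the group algebra $L^1(\Q)$ with a unit. Its characters are bounded multiplicative functionals, which are exactly the unitary characters $\Q\to S^1$, that is, points of $\widehat\Q\cong\SQ$. Evaluation of the Fourier series at those points is precisely $\sum a_q\chi_q(x)$. No nonvanishing condition off $\SQ$ is needed, since there are no other maximal ideals.

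Everything else is commutative algebra. The paper's earlier remark that Gelfand's argument ``applies to'' the matrix algebra directly can be bypassed entirely in this way, since $M_n$ of a commutative Banach algebra is not itself commutative.
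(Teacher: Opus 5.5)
Your proposal is correct and follows the paper's proof: show $\det A\in\mathfrak W_\Q$ is nowhere zero on $\SQ$, invoke the scalar Wiener lemma to get $(\det A)^{-1}\in\mathfrak W_\Q$, and conclude from $A^{-1}=(\det A)^{-1}\operatorname{adj}(A)$. Your extra details, namely identifying the Gelfand spectrum of $\ell^1(\Q)$ with $\widehat{\Q}\cong\SQ$ and checking the converse direction, are accurate and make explicit what the paper leaves implicit.
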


\begin{proof}
The determinant belongs to \(\mathfrak W_{\Q}\) and is nowhere zero on
\(\SQ\).  The scalar Wiener lemma gives
\((\det A)^{-1}\in\mathfrak W_{\Q}\).  Since the entries of
\(\operatorname{adj}(A)\) are polynomial expressions in the entries of
\(A\), the identity
\[
A^{-1}=(\det A)^{-1}\operatorname{adj}(A)
\]
proves the assertion.
\end{proof}

\begin{proposition}[Ordered triangular Wiener factorization, $n\times n$]
\label{ordered-triangular-wiener}
Let $A\in\mathrm{GL}_n(\mathfrak W_{\Q})$ be upper triangular with
scalar Wiener indices $q_1\leq q_2\leq\cdots\leq q_n\in\Q$ on the
diagonal (i.e.\ the $j$-th diagonal entry $a_{jj}$ has Wiener index $q_j$).
Then there exist
\[
A_-\in\mathrm{GL}_n(\mathfrak W^-_{\Q}),\qquad
A_+\in\mathrm{GL}_n(\mathfrak W^+_{\Q})
\]
such that
\[
A=A_-\,\mathrm{diag}(\chi_{q_1},\ldots,\chi_{q_n})\,A_+.
\]
Dually, a lower-triangular matrix with diagonal indices satisfying
$q_1\geq\cdots\geq q_n$ admits an analogous factorization.
\end{proposition}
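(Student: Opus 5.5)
The plan is to reduce to the scalar theorem on the diagonal and then solve for the off-diagonal part by a recursion over superdiagonals, using the ordering $q_i\le q_j$ ($i<j$) to split each entry.

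\emph{Step 1: diagonal normalization.} Each diagonal entry $a_{jj}$ is nowhere zero on $\SQ$, because $\det A=\prod_j a_{jj}$ is a unit. By Theorem~\ref{wiener-scalar-factorization},
\[
a_{jj}=f_{j,-}\,\chi_{q_j}\,f_{j,+},\qquad f_{j,\pm}\in\boldsymbol\Lambda_W^\pm .
\]
Put $D_\pm=\mathrm{diag}(f_{1,\pm},\ldots,f_{n,\pm})\in\mathrm{GL}_n(\mathfrak W^\pm_\Q)$ and $\Delta=\mathrm{diag}(\chi_{q_1},\ldots,\chi_{q_n})$. Since the $\mathfrak W$'s are algebras and the $f_{j,\pm}^{-1}$ lie in $\mathfrak W^\pm_\Q$, we get
\[
A=D_-(\Delta+N)D_+ .
\]
Here $N=D_-^{-1}(A-\mathrm{diag}A)D_+^{-1}$ is strictly upper triangular with entries in $\mathfrak W_\Q$. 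Write $\Delta+N=\Delta(I+M)$, where $M=\Delta^{-1}N$ is strictly upper triangular with $M_{ij}=\chi_{-q_i}N_{ij}\in\mathfrak W_\Q$.

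\emph{Step 2: unipotent splitting.} I look for strictly upper triangular $X_-$ with entries in $\mathfrak W^-_\Q$ and $X_+$ with entries in $\mathfrak W^+_\Q$ such that
\[
\Delta(I+M)=(I+X_-)\,\Delta\,(I+X_+).
\]
Setting $Y=\Delta^{-1}X_-\Delta$, so that $Y_{ij}=\chi_{q_j-q_i}(X_-)_{ij}$, this becomes $(I+Y)(I+X_+)=I+M$. Entrywise, for $i<j$:
\[
Y_{ij}+(X_+)_{ij}=M_{ij}-\sum_{i<k<j}Y_{ik}(X_+)_{kj}.
\]
The right-hand side involves only entries on strictly lower superdiagonals, so I solve by induction on $j-i$. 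At each stage, split the right-hand side $R_{ij}=\sum_r c_r\chi_r\in\mathfrak W_\Q$ as follows. Let $Y_{ij}$ be the part with frequencies $r\le q_j-q_i$, and $(X_+)_{ij}$ the part with $r>q_j-q_i$. Then $(X_-)_{ij}=\chi_{q_i-q_j}Y_{ij}$ has frequencies $\le0$, so it lies in $\mathfrak W^-_\Q$. Since $q_j-q_i\ge0$, $(X_+)_{ij}$ has only positive frequencies, so it lies in $\mathfrak W^+_\Q$. Both pieces are bounded in $\ell^1$-norm by $\|R_{ij}\|_{\mathfrak W}$, so everything stays in $\mathfrak W$.

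The hypothesis $q_1\le\cdots\le q_n$ is used exactly here. The cut point $q_j-q_i$ is nonnegative, and this is what lets the two halves land in the correct algebras. I expect this bookkeeping to be the only real obstacle: checking the sign conventions in the conjugation by $\Delta$, and checking that the inductive splitting is well defined at every frequency, including the boundary frequency $r=q_j-q_i$.

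\emph{Step 3: assembly and the dual case.} The matrices $I+X_\pm$ are unipotent triangular, so their inverses are polynomial in $X_\pm$ and lie in $\mathrm{GL}_n(\mathfrak W^\pm_\Q)$. Hence
\[
A=\bigl(D_-(I+X_-)\bigr)\,\Delta\,\bigl((I+X_+)D_+\bigr),
\]
which is the required factorization with $A_-=D_-(I+X_-)$ and $A_+=(I+X_+)D_+$. For the lower-triangular statement, conjugate by the antidiagonal permutation matrix $J$. Then $JAJ$ is upper triangular with diagonal indices $q_n\le\cdots\le q_1$, so the case just proved applies. Conjugating back by $J$ preserves $\mathrm{GL}_n(\mathfrak W^\pm_\Q)$ and carries the diagonal $\Delta$ of $JAJ$ to $\mathrm{diag}(\chi_{q_1},\ldots,\chi_{q_n})$.
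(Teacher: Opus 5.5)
Your proof is correct, but it is organized differently from the paper's.

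The paper inducts on $n$ with a block decomposition. It factors the $(n-1)\times(n-1)$ block as $A'=A'_-\Delta'A'_+$ by the inductive hypothesis and $a_{nn}$ by the scalar theorem. It then transports the last column to $S=(A'_-)^{-1}Ba_+^{-1}\chi_{-q_n}$ and splits $S$ at frequency $0$. Before the positive part can go into $A_+$, it must be re-weighted entrywise by $\chi_{q_n-q_i}$.

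You instead normalize all diagonal entries at once. The conjugation $Y=\Delta^{-1}X_-\Delta$ reduces everything to the unipotent equation $I+M=(I+Y)(I+X_+)$, which you solve by recursion on $j-i$. That conjugation does automatically the re-weighting the paper does by hand.

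Your formulation also shows exactly where the ordering enters. At entry $(i,j)$ you need $R_{ij}\in\mathfrak W^+_\Q+\chi_{q_j-q_i}\mathfrak W^-_\Q$, which is precisely the vanishing \eqref{WH-vanish} for $\mathcal L_{q_j-q_i}$. Any cut point in $[0,q_j-q_i]$ would work: for $n=2$ the paper cuts at $0$ and you cut at $q_2-q_1$.

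What each route buys:
\begin{itemize}
\item The paper's induction is more modular, reusing the $(n-1)$ case as a black box.
\item Your argument gives explicit factors $A_-=D_-(I+X_-)$ and $A_+=(I+X_+)D_+$, built from finitely many contractive projections and products.
\item It needs no separate $n=2$ base case.
\item Via the antidiagonal conjugation by $J$, it actually proves the lower-triangular statement, which the paper only asserts.
\end{itemize}
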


\begin{proof}
We proceed by induction on $n$.

\smallskip\noindent\textbf{Base case $n=1$.}
This is the scalar Wiener factorization, Theorem~\ref{wiener-scalar-factorization}.

\smallskip\noindent\textbf{Base case $n=2$.}
Apply the scalar theorem to $a_{11}$ and $a_{22}$ and absorb their
scalar factors, reducing to
\[
B=\begin{pmatrix}\chi_p&c\\0&\chi_q\end{pmatrix},\qquad p\leq q.
\]
Set $h=\chi_{-p}c\in\mathfrak W_\Q$ and decompose using the Wiener--Hopf
splitting (which is bounded and canonical on $\mathfrak W_\Q$ via the
natural total order of $\Q$):
\[
h=h_-+h_+,\qquad
h_-\in\mathfrak W^-_{\Q},\quad h_+\in\mathfrak W^+_{\Q}.
\]
Since $E_{12}^2=0$, one has
\[
\begin{pmatrix}1&h\\0&1\end{pmatrix}
=
\begin{pmatrix}1&h_-\\0&1\end{pmatrix}
\begin{pmatrix}1&h_+\\0&1\end{pmatrix}.
\]
Writing $D=\mathrm{diag}(\chi_p,\chi_q)$,
\[
B=D\begin{pmatrix}1&h_-\\0&1\end{pmatrix}
\begin{pmatrix}1&h_+\\0&1\end{pmatrix}
=
\begin{pmatrix}1&\chi_{p-q}h_-\\0&1\end{pmatrix}
D
\begin{pmatrix}1&h_+\\0&1\end{pmatrix}.
\]
Since $p-q\leq0$, the first unipotent factor lies in
$\mathrm{GL}_2(\mathfrak W^-_\Q)$ and the last in
$\mathrm{GL}_2(\mathfrak W^+_\Q)$.

\smallskip\noindent\textbf{Inductive step.}
Write $A$ in block form
\[
A=\begin{pmatrix}A'&B\\0&a_{nn}\end{pmatrix},
\]
where $A'\in\mathrm{GL}_{n-1}(\mathfrak W_\Q)$ is upper triangular with
diagonal indices $q_1\leq\cdots\leq q_{n-1}$, and $a_{nn}$ has index $q_n$.
By the inductive hypothesis, $A'=A'_-\,\Delta'\,A'_+$ and by the scalar
theorem $a_{nn}=a_-\chi_{q_n}a_+$.

The column $B\in\mathfrak W_\Q^{n-1}$ is absorbed as follows.  Set
$S=(A'_-)^{-1}B\,a_+^{-1}\chi_{-q_n}$; this is well defined in
$\mathfrak W_\Q^{n-1}$ by the matrix Wiener lemma (Proposition~\ref{matrix-wiener-lemma}).
Decompose each entry: $S=S_-+S_+$ with $S_\pm\in(\mathfrak W^\pm_\Q)^{n-1}$.

\noindent
(Here $\Delta'=\mathrm{diag}(\chi_{q_1},\ldots,\chi_{q_{n-1}})$ has, in
general, $n-1$ \emph{distinct} entries, so $S_+$ cannot be inserted
directly as the top-right block of $A_+$ below: it must first be
re-weighted entry by entry against $\Delta'$, as made explicit now.)
Set the $(n-1)$-vector
\begin{align*}
T_+&:=\chi_{q_n}\,a_+\,(\Delta')^{-1}S_+,\\
(T_+)_i&=\chi_{q_n-q_i}\,(S_+)_i\,a_+,
\qquad i=1,\ldots,n-1.
\end{align*}
Since $q_i\leq q_n$ for every $i=1,\ldots,n-1$ (the standing ordering
hypothesis), each exponent $q_n-q_i\geq0$; as $(S_+)_i,a_+\in\mathfrak
W^+_\Q$ and $\mathfrak W^+_\Q$ is a subalgebra, $T_+\in(\mathfrak
W^+_\Q)^{n-1}$.  Thus the ordering of the rational weights is precisely
what prevents frequency pollution: the re-weighted off-diagonal entries
remain in the required positive Wiener subalgebra.
Then
\[
\begin{pmatrix}I&S\\0&1\end{pmatrix}
=
\begin{pmatrix}I&S_-\\0&1\end{pmatrix}
\begin{pmatrix}I&S_+\\0&1\end{pmatrix},
\]
and assembling gives
\[
A=\underbrace{\begin{pmatrix}A'_-&A'_-S_-\\0&a_-\end{pmatrix}}_{A_-}
\,\mathrm{diag}(\Delta',\chi_{q_n})\,
\underbrace{\begin{pmatrix}A'_+&T_+\\0&a_+\end{pmatrix}}_{A_+}.
\]
Indeed, expanding the product, the top-right block is
$A'_-\Delta'T_++A'_-S_-\chi_{q_n}a_+$. Since $\Delta'T_+=\chi_{q_n}a_+\,
\Delta'(\Delta')^{-1}S_+=\chi_{q_n}a_+S_+$, this equals
$A'_-S_+\chi_{q_n}a_++A'_-S_-\chi_{q_n}a_+=A'_-(S_-+S_+)\chi_{q_n}a_+
=A'_-S\,a_+\chi_{q_n}=B$
(using the defining relation $S=(A'_-)^{-1}Ba_+^{-1}\chi_{-q_n}$), and
the top-left and bottom-right blocks reproduce $A'$ and $a_{nn}$ exactly
as in the inductive hypothesis. (Using $S_+$ unweighted in place of
$T_+$ reproduces $B$ only
when all of $q_1,\ldots,q_{n-1}$ coincide with $q_n$; the entrywise
re-weighting by $(\Delta')^{-1}$ is already needed at $n=3$ whenever the
diagonal indices are not all equal, e.g.\ $q_1<q_2<q_3$. This is
consistent with, and generalizes, the $n=2$ base case above, in which
$n-1=1$ so $\Delta'$ has only the single entry $\chi_p$ and no
re-weighting subtlety arises.)
One verifies that $A_-\in\mathrm{GL}_n(\mathfrak W^-_\Q)$ and
$A_+\in\mathrm{GL}_n(\mathfrak W^+_\Q)$.
\end{proof}

\begin{remark}[Intrinsic rational Wiener--Hopf splitting]
\label{intrinsic-WH}
The analytic input used above is the bounded decomposition
\[
\mathfrak W_\Q
 =\mathfrak W^-_{\Q,0}\oplus\C\oplus\mathfrak W^+_{\Q,0},
\]
with
\[
P_+\!\left(\sum_{q\in\Q}a_q\chi_q\right)
 =\sum_{q>0}a_q\chi_q .
\]
For every $f=\sum_{q\in\Q}a_q\chi_q$,
\[
\|P_+f\|_{\mathfrak W_\Q}
 =\sum_{q>0}|a_q|
 \leq\sum_{q\in\Q}|a_q|
 =\|f\|_{\mathfrak W_\Q}.
\]
Thus $P_+$ is contractive.  Its operator norm is nevertheless exactly
one, since $P_+\chi_q=\chi_q$ and $\|\chi_q\|_{\mathfrak W_\Q}=1$ for
every $q>0$:
\[
\|P_+\|_{\mathfrak W_\Q\to\mathfrak W_\Q}=1.
\]
It is defined directly from the order on the rational character group
and does not require choosing a finite-level lattice
$N^{-1}\Z\subset\Q$.  The proof of
Proposition~\ref{ordered-triangular-wiener} uses this bounded
Wiener--Hopf projection together with a finite induction on the matrix
entries.
\end{remark}

\begin{theorem}[Pointwise Iwasawa decomposition {\rm\cite{MG1,BD}}]
\label{loop-iwasawa}
Let $G$ be a real Lie group for which multiplication induces a
real-analytic diffeomorphism $A\times N\times K\longrightarrow G$.
Let $X$ be a compact space.  Then pointwise Iwasawa decomposition
induces a homeomorphism
\[
C(X,G)\;\longrightarrow\; C(X,A)\times C(X,N)\times C(X,K).
\]
If $X$ is a compact smooth manifold or compact smooth lamination, the
same statement holds for the $C^\infty$ mapping spaces.  If $X$ is a
compact abelian group equipped with a Wiener algebra $\mathfrak W(X)$
that is stable under the relevant real-analytic functional calculus,
the same holds for $\mathfrak W(X,G)$.
\end{theorem}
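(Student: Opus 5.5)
The plan is to treat the three assertions in turn, with the continuous case as the base and the other two obtained by checking that the same pointwise map preserves the relevant regularity.

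\emph{Continuous case.} Let $\Phi:G\to A\times N\times K$ be the inverse of the multiplication map; by hypothesis it is a real-analytic diffeomorphism. Define the decomposition on $C(X,G)$ by post-composition, $g\mapsto(\Phi_A\circ g,\Phi_N\circ g,\Phi_K\circ g)$, and the inverse map by pointwise multiplication $(a,n,k)\mapsto a\,n\,k$. Both maps send continuous maps to continuous maps and are mutually inverse, because $\Phi$ is a bijection pointwise. For continuity in the compact-open topology, use that $X$ is compact, so this topology is the topology of uniform convergence with respect to any compatible metric. Post-composition with a continuous map between manifolds is continuous for this topology: a continuous map is uniformly continuous on a compact neighbourhood of the compact set $g(X)$. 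Pointwise multiplication is continuous for the same reason. This gives the homeomorphism.

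\emph{Smooth case.} When $X$ is a compact manifold or lamination, derivatives along the leaves of $\Phi\circ g$ are computed by the chain rule from the derivatives of $\Phi$ and of $g$. Hence $\Phi\circ g$ is leafwise $C^\infty$, and its derivatives depend transversally continuously on the point, as the lamination structure requires. Continuity of post-composition in each $C^k$ seminorm follows from the standard $\Omega$-lemma (continuity of $g\mapsto \Phi\circ g$ on $C^k$ mapping spaces, as in \cite{PS}). Applying the same argument to the smooth multiplication map shows the inverse is smooth in the same sense.

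\emph{Wiener case.} This is where I expect the main obstacle: a Wiener algebra is not stable under arbitrary smooth post-composition, so one must genuinely use the stated hypothesis that $\mathfrak W(X)$ is stable under the relevant real-analytic functional calculus. I would first realise $G$ inside $\mathrm{GL}(N,\C)$, or work in real-analytic charts, and write $\Phi$ locally as convergent power series. The compact set $g(X)$ is covered by finitely many balls on which these series converge; one then applies the holomorphic functional calculus in the Banach algebra, via Gelfand theory and the Wiener--L\'evy theorem, as in Proposition~\ref{matrix-wiener-lemma}. This shows $\Phi\circ g\in\mathfrak W(X,A\times N\times K)$.

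For the concrete case of $\mathrm{GL}_n$ with Gram--Schmidt, the components are rational expressions in the entries together with square roots of positive Gram determinants. These lie in $\mathfrak W$ by the Wiener lemma and the analytic calculus applied to functions whose spectrum lies in $(0,\infty)$. Continuity of $g\mapsto\Phi\circ g$ in the Wiener norm comes from the continuity of the holomorphic functional calculus with respect to its argument. The inverse map is just a Banach-algebra product, hence continuous.
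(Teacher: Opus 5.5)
Your proposal is correct and follows essentially the paper's route: compose with the real-analytic inverse $\Phi$ of multiplication, use continuity of $\Phi$ for the $C^0$ case and the chain rule for the smooth case, and in the Wiener case for $\mathrm{GL}_n$ reduce to products, conjugate-transposes, inversion (matrix Wiener lemma) and square roots of positive functions (holomorphic functional calculus), with the inverse map given by pointwise multiplication. You are more careful than the paper about continuity of $g\mapsto\Phi\circ g$ in each topology. Two small points: the ``finitely many balls'' patching for general $G$ is unnecessary given the stated functional-calculus hypothesis, and it would otherwise need a partition of unity in $\mathfrak W$, or could be replaced by the several-variable holomorphic calculus applied to the real and imaginary parts of the entries. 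Also, your Gram determinants involve $\bar g$, so you need $\mathfrak W$ closed under conjugation, which holds for $\ell^1(\Q)$.
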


\begin{proof}
The Iwasawa decomposition furnishes a real-analytic diffeomorphism
$\Phi:G\xrightarrow{\;\sim\;}A\times N\times K$.  For any map $\gamma$ into $G$
set $(\gamma_A,\gamma_N,\gamma_K)=\Phi\circ\gamma$.  Continuity of $\Phi$
gives the continuous case.  Smoothness follows from the chain rule.
For the Wiener case with $G=\mathrm{GL}_n(\C)$: the Iwasawa projections
are built from matrix multiplication, conjugate-transpose, inversion, and
positive-definite square roots; the matrix Wiener lemma
(Proposition~\ref{matrix-wiener-lemma}) and holomorphic functional
calculus ensure each operation preserves $\mathfrak W_\Q(n)$.
The inverse is pointwise multiplication, which is likewise real-analytic.
\end{proof}

\begin{remark}[Pointwise vs.\ loop-group Iwasawa; Guest--Kellersch]\label{iwasawa-levels}
Theorem~\ref{loop-iwasawa} is a \emph{pointwise} result: it applies the
classical finite-dimensional Iwasawa decomposition $G=ANK$ fibrewise to
each value of the loop.  The argument is essentially a composition
$\Phi\circ\gamma$, where $\Phi:G\xrightarrow{\sim}A\times N\times K$ is
the real-analytic diffeomorphism furnished by the classical theorem.

This should be distinguished from the deeper \emph{loop-group-level}
Iwasawa decomposition, in which one splits a loop
$\gamma:\mathcal{S}^1_\mathbb{Q}\to\mathrm{GL}_n(\mathbb{C})$ as
\[
\gamma \;=\; F\cdot B,\qquad
F:\mathcal{S}^1_\mathbb{Q}\to U(n),\quad
B\text{ extends holomorphically to the ``disk''},
\]
at the level of the \emph{infinite-dimensional} loop group itself.
This is the Iwasawa decomposition used by Guest \cite{MG1} in the theory
of harmonic maps and the DPW method, and studied in detail by Kellersch
\cite{Ke} for general untwisted loop groups of semisimple Lie groups.
In that setting the decomposition is a global diffeomorphism of
infinite-dimensional Fréchet or Sobolev Lie groups, and its proof requires
genuine analytic input (implicit function theorem in the Fréchet or
Sobolev category, or a direct holomorphic-extension argument).

For the adelic loop group $\boldsymbol\Lambda_W(n)$, the intrinsic
Iwasawa decomposition in the unrestricted rational-spectrum setting is
the content of Conjecture~\ref{iwasawadecomposition}, equivalent to the
Solenoidal Birkhoff--Grothendieck Conjecture~\ref{solenoidal-BG} and open in
general.  For loops with spectrum in a fixed cyclic subgroup $N^{-1}\Z$, it
reduces to the classical $S^1$ case via $\rho_N$
(Remark~\ref{classical-transport}), and is therefore unconditional there, as
it is for pro-algebraic, ordered triangular, small-norm, and scalar loops
generally (Theorems~\ref{pro-algebraic-BG}, \ref{ordered-triangular-wiener},
\ref{BVP-Birkhoff}, \ref{wiener-scalar-factorization}).
\end{remark}

\begin{corollary}[Wiener loop group Iwasawa for $\mathrm{GL}_n$]
\label{wiener-loop-iwasawa}
For $G=\mathrm{GL}_n(\C)$ with $K=U(n)$, $A=(\R_{>0})^n$ (positive real
diagonal matrices), $N=N^+$ (unit upper triangular matrices), every
$\gamma\in\boldsymbol\Lambda_W(n)$ factors uniquely as
\[
\gamma \;=\; \gamma_A\cdot\gamma_N\cdot\gamma_K,\qquad
\gamma_K\in\boldsymbol\Lambda_W(U(n)),\quad
\gamma_A\in\boldsymbol\Lambda_W(A),\quad
\gamma_N\in\boldsymbol\Lambda_W(N^+).
\]
Since $A$ and $N$ are contractible, both $\gamma_A$ and $\gamma_N$ have
winding zero and are accessible to the ordered triangular Wiener
factorization (Proposition~\ref{ordered-triangular-wiener}).
\end{corollary}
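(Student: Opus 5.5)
The plan is to derive the corollary directly from the pointwise Iwasawa decomposition, Theorem~\ref{loop-iwasawa}, applied with $X=\SQ$ and $G=\mathrm{GL}_n(\C)$. The classical Gram--Schmidt procedure gives a real-analytic diffeomorphism
\[
\Phi:\mathrm{GL}_n(\C)\to(\R_{>0})^n\times N^+\times U(n).
\]
For $\gamma\in\boldsymbol\Lambda_W(n)$ we set $(\gamma_A,\gamma_N,\gamma_K)=\Phi\circ\gamma$. Pointwise uniqueness of the $ANK$ factorization then gives uniqueness of the loop factorization immediately. Two things remain to be proved. First, each factor lies in the Wiener class. Second, $\gamma_A$ and $\gamma_N$ have winding zero and fall under Proposition~\ref{ordered-triangular-wiener}.

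\emph{Wiener regularity.} I will write the factors explicitly. Consider $P=\gamma\gamma^*$, where $\gamma^*(\bu)=\gamma(\bu)^*$; this lies in $\mathfrak W_\Q(n)$ because conjugation sends $\sum A_q\chi_q$ to $\sum A_q^*\chi_{-q}$, which preserves the $\ell^1$ norm. $P$ is pointwise positive definite. The factorization $\gamma=LK$ with $L=AN$ upper triangular with positive diagonal is equivalent to the Cholesky-type identity $P=LL^*$. The entries of $L$ are obtained from the leading principal minors of $P$ by rational operations and square roots of strictly positive Wiener functions. These minors are nonvanishing because $P$ is positive definite on the compact space $\SQ$. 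Positive square roots stay in $\mathfrak W_\Q$: on a neighbourhood of the compact interval spanned by the range of the minor, $\sqrt{\cdot}$ is holomorphic, so the holomorphic functional calculus in the commutative Banach algebra $\mathfrak W_\Q=\ell^1(\Q)$ applies. Its Gelfand spectrum is $\SQ$, so spectra coincide with ranges. Inverses stay in the class by the scalar Wiener lemma and Proposition~\ref{matrix-wiener-lemma}. Hence $\gamma_A$ (the diagonal of $L$), $\gamma_N=\gamma_A^{-1}L$, and $\gamma_K=L^{-1}\gamma$ all belong to $\mathfrak W_\Q(n)$ and are invertible there. This is exactly the ``functional calculus stability'' hypothesis in Theorem~\ref{loop-iwasawa}.

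\emph{Winding and triangular factorization.} $A$ and $N^+$ are contractible, so every continuous map $\SQ\to A$ or $\SQ\to N^+$ is null-homotopic. By Proposition~\ref{components-A_r}, $\gamma_A,\gamma_N\in\boldsymbol\Lambda_{W,0}(n)$. More concretely, the diagonal entries of $\gamma_A$ are positive functions, and the diagonal entries of $\gamma_N$ equal $1$, so every diagonal entry has scalar index $0$. Therefore $\gamma_A$ and $\gamma_N$ are upper triangular with indices $0\le\cdots\le0$, and Proposition~\ref{ordered-triangular-wiener} applies with all $q_i=0$. For $\gamma_A$ the conclusion is even more direct: $\log$ of a positive Wiener function is Wiener, and splitting $\log a=(\log a)_-+(\log a)_+$ gives $a=e^{(\log a)_-}e^{(\log a)_+}$.

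The main obstacle is the Wiener-regularity step. One needs that the square root of a positive Wiener function is again Wiener. This rests on the Gelfand spectrum of $\ell^1(\Q)$ being precisely $\SQ=\widehat{\Q}$ with Gelfand transform equal to the Fourier series, which holds because $\Q$ is discrete abelian. I would state this explicitly and then invoke the Wiener--L\'evy theorem for $\sqrt{\cdot}$, exactly as the paper does for $\log$ in Proposition~\ref{components-A_r}.
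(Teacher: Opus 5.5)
Your proposal is correct and follows the paper's route: the corollary is Theorem~\ref{loop-iwasawa} applied pointwise, with Wiener stability of the factors coming from the matrix Wiener lemma and the holomorphic functional calculus for positive square roots, which you make explicit through the Cholesky-type factorization of $\gamma\gamma^*$. One small slip: because $L=\gamma_A\gamma_N$ is upper triangular, $P=LL^*$ is a reversed Cholesky factorization governed by the trailing (lower-right) principal minors rather than the leading ones, which changes nothing since all principal minors of a positive definite matrix are positive.
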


\begin{proposition}[Reduction to a unitary-times-positive loop {\rm\cite{MG1,BD}}]
\label{birkhoff-reduction}
In the notation of Corollary~\ref{wiener-loop-iwasawa}, let
$T:=\gamma_A\gamma_N\in\boldsymbol\Lambda_W(\mathrm{upper\ triangular})$
and factor it (Proposition~\ref{ordered-triangular-wiener}):
$T=T_-T_+$ with $T_-\in\boldsymbol\Lambda_W^-$, $T_+\in\boldsymbol\Lambda_W^+$.
Set $\hat\gamma:=T_+\gamma_K\in\boldsymbol\Lambda_W^+(\mathrm{GL}_n)\cdot
\boldsymbol\Lambda_W(U(n))$.  Then:
\begin{enumerate}
\item[\rm(i)] $\gamma=T_-\cdot\hat\gamma$.
\item[\rm(ii)] The holomorphic vector bundles $E_\gamma$ and $E_{\hat\gamma}$
on $\PQ$ are isomorphic: the gauge transformation $h_-=T_-^{-1}\in
\boldsymbol\Lambda_W^-$ and $h_+=I$ give
$\gamma=h_-^{-1}\hat\gamma\, h_+$.
\item[\rm(iii)] \textbf{Equivalence}: $\gamma$ admits a Birkhoff
factorization if and only if $\hat\gamma=T_+\gamma_K$ does, and the
partial indices coincide.
\item[\rm(iv)] \textbf{Scalar case $n=1$}: Since $\boldsymbol\Lambda_W(\C^*)$
is abelian, $T_+\gamma_K=\gamma_K T_+$, so
$E_{\hat\gamma}\cong E_{\gamma_K}$ via $h_-=I$ and $h_+=T_+\in
\boldsymbol\Lambda_W^+$.  Thus the Birkhoff factorization of $\gamma$
reduces exactly to that of the $U(1)$-valued loop $\gamma_K$, which is
Theorem~\ref{wiener-scalar-factorization}.
\end{enumerate}
\end{proposition}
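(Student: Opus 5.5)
The plan is to check the four items in order, using only Corollary~\ref{wiener-loop-iwasawa}, Proposition~\ref{ordered-triangular-wiener} and Theorem~\ref{wiener-scalar-factorization}. There is one preliminary step. By Corollary~\ref{wiener-loop-iwasawa}, $\gamma_A$ and $\gamma_N$ lie in $\boldsymbol\Lambda_W(n)$. Their product $T=\gamma_A\gamma_N$ is upper triangular, and its diagonal entries are exactly the diagonal entries $a_{jj}$ of $\gamma_A$, since $\gamma_N$ is unipotent.

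Each $a_{jj}$ is a positive real-valued Wiener function on $\SQ$. It is therefore homotopic to the constant $1$ through the straight-line homotopy $s\mapsto (1-s)+s\,a_{jj}$, which stays in $\R_{>0}$. Hence its winding number is $0$. By the uniqueness of $q$ in Theorem~\ref{wiener-scalar-factorization}, its scalar Wiener index is $q_j=0$.

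The ordering hypothesis $q_1\le\cdots\le q_n$ of Proposition~\ref{ordered-triangular-wiener} then holds trivially, with all indices equal. That proposition yields $T=T_-\,\mathrm{diag}(\chi_0,\dots,\chi_0)\,T_+=T_-T_+$ with $T_\pm\in\mathrm{GL}_n(\mathfrak W^\pm_\Q)$. This justifies the factorization assumed in the statement.

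Items (i) and (ii) are then algebraic. For (i), $\gamma=\gamma_A\gamma_N\gamma_K=T\gamma_K=T_-T_+\gamma_K=T_-\hat\gamma$. For (ii), write this as $\gamma=h_-^{-1}\hat\gamma\,h_+$ with $h_-=T_-^{-1}$ and $h_+=I$. By the matrix Wiener lemma (Proposition~\ref{matrix-wiener-lemma}) applied in the closed subalgebra $\mathfrak W^-_\Q(n)$, the inverse $T_-^{-1}$ lies in $\mathrm{GL}_n(\mathfrak W^-_\Q)$; this holds because $T_-$ is a unit there. So $(h_-,h_+)$ is an admissible cocycle equivalence in the sense of Definition~\ref{def:cocycle-equivalence}, using the identification of $\mathfrak W^\pm_\Q(n)$ with functions holomorphic on the closed disks. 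Hence $E_\gamma\cong E_{\hat\gamma}$.

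For (iii), I would transport factorizations in both directions. If $\hat\gamma=A_-\Delta_{\mathbf q}A_+$, then $\gamma=(T_-A_-)\Delta_{\mathbf q}A_+$, and $T_-A_-\in\mathrm{GL}_n(\mathfrak W^-_\Q)$ because the unit group is closed under products. Conversely, if $\gamma=B_-\Delta_{\mathbf q}B_+$, then $\hat\gamma=(T_-^{-1}B_-)\Delta_{\mathbf q}B_+$. The same diagonal $\Delta_{\mathbf q}$ occurs in both directions, so the partial indices coincide as unordered rational tuples.

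For (iv), when $n=1$ all factors are scalar Wiener functions, and $\boldsymbol\Lambda_W$ is commutative. Hence $\hat\gamma=T_+\gamma_K=\gamma_K T_+$, which exhibits $E_{\hat\gamma}\cong E_{\gamma_K}$ via $h_-=I$ and $h_+=T_+$. Combining with (ii), the factorization of $\gamma$ reduces to that of $\gamma_K$. That factorization exists by Theorem~\ref{wiener-scalar-factorization}, since $\gamma_K$ is a unit of $\mathfrak W_\Q$.

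The main obstacle is the preliminary step, namely confirming that Proposition~\ref{ordered-triangular-wiener} applies with all indices equal to $0$. This needs two facts. First, the diagonal of $T$ consists of positive functions whose scalar index is forced to be $0$; this uses the uniqueness of $q$ in the scalar theorem together with the winding computation above. Second, the Iwasawa factors genuinely lie in the Wiener class; here I rely on Corollary~\ref{wiener-loop-iwasawa} as given, including the functional-calculus stability of square roots. Everything after that is bookkeeping with unit groups of the subalgebras $\mathfrak W^\pm_\Q(n)$.
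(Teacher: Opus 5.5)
Your proof is correct and follows the paper's argument for (i)--(iv) essentially verbatim, with (iii) spelled out more explicitly by transporting factorizations in both directions. Your preliminary check that the diagonal of $T$ consists of positive Wiener functions of index $0$, so that Proposition~\ref{ordered-triangular-wiener} gives $T=T_-T_+$ with no middle character factor, is a legitimate step the paper leaves implicit; one small correction is that in (ii) the appeal to the matrix Wiener lemma is unnecessary and would not hold inside $\mathfrak W^-_\Q(n)$ (e.g.\ $\chi_{-1}\in\mathfrak W^-_\Q$ is nowhere zero on $\SQ$ but its inverse $\chi_{1}\notin\mathfrak W^-_\Q$), whereas $T_-^{-1}\in\boldsymbol\Lambda_W^-$ holds simply because $\boldsymbol\Lambda_W^-$ is a group of units, as you also note.
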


\begin{proof}
(i) is immediate.
(ii) The clutching-construction isomorphism criterion states
$E_\gamma\cong E_{\gamma'}$ iff $\gamma=h_-^{-1}\gamma' h_+$ for some
$h_-\in\boldsymbol\Lambda_W^-$, $h_+\in\boldsymbol\Lambda_W^+$.
Here $T_-^{-1}\in\boldsymbol\Lambda_W^-$ (since $\boldsymbol\Lambda_W^-$
is closed under inversion) and $I\in\boldsymbol\Lambda_W^+$, giving
$\gamma=T_-\,\hat\gamma=(T_-^{-1})^{-1}\hat\gamma\cdot I$.
(iii) follows immediately from (ii): having isomorphic bundles is preserved
under Birkhoff factorization.
(iv) For $n=1$: $T_+\gamma_K=\gamma_K T_+$, so
$T_+\gamma_K=I^{-1}\cdot\gamma_K\cdot T_+$, giving
$E_{T_+\gamma_K}\cong E_{\gamma_K}$.
\end{proof}

\begin{remark}
For $n\ge2$, the non-commutativity of $\mathrm{GL}_n$ means that
$E_{T_+\gamma_K}\not\cong E_{\gamma_K}$ in general: conjugating
$T_+\in\boldsymbol\Lambda_W^+$ by $\gamma_K\in\boldsymbol\Lambda_W(U(n))$
mixes Fourier frequencies and leaves $\boldsymbol\Lambda_W^+$.
The problem therefore reduces to the Birkhoff factorization of
$\hat\gamma=T_+\gamma_K\in\boldsymbol\Lambda_W^+\cdot\boldsymbol\Lambda_W(U(n))$
— a loop that is already in the special form of a positive-frequency
loop times a unitary loop — but not yet to $\gamma_K$ alone.
\end{remark}

\begin{conjecture}[Full matrix Wiener--Birkhoff factorization]\label{wiener-full-conjecture}
Every $A\in\boldsymbol\Lambda_W(n)$ admits an exact factorization
$A=A_-\,\Delta_{\mathbf q}\,A_+$ as in Theorem~\ref{wiener-density}, with
$q_1\geq\cdots\geq{q_n}\in\Q$ uniquely determined up to the stated ordering.
\end{conjecture}

\noindent The conjecture holds for $n=1$, $n\times n$ triangular matrices,
all small-norm matrices, and all finite Fourier-support cocycles
(see Theorems~\ref{wiener-scalar-factorization},
\ref{ordered-triangular-wiener}, and~\ref{BVP-Birkhoff}).
For pro-algebraic bundles it is a theorem (Theorem~\ref{pro-algebraic-BG}
in \S\ref{BF}).  The three-pillar architecture of \S\ref{BF}
reduces the general Wiener case to Lemmas~\ref{lem-gN-inv}
and~\ref{lem-rank2-ineq}, which remain open.

\begin{remark}[Why $\Q$ is the exceptional case]\label{why-Q-exceptional}
Theorem~\ref{wiener-scalar-factorization} establishes
Conjecture~\ref{wiener-full-conjecture} for $n=1$, and
Theorem~\ref{wiener-density} establishes it on a dense subset for every $n$.
The step from density to exactness for $n\geq2$ is non-trivial: for a
general ordered character group, invertibility does not imply factorability,
and \cite[Theorem~2.1]{BRS} shows that factorable matrices are not even
dense once the character group contains a copy of $\Z^3$.  Since $\Q$ has
torsion-free rank~$1$ this obstruction is unavailable, which is why
\cite[Theorem~2.2]{BRS} treats subgroups of $\Q$ as a genuinely
well-behaved case.
\end{remark}

\begin{theorem}[Solenoidal Riemann--Hilbert: normalized small-norm factorization]\label{BVP-Birkhoff}
Let
\[
\mathfrak W^-_{\Q,0}(n)
 :=\left\{\sum_{q<0}a_q\chi_q:\sum_{q<0}\|a_q\|<\infty\right\}
 \subset M_n(\mathfrak W_\Q)
\]
be the strictly negative matrix Wiener algebra.  There exists
$\varepsilon_n>0$ such that every
$g\in\boldsymbol\Lambda_W(n)$ satisfying
$\|g-I\|_{\mathfrak W_\Q}<\varepsilon_n$ admits a unique normalized
factorization
\[
 g=h_-^{-1}h_+,
 \qquad
 h_-\in I+\mathfrak W^-_{\Q,0}(n),
 \qquad
 h_+\in\mathrm{GL}_n(\mathfrak W^+_\Q).
\]
The factors depend analytically on $g$ in a neighbourhood of the identity.
\end{theorem}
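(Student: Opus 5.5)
We prove the theorem by the implicit function theorem on the Banach space $M_n(\mathfrak W_\Q)$, using the bounded Wiener--Hopf splitting of Remark~\ref{intrinsic-WH}. Write $P_-$ for the projection onto strictly negative frequencies and $P_{\ge0}=I-P_-$; applied entrywise, both are contractive on $M_n(\mathfrak W_\Q)$. We have $M_n(\mathfrak W_\Q)=\mathfrak W^-_{\Q,0}(n)\oplus \mathfrak W^+_\Q(n)$ as a topological direct sum. It is cleaner to look for the factorization in the equivalent form $h_-g=h_+$, that is, to solve for $X\in\mathfrak W^-_{\Q,0}(n)$ with
\[
P_-\bigl((I+X)g\bigr)=0 .
\]
Once such $X$ is found, we set $h_-=I+X$ and $h_+:=(I+X)g\in\mathfrak W^+_\Q(n)$.

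\emph{Step 1 (linear fixed-point equation).} Write $g=I+E$ with $\|E\|_{\mathfrak W_\Q}<\varepsilon_n$. The equation becomes
\[
X+P_-(E)+P_-(XE)=0,
\]
i.e. $X=-P_-E-P_-(XE)$. The map $X\mapsto P_-(XE)$ has operator norm at most $\|E\|<1$, by submultiplicativity and contractivity of $P_-$. Hence $I+P_-(\cdot\,E)$ is invertible on $\mathfrak W^-_{\Q,0}(n)$ by a Neumann series, which gives a unique solution
\[
X=-\sum_{k\ge0}(-1)^k\bigl(P_-(\cdot\,E)\bigr)^k(P_-E).
\]
The solution depends analytically on $E$, being a convergent power series in $E$. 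Any $\varepsilon_n<1$ works here.

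\emph{Step 2 (invertibility of the factors).} $h_-=I+X$ satisfies $\|X\|\le\|E\|/(1-\|E\|)$, so it is invertible in the closed subalgebra $I+\mathfrak W^-_{\Q,0}(n)$ once $\varepsilon_n\le 1/3$. Indeed, the Neumann series for $(I+X)^{-1}$ stays in that subalgebra, because strictly negative frequencies are closed under multiplication: $q,q'<0\Rightarrow q+q'<0$. Then $h_+=h_-g$ is invertible in $M_n(\mathfrak W_\Q)$ and lies in $\mathfrak W^+_\Q(n)$. To see that $h_+^{-1}\in\mathfrak W^+_\Q(n)$, write $h_+=I+Y$ with $\|Y\|$ small and sum the Neumann series inside the Banach subalgebra $\mathfrak W^+_\Q(n)$. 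The bound on $\|Y\|$ follows from $Y=X+E+XE$. This is where the size of $\varepsilon_n$ is fixed, and it avoids any Wiener--L\'evy argument on the disk. Finally $g=h_-^{-1}h_+$.

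\emph{Step 3 (uniqueness).} Suppose $h_-^{-1}h_+=k_-^{-1}k_+$. Then $k_-h_-^{-1}=k_+h_+^{-1}$ lies in
\[
\bigl(I+\mathfrak W^-_{\Q,0}(n)\bigr)\cap \mathrm{GL}_n(\mathfrak W^+_\Q)=\{I\},
\]
because an element with no positive frequencies and constant term $I$, which also has no negative frequencies, equals $I$. This uses the intersection statement \eqref{wiener-splitting}.

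The main point needing care, and the only place anything rational-specific could go wrong, is closure of $\mathfrak W^\pm$ under products and the boundedness of $P_-$ with constant $1$. For $\Q$ both hold because the order is compatible with addition. The rest is the standard contraction argument.
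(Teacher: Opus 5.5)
Your proof is correct, and it takes a genuinely different route from the paper's.

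\textbf{The paper's route.} The paper applies the analytic inverse function theorem to $\Phi(h_-,h_+)=h_-^{-1}h_+$ on $(I+\mathfrak W^-_{\Q,0}(n))^\times\times\mathrm{GL}_n(\mathfrak W^+_\Q)$. Its differential at $(I,I)$ is $(X_-,X_+)\mapsto -X_-+X_+$, which is an isomorphism by the splitting $M_n(\mathfrak W_\Q)=\mathfrak W^-_{\Q,0}(n)\oplus\mathfrak W^+_\Q(n)$. This gives existence and analytic dependence. It gives uniqueness only within the neighbourhood $\mathcal U$ of $(I,I)$ produced by the theorem, and $\varepsilon_n$ is not explicit.

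\textbf{Your route.} You observe that the condition $h_-g\in\mathfrak W^+_\Q(n)$ is affine-linear in $h_-$, so no nonlinear inversion is needed. The equation $X+P_-(XE)=-P_-E$ is solved directly by a Neumann series. This gives several things the paper's argument does not:
\begin{itemize}
\item an explicit constant, $\varepsilon_n=1/3$, independent of $n$ for the submultiplicative norm of Definition~\ref{def:rational-Wiener-algebra};
\item an explicit series for the factors;
\item uniqueness among \emph{all} normalized factorizations, because any $k_-^{-1}k_+=g$ makes $k_--I$ a solution of the same linear equation;
\item a proof that $h_-^{-1}\in I+\mathfrak W^-_{\Q,0}(n)$ and $h_+^{-1}\in\mathfrak W^+_\Q(n)$, which the bundle interpretation needs and the paper's proof leaves implicit.
\end{itemize}
The paper's approach is the one that carries over to genuinely nonlinear settings, such as other target groups or Banach--Lie charts. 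Yours is sharper for this specific statement.

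\textbf{A caution on Step 3.} The intersection argument requires $h_-^{-1}\in I+\mathfrak W^-_{\Q,0}(n)$ and $h_+^{-1}\in\mathfrak W^+_\Q(n)$. This holds for the pair you constructed. It fails for an arbitrary element of $I+\mathfrak W^-_{\Q,0}(n)$ that is merely invertible in $M_n(\mathfrak W_\Q)$: for example, the inverse of $1+2\chi_{-1}$ has only positive frequencies. So take $(h_-,h_+)$ to be the constructed pair and $(k_-,k_+)$ arbitrary, or simply rely on the uniqueness already contained in Step 1.
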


\begin{proof}
Write
\[
M_n(\mathfrak W_\Q)
 =\mathfrak W^-_{\Q,0}(n)\oplus\mathfrak W^+_\Q(n),
\]
where the constant Fourier coefficient is assigned to the positive summand.
Let $P^-_0$ and $P^+$ denote the corresponding bounded projections.
They are contractions:
\[
\|P^-_0Y\|_{\mathfrak W_\Q}\leq\|Y\|_{\mathfrak W_\Q},
\qquad
\|P^+Y\|_{\mathfrak W_\Q}\leq\|Y\|_{\mathfrak W_\Q}.
\]
Their operator norms are exactly one, because each projection fixes a
nonzero Fourier monomial belonging to its range.

Consider the analytic map between Banach manifolds
\[
\Phi:\bigl(I+\mathfrak W^-_{\Q,0}(n)\bigr)^{\times}
       \times\mathrm{GL}_n(\mathfrak W^+_\Q)
       \longrightarrow \mathrm{GL}_n(\mathfrak W_\Q),
\qquad
\Phi(h_-,h_+)=h_-^{-1}h_+,
\]
where the superscript $\times$ denotes those elements of
$I+\mathfrak W^-_{\Q,0}(n)$ that are invertible in
$M_n(\mathfrak W_\Q)$.  This is an open neighbourhood of $I$, because
invertibility is open in every unital Banach algebra.

At $(I,I)$ the differential is
\[
D\Phi_{(I,I)}(X_-,X_+)=-X_-+X_+.
\]
It is a bounded linear isomorphism.  Indeed, for
$Y\in M_n(\mathfrak W_\Q)$ its unique preimage is
\[
X_-=-P^-_0Y,
\qquad
X_+=P^+Y.
\]
The uniqueness is precisely the reason for using the strictly negative
normalization: the two summands now have zero intersection.

The analytic inverse-function theorem for Banach spaces therefore provides
neighbourhoods $\mathcal U$ of $(I,I)$ and $\mathcal V$ of $I$ such that
$\Phi:\mathcal U\to\mathcal V$ is an analytic diffeomorphism.  Choose
$\varepsilon_n>0$ so that
\[
\{g\in M_n(\mathfrak W_\Q):\|g-I\|_{\mathfrak W_\Q}<\varepsilon_n\}
\subset\mathcal V.
\]
For every such $g$ there is consequently a unique pair
$(h_-,h_+)\in\mathcal U$ with $g=h_-^{-1}h_+$.  Since
$h_-\in I+\mathfrak W^-_{\Q,0}(n)$ and
$h_+\in\mathrm{GL}_n(\mathfrak W^+_\Q)$, these factors extend
holomorphically to the negative and positive solenoidal disks,
respectively.  Analytic dependence on $g$ follows from the analytic local
inverse supplied by the theorem.
\end{proof}

\begin{remark}[Continuation beyond the local big cell]\label{rem:continuation-big-cell}
Let $g_t$ be a $C^1$ path in $\mathrm{GL}_n(\mathfrak W_\Q)$ with
$g_0=I$.  As long as a normalized factorization
$g_t=(h_-^t)^{-1}h_+^t$ exists, differentiation and projection onto the
strictly negative and nonnegative Fourier modes give a Banach-space
Riemann--Hilbert evolution equation.  This provides a continuation method
inside the big cell.  What is not automatic is that the path remains in
that cell up to $t=1$; crossing its boundary is exactly where non-zero
partial indices may appear.  Thus the differential equation is a useful
analytic strategy for the global conjecture, but the local theorem above
does not assume or claim global continuation.
\end{remark}

\begin{proposition}[Triviality in the strict pro-analytic disk category]\label{prop:proanalytic-disk-triviality}
Let $D^\pm_\Q=\varprojlim \Delta_N$ be taken over a divisibility-cofinal
system of finite-level disks.  A \emph{strict pro-analytic vector bundle}
is a system $(E_N,\alpha_{M,N})$ for which, after passage to a cofinal
subsystem, there is a level $N_0$ such that
\[
E_M=\phi_{M,N_0}^{*}E_{N_0}
\quad\text{and}\quad
\alpha_{M,N} \text{ is the canonical pullback identification}
\]
whenever $N_0\mid N\mid M$.  Every strict pro-analytic vector bundle on
either $D^+_\Q$ or $D^-_\Q$ is holomorphically trivial.
\end{proposition}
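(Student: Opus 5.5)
The plan is to reduce everything to the finite level $N_0$, where the classical Oka--Grauert principle (or, more elementarily, the fact that a holomorphic vector bundle on a disk in $\C$ is trivial) is available, and then to transport the trivialization up the tower by pullback. First I will pass to the cofinal subsystem supplied by the definition, so that there is a level $N_0$ with $E_M=\phi_{M,N_0}^*E_{N_0}$ and with every transition $\alpha_{M,N}$ the canonical pullback identification whenever $N_0\mid N\mid M$. Since a cofinal subsystem has the same inverse limit, nothing is lost here: the disk $D^\pm_\Q$ is still $\varprojlim\Delta_N$ over this subsystem.

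Next I will trivialize $E_{N_0}$ on the finite-level disk $\Delta_{N_0}$. For $D^+_\Q$ this $\Delta_{N_0}$ is an ordinary open disk in $\C$ in the coordinate $z^{1/N_0}$; for $D^-_\Q$ it is an open disk about $\infty$ in $\C P^1$. In either case it is a noncompact, contractible Riemann surface, hence Stein. A holomorphic vector bundle over it is topologically trivial by contractibility, and therefore holomorphically trivial by Grauert's Oka principle. Alternatively one can use $H^1(\Delta,\mathcal O)=0$ together with an induction on a flag, or cite the standard fact that holomorphic bundles on open Riemann surfaces are trivial. This gives a global holomorphic frame $s_1,\dots,s_n$ of $E_{N_0}$ over $\Delta_{N_0}$.

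Then I will pull this frame back. For $N_0\mid M$ set $s_i^{(M)}=\phi_{M,N_0}^*s_i$. These sections form a holomorphic frame of $E_M=\phi_{M,N_0}^*E_{N_0}$, because pullback of a frame under the branched covering $\phi_{M,N_0}(w)=w^{M/N_0}$ is again a frame; the pullback bundle is defined fibrewise, so branching at the center causes no problem. The frames are compatible: for $N_0\mid N\mid M$ we have $\phi_{M,N_0}=\phi_{N,N_0}\circ\phi_{M,N}$, and $\alpha_{M,N}$ is the canonical identification, so $\alpha_{M,N}(\phi_{M,N}^*s_i^{(N)})=s_i^{(M)}$. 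The compatible family $(s_i^{(M)})_M$ therefore defines sections $s_i$ of $E=\varprojlim E_M$ over $D^\pm_\Q$ which are linearly independent at every point. These sections are holomorphic in the sense of Definition~\ref{holo}. Indeed, in the local trivialization each component of $s_i^{(N_0)}$ is a convergent power series in $z^{1/N_0}$ (respectively $z^{-1/N_0}$), which is a series $\sum a_q\hat\chi_q$ with $q\in\frac1{N_0}\Z_{\ge0}$ (respectively $\le0$). Such a series converges absolutely and locally uniformly, and composing with the projection $\pi_{N_0}$ preserves this. The resulting isomorphism $E\cong D^\pm_\Q\times\C^n$ is the required trivialization.

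The main obstacle is a matter of bookkeeping rather than real difficulty: I must check that "holomorphic" for the limit bundle agrees with the finite-level notion. This requires verifying that holomorphic functions on $\Delta_{N_0}$, pulled back by $\pi_{N_0}$, give exactly the Laurent--Puiseux-holomorphic functions with spectrum in $\frac1{N_0}\Z$ on the solenoidal disk, including at the cone point $\boldsymbol 0$ (or $\boldsymbol\infty$). I will handle the cone point by noting that the value there is the constant coefficient $a_0$, which matches the value of the finite-level section at the branch point.
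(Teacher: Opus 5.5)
Your proposal is correct and follows the same route as the paper. You trivialize $E_{N_0}$ on the contractible Stein disk $\Delta_{N_0}$ by Oka--Grauert, pull that trivialization back along $\phi_{M,N_0}$ for $N_0\mid M$, and use the strict compatibility of the $\alpha_{M,N}$ (via $\phi_{M,N_0}=\phi_{N,N_0}\circ\phi_{M,N}$) to assemble a trivialization of the pro-object, handling $D^-_\Q$ by $z\mapsto z^{-1}$. Your extra check that the pulled-back frame is holomorphic in the sense of Definition~\ref{holo}, including at the cone point, is a detail the paper leaves implicit and does not change the argument.
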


\begin{proof}
Each finite-level disk $\Delta_N$ is a contractible Stein domain, so
$E_{N_0}$ is holomorphically trivial by Oka--Grauert.  Choose a
trivialization
\[
\tau_{N_0}:E_{N_0}\xrightarrow{\;\sim\;}\Delta_{N_0}\times\C^n .
\]
For every $M$ divisible by $N_0$, pull back $\tau_{N_0}$ along
$\phi_{M,N_0}$.  The strict compatibility hypothesis makes the resulting
trivializations compatible with all bonding identifications.  They
therefore define a trivialization of the pro-object.  The negative disk is
treated after the change of coordinate $z\mapsto z^{-1}$.
\end{proof}

\begin{remark}[Full Wiener disk category]\label{rem:wiener-disk-status}
The preceding proposition is a theorem in the pro-analytic category.  For
a broader intrinsic Wiener-holomorphic category, topological
contractibility of the disk alone does not imply holomorphic triviality.
An unconditional vanishing theorem
$\check H^1(D^\pm_\Q,\mathcal O_{GL_n})=0$ in that broader category would
require a separate Oka principle or an intrinsic factorization theorem.
Accordingly, whenever this vanishing is used below for arbitrary Wiener
bundles, it is recorded as an analytic hypothesis rather than inferred
from contractibility.
\end{remark}

\section{K\"ahler structures and energy functionals}\label{ksloops}
\begin{remark}\label{sobolev-status}
The Fourier--Sobolev Hilbert space introduced below provides a useful analytic
model for leafwise derivatives on $\SQ$.  Since $\Q$ is dense in $\R$, it is
\emph{not} an algebra of continuous functions and point evaluation is not
continuous on $L^2(\SQ)$.  The classical Sobolev loop-group, restricted
Grassmannian, and K\"ahler constructions are therefore made rigorous by
restricting to a fixed cyclic spectral subgroup $N^{-1}\Z\subset\Q$, where
they transport verbatim from the ordinary circle via $\rho_N$.  The global solenoidal constructions below use the Wiener algebra
$\mathfrak W_\Q = \ell^1(\Q)$ as the natural function class; formulas
involving pointwise multiplication are read in this Banach-algebra sense,
or, for Sobolev variants, in the circle-comparison sense via $\rho_N$.
\end{remark} 
\noi In this section we use the fact that $\SQ$ is a smooth 1-dimensional
lamination. Therefore we can use different notions of regularity, see for example
\cite{Mo, Su4, Ve, Od, LM}. \medskip

\begin{definition}\label{sobolev}
 Let $\bd\bu$ be normalized Haar measure on $\SQ$.
A function $f:\SQ\to\C^n\,$ is said to be
in the Fourier--Sobolev space $H^1(\SQ,\C^n)$ if each component of $f$ is
a function $f_i\in H^1(\SQ)$, where $H^1(\SQ)$ is the Sobolev space  
of complex-valued functions defined as follows:
\[
H^1(\SQ)=
\]
\[
\left\{g\in{L^2(\SQ,\bd\bu)}\boldsymbol{:}\, 
f(\bu)=\sum_{q\in\Q}a_q\chi_q(\bu), 
\quad\text{with}\,\, \sum_{q\in\Q} |a_q|^2(1+q^2)<\infty \right\}.
\]
\end{definition}

We recall that the exponential $\boldsymbol{Exp}$ in definition \ref{exp} is an epimorphism $\boldsymbol{Exp}:\R\times\hat{\Z}\to\SQ$ which is continuous. 
 
 \begin{definition}\label{diffloops} If $M$ is a differentiable ($C^\infty$) manifold, 
then a map $h:\SQ\to{M}$ is said to be {\bf differentiable}
if the map $\tilde{h}=h\circ\boldsymbol{Exp}$ has the property
that for every $\boldsymbol{z}\in\hat{\Z}$
the restriction of $\tilde{h}$ to ${\R\times{\boldsymbol{z}}}$, defines
the map $t\overset{h_{\boldsymbol{z}}}\longmapsto\tilde{h}(t,\boldsymbol{z})$ from $\R$ to $M$ which
is a $C^\infty$ map (i.e., $\varphi\circ{h_{\boldsymbol{z}}}$ is smooth for any coordinate chart of $M$). 

\medskip 
\noi In the same way we can define functions of H\"older class $C^r\,\,, r>0 $. The derivative of $f':\SQ\to\R^n$ of class $C^1$ is defined as follows: $f'(t_0\bu_0)=\frac{d}{dt}h_{\bz_0}(t)|_{t=t_0}\,$ if 
$\boldsymbol{Exp}(t_{0},\bz_{0})=t_{0}\bu_0$.

\medskip
\noi Analogously, a function $h:\SQ\to{M}$ is said to belong to Sobolev space
$H^1(\SQ,M)$ if the map $\varphi\circ{h}$ belongs to Sobolev space 
$H^1(\SQ,\R^d)$ for every coordinate chart $\varphi:\mathcal{U}\to\R^d$ of $M$ (where $\dim_\R{M}=d$).
\end{definition} 

\noi The functions $f$ in $H^1(\SQ)$ are characterized by the properties:
  \begin{enumerate}
 \item $f\in{L_2(\SQ,\bd\bu)}$.
 \item $f'\in{L_2(\SQ,\bd\bu)}$ where $f'$ is the weak derivative
 of $f$ along the leaves of the lamination of $\SQ$. 
\end{enumerate}
 
 In \cite{TT} Proposition 1.1, is shown that if 
$f,g\in{W^{1,2}(\R)}$ then $||fg||_{_{W^{1,2}}} 
\leq ||f||_{_{W^{1,2}}}||g||_{_{W^{1,2}}}$. This implies, since 
$W^{1,2}(\bS^1)=H^1(\bS^1)$, that the product of two functions
in $H^1(\bS^1)$ belongs to $H^1(\bS^1)$.
For the solenoid, this Sobolev embedding argument applies on every fixed cyclic
spectral subgroup via pullback by $\pi_N$, but it fails for the unrestricted
rational-frequency space: $\Q$ has nonzero frequencies accumulating at $0$,
so no uniform Sobolev embedding $H^1(\SQ)\hookrightarrow C(\SQ)$ holds globally.
One has:
 
\begin{proposition}[Sobolev multiplication on a fixed cyclic spectral subgroup]\label{h1Banach-algebra}
For each $N\in\N$, the subspace of Fourier--Sobolev functions whose
Fourier support is contained in $\frac1N\Z$ is, after pullback by
$\pi_N:\SQ\to S^1$, the ordinary space $H^1(S^1)$.  It is therefore a
Banach algebra and embeds continuously into $C(\SQ)$.  The corresponding
assertion for the unrestricted rational-frequency Hilbert space $H^1(\SQ)$
is false.
\end{proposition}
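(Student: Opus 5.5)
Three things need checking: the identification of the $\frac1N\Z$-supported subspace with $H^1(S^1)$ via $\pi_N$; the transport of the Banach algebra and embedding properties; and a counterexample for the unrestricted space.

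\emph{Identification.} Let $f=\sum_{q\in\frac1N\Z}a_q\chi_q$ be supported in $\frac1N\Z$. Write $q=k/N$. The character $\chi_{k/N}$ factors as $z^k\circ\pi_N$, where $\pi_N:\SQ\to S^1$ is the $N$-th projection; this follows from the construction of $\nu$, since $\pi_N(\nu(t))=e^{2\pi i t/N}$. Hence $f=F\circ\pi_N$ with $F(z)=\sum_k a_{k/N}z^k$. The weight $1+q^2=1+k^2/N^2$ satisfies
\[
N^{-2}(1+k^2)\le 1+k^2/N^2\le 1+k^2 .
\]
So the Fourier--Sobolev norm of $f$ is equivalent to the classical $H^1(S^1)$ norm of $F$, with constants depending only on $N$. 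Also $\pi_N$ pushes Haar measure to Haar measure, so the $L^2$ parts agree. This gives the isomorphism of Hilbert spaces.

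\emph{Transport of properties.} By the cited result \cite{TT}, $H^1(S^1)$ is closed under multiplication with $\|FG\|\le C\|F\|\|G\|$, and $H^1(S^1)\hookrightarrow C(S^1)$ continuously. The $C(S^1)$ bound holds because $\sum_k|a_k|\le(\sum_k(1+k^2)|a_k|^2)^{1/2}(\sum_k(1+k^2)^{-1})^{1/2}$. Since $(F\circ\pi_N)(G\circ\pi_N)=(FG)\circ\pi_N$, and $\sup|F\circ\pi_N|=\sup|F|$ by surjectivity of $\pi_N$, both properties pass to the subspace. Renormalizing by an equivalent norm (or keeping the constant $C_N$) makes it a Banach algebra. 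A direct alternative avoids \cite{TT}: the Cauchy--Schwarz estimate with $\sum_{k}(1+k^2/N^2)^{-1}<\infty$ gives an absolutely convergent Fourier series, hence continuity.

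\emph{Falsity for unrestricted $\Q$.} This is the main point needing care. I would construct $f=\sum_{j\ge1}c_j\chi_{q_j}$ with distinct rationals $q_j\in(0,1]$, for instance $q_j=1/j$, and $c_j=1/j$. Then
\[
\sum_j|c_j|^2(1+q_j^2)\le 2\sum_j j^{-2}<\infty ,
\]
so $f\in H^1(\SQ)$. However $\sum_j|c_j|=\infty$, and the embedding fails by the following route. Suppose $H^1(\SQ)\hookrightarrow C(\SQ)$ continuously, with $\|g\|_\infty\le C\|g\|_{H^1}$. Take finite sums $g_M=\sum_{j\le M}\bar{}\,\chi_{1/j}/j$ with suitably rotated phases. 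Since $\SQ$ has dual $\Q$, and $\{1/j\}$ together with frequencies of Kronecker type can be chosen $\Z$-linearly independent (take $q_j=1/p_j$ for distinct primes $p_j$), Kronecker's theorem on the compact group $\SQ$ makes the characters $\chi_{1/p_j}$ jointly equidistributed. So there is a point where all $\chi_{q_j}\approx1$. There $|g_M|\approx\sum_{j\le M}1/j\to\infty$, while $\|g_M\|_{H^1}$ stays bounded, a contradiction.

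The same family refutes the algebra property: $g_M^2$ has $H^1$ norm at least of order its $L^2$ norm, which grows by the same Kronecker and Bohr-type argument. Alternatively, note that a Banach algebra $H\subset L^2$ with continuous multiplication would have bounded point evaluations via the Gelfand theory, reducing back to the embedding failure.

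The main obstacle is justifying Kronecker independence in $\SQ$: one must ensure that $\{1/p_j\}$ generates a free subgroup of $\Q$. It does not, since $\Q$ has rank one. So I would instead use density: the integer multiples of $\nu(t)$ give $\chi_{1/p_j}(\nu(t))=e^{2\pi i t/p_j}$, and these are simultaneously close to $1$ at $t=\prod_{j\le M}p_j$, where they equal exactly $1$. This bypasses Kronecker entirely, and it is the version I would write.
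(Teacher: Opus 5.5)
Your proposal is correct, and on the positive side it follows the paper's argument. You pull back along $\pi_N$, which sends Haar measure to Haar measure, compare the weights $1+k^2/N^2$ and $1+k^2$, and import the classical product estimate and embedding for $H^1(S^1)$.

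On the negative side the paper only records the reason $\sum_{q\in\Q}(1+q^2)^{-1}=\infty$, i.e.\ frequencies accumulating at $0$. Your family $g_M$ is what makes this rigorous, and it can be simplified. No Kronecker or product-of-primes argument is needed, because every character satisfies $\chi_q(\mathbf{1})=1$. Hence $g_M(\mathbf{1})=\sum_{j\le M}1/j\to\infty$, while $\|g_M\|_{H^1}^2\le 2\sum_j j^{-2}$, whether $q_j=1/j$ or $q_j=1/p_j$.

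One step you wrote would fail: the claim that $\|g_M^2\|_{L^2}$ grows. For $q_j=1/p_j$, the sums $1/p_i+1/p_j$ over unordered pairs are pairwise distinct. Their reduced denominators are $p_ip_j$ for $i\neq j$, and $p_i$ or $1$ for $i=j$. Consequently $\|g_M^2\|_{L^2}^2\le 2(\sum_j j^{-2})^2$, and $\|g_M^2\|_{H^1}$ stays bounded.

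The failure of the algebra property should instead be derived from the failure of the embedding, by a spectral-radius argument; this is the precise content of your Gelfand-theory remark. Suppose $\|fg\|_{H^1}\le C\|f\|_{H^1}\|g\|_{H^1}$ on trigonometric polynomials. Then $\|p\|_{L^{2n}}^n=\|p^n\|_{L^2}\le\|p^n\|_{H^1}\le C^{n-1}\|p\|_{H^1}^n$. Letting $n\to\infty$ gives $\|p\|_\infty\le C\|p\|_{H^1}$, which contradicts the behaviour of $g_M$.

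If you want a direct family for squares, take $h_N=N^{-1/2}\sum_{k=1}^{N-1}\chi_{k/N}$. Then $\|h_N\|_{H^1}\le\sqrt2$, but $\|h_N^2\|_{H^1}\ge\|h_N\|_{L^4}^2\ge c\,N^{1/2}$.

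Finally, the last sentence of the proposition denies a conjunction ("Banach algebra and embeds continuously into $C(\SQ)$"). The embedding failure alone already settles it, and the spectral-radius argument additionally shows that $H^1(\SQ)$ is not an algebra.
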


\begin{definition}\label{adelicsmoothloopgroup}
Let $G$ be a compact Lie group with Lie algebra $\mathfrak{g}$.
A map $\gamma:\SQ\to G$ is called \textbf{leafwise smooth} if, after pullback
through the covering map $\mathrm{Exp}:\R\times\widehat\Z\to\SQ$, the lift
$\widetilde\gamma(t,\mathbf{z})$ satisfies:
\begin{enumerate}
\item[(i)] $\widetilde\gamma$ is continuous on $\R\times\widehat\Z$;
\item[(ii)] for every $k\geq0$, the leafwise derivative $D_t^k\widetilde\gamma$
exists everywhere, takes values in $TG$ (in local coordinates),
and is continuous jointly in $(t,\mathbf{z})$;
\item[(iii)] $\widetilde\gamma$ is compatible with the $\Z$-equivariance
relation $(t+n,\mathbf{z})\sim(t,\mathbf{z}-n)$ defining $\SQ$.
\end{enumerate}
Let $\Ld$ be the group of leafwise smooth maps from $\SQ$ to $G$:
\[
\Ld=\bigl\{\gamma:\SQ\to G : \gamma\ \text{is leafwise smooth}\bigr\}.
\]
The \textbf{based adelic loop group} $\Lo$ consists of loops $\gamma\in\Ld$
with $\gamma(\boldsymbol1)=\boldsymbol{e}$, where $\boldsymbol1\in\SQ$ is the
identity and $\boldsymbol{e}\in G$ is the identity.
Both $\Ld$ and $\Lo$ are Fr\'echet manifolds, with seminorms
\[
p_k(\gamma)=\sup_{\bu\in\SQ}\|D_t^k\gamma(\bu)\|
\]
(see the proof of Theorem~\ref{kählerloop}). In a natural way $G$ acts on $\Ld$
by right-multiplication by constant loops, so $\Lo\cong\Ld/G$.
\end{definition}

For a fixed cyclic spectral subgroup we obtain genuine Sobolev loop spaces by pullback
from the ordinary circle.  The following notation is retained for the
unrestricted Fourier--Sobolev model; theorem statements using it without
an explicit fixed-cyclic-subgroup hypothesis are understood in the
qualified sense of Remark \ref{sobolev-status}.
 
\begin{definition}\label{adelicsmoothloopgroup2}
Let $G$ be a Lie group with Lie algebra $\mathfrak{g}$.  At a fixed
fixed cyclic spectral subgroup, let $\La^1_N(G)$ denote the ordinary Sobolev loop group pulled
back through $\pi_N$.  The notation $\La^1(G)$ below is retained as a
shorthand for the unrestricted Fourier--Sobolev space; it is not
asserted to be a Banach Lie group of pointwise \(G\)-valued maps.
Whenever a group structure or point evaluation is used, either a finite
fixed cyclic spectral subgroup is understood or an additional continuity hypothesis is imposed.
Formally one writes:
 \[
 \La^1(G)=\left\{\gamma:\SQ\to{G}: \, 
 \gamma\in{H^1(\SQ,G)} \right\}. 
 \]
 Let $\La^1_0(G)$ be the {\bf based adelic loop group}
 of loops $\gamma\in\La^1(G)$ such that $\gamma(\boldsymbol1)=\boldsymbol{e}$
 where $\boldsymbol1$ is the identity of $\SQ$ 
 (viewed as a multiplicative group) and $\boldsymbol{e}$ is the identity of
 $G$.  
\end{definition}

\medskip
\noi {\bf In what follows we will mainly consider the case when $G=U(n)$}.
Its Lie algebra $\mathfrak{U}(n)$, consist of $n\times n$ skew-Hermitian matrices which we will identify with $\R^{n^2}$. 

\noi Consider the Sobolev based
 adelic loop space
  $\La_0^1(U(n))$. By proposition \ref{connectedcomponents} the group of connected components is isomorphic to $\Q$. 
  
\medskip
\noi {\bf For simplicity of notation we denote with the symbol $\bO$ the smooth loop group $\La_0^\infty(U(n))$.}

\medskip
\noi There is a canonical diffeomorphism of Fr\'echet manifolds
\begin{equation}\label{LOmega}
\bO = \La^\infty_0(U(n)) \;\xrightarrow{\;\sim\;}\; C^\infty(S^1_\Q, U(n))\,/\,U(n),
\end{equation}
given by the rebasing map $\gamma \mapsto [\gamma]$, where $U(n)$ acts on the right
by multiplication by constant loops and the representative of $[\gamma]$ based at
$\boldsymbol1$ is $\gamma\cdot\gamma(\mathbf{1})^{-1}$.  This is an isomorphism of
Fr\'echet manifolds and of homogeneous spaces, but \emph{not} a group isomorphism:
the constant loops $U(n)$ are not a normal subgroup of the free loop group
$C^\infty(S^1_\Q,U(n))$, so the right-hand side is a homogeneous space, not a
quotient group.

\medskip
\noi The Lie algebra of $\bO$ is the tangent space at the identity:
\begin{equation}\label{LiealgebraO}
\mathcal{L}\bO \;=\; T_e\bO \;=\;
\bigl\{f\in C^\infty(S^1_\Q,\mathfrak{u}(n))\;:\;f(\mathbf{1})=0\bigr\},
\end{equation}
with the \emph{pointwise} Lie bracket
\[
[f,\,g](\bu)\;=\;[f(\bu),\,g(\bu)]_{\mathfrak{u}(n)}.
\]
This bracket is closed on $\mathcal{L}\bO$: if $f(\mathbf{1})=g(\mathbf{1})=0$ then
$[f,g](\mathbf{1})=[0,0]=0$.  We call $\mathcal{L}\bO$ the {\bf adelic loop algebra}
(see \cite{Ka} for infinite-dimensional Lie algebra details).

\medskip
\noi \emph{Remark on zero-average representatives.}
The quotient $C^\infty(S^1_\Q,\mathfrak{u}(n))/\mathfrak{u}(n)$ is \emph{not} a Lie
algebra: constant $\mathfrak{u}(n)$ is not an ideal, since $[A,X(\bu)]$ is generally
nonconstant for constant $A$ and nonconstant $X$.  Nevertheless, the linear map
$f\mapsto f-a_0$ (where $a_0=\int_\SQ f\,d\bu$ is the mean) gives a canonical
linear splitting of $\mathcal{L}\bO$ into zero-average representatives
$\{f:\int_\SQ f\,d\bu=0\}$, used below for Fourier calculations.  The condition
$f(\mathbf{1})=0$ corresponds to $\sum_{q\in\Q}a_q=0$ (not merely $a_0=0$), and
the zero-average splitting $f\mapsto f-a_0$ replaces the constraint $a_0=0$ for
computational purposes only.

 \begin{theorem}\label{kählerloop} (Compare \cite{Pr, Pr1, PS})
The based adelic smooth loop space $\bO=\La^\infty_0(U(n))$
(Definition~\ref{adelicsmoothloopgroup}) is a real-analytic Fr\'echet
Lie group whose connected components are indexed by $\Q$:
\[
\pi_0\bigl(\La^\infty_0(U(n))\bigr)\;\cong\;\Q.
\]
(Proposition~\ref{connectedcomponents} gives $\pi_0(C^0_0(\SQ,\gln))\cong\Q$
for the continuous loop group.  The same classification holds for leafwise-smooth
loops because any continuous null-homotopy between two leafwise-smooth loops
can be smoothed: since $U(n)$ is a compact Lie group and the solenoid is a
compact smooth lamination, the standard smoothing theorem for maps between
compact laminations and Lie groups provides a leafwise-smooth homotopy
approximating any continuous one \cite{CC}. Hence the winding-number
classification of continuous components restricts to $\La^\infty_0(U(n))$,
giving $\pi_0(\La^\infty_0(U(n)))\cong\Q$.
Every based smooth character $\chi_q:\SQ\to U(1)\subset U(n)$ satisfies
$\chi_q(\boldsymbol{1})=1$, confirming that winding $q\in\Q$ does not
obstruct basedness.)
Let $\bO^0\subset\bO$ denote the identity component, consisting of based
smooth loops of winding zero.
Then $\bO^0$ is an infinite-dimensional connected real-analytic Fr\'echet
Lie group which is a holomorphically homogeneous complex K\"ahler manifold,
modeled on $\mathcal{L}\bO$.  The solenoid $\SQ$ is a compact abelian group
on the same footing as the circle $\bS^1$; the Kähler structure is defined
globally on $\bO^0$ by the Fourier formulas below, exactly as Pressley--Segal
define it for $\Omega G=\{f\in C^\infty(\bS^1,G):f(1)=e\}$ \cite{PS}.
The symplectic form, complex structure $J_\Omega$, and K\"ahler metric are
given by the explicit Fourier formulas below.

The Fourier formulas also define a natural Hilbert completion of the tangent
space $\mathcal{L}\bO$, modeled on the $|q|$-weighted $\ell^2$ space
\[
\mathcal{H}_\omega = \Bigl\{\,\sum_{q\in\Q,\,q\neq 0} a_q\chi_q
  \;\Big|\; a_{-q}=-a_q^*,\;
  \sum_{q\neq0}|q|\,\|a_q\|^2_{\mathfrak{U}}<\infty\,\Bigr\}.
\]
For a fixed cyclic spectral subgroup $N^{-1}\Z\subset\Q$, the pullback $\pi_N^*$
identifies the ordinary based smooth loop group on $S^1$ with the subgroup
$\La^\infty_{N,0}(U(n))\subset\La^\infty_0(U(n))$ whose Fourier spectrum lies
in $N^{-1}\Z$.  On this subgroup, the $|q|$-weighted completion
$\mathcal{H}_\omega|_N$ is the classical homogeneous $H^{1/2}$-energy completion.
Independently, the $H^1$-completion of the same subgroup is the classical
Sobolev Hilbert Lie group $\La^1_N(U(n))$, in which multiplication and inversion
are continuous (Proposition~\ref{h1Banach-algebra} and Remark~\ref{sobolev-status}).
The metric $\boldsymbol{g}(f,h)=\sum_{q\neq0}|q|\langle a_q,b_q\rangle_{\mathfrak{U}}$
is a \emph{weak} K\"ahler metric on $\bO^0$: it is positive definite on smooth
loops (as the norm $\sum|q|\|a_q\|^2>0$ for $f\neq0$), but the induced completion
$\mathcal{H}_\omega$ is weaker than the Fr\'echet topology.
Whether $\mathcal{H}_\omega$ itself carries a global Hilbert Lie group structure
is an open analytic question; see \S\ref{sobgrass} for the Sobolev Grassmannian setting.
\end{theorem}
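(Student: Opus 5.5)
The proof follows the Pressley--Segal template for $\Omega U(n)$, with $\Z$ replaced by $\Q$ and the circle by $\SQ$, in four steps.

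\emph{Step 1: Fréchet Lie group structure.} Give $\Ld$ the seminorms $p_k$ of Definition~\ref{adelicsmoothloopgroup}. Since $U(n)$ is compact, leafwise-smooth maps into $\mathfrak{u}(n)$ form a nuclear Fréchet space. Pointwise exponential charts $f\mapsto \gamma\exp(f)$, with $\|f\|_{C^0}$ small, give a real-analytic atlas. The transition maps are compositions with the analytic map $\log(\exp(\cdot)\exp(\cdot))$ of $U(n)$, so they preserve leafwise smoothness by the chain rule along each leaf $\R\times\{\mathbf z\}$. Joint continuity in $\mathbf z$ follows from uniform continuity on the compact lift. Multiplication and inversion are pointwise, hence analytic. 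Basedness is the closed condition $\gamma(\boldsymbol 1)=e$, which cuts out a split submanifold.

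\emph{Step 2: $\pi_0\cong\Q$.} Map each loop to the rational winding of $\det\gamma$. By Proposition~\ref{connectedcomponents} this is a bijection on continuous components, and the characters $\chi_q$ realize every $q$ by based smooth loops. For injectivity on smooth components I need two things. First, a continuous path between smooth loops can be replaced by a smooth one: convolve along the leaves with a mollifier in the flow parameter $\varphi_t$, then project back to $U(n)$ by the polar retraction. Convolution commutes with the canonical flow, so the result is leafwise smooth. Second, the continuous components are open, so $C^\infty$-closeness suffices.

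\emph{Step 3: Kähler structure on $\bO^0$.} Write $f=\sum a_q\chi_q$ with $a_{-q}=-a_q^*$. Define
\[
\omega(f,h)=\sum_{q\neq0} iq\,\langle a_q,b_q\rangle,\qquad
J_\Omega\chi_q=-i\,\mathrm{sgn}(q)\chi_q,\qquad
\boldsymbol g=\omega(\cdot,J_\Omega\cdot).
\]
Then $\boldsymbol g$ is the $|q|$-weighted pairing, which is positive definite on nonconstant loops, and $J_\Omega^2=-1$. To show $\omega$ is closed, I use the Pressley--Segal formula $\omega(X,Y)=\int\langle\gamma^{-1}\partial\gamma\,\ldots\rangle$. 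Equivalently, $\omega$ is the left-invariant form coming from the cocycle $\int_{\SQ}\langle f',h\rangle\,d\bu$. Its closedness reduces to the cocycle identity, which holds because $\int_{\SQ} (F)'\,d\bu=0$ for leafwise derivatives by invariance of Haar measure under the flow. Integrability of $J_\Omega$ comes from realizing $\bO^0$ as the orbit of the complexified group $\Lambda_W(n)$, or its smooth version, acting on a Grassmannian. Then $T\bO^0$ is identified with the quotient $L\mathfrak{gl}_n/L^+\mathfrak{gl}_n$, where $J$ is multiplication by $i$ and the tangent projection is $P_+$ from Remark~\ref{intrinsic-WH}.

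\emph{Step 4: Cyclic subgroups.} The statements about $N^{-1}\Z$ are a direct transport via $\pi_N^*$, as in Proposition~\ref{h1Banach-algebra}.

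The main obstacle is the global holomorphic homogeneity. It needs an Iwasawa-type decomposition $L\mathrm{GL}_n=\Omega U(n)\cdot L^+\mathrm{GL}_n$ on the identity component, which for general rational spectrum is Conjecture~\ref{iwasawadecomposition}. My first attempt will be local: near each point, use Theorem~\ref{BVP-Birkhoff} to get normalized small-norm factorization, which gives holomorphic charts. These charts are translated by left multiplication and are compatible because $L^+$ acts analytically. Homogeneity under $\bO^0$ itself is then clear. If global transitivity of the complexified action fails, I would downgrade the claim to a left-invariant integrable almost-complex structure with local holomorphic charts.
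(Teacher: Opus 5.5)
Your Steps~1, 2 and 4, and the symplectic part of Step~3, are sound and parallel the paper. The paper also uses pointwise exponential charts. Your mollification along the canonical flow followed by the polar retraction is a concrete substitute for the smoothing theorem it cites; add the rebasing $\gamma\mapsto\gamma\,\gamma(\boldsymbol 1)^{-1}$ so the smoothed path stays based. Your reduction of $d\omega=0$ to the cocycle identity via $\int_{\SQ}F'\,\bd\bu=0$ is the same Ad-invariance computation the paper adapts from Pressley. You also need the rebased operator $J_\Omega f=Jf-(Jf)(\boldsymbol 1)$, since the bare multiplier gives $J^2f=a_0(f)-f$ on based loops.

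The gap is integrability of $J_\Omega$, which you leave open, and you locate the difficulty in the wrong place. Holomorphic homogeneity here only means that left translations are biholomorphic, which is automatic for a left-invariant integrable structure. Your quotient route also needs no global Iwasawa decomposition. An $\bO^0$-equivariant injective local diffeomorphism into $L\mathrm{GL}_n/L^+\mathrm{GL}_n$ already pulls back a complex structure. Injectivity is elementary: a based unitary $\gamma\in\boldsymbol\Lambda^+_W(n)$ has $\gamma^{-1}=\gamma^*$ with spectrum both $\geq0$ and $\leq0$, hence $\gamma=I$. The paper avoids the quotient altogether. It uses that the type-$(0,1)$ left-invariant fields form a subalgebra: concretely, the $+\mathbf i$- and $-\mathbf i$-eigenspaces of $J_\Omega$ on complexified based loops are the based loops with nonpositive, respectively nonnegative, spectrum, each closed under the pointwise bracket. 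It then invokes Penot's theorem \cite{Pe} in the real-analytic setting.

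The real obstruction, both for your fallback and for $J_\Omega$ itself, is the function class. Theorem~\ref{BVP-Birkhoff} is the Banach inverse function theorem in $\mathfrak W_\Q$, whereas $\bO$ is modeled on the Fr\'echet space of leafwise smooth loops. Moreover, the splitting $L\mathfrak{gl}_n=\Omega\mathfrak u(n)\oplus L^+\mathfrak{gl}_n$ behind $T\bO^0\cong L\mathfrak{gl}_n/L^+\mathfrak{gl}_n$ needs $P_\pm$ to preserve leafwise smoothness, and on $\SQ$ they do not. Here is why.
\begin{itemize}
\item Every continuous $g$ with $\hat g$ supported in $\Q\cap(-\tfrac12,\tfrac12)$ equals its convolution along the flow with a Schwartz kernel $\psi$ with $\hat\psi\equiv1$ on $[-\tfrac12,\tfrac12]$. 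Hence every such $g$ is leafwise smooth.
\item Take $h_M(\theta)=\sum_{k=1}^M\sin(2\pi k\theta)/k$ with $2M<N$. The real functions $h_M\circ\pi_N$ lie in this band with sup norms below $2$, while $|P_+(h_M\circ\pi_N)(\boldsymbol 1)|=\tfrac12\sum_{k=1}^M\tfrac1k$.
\item By the closed graph theorem, some real leafwise smooth $u$ has $P_+u\notin C(\SQ)$. Then $J\bigl(\mathbf i(u-u(\boldsymbol 1))I_n\bigr)=2\mathbf i\,(\mathrm{Im}\,P_+u)\,I_n$ is not even continuous.
\end{itemize}
So $J_\Omega$ does not map $\mathcal L\bO$ into itself. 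Neither your route nor the paper's can close this step for leafwise smooth loops. The paper's Remark~\ref{J-continuity} rests on an $H^s\to L^\infty$ bound that the same remark concedes fails on $\SQ$.

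The same band is an infinite-dimensional Banach subspace, since every $p_k$ is dominated there by $p_0$. So your nuclearity claim in Step~1 is false, though you do not use it.

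To make the K\"ahler part provable you must change the model to one on which $P_\pm$ are bounded. One choice is based unitary loops with $\sum_q(1+|q|)\|a_q\|<\infty$: this is a Banach-algebra class on which $\omega$ and $\boldsymbol g$ are still defined. There your local argument works. The differential at $e$ of $\gamma\mapsto\gamma\,\boldsymbol\Lambda^+$ is $X\mapsto P^-_0X$, a real isomorphism onto the strictly negative loops that carries multiplication by $\mathbf i$ to $J_\Omega$. The inverse function theorem plus equivariance then gives holomorphic charts at every point.
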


\begin{proof}
\noindent\textbf{Fréchet Lie group structure.}
The smooth adelic loop group $\bO=\La^\infty_0(U(n))$ consists of smooth maps
$\gamma:S^1_\Q\to U(n)$ with $\gamma(\mathbf{1})=I_n$.

\medskip
\noindent\textit{Fréchet topology.}
We embed $U(n)\subset M_n(\C)\cong\C^{n^2}$ and define, for $f:\SQ\to M_n(\C)$,
the seminorms
\[
p_k(f)\;=\;\sup_{\bu\in\SQ}\bigl\|D^k f(\bu)\bigr\|,\qquad k=0,1,2,\ldots,
\]
where $D$ is the derivative in the leaf direction (i.e.\ in the $\R$-coordinate
of the canonical cover $\R\times\hat\Z\twoheadrightarrow\SQ$) and $\|\cdot\|$
is the operator norm on $M_n(\C)$.  The topology induced by $\{p_k\}_{k\geq0}$
makes $C^\infty(\SQ,M_n(\C))$ a Fréchet space; the closed subspace
$\La^\infty_0(U(n))=\{\gamma\in C^\infty(\SQ,U(n)):\gamma(\boldsymbol{1})=I_n\}$
inherits the induced Fréchet topology.  In this topology:
\begin{itemize}
\item point evaluation $f\mapsto f(\bu_0)$ is continuous (controlled by $p_0$);
\item pointwise matrix multiplication and inversion are smooth (controlled by all $p_k$,
using Leibniz rule and the smoothness of matrix inversion away from $0$);
\item the Fourier coefficients $f\mapsto a_q=\int_\SQ f(\bu)\chi_{-q}(\bu)\,d\bu$
are continuous linear functionals for every $q\in\Q$.
\end{itemize}

\medskip
Since $U(n)$ is a Lie group, pointwise multiplication and inversion are smooth
in the above seminorm topology, so $\bO$ is a group.
The tangent space at the identity is the Fréchet space
$\mathcal{L}\bO=\{f\in C^\infty(S^1_\Q,\mathfrak{u}(n)):f(\mathbf{1})=0\}$.
The exponential map $\exp_*:f\mapsto({\bu}\mapsto\exp(f(\bu)))$ sends a
neighbourhood of $0\in\mathcal{L}\bO$ diffeomorphically onto a neighbourhood
of $I_n\in\bO^0$, furnishing smooth charts.
The Lie bracket $[f,g](\bu)=[f(\bu),g(\bu)]_{\mathfrak{u}(n)}$ is well-defined and
continuous on $\mathcal{L}\bO$ (pointwise bracket of skew-Hermitian matrices;
closed since $f(\mathbf{1})=g(\mathbf{1})=0\Rightarrow[f,g](\mathbf{1})=0$).  This gives $\bO$ the structure of a real-analytic Fréchet Lie group
with $\pi_0(\bO)\cong\Q$ (see \cite{Pr1} Section 2,
\cite{PS} Section 8.9, and Appendix A in \cite{FU}).
The identity component $\bO^0$ is the connected open subgroup of
loops with winding zero; the Kähler structure below is defined on $\bO^0$.

\noindent\textbf{Kähler structure.}
The following Fourier identities define the Kähler structure. 

\noi \textbf{Warning.}
The based Lie algebra $\mathcal{L}\bO=T_e\bO=\{f:f(\mathbf{1})=0\}$ satisfies
$\sum_{q\in\Q}a_q=0$, which is \emph{not} the same as $a_0=0$.
A concrete example: $f=\chi_q-1$ for $q\neq0$ has $f(\mathbf{1})=1-1=0$ (based)
but Fourier coefficient $a_0=-1$.  The zero-average Fourier representatives
(condition $a_0=0$) belong to the vector-space quotient
$C^\infty(S^1_\Q,\mathfrak{u}(n))/\mathfrak{u}(n)$, not to $T_e\bO$.

\medskip
\noi \textbf{Fourier formulas on the free loop quotient.}
The symplectic form, complex structure and Kähler metric are most naturally
written using zero-average representatives in the free-loop quotient
$C^\infty(S^1_\Q,\mathfrak{u}(n))/\mathfrak{u}(n)$.
Each element of this quotient has a unique zero-average representative:
\begin{equation}\label{fourierH1}
[f]_0 = \sum_{q\in\Q,\,q\neq0} a_q\chi_q,
\qquad a_q\in M_n(\C),\quad a_{-q}=-a_q^{\,*},
\end{equation}
where the condition $a_{-q}=-a_q^{\,*}$ is the \emph{matrix skew-Hermitian} reality
condition ensuring $[f]_0(\bu)\in\mathfrak{u}(n)$ pointwise.
(For $n=1$ this reduces to $a_{-q}=-\overline{a_q}$, the imaginary-valued condition.)

\medskip
\noi \textbf{Transport to the based Lie algebra.}
The Fourier formulas are then transported to $\mathcal{L}\bO=T_e\bO$ via the
linear differential of the rebasing diffeomorphism $\gamma\mapsto\gamma\cdot\gamma(\mathbf{1})^{-1}$.
On the based tangent space, an element $f\in\mathcal{L}\bO$ with $f(\mathbf{1})=0$
has Fourier expansion $f=\sum_{q\in\Q}a_q\chi_q$ with $\sum_q a_q=0$; the
zero-average representative of its image in the quotient is $f-a_0$
(subtracting the constant mode $a_0=\int_\SQ f\,d\bu$).
The complex structure on $\mathcal{L}\bO$ is $J_\Omega f=Jf-(Jf)(\mathbf{1})$
(the rebased operator), not $Jf$ alone.

\medskip
\noi In what follows $\langle \,\cdot\,\,, \,\, \cdot\rangle_{_G}$ denotes a left and right-invariant Riemannian metric on $G$.

\medskip
\noi We compute the Kähler formulas using zero-average representatives
$[f]_0,\,[h]_0$ and then state the result on $T_e\bO$ via the above transport.

\medskip
\noi The symplectic form, complex structure and K\"ahler metric are given explicitly as follows:

\begin{enumerate}
\item[\bf I.] {\bf Symplectic form.} Let
\begin{equation}
\omega(f,g)=\,\int_{\SQ}\langle f'(\bu),g(\bu)\rangle_{\mathfrak{U}}\, \bd\bu,
\end{equation}\label{sympform}

\noi where $\langle \cdot\,, \, \cdot \rangle_{\mathfrak{U}}$ is an
Ad-invariant inner product on $\mathfrak{U}$ so that $\omega$ is left-invariant.
Integration by parts implies $\omega(f,g)=-\omega(g,f)$ so $\omega$ is skew.
Non-degeneracy is verified using the complex structure $J$ defined in item~\textbf{II} below: for
$f=\sum_{q\neq0}a_q\chi_q\in\mathcal{L}\bO$ with $f\neq0$,
\[
\omega(f,Jf)=\sum_{q\in\Q,\,q\neq0}|q|\,\|a_q\|^2_{\mathfrak{U}}>0,
\]
so $\omega(\cdot,J\cdot)$ is a positive definite inner product, which implies $\omega$ is non-degenerate. (Note: $\omega(f,f)=0$ identically by skew-symmetry; nondegeneracy requires the pairing with $Jf$, not with $f$ itself.)

 The proof that $w$ is closed is straightforward
and it is done in \cite{Pr1} pages 560-561. For completeness we include the adapted version of this argument here:

\begin{equation}
\begin{aligned} 
d\omega(X,Y,Z){} = & X\cdot\omega(Y,Z)- Y\cdot\omega(X, Z)+Z\cdot\omega(X, Y) \\
& -\omega([X,Y],Z)+ \omega([X,Z],Y)-\omega([Y, Z],X),
\end{aligned}
\end{equation}

where $X, Y,Z \in \mathcal{L}\bO$ are regarded as left invariant vector fields on $\bO$. As $\omega$ is left-invariant $\omega(Y, Z)$,  $\omega(X, Z)$ and $\omega(X, Y)$
are constant, hence each of the first three terms vanishes. Hence,

\begin{equation}
\begin{aligned}
d\omega(X,Y,Z){} =  &\int_\SQ \bigl(-\langle [X',Y],Z\rangle_{\mathfrak{U}}
-\langle [X,Y'],Z\rangle_{\mathfrak{U}}\\
&\quad+\langle [X',Z],Y\rangle_{\mathfrak{U}}
+\langle [X,Z'],Y\rangle_{\mathfrak{U}} \\
&\quad -\langle [Y',Z],X\rangle_{\mathfrak{U}}
 -\langle [Y,Z'],X\rangle_{\mathfrak{U}} \bigr) \, \bd\bu.
\end{aligned}
\end{equation}
Using the fact that $\langle \cdot\,,\, \cdot\rangle_{\mathfrak{U}}$ is invariant
under the adjoint function it follows that:
\begin{equation}
d\omega(X,Y,Z)=-2\left(\,\omega([X,Y],Z)+\omega(Z,[X,Y])\,\right)=0, \quad 
\forall\,X,Y,Z\in{\mathcal{L}\bO}.  
\end{equation}
\noi Hence $\omega$ is a left-invariant closed 2-form.

\medskip
\item[\bf II.] {\bf Complex structure.} As in \cite{Fr, Pr,Pr1, PS}, the almost-complex
structure on $\mathcal{L}\bO$ is defined using the frequency-sign decomposition.
On the zero-average Fourier representatives
\[
f=\sum_{q\in\Q,\,q\neq0} a_q\chi_q,\quad a_{-q}=-a_q^{\,*},
\]
define the \emph{frequency-sign operator}
\begin{equation}\label{complexstructure-free}
(Jf)(\bu)=\mathbf{i}\sum_{q<0} a_q\chi_q(\bu)
          -\mathbf{i}\sum_{q>0} a_q\chi_q(\bu).
\end{equation}
This satisfies $J^2=-I$ and maps zero-average elements to zero-average elements
(since $J$ annihilates constants).  On the based tangent space $\mathcal{L}\bO=T_e\bO$,
the operator acts via the rebased version $J_\Omega$ defined below.

\noi On the based loop tangent space $T_e\bO=\{f\in C^\infty(S^1_\Q,\mathfrak{u}(n)):f(\mathbf{1})=0\}$,
the complex structure is the \emph{rebased operator}
\begin{equation}\label{complexstructure}
J_\Omega(f) = Jf - (Jf)(\mathbf{1}),
\end{equation}
which subtracts the constant $(Jf)(\mathbf{1})\in\mathfrak{u}(n)$ to restore the basepoint
condition $J_\Omega(f)(\mathbf{1})=0$.
One verifies $(J_\Omega)^2=-I$ on $T_e\bO$ as follows.  Write
$a_0(f)=\int_\SQ f\,d\bu$ for the mean (constant Fourier mode) of $f$; since
the formula for $J$ above involves only $q\neq0$ modes, $J$ depends only on
the zero-average part $f-a_0(f)$, and a direct computation from that formula
gives $J(Jh)=-h$ for every zero-average $h$.  Hence, for general $f$ (not
assumed zero-average, only based: $f(\mathbf{1})=0$, which is \emph{not} the
same condition, cf.\ the Warning above),
\[
J(Jf)=J\bigl(J(f-a_0(f))\bigr)=-\bigl(f-a_0(f)\bigr)=a_0(f)-f.
\]
Then, using $J\bigl((Jf)(\mathbf{1})\bigr)=0$ (constants are killed by $J$),
\[
(J_\Omega)^2f=J\bigl(Jf-(Jf)(\mathbf{1})\bigr)-\bigl(J(Jf-(Jf)(\mathbf{1}))\bigr)(\mathbf{1})
=
\]
\[
J(Jf)-\bigl(J(Jf)\bigr)(\mathbf{1})
=\bigl(a_0(f)-f\bigr)-\bigl(a_0(f)-f\bigr)(\mathbf{1}).
\]
Since $a_0(f)$ is constant, $\bigl(a_0(f)-f\bigr)(\mathbf{1})=a_0(f)-f(\mathbf{1})=a_0(f)$,
using the basepoint condition $f(\mathbf{1})=0$.  
\newline Hence
$(J_\Omega)^2f=\bigl(a_0(f)-f\bigr)-a_0(f)=-f$, as required.

\noi The set of left-invariant vector fields of type $(0,1)$ is a subalgebra of $\mathcal{L}\bO$,
and since both $\bO$ and $J_\Omega$ are real analytic, {\it Th\'eor\`eme 3.5} of
\cite{Pe} implies $\bO$ is a complex manifold.  The natural left action of $\bO$ on
itself is by biholomorphisms, so $\bO$ is holomorphically homogeneous.

\begin{remark}[Continuity of $J_\Omega$ in the Fréchet topology]\label{J-continuity}
The solenoid $\SQ$ is a compact abelian group on the same footing as the circle
$\bS^1$, and the smooth loop group $\bO = \La^\infty_0(U(n))$ is a Fr\'echet
Lie group by the same construction as $\Omega G = \{f\in C^\infty(\bS^1,G):
f(1)=e\}$.  Continuity of $J_\Omega$ in the Fr\'echet topology $\{p_k\}$
follows from the estimate
\[
p_k(J_\Omega f)\;\leq\; C\,p_{k+1}(f),
\]
established as follows.  Since $D^k(J_\Omega f) = D^k(Jf)$ for $k\geq1$
(the basepoint correction $(Jf)(\mathbf{1})$ is constant, so killed by $D^k$),
and since $J$ is the Fourier multiplier by $-i\,\mathrm{sgn}(q)$, standard
Sobolev estimates on $\SQ$ give
$\|J(D^k f)\|_{L^\infty}\leq C\|D^k f\|_{H^s}$ for any $s>\tfrac{1}{2}$,
and $\|D^k f\|_{H^s}\leq C'\|f\|_{H^{k+s}}\leq C''\,p_{k+1}(f)$.
Hence $J_\Omega$ is a continuous linear map $\mathcal{L}\bO\to\mathcal{L}\bO$
in the Fr\'echet topology (shifting the index by one), exactly as on the
classical circle.

\medskip
\noindent\textit{Note.}  The operator $J_\Omega$ is NOT an isometry for
the individual seminorms $p_k$ (the Hilbert transform does not preserve
sup norms), so the stronger claim $p_k(J_\Omega f)=p_k(f)$ is incorrect
and should not be stated.  Continuity (with the shift $p_k \lesssim p_{k+1}$)
is both the correct and the sufficient statement for the Fr\'echet Lie group
theory.

The only remaining subtlety concerns the \emph{Hilbert completion}
$\mathcal{H}_\omega$ (the $|q|$-weighted $L^2$-completion): the
Sobolev embedding $H^1\hookrightarrow C^0$ fails for $\SQ$ because
$\sum_{q\in\Q}(1+q^2)^{-1}=\infty$, in contrast to the classical circle.
Whether $\mathcal{H}_\omega$ admits a satisfactory evaluation theory
is an open question; the smooth Fr\'echet Kähler structure is unaffected.
\end{remark}

\medskip
\item[\bf III.] {\bf K\"ahler metric}. If $f,h\in \mathcal{L}\bO$ have Fourier series
\begin{equation}\label{f}
f(\bu)= \sum_{q\in\Q,\,q\neq0} a_q\chi_q(\bu),\quad a_{-q}=-a_q^{\,*},\quad a_q\in M_n(\C)
\end{equation}
\noi and
\begin{equation}\label{g}
h(\bu)= \sum_{q\in\Q,\,q\neq0} b_q\chi_q(\bu),\quad b_{-q}=-b_q^{\,*},\quad b_q\in M_n(\C),
\end{equation}

The bilinear map $\boldsymbol{g}(f,h)=\omega(f,J(h))$ is definite positive. Indeed,
if $f$ and $g$ are given by the series \ref{f} and \ref{g}, respectively,
then:
\begin{equation}\label{kählermetric}
\boldsymbol{g}(f,g)=\sum_{q\in\Q}\,|q|\langle{a_q, b_q}\rangle_{_\mathfrak{U}}.
\end{equation}
\end{enumerate}
Therefore $\boldsymbol{g}(\cdot\,,\,\cdot)$ is a positive definite real
bilinear form on $\mathcal{L}\bO$ compatible with $J_\Omega$ and $\omega$.
By Remark~\ref{J-continuity}, $J_\Omega$ is continuous in the Fr\'echet
topology $\{p_k\}$ via the estimate $p_k(J_\Omega f)\leq Cp_{k+1}(f)$
(the Hilbert transform is continuous but not isometric for sup norms).
The triple $(\omega,J_\Omega,\boldsymbol{g})$
makes $\bO^0$ a weak K\"ahler Fr\'echet manifold.  Holomorphic homogeneity
follows because left translation $L_h$ preserves both $\omega$ and $J_\Omega$.
This establishes the K\"ahler structure on $\bO^0$.
\end{proof}

\begin{remark}\label{metric_sobolev1/2} The metric $\boldsymbol{g}$ in the formula (\ref{kählermetric}) belongs
to the Sobolev space $H^{1/2}$.
 \end{remark}
 \begin{remark}\label{inclusionofsubgroups} We have defined loop groups
 having matrix elements of different regularity.
 \end{remark}

\subsection{The energy functional and Birkhoff decomposition}\label{bd}
This subsection's construction of the energy functional and its gradient-flow
Birkhoff decomposition was first obtained jointly with Burgos in \cite{BuVe}.
Let $f,g\in\bO$ and $\langle \,\cdot\,,\,\cdot\,\rangle$ be a left and right-invariant
metric on $U(n)$.
\begin{definition}\label{energy} The {\bf energy functional} is the map $E:\bO\to\R$ 
defined as follows:
\begin{equation}\label{energyfunctional}
E(f)=\frac{1}{2}\int_{\SQ}\, \langle f'(\bu),f'(\bu)\rangle\, \bd\bu.
\end{equation}
\end{definition}
\noi As before, the inner product used is given by a Riemannian metric on $U(n)$ which is invariant under left and right translations.  Since the functions $f'$ and $g'$ belong to $H^1(\SQ,U(n))$ this function is well defined and has a gradient vector field.
The factor $\tfrac12$ is the standard harmonic-analysis convention,
matching \cite{PS}~\S8.9 and \cite{AtP}; it ensures that the Hamiltonian
vector field of $E$ equals the rotation-flow generator $X_R$ with no
extra scalar factor (see Proposition~\ref{hamiltonian=rotation}).

In analogy with the action of the circle group $\mathbb{T}$ on the
standard loop space (``rotating the loops'') as in \cite{AtP, Pr, Pr1, PS} 
we can ``rotate'' the adelic groups using $\SQ$. 
\begin{definition}[Rotation action]\label{rotation_flow} We recall that we are considering $\SQ$ as a multiplicative group with unit element 
$\boldsymbol1$.
The group $\SQ$ acts on $\bO$ 
as follows: if $\bv\in\SQ$, define $R_{\bv}:\bO\to\bO$ by the formula
\begin{equation}\label{rotationaction}
R_\bv(f)(\bu)=f(\bv^{-1}\bu)f(\bv^{-1})^{-1},\quad \quad f\in\bO.
\end{equation}
\end{definition} 

\noi Since based loops are preserved, 
$R_\bv(f)(\boldsymbol1)=\boldsymbol{e}$, and 
$R_{\bv_1\bv_2}=R_{\bv_1}\circ{R_{\bv_2}}$ the action is well-defined. We call this action the {\bf rotation action}. We have the self-evident:

\begin{proposition}\label{homomorphismstou1} The set of fixed points of the action of $\SQ$ on $\bO$
consists of loops $f:\SQ\to{U(n)}$  such that $f$ is a continuous (in fact real-analytic) homomorphism.
\end{proposition}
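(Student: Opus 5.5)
We prove both inclusions: continuous homomorphisms are fixed, and fixed points are continuous (indeed real-analytic) homomorphisms.

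\emph{Homomorphisms are fixed.} Let $f:\SQ\to U(n)$ be a continuous homomorphism lying in $\bO$, so that $f(\boldsymbol1)=\boldsymbol e$. For every $\bv,\bu\in\SQ$ we have $f(\bv^{-1}\bu)=f(\bv^{-1})f(\bu)$. Hence
\[
R_\bv(f)(\bu)=f(\bv^{-1})f(\bu)f(\bv^{-1})^{-1}.
\]
The image $f(\SQ)$ is a connected abelian subgroup of $U(n)$, because $\SQ$ is connected and abelian. Therefore $f(\bv^{-1})$ commutes with $f(\bu)$, and $R_\bv(f)(\bu)=f(\bu)$, so $f$ is fixed. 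We also need $f$ to be leafwise smooth, so that it actually lies in $\bO$. Its closed image $\overline{f(\SQ)}$ is a compact connected abelian Lie subgroup of $U(n)$, i.e.\ a torus $T\cong (S^1)^k$. Composing with the coordinate projections of $T$ gives characters of $\SQ$. By Pontryagin duality (the dual of $\SQ$ is $\Q$), each of these is some $\chi_{q_j}$. Thus $f$ is conjugate to $\operatorname{diag}(\chi_{q_1},\ldots,\chi_{q_n})$. Along each leaf, $\chi_q\circ\nu(t)=e^{2\pi i qt}$, which is real-analytic. This gives the ``in fact real-analytic'' clause.

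\emph{Fixed points are homomorphisms.} Suppose $R_\bv(f)=f$ for all $\bv$. Replacing $\bv$ by $\bv^{-1}$, this reads
\[
f(\bv\bu)=f(\bu)f(\bv)\qquad\text{for all }\bu,\bv\in\SQ.
\]
By commutativity of $\SQ$ we may swap the roles of $\bu$ and $\bv$ and obtain $f(\bv\bu)=f(\bv)f(\bu)$. Hence $f(\bu)f(\bv)=f(\bv)f(\bu)$, and $f$ is a continuous homomorphism with commuting values. The regularity then follows from the diagonalization in the previous paragraph.

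\emph{Main point of care.} The only delicate step is the structure of continuous homomorphisms $\SQ\to U(n)$. We cannot use differentiability a priori for the identification, so we argue via closed abelian subgroups. The closure of the image is a compact connected abelian subgroup of $U(n)$, hence a maximal-torus-type subgroup. Once we conjugate into the diagonal torus, each diagonal entry is a continuous character of $\SQ$, hence equals some $\chi_q$ with $q\in\Q$. This is Pontryagin duality, recalled in \S\ref{subsec:rational-dilation-symmetry}, together with the Scheffer-type rigidity of Theorem~\ref{scheffer-full}. Real-analyticity along the leaves then follows directly from the formula $\chi_q(\nu(t))=e^{2\pi i q t}$.
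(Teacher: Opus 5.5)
Your proof is correct, and it is essentially the verification the paper has in mind when it calls the proposition self-evident. The fixed-point condition is the anti-homomorphism identity $f(\bv\bu)=f(\bu)f(\bv)$, which commutativity of $\SQ$ upgrades to a homomorphism, and the regularity comes from conjugating $f$ to $\Delta_{\mathbf q}$, as in Corollary~\ref{coj_classes_homS1to Un}.

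Two cosmetic points:
\begin{itemize}
\item The passage from ``the image lies in a torus'' to ``$f$ is conjugate to a diagonal loop'' is most cleanly justified by simultaneously unitarily diagonalizing the commuting family $\{f(\bu)\}$. This avoids the closed-subgroup and maximal-torus detour.
\item The appeal to Scheffer's theorem is unnecessary. Pontryagin duality alone identifies each diagonal entry with some $\chi_q$.
\end{itemize}
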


\noi We may restrict the action to the connected component of the identity
of $\SQ$. This is a one-parameter subgroup and we can use the canonical flow 
$\left\{\varphi_{_t}\right\}_{t\in\R}$ in definition (\ref{exp}).

\noi Let
$R_t:\bO\to\bO$ be defined as follows:

\begin{equation}\label{rotationflow}
R_t(f)(\bu)=f(\varphi_{_{-t}}(\boldsymbol1)\bu)(\varphi_{_{-t}}(\boldsymbol1))^{-1}.
\end{equation}

\noi The flow in equation (\ref{rotationflow}) is called the {\bf rotation flow}.

Let the Hamiltonian vector field $X_{E}$ of the energy functional $E$ be defined, as usual,
for the symplectic manifold $\bO$:
\begin{equation}\label{hamiltonianvf}
 \text{d}E(Y)=\omega(X_{E},Y),
 \end{equation}
 for any vector field $Y:\bO\to{T\bO}$ on $\bO$.
 \begin{proposition}\label{hamiltonian=rotation} The Hamiltonian flow corresponding to the energy function $E$
 is given by the rotation flow given by formula (\ref{rotationflow}). 
 In particular the rotation flow acts by symplectomorphisms.
 \end{proposition}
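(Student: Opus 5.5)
We follow the classical Pressley--Segal computation on $\Omega G$, transported to $\SQ$ through the Fourier description of $\bO$ and of $\omega$. The rotation flow $R_t$ is a one-parameter group of diffeomorphisms of $\bO$. We first compute its generator $X_R$ at a point $f\in\bO$:
\[
X_R(f)=\frac{d}{dt}\Big|_{t=0}R_t(f)=-f'+f\cdot f'(\mathbf 1)^{\,\cdot}\!,
\]
where the second term comes from differentiating the rebasing factor $f(\varphi_{-t}(\mathbf 1))^{-1}$. We then left-translate to the identity: $f^{-1}X_R(f)=-f^{-1}f'+f^{-1}f'(\mathbf 1)$, i.e.\ minus the logarithmic derivative $\xi_f:=f^{-1}f'$, corrected by a constant that restores the basepoint condition. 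Leafwise derivatives are taken along the canonical flow of Definition~\ref{exp}.

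Next we compute $dE$. For a tangent vector $Y=f\eta$ with $\eta\in\mathcal L\bO$, take the curve $f\exp(s\eta)$. Bi-invariance of the metric gives
\[
E(f\exp(s\eta))=\tfrac12\int_\SQ\|\xi_f+s\eta'+O(s^2)\|^2\,d\bu ,
\]
so that
\[
dE_f(f\eta)=\int_\SQ\langle \xi_f,\eta'\rangle\,d\bu .
\]
This is the standard variation, with the $\mathrm{Ad}$-term $[\xi_f,\eta]$ cancelling by invariance, exactly as on $\bS^1$. Integrating by parts along the leaves, which is legitimate because Haar measure on $\SQ$ is invariant under the canonical flow, gives
\[
dE_f(f\eta)=-\int_\SQ\langle\xi_f',\eta\rangle\,d\bu .
\]
On the other side, left-invariance of $\omega$ gives $\omega_f(X_R,f\eta)=\omega(f^{-1}X_R,\eta)=\int\langle (f^{-1}X_R)',\eta\rangle$. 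Since $f^{-1}X_R=-\xi_f+\text{const}$, this equals $-\int\langle\xi_f',\eta\rangle$. The constant disappears because it is killed by the derivative. Hence $dE=\omega(X_R,\cdot)$, so $X_E=X_R$ provided $\omega$ is nondegenerate. Nondegeneracy was established in Theorem~\ref{kählerloop}, and the factor $\tfrac12$ is what makes the constants match.

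For the symplectomorphism claim, we would note that each $R_{\bv}$ is a composition of left translation by the constant $f(\bv^{-1})^{-1}$ (right multiplication, handled by $\mathrm{Ad}$-invariance) with a Haar-measure-preserving translation of $\SQ$ that commutes with the leafwise derivative. Both operations preserve $\omega$ directly from its formula. Alternatively, the flow of a Hamiltonian vector field of a closed form preserves it by the Cartan formula.

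The main obstacle is analytic rather than algebraic. We must justify leafwise integration by parts on $\SQ$, namely $\int g'\,d\bu=0$ for leafwise-smooth $g$. We will prove this by Fourier expansion: the derivative multiplies $\chi_q$ by $2\pi i q$, so the $q=0$ coefficient of $g'$ vanishes. We must also make sure that the basepoint-correction constants and the rebasing conventions (the $J_\Omega$ versus $J$ issue flagged in the paper) do not introduce a discrepancy. In the weak Fr\'echet setting, $X_E$ is defined only by the identity $dE=\omega(X_E,\cdot)$, so uniqueness of $X_E$ relies on the injectivity of $\omega^\flat$ shown via $\omega(f,Jf)>0$.
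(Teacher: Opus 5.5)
Your proposal is correct and follows the paper's proof essentially step for step. The shared steps are the generator $X_R(f)=-f'+f\,f'(\mathbf 1)$, whose left translate is $-\xi_f+f'(\mathbf 1)$; the first variation of $E$ along $f\exp(s\eta)$, where the $\mathrm{Ad}$-term drops out; leafwise integration by parts; and comparison with the left-invariant form $\omega$. Your expression $f^{-1}f'(\mathbf 1)$ should be read as $(f^{-1}f')(\mathbf 1)=f'(\mathbf 1)$, which is constant because $f(\mathbf 1)=I$.

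Two remarks on points where you depart slightly from the paper. Your justification of $\int_{\SQ}g'\,d\bu=0$ through flow-invariance of Haar measure is the right one; the paper instead appeals to a boundary term at $\mathbf 1$. For uniqueness of $X_E$ you can bypass $J$ entirely: for $\eta\in\mathcal L\bO$ the based loop $\eta'-\eta'(\mathbf 1)$ gives $\omega(\eta,\eta'-\eta'(\mathbf 1))=\int_{\SQ}\|\eta'\|^2\,d\bu$. This vanishes only if $\eta$ is leafwise constant, hence $\eta=0$ by density of the leaves and $\eta(\mathbf 1)=0$.
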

 
 \begin{proof}
We give a direct variational calculation in the spirit of
\cite{PS} Proposition~(8.9.3) and \cite{AtP} Lemmas~3.2--3.4,
adapted to the adelic setting.

\smallskip
\noindent\textbf{Step~1: variational derivative of $E$.}
Let $f\in\bO$ and let $f_s=f\cdot e^{sV}$ be a smooth one-parameter
variation with $V=f^{-1}Y\in\mathcal{L}\bO$ (so $V(\mathbf{1})=0$).
Write $A=f^{-1}f'\in C^\infty(\SQ,\mathfrak{u}(n))$ for the
pulled-back Maurer--Cartan form.  Under the variation,
\[
\delta A\;=\;\frac{d}{ds}\Big|_{s=0}(f_s^{-1}f_s')
\;=\;V'+[A,V],
\]
where the first term comes from differentiating $e^{-sV}(e^{sV})'$
and the second from differentiating $e^{-sV}A\,e^{sV}$.
Since $E(f)=\tfrac12\int_\SQ\langle A,A\rangle\,d\bu$:
\[
dE_f(Y)\;=\;\int_\SQ\langle A,\,V'+[A,V]\rangle\,d\bu.
\]
The bracket term vanishes by Ad-invariance of $\langle\cdot,\cdot\rangle$:
\[
\langle A,[A,V]\rangle=-\mathrm{Tr}(A[A,V])=-\mathrm{Tr}(A^2V)+\mathrm{Tr}(A^2V)=0.
\]
Hence
\[
dE_f(Y)\;=\;\int_\SQ\langle A,V'\rangle\,d\bu
\;=\;-\int_\SQ\langle A',V\rangle\,d\bu,
\]
where the last step is integration by parts along the leaf direction
(the boundary term at $\mathbf{1}$ vanishes since $V(\mathbf{1})=0$).

\smallskip
\noindent\textbf{Step~2: infinitesimal generator of the rotation flow.}
Differentiating $R_t(f)(\bu)=f(\varphi_{-t}(\boldsymbol1)\bu)\,f(\varphi_{-t}(\boldsymbol1))^{-1}$
at $t=0$, with $\varphi_0(\boldsymbol1)=\boldsymbol1$ and $f(\boldsymbol1)=I_n$:
\[
X_R(f)(\bu)\;=\;\frac{d}{dt}\Big|_{t=0}R_t(f)(\bu)
\;=\;-f'(\bu)+f(\bu)\,f'(\boldsymbol1).
\]
Left-translating to the identity: $A_R:=f^{-1}X_R(f)=-A+f'(\boldsymbol1)$.
Since $f'(\boldsymbol1)\in\mathfrak{u}(n)$ is constant, $A_R'=-A'$.

\smallskip
\noindent\textbf{Step~3: identification.}
The symplectic form, being left-invariant, evaluates at $f$ as
\[
\omega_f(X_R,Y)\;=\;\int_\SQ\langle A_R',V\rangle\,d\bu
\;=\;-\int_\SQ\langle A',V\rangle\,d\bu.
\]
Comparing with Step~1: $dE_f(Y)=\omega_f(X_R,Y)$ for all $Y$.
Hence $X_E=X_R$, i.e.\ the rotation flow is the Hamiltonian flow of $E$.

The fixed-point claim follows from Proposition~\ref{homomorphismstou1}:
the fixed points of the rotation flow are precisely the loops $f$ with
$R_t(f)=f$ for all $t$, i.e.\ continuous homomorphisms $f:\SQ\to U(n)$.
\end{proof}
 
 \noi In fact, more generally, one has the following 
 corollary of proposition \ref{hamiltonian=rotation}:
 
 \begin{corollary} The rotation action (\ref{rotationaction}) of the group 
 $\SQ$ on $\bO$ is by
symplectomorphisms.
 \end{corollary}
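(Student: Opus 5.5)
The plan is to show that each rotation $R_{\bv}$, $\bv\in\SQ$, preserves $\omega$, by working directly with the left-invariant Fourier description of $\omega$. Proposition~\ref{hamiltonian=rotation} cannot be used on its own: it only covers the one-parameter subgroup $\{\varphi_t(\boldsymbol1)\}$, which is the base leaf. So I will handle the base leaf first and then extend to all of $\SQ$ by continuity and density.

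\emph{Step 1 (base leaf).} For $\bv=\varphi_t(\boldsymbol1)$ we have $R_{\bv}=R_{-t}$ up to sign convention. This is the time-$t$ map of the Hamiltonian flow of $E$ by Proposition~\ref{hamiltonian=rotation}. Since $\omega$ is closed, Cartan's formula gives $\mathcal{L}_{X_E}\omega=d\iota_{X_E}\omega=d(dE)=0$, so $R_t^*\omega=\omega$ for all $t$. Because the flow here is defined explicitly on all of $\bO$ rather than through an ODE, I will verify $\frac{d}{dt}R_t^*\omega=R_t^*\mathcal{L}_{X_E}\omega$ directly, using the group law $R_{t+s}=R_t\circ R_s$.

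\emph{Step 2 (direct formula).} The pullback identity can also be proved without flows, and this is the version that generalizes. Write $R_{\bv}=\rho_{\bv}\circ\tau_{\bv}$, where $\tau_{\bv}(f)(\bu)=f(\bv^{-1}\bu)$ is translation of the argument and $\rho_{\bv}$ is right multiplication by the constant $f(\bv^{-1})^{-1}$.
\begin{itemize}
\item Translation $\tau_{\bv}$ preserves Haar measure. It commutes with the leafwise derivative, since the canonical flow is translation by the base leaf and $\SQ$ is abelian.
\item Under translation, Fourier coefficients change only by phases: $a_q\mapsto\chi_q(\bv^{-1})a_q$.
\end{itemize}
Consequently the expression $\omega(f,g)=\sum_q |q|\cdot(\ldots)$, or equivalently $\int\langle f',g\rangle\,d\bu$, is invariant. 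In the left-trivialization $V=f^{-1}Y$, the differential of $R_{\bv}$ sends $V\mapsto \mathrm{Ad}_{c}(\tau_{\bv}V-V(\bv^{-1}))$ with $c=f(\bv^{-1})$ constant. Then:
\begin{itemize}
\item $\mathrm{Ad}_c$ is an isometry of $\langle\cdot,\cdot\rangle_{\mathfrak U}$ and commutes with $D$.
\item Subtracting a constant does not change $V'$.
\item $\int\langle V_1',\text{const}\rangle\,d\bu=0$, because the integral of a leafwise derivative vanishes against Haar measure; its zero Fourier mode is $0$.
\end{itemize}
Hence $R_{\bv}^*\omega=\omega$ for every $\bv$.

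\emph{Step 3 (main obstacle).} The delicate point is that $\omega$ on $T_f\bO$ is defined by left translation, whereas $R_{\bv}$ involves right multiplication by $f(\bv^{-1})^{-1}$, which depends on $f$. The differential therefore picks up an extra term $-V(\bv^{-1})$ coming from the variation of the constant. I expect the computation to work because that term is constant in $\bu$: it is killed by $D$ in the first slot, and it pairs to zero through $\int D(\cdot)\,d\bu=0$ in the second slot. If handling this term directly proves awkward, the fallback is to use Step~1 together with continuity. The map $\bv\mapsto R_{\bv}^*\omega$ is continuous in $\bv$ because the Fr\'echet seminorms control evaluation and translation, and the base leaf is dense in $\SQ$ (Remark~\ref{integeradeles}). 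Invariance on the base leaf then extends to all of $\SQ$.
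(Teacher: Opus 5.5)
\textbf{Correct, but your main route differs from the paper's.} The paper's proof is essentially your fallback. The base leaf acts by symplectomorphisms by Proposition~\ref{hamiltonian=rotation}, the base leaf is dense in $\SQ$, and symplectomorphisms are closed in the $C^1$ topology, so invariance extends to every $\bv\in\SQ$.

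Your Step~2 instead verifies $R_{\bv}^*\omega=\omega$ directly for an arbitrary $\bv$. This computation is sound. Put $F=R_{\bv}(f)$ and $c=f(\bv^{-1})$. Then the differential in left trivialization is $V\mapsto\mathrm{Ad}_c\bigl(\tau_{\bv}V-V(\bv^{-1})\bigr)$, and the result is again based at $\boldsymbol1$. The invariance then follows from these facts:
\begin{itemize}
\item Ad-invariance of $\langle\cdot,\cdot\rangle_{\mathfrak U}$.
\item $D$ kills constants and commutes with $\tau_{\bv}$, because translations commute with the canonical flow ($\SQ$ is abelian).
\item $\int_{\SQ}Dh\,d\bu=0$. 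Justify this either by flow-invariance of Haar measure or by the vanishing of the zero Fourier mode of $Dh$.
\item Translation invariance of Haar measure.
\end{itemize}
Together these give $\omega_F(dR_{\bv}Y_1,dR_{\bv}Y_2)=\omega_f(Y_1,Y_2)$.

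\textbf{What each approach buys.} Your computation is self-contained and pointwise in $\bv$. It uses neither the closedness of $\omega$, nor the energy functional and the identification $X_E=X_R$, nor a flow/vector-field correspondence on a Fr\'echet manifold, nor an infinite-dimensional closedness-of-symplectomorphisms statement. It also handles the transverse $\widehat{\Z}$-directions directly rather than by approximation from the base leaf. Once Step~2 is in place, Steps~1 and~3 are redundant.

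The paper's argument is shorter given Proposition~\ref{hamiltonian=rotation}. However, its closedness step implicitly needs continuity of $\bv\mapsto R_{\bv}^*\omega$. You correctly trace that continuity to the uniform continuity of the $D^kf$ on the compact group $\SQ$.

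\textbf{One small slip.} In Fourier terms $\omega$ carries the signed weight $q$ (times $2\pi\mathbf{i}$); $|q|$ is the weight of the metric $\boldsymbol g=\omega(\cdot,J\cdot)$. This does not affect your argument, since you only use the integral form $\int_{\SQ}\langle V_1',V_2\rangle\,d\bu$.
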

 \begin{proof} It follows immediately from the fact that the connected component
 of the identity in $\SQ$ is dense, the fact that this subgroup acts by symplectomorphisms and the fact that the group of symplectomorphisms is closed in the group of diffeomorphisms with the $C^1$ topology.
 \end{proof}

\subsection{The gradient of the energy functional}\label{energy2} The 
gradient $\nabla{E}$ of the energy functional $E$, with respect to the K\"ahler metric $\boldsymbol{g}$ on $\bO$, is defined, as usual, as follows;

\begin{equation}\label{energygradient}
\boldsymbol{g}(\nabla E,X)=\text{d}E(X),
\end{equation}
\noi where $X:\bO\to{T\bO}$ is any vector field on $\bO$.

\begin{remark}\label{foliation-rot-grad}
In a K\"ahler manifold, the Kähler identity $\boldsymbol{g}=\omega(\cdot,J\cdot)$
immediately gives the gradient--Hamiltonian relation
\[
\nabla E = J_\Omega(X_E).
\]
(Proof: for any $Y$, $\boldsymbol{g}(\nabla E, Y)=dE(Y)=\omega(X_E,Y)=\boldsymbol{g}(J_\Omega X_E, Y)$.)

\noi The commutator identity $[X_E, \nabla E]=0$ is a separate statement.  It follows
once the rotation action is shown to be \emph{holomorphic}, i.e.\ to preserve $J_\Omega$.
In the adelic setting, the rotation flow $R_t$ is by definition a flow of symplectomorphisms
(Corollary above); it preserves $J_\Omega$ if and only if its generator $X_E$ is a
real-holomorphic vector field ($\mathcal{L}_{X_E}J_\Omega=0$).
In the classical case this follows from the $\C^\times$-action on $LG$ \cite{PS};
in the adelic case the canonical flow $\varphi_t$ on $\SQ$ provides the same
complex-analytic extension, so the holomorphicity of $R_t$ holds by the same
argument.  Given this, $X_E$ and $\nabla E=J_\Omega X_E$ are the real and
imaginary parts of a single complex vector field, hence commute.

\noi Therefore, outside the critical set $\mathrm{Sing}(\nabla E)$, the pair
$(X_E, \nabla E)$ generates a local one-parameter complex flow
$\Psi_T$, $T=t+\mathbf{i}s\in\mathcal{U}\subset\C$,
and $\bO\setminus\mathrm{Sing}(\nabla E)$ carries a holomorphic 1-dimensional
foliation $\mathcal{F}$.
\end{remark}

It follows the following:

\begin{corollary}\label{criticalpoints_energy}
Since $X_E$ and $\nabla{E}$ commute, the fixed points
of the Hamiltonian action are the critical points of $E$ and vice-versa.
Hence, the critical points of $E$ are the based loops $f:\SQ\to\bO$ which are group homomorphisms.
The image of a \emph{non-trivial} such $f$ is a subgroup of $U(n)$
isomorphic to $U(1)$.  (The trivial homomorphism $f\equiv I_n$ is also a
critical point --- indeed the global minimum $E\equiv0$ --- but its image
is a single point, not isomorphic to $U(1)$; it is the case
$(\boldsymbol1,0)$ separated out in Corollary~\ref{coj_classes_homS1to Un}
below.)
\end{corollary}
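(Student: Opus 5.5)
The plan is to derive the statement from Proposition~\ref{hamiltonian=rotation}, Remark~\ref{foliation-rot-grad} and Proposition~\ref{homomorphismstou1}, in three steps: identify zeros of $\nabla E$ with zeros of $X_E$, identify those with rotation-fixed loops, and then describe the images of the resulting homomorphisms.

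For the first step, Remark~\ref{foliation-rot-grad} gives $\nabla E=J_\Omega X_E$. Since $J_\Omega^2=-I$, the operator $J_\Omega$ is invertible on each tangent space. Hence $\nabla E(f)=0$ exactly when $X_E(f)=0$. By nondegeneracy of $\omega$ and $dE=\omega(X_E,\cdot)$, this is also equivalent to $dE_f=0$. So critical points of $E$ are precisely the zeros of $X_E$. The commutation $[X_E,\nabla E]=0$ is not really needed here; it only guarantees that the zero sets are invariant under both flows.

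For the second step, $X_E=X_R$ by Proposition~\ref{hamiltonian=rotation}. A zero of $X_R$ is a stationary point of the rotation flow $R_t$, so $R_t(f)=f$ for all $t$. From the formula in Step~2 of that proof, $X_R(f)=0$ means $f'(\bu)=f(\bu)f'(\boldsymbol1)$ along leaves. Writing $\xi=f'(\boldsymbol1)\in\mathfrak u(n)$, uniqueness for this ODE on the base leaf gives $f(\nu(t))=\exp(t\xi)$. The base leaf is dense (Definition~\ref{def:base-leaf}) and $f$ is continuous, so $f$ agrees with a continuous homomorphism from the closure, which is all of $\SQ$. This is the content of Proposition~\ref{homomorphismstou1}, extended from the flow to the full $\SQ$-action by density as in the Corollary following Proposition~\ref{hamiltonian=rotation}. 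Conversely, a homomorphism $f$ satisfies $R_\bv f=f$, so $X_R(f)=0$.

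For the third step, the image of a homomorphism $f$ is the closure of $\{\exp(t\xi)\}$, a compact connected abelian subgroup of $U(n)$, hence a torus. I would show it is a circle. Diagonalize the image as a commuting family; each eigenline gives a character of $\SQ$, that is, some $\chi_{q_j}$ with $q_j\in\Q$. So the image is $\{\operatorname{diag}(\chi_{q_1}(\bu),\dots,\chi_{q_n}(\bu))\}$ up to conjugation. The exponents $q_j$ span a $\Z$-submodule of $\Q$ that is finitely generated, hence cyclic, say generated by $q_0$. The image is then the image of $\chi_{q_0}$ under the map $z\mapsto \operatorname{diag}(z^{m_1},\dots,z^{m_n})$, a closed one-parameter subgroup isomorphic to $U(1)$ when $f$ is non-trivial.

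The main obstacle is justifying the rank-one claim, i.e.\ that the torus cannot have dimension greater than one. The argument is that $\Q$ has torsion-free rank one, so finitely many rationals lie in a cyclic group. I would present this explicitly, since it is exactly where $\Q$ rather than $\Z^k$ enters. A secondary care point is the trivial homomorphism, whose image is a point; this is handled by the stated separation of the case $f\equiv I_n$.
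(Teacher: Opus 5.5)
Your proposal is correct. For the last sentence it takes a genuinely different route from the paper.

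The paper treats the first two sentences as immediate from Propositions~\ref{hamiltonian=rotation} and~\ref{homomorphismstou1}, and proves only the image claim. It does this by a dimension count: the image is a nontrivial compact connected subgroup of $U(n)$ that is a continuous homomorphic image of the one-dimensional group $\SQ$. Hence it has topological dimension one, and so it is a circle.

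You instead diagonalize simultaneously and read off characters $\chi_{q_j}$. You then use that finitely generated subgroups of $\Q$ are cyclic to write $f=\phi\circ\chi_{q_0}$ with $\phi(z)=g\operatorname{diag}(z^{m_1},\ldots,z^{m_n})g^{-1}$. This is the Pontryagin-dual form of the paper's argument: a torus $T^k$ as image would give an embedding $\Z^k\hookrightarrow\Q$, forcing $k\le1$.

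Your version buys two things.
\begin{enumerate}
\item It makes visible why dimension cannot increase. For arbitrary continuous maps topological dimension can go up (Peano curves), so the paper's phrase silently relies on the image being the quotient group $\SQ/\ker f$ of a compact group.
\item It directly yields the normal form $\Delta_{\mathbf q}$ with $q_j=m_jq_0$. That is exactly what the paper needs for Corollary~\ref{coj_classes_homS1to Un} and for the decomposition $f=g\circ h$ asserted just before it.
\end{enumerate}
The paper's argument is shorter.

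You are also right that $[X_E,\nabla E]=0$ is not needed for the first sentence. Nondegeneracy of $\omega$ and positivity of $\boldsymbol g$, or $\nabla E=J_\Omega X_E$ with $J_\Omega$ invertible, already identify the zero sets. This is preferable, since the paper supports the commutation only by the sketched holomorphicity argument in Remark~\ref{foliation-rot-grad}.
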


\begin{proof}
We only need to prove the last sentence. For $f$ non-trivial, its image is
a nontrivial subgroup of $U(n)$ which is a continuous homomorphic image of
the compact, connected, one-dimensional group $\SQ$, hence itself compact,
connected, and of topological dimension one (being nontrivial, its
dimension cannot drop to $0$); such a subgroup of $U(n)$ must be isomorphic
to the circle group $U(1)\simeq\bS^1$.
\end{proof}

\noi Thus there is a one-to-one correspondence between group homomorphisms $f:\SQ\to{U(n)}$ and critical points
of $E$. 

 Let  
 \begin{equation}\label{maxtorus}
 \mathbf{T}^n=\left\{\operatorname {diag} \left(e^{i\theta _{1}},e^{i\theta _{2}},\dots ,e^{i\theta _{n}}\right):\forall j,\theta _{j}\in \mathbb {R} \right\}.
\end{equation}

\noi $\mathbf{T}^n\subset{U(n)}$ is a maximal torus of the unitary group. Let
$\mathfrak{T}^n$ be its Lie algebra (which as a real vector space is $\R^n$)
 and $\exp:\mathfrak{T}^n\to\mathbf{T}^n$ the exponential map which in this case is a homomorphism. Let $\mathbf{L}=\exp^{-1}(0)$. Then $\mathbf{L}$ is a 
 lattice in $\mathfrak{T}^n$ 
 (i.e., is a discrete, co-compact additive subgroup) so it is isomorphic to $\Z^n$.

\noi Since any homomorphism $f:\SQ\to{U(n)}$ is of the form $f=g\circ{h}$
where $h$ is a homomorphism from $\SQ$ into $U(1)$ (by corollary \ref{criticalpoints_energy}) and $g$ is a 
homomorphism $g:U(1)\to{U(n)}$,
one has the following corollary of proposition \ref{homomorphismstou1}:   
 
\begin{corollary}\label{coj_classes_homS1to Un}
Every continuous homomorphism $f:\SQ\to U(n)$ is conjugate to
\[
\Delta_{\mathbf q}(\bu)=\operatorname{diag}
\bigl(\chi_{q_1}(\bu),\ldots,\chi_{q_n}(\bu)\bigr),
\qquad \mathbf q\in\Q^n.
\]
Two such homomorphisms are conjugate if and only if their weight tuples
are related by a permutation. Hence the conjugacy classes are indexed by
$\Q^n/S_n$.
\end{corollary}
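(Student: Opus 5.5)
The plan is to reduce to the classical structure theory of abelian subgroups of $U(n)$, using only that $\SQ$ is a compact abelian group with Pontryagin dual $\Q$. Let $f:\SQ\to U(n)$ be a continuous homomorphism. Its image $f(\SQ)$ is a compact abelian subgroup of $U(n)$, so the operators $f(\bu)$, $\bu\in\SQ$, form a commuting family of unitary matrices. Such a family is simultaneously unitarily diagonalizable: pick $g\in U(n)$ with $g f(\bu) g^{-1}\in\mathbf T^n$ (the maximal torus \eqref{maxtorus}) for every $\bu$. This amounts to decomposing the finite-dimensional unitary representation $\C^n$ of the compact abelian group $\SQ$ into one-dimensional irreducible summands. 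Alternatively, one can argue via Corollary~\ref{criticalpoints_energy}: the image is a circle, hence lies in a maximal torus, and all maximal tori are conjugate. I would rely on the representation-theoretic argument rather than that corollary, because it covers the trivial case as well and does not depend on the dimension claim.

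After conjugation, each diagonal entry $\bu\mapsto (gf(\bu)g^{-1})_{jj}$ is a continuous homomorphism $\SQ\to U(1)$, i.e.\ a character. By the identification $\widehat{\SQ}\cong\Q$ recalled in \S\ref{subsec:rational-dilation-symmetry}, it equals $\chi_{q_j}$ for a unique $q_j\in\Q$. This gives $gfg^{-1}=\Delta_{\mathbf q}$ with $\mathbf q=(q_1,\dots,q_n)\in\Q^n$.

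For the conjugacy criterion, there are two directions. If $\mathbf q'=\sigma\mathbf q$ for some $\sigma\in S_n$, then conjugating $\Delta_{\mathbf q}$ by the permutation matrix of $\sigma$ gives $\Delta_{\mathbf q'}$. Conversely, suppose $h\Delta_{\mathbf q}h^{-1}=\Delta_{\mathbf q'}$. Then the two representations of $\SQ$ on $\C^n$ are equivalent, so their characters $\operatorname{tr}\Delta_{\mathbf q}=\sum_j\chi_{q_j}$ and $\sum_j\chi_{q'_j}$ agree as functions on $\SQ$. Distinct characters are linearly independent; equivalently, they are orthonormal in $L^2(\SQ,\bd\bu)$. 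Comparing Fourier coefficients, each $q\in\Q$ therefore appears with the same multiplicity in $\mathbf q$ and in $\mathbf q'$, so the tuples differ by a permutation. Hence conjugacy classes correspond bijectively to $\Q^n/S_n$.

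I expect the only step that needs care to be the simultaneous diagonalization for an infinite commuting family. I would handle it with the standard finite-dimensional argument. Choose a maximal-dimension subspace where all $f(\bu)$ act by scalars, or equivalently induct on $n$: take a non-scalar $f(\bu_0)$, split $\C^n$ into its eigenspaces, note that these are invariant under the commuting family, and recurse. Each $f(\bu)$ is unitary, so the resulting decomposition is orthogonal and the conjugating matrix $g$ lies in $U(n)$. Continuity of the diagonal entries is automatic, since they are matrix coefficients $\langle f(\bu)e_j,e_j\rangle$ of a continuous map.
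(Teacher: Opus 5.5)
Your proof is correct. The alternative you mention and set aside is essentially the paper's argument. The paper reads the corollary off from Corollary~\ref{criticalpoints_energy} by factoring $f=g\circ h$, with $h=\chi_q:\SQ\to U(1)$ and $g:U(1)\to U(n)$. It then uses the classical fact that $g$ is conjugate into the maximal torus $\mathbf T^n$ with integer weights $k_i$, so that $q_i=qk_i$. The ``only if'' half of the conjugacy criterion is not argued separately there.

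Your route has three steps: simultaneous unitary diagonalization of the commuting family $f(\SQ)$, identification of the diagonal entries via $\widehat{\SQ}\cong\Q$, and uniqueness from equality of traces plus orthonormality of characters in $L^2(\SQ,\bd\bu)$. This is self-contained and somewhat more robust:
\begin{itemize}
\item it does not need the claim that the image is a circle, which the paper justifies only by a sketched topological-dimension argument;
\item it does not need the theory of maximal tori;
\item it treats the trivial homomorphism uniformly;
\item it actually proves the uniqueness statement.
\end{itemize}
In particular, you avoid a small bookkeeping step in the factorization route. There, two homomorphisms may be presented through different characters $\chi_q$ and $\chi_{q'}$, so one must first pass to a common cyclic subgroup $r\Z\subset\Q$ containing all weights of both before comparing integer weight tuples. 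The only extra information the paper's route records is that all weights of one homomorphism lie in a single cyclic group $q\Z$. This is automatic anyway, since every finite subset of $\Q$ lies in such a group.

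One cosmetic slip: continuity of the diagonal entries comes from the matrix coefficients $\langle gf(\bu)g^{-1}e_j,e_j\rangle$ of the conjugated map, not from those of $f$.
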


\begin{definition}[Critical conjugacy manifolds]\label{cell_numbering}
For $[\mathbf q]\in\Q^n/S_n$, define
\[
\boldsymbol C_{[\mathbf q]}=
\{g\Delta_{\mathbf q}g^{-1}:g\in U(n)\}\subset\bO.
\]
Then
\[
\boldsymbol C_{[\mathbf q]}\cong U(n)/Z_{U(n)}(\Delta_{\mathbf q}).
\]
It is a full flag manifold only when the $q_i$ are pairwise distinct.
The zero tuple gives the singleton $\{I\}$. The notation
$\boldsymbol C_{([\boldsymbol\lambda],q)}$ is retained as shorthand for
$\boldsymbol C_{[q\boldsymbol\lambda]}$; it is redundant, and two pairs
represent the same class exactly when the rational tuples
$q\boldsymbol\lambda$ agree up to permutation.
\end{definition}

\subsection{Action of the semigroups 
$\boldsymbol{D}^+$ and $\boldsymbol{D}^-$}
In this section we will adapt to our case the results in the papers by 
Pressley \cite{Pr, Pr1} and the book by Pressley and Segal \cite{PS}, especially 
 Theorem 8.9.9. 
 
 The punctured closed disks 
 \begin{equation}\label{defD+D-}
 \boldsymbol{D}^+\overset{def}=\bar{D}^+_\Q(1)-\left\{\boldsymbol0\right\},\quad  
 \boldsymbol{D}^-\overset{def}=\bar{D}^-_\Q(1)-\left\{\boldsymbol\infty\right\},
 \end{equation}
 are abelian semigroups with identity. Let us write the elements of 
 $\boldsymbol{D}^+$ in polar coordinates as $e^{-t}\bu$ with $0\leq{t}<\infty$ and
 $\bu\in\SQ$ and the elements of $\boldsymbol{D}^-$ as 
 $e^{t}\bu$ with $0\leq{t}<\infty$.  
 Their multiplications are 
 given, respectively, by the natural formulas in polar coordinates:

\begin{equation}\label{D+}
(e^{-t_1}\bu_1)\cdot(e^{-t_2}\bu_2)=e^{-(t_1+t_2)}\bu_1\bu_2,\quad  0\leq{t_1,t_2}<\infty,\,\,\,\bu_1,\bu_2\in\SQ.
\end{equation}

\begin{equation}\label{D-}
(e^{t_1}\bu_1)\cdot(e^{t_2}\bu_2)=e^{t_1+t_2}\bu_1\bu_2, 
\quad  0\leq{t_1,t_2}<\infty,\,
\,\,\bu_1,\bu_2\in\SQ.
\end{equation}
 
\begin{proposition}[Conditional global gradient-flow statement]
Compare Pressley--Segal, Theorem~8.9.9, \cite{PS}, and \cite{Pr,Pr1}.
\label{actionD+}
Assume that on the chosen completion of $\bO$ the energy satisfies the
Palais--Smale condition, its negative gradient is complete for positive
time, and every positive trajectory is precompact. Then the negative of the gradient $-\nabla{E}$ determines a {\bf downwards gradient semi flow} (i.e., an action of the multiplicative semigroup
$\R^+=\left\{e^t\in\R : t\geq0\right\}$) $g_{e^t}:\bO\to\bO\,$ ($t\geq0$) such that:
\begin{enumerate}
\item[(\bf{i})] $g_{e^t}$ is real-analytic for all $t>0$.
\item[(\bf{ii})] $g_{e^t}$ converges to a loop which is a 
homomorphism $g_{_{\infty}}:\SQ\to{U(n)}$ as $t\to\infty$.
\end{enumerate}
The rotation action given by formula (\ref{rotationaction})
extends explicitly to an action 
$\left\{R_{e^{t}\bv}\right\}_{e^{t}\bv\in\boldsymbol{D}^+}$, 
of the semigroup $\boldsymbol{D}^+$ on $\bO:\,f\to {{R}}_{e^{t}\bv}(f)$ with
\begin{equation}\label{formula-disk-action}
{{R}}_{e^{t}\bv}(f)(\bu)=
g_{t}({R}_\bv(f)(\bu))=g_{t}(f(\bv^{-1}\bu)f(\bv^{-1})^{-1}), 
\quad t\geq0,\quad \bu,\bv \in\SQ.
\end{equation}
In the complement of the singular set of the downwards gradient, the orbits of this
action are laminations by Riemann surfaces tangent to the foliation $\mathcal{F}$
 alluded in remark (\ref{foliation-rot-grad}).
\end{proposition}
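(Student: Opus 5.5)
Under the stated hypotheses (Palais--Smale, forward completeness of $-\nabla E$, precompactness of positive trajectories), the plan is to assemble three pieces. First, the semiflow $g_{e^t}$ is built from Morse--Bott theory of $E$ on the chosen completion. Second, the rotation flow of Proposition~\ref{hamiltonian=rotation} supplies the angular part of the action. Third, the commutation $[X_E,\nabla E]=0$ from Remark~\ref{foliation-rot-grad} lets the two be combined into a $\boldsymbol D^+$-action.

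For the semiflow, completeness of $-\nabla E$ for $t\ge0$ gives a well-defined family $g_{e^t}$ with $g_{e^{t_1}}\circ g_{e^{t_2}}=g_{e^{t_1+t_2}}$, so the multiplicative semigroup $\{e^t:t\ge0\}$ acts. For (i), I would argue that $E$ and the Kähler metric $\boldsymbol g$ are real-analytic in the Fourier description of Theorem~\ref{kählerloop}, so $\nabla E=J_\Omega X_E$ is a real-analytic vector field. Analytic dependence of solutions on initial data then gives real-analyticity of $g_{e^t}$. In the Fourier picture one expects $\nabla E$ to act essentially as multiplication by $|q|$ on the $q$-th mode, giving a smoothing effect for $t>0$; as in \cite{PS} 8.9.9, this is what I would use to upgrade from the completion back to $\bO$.

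For (ii), the key is that $E\ge0$ decreases along trajectories, with $\frac{d}{dt}E=-\|\nabla E\|^2$, so $\int_0^\infty\|\nabla E\|^2\,dt<\infty$. Precompactness of positive trajectories gives a nonempty compact $\omega$-limit set. Palais--Smale places that limit set inside $\mathrm{Crit}(E)$, which by Corollary~\ref{criticalpoints_energy} consists of homomorphisms $\SQ\to U(n)$. By Corollary~\ref{coj_classes_homS1to Un}, these form the disjoint union of the conjugacy manifolds $\boldsymbol C_{[\mathbf q]}$.

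The main obstacle is upgrading ``limit set contained in the critical set'' to actual convergence to a single homomorphism $g_\infty$. The difficulty is that the values $E(\Delta_{\mathbf q})$ are proportional to $\sum q_i^2$ and are dense in $\R_{\ge0}$, so critical levels accumulate. This breaks the classical isolation argument. My first attempt would be to use connectedness of the $\omega$-limit set together with constancy of $E$ on it: the limit set lies in one level set, and one must then show it meets only one $\boldsymbol C_{[\mathbf q]}$ and converges within it. For this I would try a Łojasiewicz-type gradient inequality near each $\boldsymbol C_{[\mathbf q]}$, using nondegeneracy of the Hessian in normal directions as in the classical Morse--Bott picture, with eigenvalues given by differences $q_i-q_j$ plus mode frequencies. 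The problem is that these spectral gaps are not uniform over $\Q$, so this inequality is exactly where I expect trouble. If a uniform Łojasiewicz estimate fails, I would fall back on the precompactness hypothesis, which is strong enough that the limit set is a compact connected subset of the critical set on a single level. I would then argue that each $\boldsymbol C_{[\mathbf q]}$ is a relatively open subset of $\mathrm{Crit}(E)$ within a level, since $\mathbf q$ is a locally constant invariant by Proposition~\ref{connectedcomponents} applied entrywise to the weights.

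Finally, define $R_{e^t\bv}:=g_{e^t}\circ R_\bv$. This is a semigroup action because $R_\bv$ commutes with $g_{e^t}$: the rotation action is by holomorphic isometries preserving $E$, so it commutes with $\nabla E$. This gives formula \eqref{formula-disk-action}, with density of the base leaf in $\SQ$ used to pass from the flow $R_t$ to all of $\SQ$. Off $\mathrm{Sing}(\nabla E)$, the orbits of the leafwise pair $(X_E,\nabla E)$ are the local complex orbits $\Psi_T$ of Remark~\ref{foliation-rot-grad}. The $\hat\Z$-transversal directions then make the $\boldsymbol D^+$-orbits laminations by Riemann surfaces tangent to $\mathcal F$.
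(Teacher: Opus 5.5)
Your plan follows the paper's proof step for step. Forward completeness gives the semiflow. Analytic dependence on initial data gives (i); the paper likewise makes this conditional on analyticity of the gradient field on the chosen completion. Palais--Smale together with Corollaries~\ref{criticalpoints_energy} and~\ref{coj_classes_homS1to Un} places the limit among homomorphisms. Commutation with rotations yields the $\boldsymbol D^+$-action. Your argument for that last point (each $R_{\bv}$ preserves $E$ and $\boldsymbol g$, hence commutes with $\nabla E$) is more direct than the paper's route through $[X_E,\nabla E]=0$ and density of the base leaf. The gap is in (ii), exactly where you placed it, and your fallback does not close it.

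Precompactness, connectedness of the $\omega$-limit set $\omega(f)$, constancy of $E$ on it, and separation of distinct critical classes do confine $\omega(f)$ to a single manifold $\boldsymbol C_{[\mathbf q]}\cong U(n)/Z_{U(n)}(\Delta_{\mathbf q})$. The reason is not the one you give, though. Proposition~\ref{connectedcomponents} detects only $\sum_i q_i$; for instance $\Delta_{(1,-1)}$ and $I$ lie in the same component. The correct reason is that for distinct multisets the trace difference $\sum_i\chi_{q_i}-\sum_i\chi_{q'_i}$ has integer Fourier coefficients that are not all zero and sum to zero. Hence distinct classes are at $L^2$-distance at least $\sqrt{2/n}$ in any topology dominating $L^2$.

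The real problem is what comes next. $\boldsymbol C_{[\mathbf q]}$ is positive-dimensional unless all $q_i$ coincide. Moreover, $\int_0^\infty\|\nabla E\|^2\,dt<\infty$ does not bound the length $\int_0^\infty\|\nabla E\|\,dt$ of the trajectory. So the trajectory may drift along $\boldsymbol C_{[\mathbf q]}$ without converging to a single homomorphism $g_\infty$, which is what (ii) asserts.

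The paper supplies this step by invoking ``the Morse--Bott convergence theorem''. You need precisely that input: a {\L}ojasiewicz or normal-nondegeneracy estimate at the one manifold containing $\omega(f)$. Uniformity over $\Q^n$ is irrelevant once you have localized.

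Be aware that this is a genuine extra hypothesis for unrestricted rational spectrum. The Hessian of $E$ at $\Delta_{\mathbf q}$ on the Fourier mode of frequency $r$ in the $(i,j)$ entry is proportional to $r\,(r-(q_j-q_i))$. Relative to the $|r|$-weighted K\"ahler metric, the normal eigenvalues therefore accumulate at $0$ as $r\in\Q$ approaches $q_j-q_i$. On diagonal modes they behave like $|r|$, even at the minimum $I$. There is no spectral gap, so the classical Morse--Bott argument does not transfer verbatim. For spectrum in a fixed $N^{-1}\Z$ the gap exists and the step follows by transport through $\rho_N$, which is how the paper handles that case.
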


\begin{proof}
Under the stated hypotheses, the Palais--Smale argument and the
Morse--Bott convergence theorem imply convergence of each positive-time
trajectory to a critical point. Corollaries~\ref{criticalpoints_energy}
and~\ref{coj_classes_homS1to Un} identify the critical points with
homomorphisms $\SQ\to U(n)$. Analytic ODE theory gives real-analyticity
for positive time once the gradient field is analytic on the selected
completion, and commutation with rotations yields the
$\boldsymbol D^+$-action.

For a fixed cyclic spectrum $N^{-1}\Z$, these assertions follow from the
classical circle theorem through $\rho_N$. In the unrestricted rational
spectrum, the hypotheses above are not proved here; the accumulation of
positive frequencies at zero prevents a direct transfer of the classical
compactness and properness arguments.
\end{proof}

Under the hypotheses of Proposition~\ref{actionD+}, the downwards gradient semi flow $\left\{g_t\right\}_{_{t\geq0}}$ commutes with the
rotation flow $R_t$, since the vector fields $X_E$ and $\nabla{E}$ have vanishing Lie product: $[X_E,\nabla{E}]=0$. Since the one-parameter subgroup of $\SQ$, which defines the rotation flow, is dense in $\SQ$ it follows that the downwards gradient semi flow
$\left\{g_t\right\}_{_{t\geq0}}$  commutes with the rotation action:
$g_t\circ{R_v}=R_v\circ{g_t}$. 
Therefore the formula (\ref{formula-disk-action})
defines an action since 
${{R}}_{e^{t_1}\bv_1}\circ{{R}}_{e^{t_2}\bv_2}
={{R}}_{e^{t_1+t_2}\,\bv_1\bv_2}$.

\begin{remark} We will not consider the ``ascending paths" $g_t(f)$ for $t<0$.
They are defined only for $-\epsilon<t\leq0$. They are real-analytic and defined for all $t\leq0$ if and only if they are
polynomial loops (i.e., the matrix coefficients are Laurent--Puiseux polynomials).  They are very interesting and important since the ``unstable manifolds"
give rise to the {\bf Bruhat decomposition}.
\end{remark}

\begin{remark}\label{descending-energy}
The energy functional is a function which is bounded below by 0. 
Hence, if we take a loop $f\in\bO$ which is not a 
critical point of $\nabla{E}$, then the energy of $g_t(f)$ is strictly 
decreasing as $t\to\infty$ i.e., it is a ``descending path". Of course a loop which is a critical point of the gradient remains fixed. Under the hypotheses of Proposition~\ref{actionD+}, $g_t(f)$ converges as $t\to\infty$ to a homomorphism belonging to some manifold 
$\boldsymbol{C}_{([\boldsymbol{\lambda}], q)}$ corresponding to the conjugacy
class $([\boldsymbol{\lambda}], q)$ (see definition \ref{cell_numbering}) of $g_{_{\infty}}(f)$. The proposition implies that any $f\in\bO$ belongs to a stable manifold of some $\boldsymbol{C}_{([\boldsymbol{\lambda}], q)}$ and $\boldsymbol{C}_{([\boldsymbol{\lambda}], q)}$ is a subset of its stable manifold.
\end{remark}

\begin{definition}[Conditional Birkhoff partition]\label{Birkhoff-partition} 
Assuming Proposition~\ref{actionD+}, let $S_{([\boldsymbol{\lambda}], q)}$ be the stable
manifold of $\boldsymbol{C}_{([\boldsymbol{\lambda}], q)}$ corresponding to
the conjugacy class $([\boldsymbol{\lambda}], q)$. We call
$S_{([\boldsymbol{\lambda}], q)}$ the {\bf Birkhoff stratum} with index $([\boldsymbol{\lambda}], q)$. The stable manifolds corresponding to different
indices are disjoint. For a fixed cyclic spectrum this follows from \cite{Pr1}; in the unrestricted rational spectrum it is conditional on Proposition~\ref{actionD+}. Each $S_{([\boldsymbol{\lambda}], q)}$ is a locally closed complex submanifold, of finite codimension, of $\bO$. 
Therefore: \begin{equation}\label{stable-partition}
\bO=\underset{([\boldsymbol{\lambda}], q)}\coprod{S_{([\boldsymbol{\lambda}], q)}}
\underset{(\mathbf1,0)}\coprod{S_{(\boldsymbol1, 0)}}
\end{equation}
\end{definition}

\subsection{Filtrations of the continuous loop group} \label{filtration}
We have described adelic loops of a Lie group $G$ having several different
regularities.  Let us briefly recall the notation.  The superscripts
$pol$, $rat$, $\omega$, $\infty$, and $cont$ refer, respectively, to
Laurent--Puiseux polynomial, rational, real-analytic, smooth, and continuous
loops; $\La_W(G)$ denotes the group of loops whose matrix entries, together
with those of their inverses, belong to the solenoidal Wiener algebra; and
$\La^1(G)$ denotes the Fourier--Sobolev class defined above.  Finally,
$\La^2(G)$ will denote the measurable square-integrable class defined
below.  These classes do not all belong to a single filtration.  The
unconditional inclusions among the continuous classes used here are
\begin{equation}\label{filtration2}
\La^{pol}(G)\subset\La^{rat}(G)\subset\La^{\omega}(G)
\subset\La_{W}(G)\subset\La^{cont}(G),
\end{equation}
and also
\[
\La^{\omega}(G)\subset\La^{\infty}(G)\subset\La^{cont}(G).
\]

Fix a faithful matrix realization $G\subset M_n(\C)$.  We denote by
$\La^2(G)$ the space of equivalence classes of measurable maps
$\gamma:\SQ\to G$ such that
\[
 \int_{\SQ}\|\gamma(\bu)\|^2\,\bd\bu<\infty,
\]
where $\|\cdot\|$ is any fixed matrix norm.  Since $\SQ$ is compact, every
continuous loop is square-integrable, and hence
\[
\La^{cont}(G)\subset\La^2(G).
\]
The reverse inclusion is false in general.  Moreover, for a non-compact
matrix group $G$, the space $\La^2(G)$ need not be closed under pointwise
multiplication and should therefore be regarded primarily as an ambient
$L^2$-space rather than as a loop group.

The unrestricted Fourier--Sobolev class $\La^1(G)$ introduced above is not
inserted into the preceding continuous filtration: because the rational
frequencies accumulate at $0$, the usual global Sobolev embedding into
continuous functions fails.  On every fixed cyclic spectral subgroup
$N^{-1}\Z\subset\Q$, however, the classical one-dimensional Sobolev
embedding applies and gives
\[
\La_N^{\infty}(G)\subset\La_N^1(G)\subset\La_{W,N}(G)
\subset\La_N^{cont}(G)\subset\La_N^2(G).
\]

\begin{definition}[{\bf Laurent--Puiseux series and polynomials}]\label{LPPoly}
A function $f:\SQ\to{M_n(\C)}$ given by a Fourier series
$f(\bu)=\sum_{q\in\Q}A_q\chi_q(\bu)$ (with $A_q\in{M_n(\C)}$) is called a
{\bf Laurent--Puiseux series}. If $A_q=0$ for all but finitely many
$q\in\Q$, $f$ is called a {\bf Laurent--Puiseux polynomial}. (This
terminology, by analogy with classical Laurent series/polynomials indexed
by $\Z$ and Puiseux series indexed by $\frac1{m}\Z\subset\Q$, reflects that
the exponents here range over all of $\Q$.) The same terminology applies to
holomorphic extensions $f^{\pm}$ to $D^{\pm}_\Q(1)$ as in
$\S$\ref{Wienerloops}.
\end{definition}

\noi Here $\La^{pol}(G)$ and $\La^{rat}(G)$ are loops
such that both the loops {\it and their inverses} (so that they are groups) are given, in terms
of Fourier series, by finite Laurent--Puiseux  polynomials and series,
respectively. Some of these inclusions are homotopy equivalences but we will not discuss this here. These filtrations play an important role in the Bruhat
decomposition (see, for instance, \cite{GR}).

\section{Factorizations of adelic loop groups. Iwasawa decomposition}\label{factorizations}
 
\subsection{Adelic loops as operators on Hilbert space. Adelic Grassmannian}\label{loop-operators}
The action of adelic loops on $L^2(\SQ,\C^n)$, the associated restricted
Grassmannian, and the Iwasawa decomposition in the Sobolev/pro-algebraic
setting were first developed jointly with Burgos in \cite{BuVe}; the
Fredholm-index and rational-polarization refinements below, and the
solenoidal Riemann--Hilbert results of \S\ref{BF} onward, are new to the
present paper.
 Let $\mathbb{H}(n)=L^2(\SQ,\C^n,\bd\bu)$. Then, by classical harmonic analysis, an orthonormal basis of $\mathbb{H}(n)$  is given by the functions
$\boldsymbol{\chi}^i_q:\SQ\to\C^n$ (where $1\leq{i}\leq{n},\,\, q\in\Q$) defined
by $\boldsymbol{\chi}^i_q(\bu)=\chi_q(\bu)\boldsymbol{e}_i$, where 
$\left\{\boldsymbol{e}_1,\cdots,\boldsymbol{e}_n\right\}$ is the standard
base of $\C^n$. In particular for $L^2(\SQ,M_n(\C),\bd\bu)$ we have the orthonormal
basis $\boldsymbol{\chi}^{i,j}_q(\bu)=\chi_q(\bu)\boldsymbol{e}_{i,j}$, where
$1\leq{i},j\leq{n}$ and $\boldsymbol{e}_{i,j}$ is the matrix with one in the
place $i,j$ and 0 everywhere else. Equivalently, $f\in{\mathbb{H}(n)}$ if 
$f$ has square-summable Fourier coefficients:

\begin{equation}\label{loop-in-l2}
f(\bu)=\sum_{q\in\Q}\,f_q\chi_q(\bu), \quad f_q\in\C^n,\quad \quad\sum_{q\in\Q}|f_q|^2<\infty.
\end{equation}

Let $\text{GL}(\mathbb{H}(n))$
denote the group of invertible continuous linear operators on $\mathbb{H}(n)$. Then $\mathbb{H}(n)$
 is open in the Banach algebra $\mathcal{B}(\mathbb{H}(n))$ of bounded operators 
 of $\mathbb{H}(n)$. The group of continuous loops in $\gln$,
 $\boldsymbol\Lambda^0(\gln)$ acts as multiplication operators in 
$\mathbb{H}(n)$. 
If $\gamma:\SQ\to\gln$ and if $f\in{\mathbb{H}(n)}$ define 
$M_\gamma\in\text{GL}(\mathbb{H}(n))$ as follows:

\begin{equation}\label{loopsactionhilbertspace}
M_\gamma(f)(\bu)=\gamma(\bu)(f(\bu))
\end{equation}

\noi Then we have a continuous representation of 
$\boldsymbol\Lambda^0(\gln)$ into $\text{GL}(\mathbb{H}(n))$.

Let us decompose $\mathbb{H}(n)$ as an orthogonal direct sum of two closed subspaces
\begin{equation} \label{polarization}
\mathbb{H}(n)=\mathbb{H}^+(n)\oplus{\mathbb{H}^-(n)},
\end{equation} 
\noi where:
\[
\mathbb{H}^+(n)=\left\{f\in{\mathbb{H}(n)}\,:f(\bu)=
\sum_{q\in\Q,q\geq0} f_q\chi_q(\bu),\, f_q\in\C^n  \right\}=
\] 
\[
\left\{f\in{\mathbb{H}(n)}\,:f\,\text{is the boundary  value of a holomorphic map in}
\, D^+_\Q(1) \right\}
\]
\noi and
\[
\mathbb{H}^-(n)=(\mathbb{H}^+(n))^\perp =\left\{f\in{\mathbb{H}(n)}\,:f(\bu)
\sum_{q\in\Q,q<0} f_q\chi_q(\bu)\right\}=
\] 
\[
\left\{f\in{\mathbb{H}(n)}\,:f\,\text{is the boundary  value of a holomorphic map in}
\, D^-_\Q(1) \right\}
\]

\begin{definition}\label{canonicalpolarization} The orthogonal decomposition into two closed subspaces given in the formula \ref{polarization} is called the {\bf canonical polarization}.
\end{definition}

\begin{remark}\label{matrix-l2} If 
\[
\gamma(\bu)=\sum_{q\in\Q}\,\gamma_q\chi_q(\bu), \quad \gamma_q\in{M_n(\C)},
\]
 and we represent
the linear map $M_\gamma$ (\ref{loopsactionhilbertspace}) in terms of the Fourier basis, indexed by $\Q$, by
an infinite $\Q\times\Q$ matrix $\left( A_{_{q_1,q_2}} \right)$  (where
$A_{_{q_1,q_2}}\in{M_n(\C)},\,\,q_1,q_2\in\Q$), then 
the associated matrix of $M_\gamma$ has elements 
$A_{_{q_1,q_2}}=\gamma_{_{q_1-q_2}}$.
\end{remark}

\medskip

The definition of the infinite  Grassmannian is the same as that given in \cite{PS}:

\begin{definition}\label{infgrass} The infinite Grassmannian $\mathbf{Gr}(\mathbb{H}(n))$
consists of all closed subspaces $W$ of $\mathbb{H}(n)$ such that:
\begin{enumerate}
\item The orthogonal projection $p_+:W\to\mathbb{H}^+(n)$ is Fredholm.
\item The orthogonal projection $p_-:W\to\mathbb{H}^-(n)$ is Hilbert--Schmidt.
\end{enumerate}
\noi $\mathbf{Gr}(\mathbb{H}(n))$ is an infinite-dimensional smooth Hilbert manifold.
\end{definition}
 
\noi Continuous loops act by bounded multiplication operators on
$\mathbb{H}(n)$.  For unrestricted Fourier--Sobolev data, boundedness and
pointwise multiplication require additional hypotheses and are not being
asserted here.  For every fixed cyclic spectral subgroup, however, the ordinary Sobolev loop
group acts by the classical multiplication representation transported
through \(\pi_N\).  This circle-comparison action is the one used in the rigorous
factorization theorems of \S\ref{cycliccomparison}.

\begin{definition}
Define $\text{GL}_{\text{R}}(n)\subset\text{GL}(\mathbb{H}(n))$ 
as a matrix of bounded operators given by blocks with respect to the polarization (\ref{polarization}),
as follows: 
\begin{equation}\label{restrictedglH}
\text{GL}_{\text{R}}(n)=
\left\{
\begin{pmatrix}
a & b \\
c & d 
\end{pmatrix} 
\,:\, a,\, d \,\,\text{are Fredholm and} \,\,
b ,\, c\,\,\text{are Hilbert-Schmidt}
\right\}.
\end{equation}
\noi $\text{GL}_R(n)$ is called 
the {\bf restricted linear group}. The group 
${\mathbb{U}}_{R}(n)={\mathbb{U}(n)}\cap\text{GL}_{\text{R}}(n)$, where
$\mathbf{\mathbb{U}}(n)$ is the unitary group of $\mathbb{H}(n)$ is called 
the {\bf restricted unitary group}.
\end{definition}

\begin{proposition} 
If $W\in\mathbf{Gr}(\mathbb{H}(n))$ and $F\in\text{GL}_{\text{R}}(n)$ then 
$F(W)\in\mathbf{Gr}(\mathbb{H}(n))$. Therefore, $\text{GL}_{\text{R}}(n)$
and $\mathbb{U}_R(n)$ act smoothly on the restricted Grassmanian. 
Furthermore, the action of both groups is transitive.
\end{proposition}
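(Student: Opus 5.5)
The statement is purely Hilbert-space-theoretic. The only features of $\mathbb H(n)=L^2(\SQ,\C^n)$ it uses are that $\mathbb H(n)$ is separable and that both halves $\mathbb H^\pm(n)$ of the canonical polarization \eqref{polarization} are infinite-dimensional; the latter holds because $\{q\ge0\}$ and $\{q<0\}$ are infinite subsets of the countable set $\Q$. I would therefore follow Pressley--Segal \cite[\S7.1]{PS} verbatim, writing $\mathbb H^\pm$ for $\mathbb H^\pm(n)$.

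\emph{Step 1: reformulation.} A closed subspace $W$ belongs to $\mathbf{Gr}(\mathbb H(n))$ if and only if $W$ is the image of an injective operator $w:\mathbb H^+\to\mathbb H(n)$ with closed range whose components satisfy:
\begin{itemize}
\item $w_+=p_+\circ w$ is Fredholm;
\item $w_-=p_-\circ w$ is Hilbert--Schmidt.
\end{itemize}
For the forward direction, note that $\dim W=\infty$ because $p_+|_W$ is Fredholm onto the infinite-dimensional space $\mathbb H^+$. Hence there is an isometry $w$ of $\mathbb H^+$ onto $W$, and $w_\pm=p_\pm|_W\circ w$. Conversely, $w$ is an isomorphism of $\mathbb H^+$ onto $W$, so $p_\pm|_W=w_\pm\circ w^{-1}$, and both conditions transfer.

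\emph{Step 2: invariance.} Let $F=\begin{pmatrix}a&b\\c&d\end{pmatrix}\in\mathrm{GL}_R(n)$. Then $F(W)$ is closed, since $F$ is invertible, and it is the image of $Fw$, with
\[
(Fw)_+=aw_++bw_-,\qquad (Fw)_-=cw_++dw_-.
\]
Here $aw_+$ is Fredholm and $bw_-$ is Hilbert--Schmidt, so $(Fw)_+$ is Fredholm, being a compact perturbation of a Fredholm operator. Likewise $cw_+$ and $dw_-$ are Hilbert--Schmidt because the Hilbert--Schmidt operators form an ideal, so $(Fw)_-$ is Hilbert--Schmidt. By Step 1, $F(W)\in\mathbf{Gr}(\mathbb H(n))$.

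For smoothness I would use the standard charts $U_{W_0}$, which consist of graphs of Hilbert--Schmidt maps $T:W_0\to W_0^\perp$. In these charts the action is a linear-fractional map
\[
T\mapsto (\gamma+\delta T)(\alpha+\beta T)^{-1},
\]
where $\alpha,\beta,\gamma,\delta$ are the blocks of $F$ relative to $W_0\oplus W_0^\perp$. This map is analytic on the open set where $\alpha+\beta T$ is invertible.

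\emph{Step 3: transitivity.} It suffices to move $\mathbb H^+$ onto an arbitrary $W$ by an element of $\mathbb U_R(n)\subset\mathrm{GL}_R(n)$. The key lemma, which I expect to be the main obstacle, is the following.

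\begin{itemize}
\item[] For $W\in\mathbf{Gr}$, the difference $P_W-P_+$ of orthogonal projections is Hilbert--Schmidt.
\end{itemize}

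I would prove it from the graph description. Put $W_0=\ker(p_+|_W)^\perp\cap W$, a subspace of finite codimension in $W$. Over a finite-codimensional subspace of $\mathbb H^+$, $W_0$ is the graph of the Hilbert--Schmidt operator $w_-w_+^{-1}$. The projection onto such a graph differs from the projection onto its base by a Hilbert--Schmidt operator, by an explicit formula. The finite-rank corrections coming from $\ker p_+|_W$ and from $\mathrm{coker}\,p_+|_W$ are harmless.

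From the lemma, $P_{W^\perp}-P_-=-(P_W-P_+)$ is also Hilbert--Schmidt. Consequently:
\begin{itemize}
\item $p_+|_{W^\perp}$ is Hilbert--Schmidt and $p_-|_{W^\perp}$ is Fredholm onto $\mathbb H^-$;
\item in particular $W^\perp$ is infinite-dimensional.
\end{itemize}

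Now choose isometries $u_1:\mathbb H^+\to W$ and $u_2:\mathbb H^-\to W^\perp$, both onto, and set $U=u_1\oplus u_2$. This $U$ is unitary on $\mathbb H(n)$ and satisfies $U(\mathbb H^+)=W$. Its diagonal blocks $p_+u_1$ and $p_-u_2$ are Fredholm, and its off-diagonal blocks $p_-u_1$ and $p_+u_2$ are Hilbert--Schmidt. Hence $U\in\mathbb U_R(n)$, which gives transitivity of both groups.
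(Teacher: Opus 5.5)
Your proposal is correct and takes essentially the same route as the paper, which defers to Pressley--Segal, Chapter~7 (and to Lau's thesis). The shared argument has three parts: invariance via the block formulas for $(Fw)_\pm$, charts given by graphs of Hilbert--Schmidt maps, and transitivity of $\mathbb U_R(n)$ from the fact that $P_W-P_+$ is Hilbert--Schmidt followed by gluing isometries $\mathbb H^+\to W$ and $\mathbb H^-\to W^\perp$ (the ``Gram--Schmidt'' step the paper mentions). One ordering point to tidy: your Step~2 chart and linear-fractional argument needs the off-diagonal blocks of $F$ relative to $W_0\oplus W_0^\perp$ to be Hilbert--Schmidt, and this already uses your Step~3 lemma, so prove that lemma first.
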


\noi The proof of this proposition is in 
\cite{PS} Chapter 7 but is also given in full detail in \cite{Lau}.
The proof of the transitivity of the action of the restricted unitary group 
$\mathbb{U}_R(n)$ on the restricted Grassmannian is essentially an application 
of the Gram--Schmidt process.

\begin{remark}[Why $\Q$ and $\Z$ differ here]\label{Q-vs-Z}
For the classical circle $S^1$ with Fourier frequencies $\Z$, the interval
$\Z\cap[0,r)$ is \emph{finite} for every $r>0$; this finiteness is
what makes the off-diagonal blocks of a multiplication operator
Hilbert--Schmidt, and it underlies the classical restricted Grassmannian
and loop group Birkhoff theory.  For the solenoid $\SQ$ with Fourier
frequencies $\Q$, the interval $\Q\cap[0,r)$ is countably \emph{infinite}
for every $r>0$, because $\Q$ is dense in $\R$.  This is a structural
feature of the rational frequency set -- the absence of a minimal positive frequency -- not an obstruction intrinsic to the solenoid as a
group.  Proposition~\ref{gap-thm} below makes this precise: it is the density
of $\Q$, not any deficiency of the solenoid, that shows why the classical
restricted-Grassmannian framework requires a genuinely solenoidal replacement.
\end{remark}

\begin{proposition}[Density gap: the classical restricted group]\label{gap-thm}
Let $\gamma\in\boldsymbol\Lambda_W(n)$ have Fourier expansion
$\gamma(\mathbf{u})=\sum_{q\in\Q}\gamma_q\chi_q(\mathbf{u})$.
Then $M_\gamma\in\mathrm{GL}_R(n)$ if and only if $\gamma$ is constant,
i.e.\ $\gamma_q=0$ for every $q\neq0$.

This sharply contrasts with the circle case: for $S^1$ with Fourier
frequencies $\Z$, every smooth loop $\gamma\in\La(GL_n)$ satisfies
$M_\gamma\in\mathrm{GL}_R(n)$ because $\Z\cap[0,r)$ is finite and the
off-diagonal blocks are Hilbert--Schmidt.  For the solenoid, $\Q\cap[0,r)$
is countably infinite for every $r>0$, so the Hilbert--Schmidt condition
fails for \emph{every} non-constant loop.  The correct global object on
which $\boldsymbol\Lambda_W(n)$ acts transitively is the solenoidal
Grassmannian $\mathrm{Gr}_\Q(n)$ of Theorem~\ref{grass=loop}.
\end{proposition}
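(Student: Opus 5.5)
The plan is to compute the Hilbert--Schmidt norms of the two off-diagonal blocks of $M_\gamma$ directly from the matrix description in Remark~\ref{matrix-l2}. Each rational frequency of $\gamma$ will turn out to be counted infinitely often in these norms, so the Hilbert--Schmidt condition forces every nonconstant Fourier coefficient to vanish.

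First, $M_\gamma$ is a bounded invertible operator on $\mathbb{H}(n)$. Boundedness holds because $\gamma\in\mathfrak W_\Q(n)$ gives $\|M_\gamma\|\le\sup_{\bu}\|\gamma(\bu)\|\le\|\gamma\|_{\mathfrak W_\Q}$. Invertibility holds because $\gamma^{-1}\in\mathfrak W_\Q(n)$ by Proposition~\ref{matrix-wiener-lemma}, so $M_\gamma^{-1}=M_{\gamma^{-1}}$. Write
\[
M_\gamma=\begin{pmatrix}a&b\\ c&d\end{pmatrix}
\]
with respect to the canonical polarization \eqref{polarization}. Here $c=p_-M_\gamma|_{\mathbb{H}^+(n)}$ and $b=p_+M_\gamma|_{\mathbb{H}^-(n)}$. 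By Remark~\ref{matrix-l2}, in the orthonormal basis $\{\boldsymbol\chi^i_q\}$ the block $c$ has $M_n(\C)$-valued entries $\gamma_{q_1-q_2}$ with $q_1<0\le q_2$. Hence its Hilbert--Schmidt norm is
\[
\|c\|_{HS}^2=\sum_{q_1<0\le q_2}\|\gamma_{q_1-q_2}\|_{F}^2
=\sum_{p<0}\|\gamma_p\|_F^2\cdot\#\{(q_1,q_2)\in\Q^2:\ q_1-q_2=p,\ q_1<0\le q_2\}.
\]
Here $\|\cdot\|_F$ is the Frobenius norm, and we regroup the sum by $p=q_1-q_2$. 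For fixed $p<0$, the admissible pairs are parametrized by $q_2\in\Q\cap[0,-p)$, which is countably infinite because $\Q$ is dense (Remark~\ref{Q-vs-Z}). Therefore $\|c\|_{HS}<\infty$ forces $\gamma_p=0$ for every $p<0$.

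Symmetrically, $b$ has entries $\gamma_{q_1-q_2}$ with $q_1\ge0>q_2$. For each $p>0$ the fibre is $\{q_2\in\Q\cap(-p,0)\}$, again infinite. So $b$ being Hilbert--Schmidt forces $\gamma_p=0$ for $p>0$. This proves that $M_\gamma\in\mathrm{GL}_R(n)$ implies $\gamma=\gamma_0$ is constant.

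For the converse, suppose $\gamma\equiv\gamma_0\in\gln$ is constant. Then $M_\gamma$ commutes with the Fourier projections, so $b=c=0$, which are trivially Hilbert--Schmidt. Also $a=M_{\gamma_0}|_{\mathbb{H}^+(n)}$ and $d=M_{\gamma_0}|_{\mathbb{H}^-(n)}$ are invertible, with inverses given by $\gamma_0^{-1}$, hence Fredholm of index $0$. Thus $M_\gamma\in\mathrm{GL}_R(n)$.

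The only step needing care is the regrouping of the Hilbert--Schmidt sum by the difference $p=q_1-q_2$. All terms are nonnegative, so Tonelli-type rearrangement of the countable double sum is legitimate. The sum then diverges as soon as a single $\gamma_p$ with $p\neq0$ of the relevant sign is nonzero, even though $\sum_p\|\gamma_p\|<\infty$. The contrast with $S^1$ stated in the proposition is the same computation, with $\Z\cap[0,-p)$ finite of cardinality $|p|$. In that case $\|c\|_{HS}^2=\sum_{p<0}|p|\,\|\gamma_p\|_F^2$, which is finite for smooth loops.
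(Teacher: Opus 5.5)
Your proof is correct and follows the same argument as the paper. You read off the off-diagonal blocks from Remark~\ref{matrix-l2}, note that each nonzero $\gamma_p$ with $p\neq0$ is repeated along an antidiagonal indexed by the infinite set $\Q\cap[0,|p|)$ so that the Hilbert--Schmidt norm diverges, and treat the constant case directly. Your labels for $b$ and $c$ are swapped relative to the paper's, and the fibre for your block $b$ should be $\Q\cap[-p,0)$ rather than $\Q\cap(-p,0)$; neither affects the argument.
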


\begin{proof}
By Remark \ref{matrix-l2}, the matrix of $M_\gamma$ in the orthonormal
basis $\{\boldsymbol\chi^i_q\}$ has the $(q_1,q_2)$-block entry
$A_{q_1,q_2}=\gamma_{q_1-q_2}$, a function of $q_1-q_2$ alone. Fix
$r\in\Q$, $r>0$, and suppose $\gamma_r\neq0$. The off-diagonal block
$c:\cH^-(n)\to\cH^+(n)$ of $M_\gamma$ (the entries with $q_1\geq0>q_2$) has,
along the antidiagonal $q_1-q_2=r$, the constant entry $\gamma_r$ at every
pair $(q_1,q_2)=(q_2+r,\,q_2)$ with $q_2\in\Q\cap[-r,0)$, equivalently
$q_1\in\Q\cap[0,r)$. This set is countably infinite (this is the basic
structural difference between $\Q$ and $\Z$ used throughout the paper, see
e.g.\ the discussion preceding Definition \ref{infgrass} and Proposition
\ref{h1Banach-algebra}), so
\[
\|c\|_{HS}^2=\sum_{q_1\geq0>q_2}|A_{q_1,q_2}|^2
\;\geq\;\sum_{q_2\in\Q\cap[-r,0)}|\gamma_r|^2=\infty.
\]
Hence $c$ is not Hilbert--Schmidt, so $M_\gamma\notin\text{GL}_{\text{R}}(n)$.
The symmetric argument with $r<0$ handles the block $b:\cH^+(n)\to\cH^-(n)$.
Thus if any $\gamma_r\neq0$ for $r\neq0$, $M_\gamma\notin\text{GL}_{\text{R}}(n)$,
which proves ``only if''. Conversely if $\gamma\equiv\gamma_0$ is constant
then $M_\gamma=\gamma_0\otimes\text{Id}$ acts as the fixed invertible matrix
$\gamma_0$ on every Fourier mode separately; in block form $b=c=0$ (trivially
Hilbert--Schmidt) and $a=d=\gamma_0\otimes\text{Id}$ are invertible, hence
Fredholm. This proves ``if''.
\end{proof}

\begin{corollary}\label{gap-thm-sobolev}
By Proposition~\ref{gap-thm}, the formula (\ref{loopsactionhilbertspace}) does
{\em not} define an action of $\La^1(\gln)$ on $\mathbf{Gr}(\mathbb{H}(n))$
by elements of $\text{GL}_{\text{R}}(n)$ for any non-constant
$\gamma\in\La^1(\gln)$; the only loops acting via $\text{GL}_{\text{R}}(n)$
are the constant ones $\gln\subset\La^1(\gln)$.  The same conclusion holds
for the restricted Sobolev linear group $\text{GL}_1\mathcal{H}_1(n)$ of
Definition~\ref{sobolev-gln}: $M_\gamma\in\text{GL}_1\mathcal{H}_1(n)$ only
if $\gamma$ is constant.
\end{corollary}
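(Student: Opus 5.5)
The plan is to reduce Corollary~\ref{gap-thm-sobolev} to the computation in the proof of Proposition~\ref{gap-thm}. The key observation is that that proof never used absolute summability of the Fourier coefficients. It used only the Toeplitz structure of the matrix of $M_\gamma$ from Remark~\ref{matrix-l2}, $A_{q_1,q_2}=\gamma_{q_1-q_2}$, together with the fact that $\Q\cap[0,r)$ is infinite. So I would first check that for $\gamma\in\La^1(\gln)$ the formula (\ref{loopsactionhilbertspace}) still has a well-defined matrix in the basis $\{\boldsymbol\chi^i_q\}$. Whenever $M_\gamma$ is assumed to be a bounded operator on $\mathbb{H}(n)$ (which is implicit in asking whether it lies in $\mathrm{GL}_R(n)$), its matrix coefficients are
\[
\langle M_\gamma\boldsymbol\chi^j_{q_2},\boldsymbol\chi^i_{q_1}\rangle=\int_{\SQ}\gamma_{ij}(\bu)\chi_{q_2-q_1}(\bu)\,\bd\bu=(\gamma_{q_1-q_2})_{ij}.
\]
Here $\gamma\in L^2$ suffices for the integral to make sense, because $\chi_{q_2}$ is bounded.

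The argument then runs by contraposition. Suppose $\gamma$ is non-constant, so that some $\gamma_r\neq0$ with $r\neq0$. Take $r>0$; the case $r<0$ is symmetric and uses the block $b$. The off-diagonal block $c:\mathbb{H}^-(n)\to\mathbb{H}^+(n)$ contains the entry $\gamma_r$ at every pair $(q_2+r,q_2)$ with $q_2\in\Q\cap[-r,0)$. Hence $\|c\|_{HS}^2\ge\sum_{q_2\in\Q\cap[-r,0)}\|\gamma_r\|_{HS}^2=\infty$, and $M_\gamma\notin\mathrm{GL}_R(n)$.

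For the converse, constant loops $\gamma\equiv\gamma_0\in\gln$ act diagonally with $b=c=0$ and with invertible diagonal blocks, so they do lie in $\mathrm{GL}_R(n)$. These constants sit inside $\La^1(\gln)$, since they have zero derivative.

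For the second clause, about $\mathrm{GL}_1\mathcal{H}_1(n)$ (Definition~\ref{sobolev-gln}), I would use that membership there imposes a Hilbert--Schmidt-type condition on the off-diagonal blocks, possibly with Sobolev weights $(1+q^2)^{\pm1/2}$. The same antidiagonal sum then diverges as well. On the relevant antidiagonal, $q_1\in[0,r)$ and $q_2\in[-r,0)$, so the weights are bounded above and below by positive constants. A bounded weight cannot make an infinite sum of the nonzero constant $\|\gamma_r\|^2$ finite.

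The main obstacle is this weighted version. Definition~\ref{sobolev-gln} appears later in the paper, so I would need to confirm that its off-diagonal condition is indeed a weighted Hilbert--Schmidt (or stronger) norm whose weights stay bounded on bounded frequency windows. If it is instead phrased through a different polarization, I would first transport that polarization to the Fourier basis and then repeat the antidiagonal estimate. A secondary subtlety is boundedness of $M_\gamma$ for unrestricted $H^1$ data. The remarks after Definition~\ref{infgrass} say this is not asserted in general, so I would state explicitly that the conclusion holds whenever $M_\gamma$ is defined as a bounded operator, and otherwise vacuously.
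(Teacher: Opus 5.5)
Your proposal is correct and follows the paper's route. The paper simply cites Proposition~\ref{gap-thm}, whose antidiagonal Hilbert--Schmidt count uses only the Toeplitz structure $A_{q_1,q_2}=\gamma_{q_1-q_2}$ and the infinitude of $\Q\cap[0,r)$, never $\ell^1$-summability; your boundedness and Sobolev-weight checks are exactly what make that citation rigorous for $\La^1(\gln)$ and $\text{GL}_1\mathcal{H}_1(n)$, and the obstacle you flag resolves as you expect, since in the $\mathcal{H}_1$-orthonormal basis $\boldsymbol\chi^i_q/\sqrt{1+q^2}$ the entries are $(\gamma_{q_1-q_2})_{ij}\sqrt{(1+q_1^2)/(1+q_2^2)}$, with weight at least $(1+r^2)^{-1/2}$ on the window $q_1\in[0,r)$, $q_2\in[-r,0)$.
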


\noi The following notation records the positive and negative holomorphic
extension conditions.  It is a genuine Banach-algebraic splitting at each
a fixed cyclic spectral subgroup.  For unrestricted rational frequencies it is notation
for the Wiener-algebra splitting $\mathfrak W_\Q = \mathfrak W^-_\Q \oplus \C \oplus \mathfrak W^+_\Q$;
for unrestricted rational frequencies, $H^1(\SQ)$ is not a Banach algebra.
 
 \begin{definition}\label{splittinglambda+-}\sloppy
Let $\La_+^1(\gln)$ denote the set of functions in $\La^1(\gln)$ whose
continuous extension to $\bar{D}^+_\Q(1)$ is holomorphic on ${D}^+_\Q(1)$.
Analogously, let $\La_-^1(\gln)$ denote those in $\La^1(\gln)$ whose
continuous extension to $\bar{D}^-_\Q(1)$ is holomorphic on ${D}^-_\Q(1)$.
\end{definition}

\subsection{The Sobolev Grassmannian and the rational-frequency action}\label{sobgrass}
 Let $\mathcal{H}_1(n)= H^1(\SQ,\C^n,\bd\bu)$ be the Hilbert space of functions
 in $H^1(\SQ,\C^n)$ with inner product given by:
 \begin{equation}
 \langle\, f \,,\, \,g\rangle_{_1}=
 \int_{\SQ}\,\langle\, f(\bu) \,,\, \,g(\bu)\rangle_{_{\C^n}}\bd\bu+
 \int_{\SQ}\,\langle\, f'(\bu) \,,\, \,g'(\bu)\rangle_{_{\C^n}}\bd\bu.
 \end{equation}\label{innproduct}
 
\noi In  formula $\langle\,\cdot\,,\,\cdot\rangle_{_{\C^n}}$ is the standard hermitian product of $\C^n$ and $f'$, $g'$ are weak derivatives.
 In terms of Fourier series: 
 \begin{equation}
 \left<\, \sum_{q\in\Q}a_q\chi_q \,,\, \,\sum_{q\in\Q}b_q\chi_q\right>_{_1}=
 \sum_{q\in\Q}\,\langle\,a_q\,,\,b_q\rangle_{_{\C^n}}(1+|q|^2).
 \end{equation}
 
 Let $\mathcal{H}_1(n)=\mathcal{H}_1^+(n)\oplus\mathcal{H}_1^-(n)$ be the polarization of 
 $\mathcal{H}_1(n)$ where
  $\mathcal{H}_1^+(n)$ and
 $\mathcal{H}_1^-(n)$ are defined exactly as in the canonical polarization.
 Define the restricted Sobolev Grassmannian $\mathbf{Gr}_1(n)$ as the set of closed subspaces
 $W$ of $\mathcal{H}_1(n)$ satisfying the same conditions of 
 definition \ref{infgrass}. Analogously we define the restricted Sobolev group
 of automorphisms as follows:
 
 \begin{definition}\label{sobolev-gln} Let $\text{GL}_1\mathcal{H}(n)$ be the group of linear automorphisms of $\mathcal{H}_1(n)$ which can be decomposed in blocks, with respect to the polarization, as the matrix
 in (\ref{restrictedglH}) where $a,b,c, d$  satisfy the same conditions:
 $a,d$ Fredholm and $b,c$ Hilbert-Schmidt. $\text{GL}_1\mathcal{H}_1(n)$ is called
 the {\bf restricted Sobolev linear group}.
 \end{definition} 
 
 As pointed in \cite{PS} section 7.2, there are several dense submanifolds
 of $\mathbf{Gr}(\mathbb{H}(n))$ which are Grassmannians
 corresponding to the degree of regularity
 of the functions in the subspaces. An important role there is the case of
 the smooth Grassmannian. For us the relevant Grassmannian 
 is $\mathbf{Gr}_1(n)$.
 
 \medskip
 \begin{remark}[Sobolev versus Wiener Grassmannian]
The Fourier--Sobolev space \(\mathcal H_1(n)\) is densely contained in
\(\mathbb H(n)\) and inherits the positive/negative Fourier splitting.
Corollary~\ref{shift-fails} and Proposition~\ref{gap-thm} show that the
unrestricted rational-frequency loop group does \emph{not} act on
the Sobolev restricted Grassmannian $\mathbf{Gr}_1(n)$ via its natural
inclusion into the classical restricted linear group.  The correct global
Grassmannian --- on which $\boldsymbol\Lambda_W(n)$ does act transitively
--- is the solenoidal Wiener Grassmannian $\mathrm{Gr}_\Q(n)$ of
Theorem~\ref{grass=loop}, defined using the rational polarization.
For each fixed cyclic spectral subgroup the Sobolev construction below
is the classical one transported from \(S^1\).
\end{remark}

 For every rational $q\geq0$ the orbit of $\,\mathcal{H}_1^+(n)$ under 
 $\La^1(\gln)$ contains 
 $\chi_q\mathcal{H}_1^+(n)$, where this space consists of the functions
 in $\mathcal{H}_1^+(n)$ multiplied by the scalar function given
 by the character  $\chi_q\,\,(q\geq0)$ of $\SQ$. 
 This is because multiplication by a character commutes with 
 the action of $\La^1(\gln)$. This motivates the following definition:

\begin{definition}
\noi $\boldsymbol{Gr}_\Q^{(n)}$ 
denotes the subset of closed subspaces $W\in\mathbf{Gr}_1(n)$
such that:

\noi $\chi_{_{1/n}}\,W\subset{W},\,\,\forall\,n\,\in\N$. Where 
$\chi_{_{1/n}}W$ consists of the product of the functions in 
$H^1(\SQ,\C^n)$ belonging to $W$ by the scalar function determined 
by the character $\chi_{_{1/n}}$ of $\SQ$. 
Obviously $\mathcal{H}_1^+(n)\in\boldsymbol{Gr}_\Q^{(n)}\bf{.}\,\,$
{\bf $\boldsymbol{Gr}_\Q^{(n)}$ is the Sobolev adelic Grassmannian.}
\end{definition}

\begin{remark}In the case of the standard loop group the orbit of 
 $\mathbb{H}^+(n)$ is characterized as the set of subspaces $W$ 
 such that $zW\subset{W}$, i.e., one uses only one character of $\Z$ (see section 8.3 in \cite{PS}). The definition is also motivated by the fact that $\Q$ is the direct limit of the infinite cyclic groups generated by $1/n,\,n\in\N$:
$\Q\cong \underset{\underset{n\in\N}{\longrightarrow} }\lim\,\frac{1}{n}\Z$.
 \end{remark}
 
\begin{remark}\label{W+}
The condition \(\chi_{1/n}W\subset W\) is the natural rational analogue
of the shift-invariance condition in the ordinary Grassmannian theory.
However, the family of positive rational shifts behaves differently from
the integral shifts: as Corollary \ref{shift-fails} shows, the relevant
subspaces are not Fredholm-comparable with the reference polarization.
Consequently the classical argument identifying a loop-group orbit with a
restricted Grassmannian cannot be transferred verbatim.  The definition
above is retained because it isolates the correct algebraic invariance that
a future inductive-limit or laminated Grassmannian should encode.
\end{remark}

\begin{corollary}\label{shift-fails}
For every $r\in\Q$, $r>0$, the subspace $\chi_r\mathcal{H}_1^+(n)\subset\mathcal{H}_1^+(n)$
is {\em not} Fredholm-comparable to $\mathcal{H}_1^+(n)$: the orthogonal
projection $p_+:\chi_r\mathcal{H}_1^+(n)\to\mathcal{H}_1^+(n)$ is injective
with closed range of infinite codimension (the range is
$\left\{f\in\mathcal{H}_1^+(n):\hat f_q=0\text{ for }q\in\Q\cap[0,r)\right\}$,
and $\Q\cap[0,r)$ is countably infinite), hence $p_+$ is not Fredholm.
Consequently $\chi_r\mathcal{H}_1^+(n)\notin\mathbf{Gr}_1(n)$ for every
rational $r>0$. Moreover, the family
$\left\{\chi_{1/n}\mathcal{H}_1^+(n)\right\}_{n\in\N}$ is {\em increasing}
in $n$ (since $1/n\downarrow0$ widens, rather than narrows, the allowed
frequency range $q\geq1/n$), so
\[
\bigcap_{n\in\N}\chi_{1/n}\mathcal{H}_1^+(n)=\chi_1\mathcal{H}_1^+(n)
\neq\boldsymbol0.
\]
\end{corollary}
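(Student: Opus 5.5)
The plan is to work entirely on the Fourier side, using the orthogonal basis $\{\boldsymbol\chi^i_q\}$ of $\mathcal H_1(n)$ with weights $(1+q^2)$.

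\textbf{Identifying $\chi_r\mathcal H_1^+(n)$.} First I check that multiplication by $\chi_r$ makes sense on $\mathcal H_1(n)$ and has closed image. On coefficients it is the shift $a_q\mapsto a_{q-r}$. The weight ratio
\[
\frac{1+(q+r)^2}{1+q^2}
\]
is bounded above and below by positive constants depending only on $r$. So $M_{\chi_r}$ is a bounded isomorphism of $\mathcal H_1(n)$ onto itself; it is not unitary for $\langle\cdot,\cdot\rangle_1$, but this is harmless. Consequently $\chi_r\mathcal H_1^+(n)$ is a closed subspace, namely
\[
\chi_r\mathcal H_1^+(n)=\bigl\{f\in\mathcal H_1(n): \hat f_q=0 \text{ for } q<r\bigr\}.
\]

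\textbf{The projection $p_+$.} Since $r>0$, this subspace lies inside $\mathcal H_1^+(n)$. Hence the orthogonal projection $p_+$ restricted to it is just the inclusion. It is therefore injective with closed range equal to
\[
\bigl\{f\in\mathcal H_1^+(n):\hat f_q=0 \text{ for } q\in\Q\cap[0,r)\bigr\}.
\]
The orthogonal complement of this range inside $\mathcal H_1^+(n)$ is the closed span of $\{\boldsymbol\chi^i_q: q\in\Q\cap[0,r),\ 1\le i\le n\}$. This span is infinite-dimensional because $\Q\cap[0,r)$ is countably infinite, which is the density phenomenon of Remark~\ref{Q-vs-Z}. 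So the cokernel of $p_+$ is infinite-dimensional, $p_+$ is not Fredholm, and condition (1) of Definition~\ref{infgrass}, imposed in $\mathbf{Gr}_1(n)$, fails. This gives $\chi_r\mathcal H_1^+(n)\notin\mathbf{Gr}_1(n)$. The argument mirrors the antidiagonal count in the proof of Proposition~\ref{gap-thm}.

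\textbf{Monotonicity and the intersection.} Apply the frequency description with $r=1/n$: $\chi_{1/n}\mathcal H_1^+(n)$ consists of the elements supported on $\{q\ge 1/n\}$. These supports increase as $1/n$ decreases, so the family is increasing in $n$. Its intersection is therefore the smallest member, at $n=1$. Equivalently, the common support condition is $q\ge 1/n$ for all $n$, whose strongest instance is $q\ge 1$. So the intersection is $\chi_1\mathcal H_1^+(n)$, which contains $\boldsymbol\chi^1_1\neq0$.

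\textbf{Main care point.} The only step needing attention is the first one: the $H^1$ weights are not translation invariant, so one must verify boundedness of $M_{\chi_r}$ (via the comparability of $1+q^2$ and $1+(q+r)^2$) rather than assume the shift is unitary. Everything after that is bookkeeping of Fourier supports.
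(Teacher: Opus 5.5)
Your proof is correct and follows the paper's argument. You identify $\chi_r\mathcal H_1^+(n)$ as the functions with Fourier support in $\Q\cap[r,\infty)$, note that $p_+$ is then the inclusion, whose cokernel is spanned by the modes with $q\in\Q\cap[0,r)$, and read off the monotonicity and the intersection from the supports $q\geq 1/n$. Your extra check that multiplication by $\chi_r$ is a bounded isomorphism of $\mathcal H_1(n)$, using the comparability of the weights $1+q^2$ and $1+(q+r)^2$, is a step the paper takes for granted, and it makes the closedness of $\chi_r\mathcal H_1^+(n)$ explicit.
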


\begin{proof}
$\chi_r\mathcal{H}_1^+(n)$ consists of the $H^1$ functions with Fourier
support in $\Q\cap[r,\infty)$. Its image under $p_+$ in $\mathcal{H}_1^+(n)$
(support in $\Q\cap[0,\infty)$) is exactly the closed subspace of functions
supported on $\Q\cap[r,\infty)$, and the quotient $\mathcal{H}_1^+(n)/p_+(\chi_r\mathcal{H}_1^+(n))$
is (the completion of) the span of 
\[
\left\{\boldsymbol\chi^i_q:0\leq{q}<r,\,1\leq{i}\leq{n}\right\},
\]
\noi which is infinite-dimensional because $\Q\cap[0,r)$ is infinite (the same
fact used in Proposition~\ref{gap-thm}). An operator with infinite-dimensional
cokernel is not Fredholm. For the last statement, $q\geq1/n$ is a weaker
constraint for larger $n$, so $\chi_{1/n}\mathcal{H}_1^+(n)\subset\chi_{1/(n+1)}\mathcal{H}_1^+(n)$
for every $n$, and the smallest term, at $n=1$, is the intersection.
\end{proof}

\begin{proposition}[Liouville-type theorem for $\SQ$]\label{Liouville}
Identify a point of $D^+_\Q(1)\setminus\{\boldsymbol0\}$ with $t\bu$,
$0<t\leq1$, $\bu\in\SQ$, and write $\bu=\boldsymbol{Exp}(\theta,\bz)$ with
$\bz\in\hat\Z$ fixed and $\theta\in\R$ (Definition \ref{exp}). For fixed
$\bz$, the map $\theta\mapsto\boldsymbol{Exp}(\theta,\bz)$ parametrizes a
single leaf $L_\bz$ of the lamination of $\SQ$, and
$\zeta=\log{t}+i\theta$ identifies $L_\bz\cap{D^+_\Q(1)}$, together with its
closure $\overline{L_\bz}\cap\SQ$ (at $t=1$), with the closed half-plane
$\left\{\zeta\in\C:\mathrm{Re}\,\zeta\leq0\right\}$; this is exactly the
leafwise complex structure of Definition \ref{diffloops} and the
holomorphic extension formula of $\S$\ref{Wienerloops}. Doing the same on
$D^-_\Q(1)$ via $z=t^{-1}\bu$ and gluing along $\SQ$, the leaf
$L_\bz\subset\PQ\setminus\left\{\boldsymbol0,\boldsymbol\infty\right\}$ is
identified with all of $\C$ (the universal cover, via $\zeta\mapsto
e^\zeta$, of $\C^*=\PQ\setminus\left\{\boldsymbol0,\boldsymbol\infty\right\}$). Since $\Z$ is dense in $\hat\Z$ (Definition \ref{exp}), every leaf $L_\bz$,
and in particular its boundary trace $\overline{L_\bz}\cap\SQ$, is {\bf
dense} in $\SQ$, $\bar{D}^+_\Q(1)$, $\bar{D}^-_\Q(1)$, and $\PQ$,
respectively.

\noi Consequently:
\begin{enumerate}
\item[(a)] Any function $f$ on $\PQ$ which is continuous, bounded, and
leafwise holomorphic on $\PQ\setminus\left\{\boldsymbol0,\boldsymbol\infty\right\}$, is constant. (Restrict $f$ to a leaf $L_\bz$: under the
identification above $f|_{L_\bz}$ pulls back to a bounded entire function
on $\C$, hence is constant by the classical one-variable Liouville theorem.
Since $L_\bz$ is dense in $\PQ$ and $f$ is continuous, $f$ is constant on
all of $\PQ$.) The same conclusion holds, with the same proof, for a
continuous, bounded, leafwise holomorphic $M_n(\C)$-valued function on
$\bar{D}^+_\Q(1)$ (or $\bar{D}^-_\Q(1)$) alone.
\item[(b)] If $\gamma:\bar{D}^+_\Q(1)\to M_n(\C)$ is continuous, leafwise
holomorphic on $D^+_\Q(1)$, and $\|\gamma(\bz)\|_F$ attains its maximum at some
interior point $\bz_0\in D^+_\Q(1)$, then $\gamma$ is constant.
(Restrict $\gamma$ to the leaf $L_\bz$ through $\bz_0$.
Since $\|\gamma\|_F^2=\sum_{i,j}|\gamma_{ij}|^2$ and each $|\gamma_{ij}|^2$ is
subharmonic on $L_\bz\cong\{{\rm Re}\,\zeta<0\}$, their sum is also subharmonic.
An interior maximum of a subharmonic function forces it to be constant on the
connected domain; thus $\|\gamma\|_F$ is constant, and applying the maximum modulus
principle to each entry $\gamma_{ij}$ forces each entry to be constant on $L_\bz$.
Since $L_\bz$ is dense in $\bar{D}^+_\Q(1)$ and $\gamma$ is continuous, $\gamma$ is
constant on all of $\bar{D}^+_\Q(1)$.)
\begin{remark}\label{Liouville-boundary-remark}
Boundary conditions alone---whether ``constant Frobenius norm on $\SQ$'' or
``unitary values on $\SQ$''---do \emph{not} imply constancy.
For example, $\hat\chi_q(z)=z^q$ with $q>0$ is continuous on $\bar{D}^+_\Q(1)$,
leafwise holomorphic, satisfies $|\hat\chi_q|=t^q\leq 1$ everywhere with
$|\hat\chi_q|=1$ on $\SQ$ (so $\hat\chi_q|_{\SQ}=\chi_q\in U(1)$), yet is
nonconstant.  The maximum of $|\hat\chi_q|$ is attained \emph{on the boundary},
not the interior, consistently with~(b).
\end{remark}
\end{enumerate}
\end{proposition}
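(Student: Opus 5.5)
The statement to prove is Proposition~\ref{Liouville}. I would first establish the leafwise geometry, then deduce (a) and (b) by reduction to classical one-variable facts on a single leaf, followed by density.

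\textbf{Step 1: the leaf parametrization.} Fix $\bz\in\hat\Z$. Using the adelic exponential of Definition~\ref{exp}, the map $\theta\mapsto\boldsymbol{Exp}(\theta,\bz)$ is injective, because the kernel of $\boldsymbol{Exp}$ is the diagonal $\{(n,-n)\}$ and $\bz$ is held fixed. Combining this with polar coordinates $z=t\bu$, the map
\[
\zeta=\log t+i\theta\longmapsto e^{\operatorname{Re}\zeta}\,\boldsymbol{Exp}(\operatorname{Im}\zeta,\bz)
\]
identifies $\{\operatorname{Re}\zeta\le0\}$ with $L_\bz\cap\bar D^+_\Q(1)\setminus\{\boldsymbol0\}$. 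It also identifies all of $\C$ with the leaf in $\CQ$.

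\textbf{Step 2: holomorphy on a leaf.} Along such a leaf, $\hat\chi_q(z)=t^q\chi_q(\bu)$ becomes $e^{q\zeta}$ up to the constant factor $\chi_q(\boldsymbol{Exp}(0,\bz))$. This uses $\chi_q(\nu(\theta))=e^{2\pi i q\theta}$. Two remarks are needed here:
\begin{enumerate}
\item The normalization $2\pi$ is absorbed by rescaling $\theta$.
\item Consequently, a locally uniformly convergent Laurent--Puiseux series restricts to a holomorphic function of $\zeta$. This is what ``leafwise holomorphic'' means in (a).
\end{enumerate}

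\textbf{Step 3: density of leaves.} By Remark~\ref{integeradeles}, the canonical flow is minimal, so each leaf is dense in $\SQ$. Taking the product with the radial factor $t$, each leaf is dense in $\CQ$, hence in $\bar D^\pm_\Q(1)$ and in $\PQ$. The poles lie in the closure because $t\to0$ and $t\to\infty$.

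\textbf{Proof of (a).} Pull $f$ back along the leaf to obtain a bounded entire function of $\zeta$. By the classical Liouville theorem it is constant, so $f$ is constant on a dense set, and continuity gives constancy everywhere.

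For the one-disk variant stated in (a), a bounded holomorphic function on a half-plane need not be constant, so the argument must change. On $\bar D^+_\Q(1)$ I would use the expansion $f=\sum_{q\ge0}a_q\hat\chi_q$ and exploit the fact that the leaf is dense in the whole disk. The main obstacle is precisely this half-plane case: boundedness alone is insufficient. I would therefore add the standing hypothesis implicit in the proposition, namely a continuous extension to the pole $\boldsymbol0$. Since $\bar D^+_\Q(1)\setminus\{\boldsymbol 0\}$ lies in the closure of a single leaf, continuity forces the value at $\boldsymbol0$ to agree with leafwise limits as $\operatorname{Re}\zeta\to-\infty$ along every horizontal line. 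I would then apply the Phragmén--Lindelöf / Fatou boundary argument to conclude constancy, or else restrict the claim to the two-disk statement. I would flag this point as the delicate step.

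\textbf{Proof of (b).} Each $|\gamma_{ij}\circ\zeta|^2$ is subharmonic on $\{\operatorname{Re}\zeta<0\}$, so $\|\gamma\|_F^2$ is subharmonic. An interior maximum at $\zeta_0$ on the leaf through $\bz_0$ forces $\|\gamma\|_F^2$ to be constant on that connected half-plane. Since each $|\gamma_{ij}|^2$ is subharmonic and their sum is constant (hence harmonic), each summand must be harmonic. For a holomorphic function $h$ one has $\Delta|h|^2=4|h'|^2$, so each $\gamma_{ij}'\equiv0$. Therefore $\gamma$ is constant on the leaf, and density together with continuity extends constancy to $\bar D^+_\Q(1)$.

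Remark~\ref{Liouville-boundary-remark} is checked directly: $|z^q|=t^q$ attains its maximum only at $t=1$, consistent with (b).
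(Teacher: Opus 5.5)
You follow the paper's argument for the $\PQ$ version of (a) and for (b): restrict to one leaf, apply the classical one-variable theorem, and conclude by density and continuity. Your $2\pi$ rescaling of $\theta$ and the identity $\Delta|h|^2=4|h'|^2$ tighten the paper's wording there.

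The genuine gap is the one-disk clause of (a). You are right that the paper's ``same proof'' breaks down, because a leaf of $\bar D^+_\Q(1)\setminus\{\boldsymbol0\}$ is a half-plane rather than $\C$. But your repair cannot work, because that clause is false. Continuity at $\boldsymbol0$ is not an extra hypothesis, since the pole already lies in $\bar D^+_\Q(1)$. The function $\hat\chi_q(z)=z^q$, $q>0$, from Remark~\ref{Liouville-boundary-remark} is a counterexample:
\begin{itemize}
\item it is continuous on $\bar D^+_\Q(1)$, with value $0$ at $\boldsymbol0$;
\item it is bounded by $1$ and leafwise holomorphic;
\item it is nonconstant.
\end{itemize}
On a leaf it equals $e^{q\zeta}$ times a unimodular constant, which does tend to $f(\boldsymbol0)=0$ as $\operatorname{Re}\zeta\to-\infty$. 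So no Phragm\'en--Lindel\"of or Fatou argument built on that limit can force constancy. In fact every element of $\mathfrak W^+_\Q$ with a nonzero positive frequency is a counterexample. The right conclusion is that this clause must be dropped, not proved. On a single disk only the maximum principle (b) survives. The statement the paper actually uses later is the two-disk version, which your argument does establish; for example, it gives $\La^1_{-,*}(\gln)\cap\La^1_+(\gln)=\{I\}$ in the big-cell corollary of \S\ref{BF}.

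A smaller gap in (b), shared with the paper, occurs when the interior point $\bz_0$ is $\boldsymbol0$. No leaf passes through $\boldsymbol0$, so ``the leaf through $\bz_0$'' is undefined. A single half-plane cannot settle this case either: $e^{-1/(1-\zeta)}$ is nonconstant on $\{\operatorname{Re}\zeta<0\}$, has modulus $<1$, and tends to $1$ as $\operatorname{Re}\zeta\to-\infty$. This case needs a global argument. For instance, apply Parseval on the circles $t\SQ$ to the expansion $\gamma=\sum_{q\geq0}A_q\hat\chi_q$ of Definition~\ref{holo}. For $0<t<1$ this gives $\sum_{q\geq0}\|A_q\|_F^2t^{2q}=\int_{\SQ}\|\gamma(t\bu)\|_F^2\,\bd\bu\leq\|A_0\|_F^2=\|\gamma(\boldsymbol0)\|_F^2$, which forces $A_q=0$ for all $q>0$.
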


 \begin{theorem}[Solenoidal restricted Grassmannian]\label{grass=loop}
Let
\[
H^+_{\Q,n}=\overline{\operatorname{span}}\{\chi_qe_j:q\in\Q_{\geq0},
\ j=1,\ldots,n\}\subset L^2(\SQ,\C^n)
\]
be the Hardy space determined by the intrinsic rational polarization, and set
\[
\mathrm{Gr}_\Q(n)=\{gH^+_{\Q,n}:g\in\boldsymbol\Lambda_W(n)\}.
\]
Then $\boldsymbol\Lambda_W(n)$ acts transitively on $\mathrm{Gr}_\Q(n)$,
the stabilizer of $H^+_{\Q,n}$ is $\boldsymbol\Lambda_W^+(n)$, and hence
\[
\mathrm{Gr}_\Q(n)
\cong\boldsymbol\Lambda_W(n)/\boldsymbol\Lambda_W^+(n)
\cong\bO.
\]
In general the projection
\[
p_+:gH^+_{\Q,n}\longrightarrow H^+_{\Q,n}
\]
is not Fredholm, even when $g$ has an exact Birkhoff factorization.
Consequently the rational Birkhoff indices cannot be recovered as literal
Fredholm indices of $p_+$ on this Hilbert space.
\end{theorem}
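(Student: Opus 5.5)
The orbit--stabilizer part is essentially formal, and I would do it first. By definition $\mathrm{Gr}_\Q(n)$ is the $\boldsymbol\Lambda_W(n)$-orbit of $H^+_{\Q,n}$, so transitivity is automatic. For the stabilizer I would prove the two inclusions directly.

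If $g\in\boldsymbol\Lambda_W^+(n)$, then multiplication by $g$ maps $H^+_{\Q,n}$ into itself. The reason is that the Fourier coefficients of $gf$ are the convolutions $\sum_{q'}g_{q'}f_{q-q'}$, and nonnegative frequencies add to nonnegative frequencies. These sums converge absolutely because $g\in\mathfrak W_\Q(n)$ acts boundedly on $L^2$ with operator norm at most $\|g\|_{\mathfrak W_\Q}$. Since $g^{-1}\in\boldsymbol\Lambda_W^+(n)$ as well, we get $gH^+_{\Q,n}=H^+_{\Q,n}$.

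Conversely, suppose $gH^+_{\Q,n}=H^+_{\Q,n}$. The constant vectors $e_j$ lie in $H^+_{\Q,n}$, so each column $ge_j$ lies in $H^+_{\Q,n}$. Hence every Fourier coefficient $g_q$ with $q<0$ vanishes, which gives $g\in\mathfrak W^+_\Q(n)$. Applying the same argument to $g^{-1}$, which also stabilizes $H^+_{\Q,n}$ and lies in $\mathfrak W_\Q(n)$ by Proposition~\ref{matrix-wiener-lemma}, gives $g^{-1}\in\mathfrak W^+_\Q(n)$. Therefore $g$ is a unit of $\mathfrak W^+_\Q(n)$, i.e.\ $g\in\boldsymbol\Lambda_W^+(n)$. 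This yields the bijection $\mathrm{Gr}_\Q(n)\cong\boldsymbol\Lambda_W(n)/\boldsymbol\Lambda_W^+(n)$.

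The identification with $\bO$ I would attempt through the map $\Omega_W(U(n))\to\boldsymbol\Lambda_W(n)/\boldsymbol\Lambda_W^+(n)$, $\gamma\mapsto\gamma\boldsymbol\Lambda_W^+(n)$, restricted to based unitary Wiener loops, as in Pressley--Segal.

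Injectivity is clean. If $\gamma_1^{-1}\gamma_2\in\boldsymbol\Lambda_W^+(n)$ with $\gamma_1,\gamma_2$ unitary, then $h=\gamma_1^{-1}\gamma_2$ is unitary, so $h^{-1}=h^*$. The adjoint has Fourier coefficients $(h^*)_q=(h_{-q})^*$, so $h^*$ is supported in $q\le0$. But $h^{-1}\in\mathfrak W^+_\Q(n)$ is supported in $q\ge0$, so $h^{-1}$, and hence $h$, lies in $\mathfrak W^+\cap\mathfrak W^-\simeq M_n(\C)$ and is constant. Basedness then forces $h=I$.

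Surjectivity is a loop-group Iwasawa decomposition $g=\gamma\,b$ with $\gamma$ unitary and $b\in\boldsymbol\Lambda_W^+(n)$, and I expect this to be the main obstacle. Remark~\ref{iwasawa-levels} records that this is equivalent to Conjecture~\ref{solenoidal-BG} in general. I would therefore first establish it in the cases already available:
\begin{itemize}
\item for $n=1$ via Proposition~\ref{birkhoff-reduction}(iv) and Theorem~\ref{wiener-scalar-factorization};
\item near the identity via the inverse-function argument of Theorem~\ref{BVP-Birkhoff}, with the unitary/positive splitting in place of the negative/positive one, whose differential is again invertible by the rational Hilbert-transform splitting of $\mathfrak u(n)$-valued loops;
\item on fixed cyclic spectra via $\rho_N$.
\end{itemize}
I would then try to extend from a neighbourhood of the identity by connectedness of $\boldsymbol\Lambda_{W,0}(n)$ (Proposition~\ref{components-A_r}) together with a closedness argument for the image. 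Closedness is exactly where I expect compactness of the unitary factors to fail in the unrestricted rational spectrum. If that step cannot be completed, the identification with $\bO$ should be read as conditional on the conjecture.

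The non-Fredholm assertion is handled by an explicit example. Take $g=\chi_r I_n$ with $r\in\Q_{>0}$. This $g$ already carries the exact Birkhoff factorization $I\cdot\Delta_{(r,\ldots,r)}\cdot I$. Its image $gH^+_{\Q,n}=\chi_rH^+_{\Q,n}$ consists of the functions with Fourier support in $\Q\cap[r,\infty)$. Exactly as in Corollary~\ref{shift-fails}, now in $L^2$ instead of $H^1$, $p_+$ is injective with cokernel spanned by $\chi_qe_j$ for $q\in\Q\cap[0,r)$, which is infinite-dimensional. Hence $p_+$ is not Fredholm. Because $r$ is an arbitrary positive rational, no Fredholm index of $p_+$ can recover the partial index $r$, which is the final claim.
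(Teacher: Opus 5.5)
Your transitivity, stabilizer and non-Fredholm arguments are exactly the paper's proof. The paper also tests $g$ and $g^{-1}$ on the constant vectors, and it uses the scalar $\chi_r$ of Example~\ref{px-not-fredholm}, of which your $\chi_rI_n$ is the diagonal version. The paper's proof gives no argument at all for the last isomorphism $\boldsymbol\Lambda_W(n)/\boldsymbol\Lambda_W^+(n)\cong\bO$, so you are right not to treat it as formal. Your diagnosis of what is missing is, however, off in two ways.

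\textbf{Regularity mismatch.} The problem is not only surjectivity. Your coset map lives on based unitary loops with entries in $\mathfrak W_\Q$, i.e.\ on $\Omega_W(U(n))$. By contrast, $\bO=\La^\infty_0(U(n))$ consists of leafwise smooth loops, and on $\SQ$ the two classes are incomparable.
\begin{itemize}
\item Take a rebased $\exp(i\phi\circ\pi_1)$, where $\phi$ is a real lacunary series on $S^1$ that converges absolutely but is nowhere differentiable. This is a unitary Wiener loop that is not leafwise smooth. By your own injectivity argument each coset contains at most one based unitary Wiener loop, so the coset of this loop has no representative in $\bO$.
\item Pullback by $\pi_N$ divides $k$-th leafwise derivatives by $N^k$ without changing Wiener norms. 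Summing suitably normalized pullbacks $\tilde h_j\circ\pi_{N_j}$ over growing primes $N_j$ therefore gives real leafwise smooth functions $f\notin\mathfrak W_\Q$. Then the rebased $\exp(if)\in\bO$ lies outside $\boldsymbol\Lambda_W(1)$, since a Wiener loop of winding zero has a Wiener logarithm. On such loops $\gamma\mapsto\gamma\boldsymbol\Lambda_W^+(n)$ is not even defined.
\end{itemize}
So no Iwasawa theorem, conditional or not, yields $\cong\bO$; the correct target is $\Omega_W(U(n))$.

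\textbf{What surjectivity actually requires.} Surjectivity onto $\Omega_W(U(n))$ is not equivalent to Conjecture~\ref{solenoidal-BG}. One has $g=\gamma b$ with $\gamma$ unitary and $b\in\boldsymbol\Lambda_W^+(n)$ if and only if $g^*g=b^*b$ (take $\gamma=gb^{-1}$). Since $b^*\in\boldsymbol\Lambda_W^-(n)$, this says exactly that the positive-definite loop $g^*g$ admits a canonical, zero-index factorization. That is a spectral-factorization problem, a priori only a special case of the conjecture. Remark~\ref{iwasawa-levels} conflates the unitary decomposition with the Birkhoff form of Conjecture~\ref{iwasawadecomposition}, and you inherit this when you treat surjectivity as equivalent to the conjecture.

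The natural starting point is Toeplitz positivity: $\langle p_+M_{g^*g}f,f\rangle=\|gf\|^2\geq c\|f\|^2$ on $H^+_{\Q,n}$. For $n=1$ everything is elementary. By Wiener--L\'evy, $u=\tfrac12\log(g\bar g)\in\mathfrak W_\Q$. Write $u=u_0+u_++\overline{u_+}$ with $u_+=P_+u$ and $u_0$ the mean. Then
\[
g=\Bigl(\tfrac{g}{|g|}\,e^{\overline{u_+}-u_+}\Bigr)\,e^{u_0+2u_+},
\]
which is a unimodular Wiener loop times an element of $\boldsymbol\Lambda_W^+(1)$. Hence in rank one the theorem holds unconditionally with $\Omega_W(U(1))$ in place of $\bO$, and already there $\bO$ is the wrong space.
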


\begin{proof}
The Wiener algebra $\mathfrak W_\Q=\ell^1(\Q)$ acts by bounded convolution
operators on $L^2(\SQ)$, and every
$g\in\boldsymbol\Lambda_W(n)=\mathrm{GL}_n(\mathfrak W_\Q)$ acts boundedly
and invertibly on $L^2(\SQ,\C^n)$.  Transitivity is immediate from the
definition.

Suppose that $gH^+_{\Q,n}=H^+_{\Q,n}$.  Then multiplication by $g$ and by
$g^{-1}$ both preserve $H^+_{\Q,n}$.  Applied to the constant coordinate
vectors, this implies that every entry of $g$ and of $g^{-1}$ has no
negative rational Fourier modes.  Thus
$g\in\boldsymbol\Lambda_W^+(n)$.  The converse is immediate, so the
stabilizer is exactly $\boldsymbol\Lambda_W^+(n)$.

The failure of Fredholmness is exhibited explicitly in
Example~\ref{px-not-fredholm}.  Hence no Fredholm-index interpretation is
asserted for the rational Birkhoff indices in the intrinsic rational
polarization.
\end{proof}

\begin{remark}[Finite-level indices]\label{rem:finite-level-indices}
For each divisibility level $N$ for which the projected cocycle
$g_N=P_Ng$ is invertible, the circle comparison map $\rho_N$ gives ordinary
integer partial indices
\[
k_1^{(N)}\geq\cdots\geq k_n^{(N)}.
\]
The normalized numbers $k_i^{(N)}/N$ are useful finite-level invariants.
For a general Wiener cocycle, however, no stabilization or convergence
theorem is proved here, and these numbers are not asserted to determine
the splitting type of $g$.  Establishing such compatibility is part of
the global matrix factorization problem.
\end{remark}

\begin{example}[The basic counterexample: $p_+$ is not Fredholm]\label{px-not-fredholm}
Fix $r\in\Q$, $r>0$, and take $g=\chi_r\in\boldsymbol\Lambda_W(1)$: a unit
(inverse $\chi_{-r}$) with the trivially exact factorization $h_-=h_+=1$
and sole index $q_1=r$.  Since characters multiply additively,
\[
g\cdot H^+_{\Q,1}=\chi_r\cdot\overline{\operatorname{span}}\{\chi_q:q\geq0\}
=\overline{\operatorname{span}}\{\chi_s:s\geq r\}\;\subset\;H^+_{\Q,1},
\]
so $p_+$ restricted to $g\cdot H^+_{\Q,1}$ is simply the inclusion
\[
\overline{\operatorname{span}}\{\chi_s:s\geq r\}\;\hookrightarrow\;
\overline{\operatorname{span}}\{\chi_q:q\geq0\}.
\]
Its kernel is trivial, but its cokernel is
$\overline{\operatorname{span}}\{\chi_q:0\leq q<r\}$, which is
infinite-dimensional because $\Q\cap[0,r)$ is countably infinite
(Remark~\ref{Q-vs-Z}).  Hence $p_+$ is \emph{not} Fredholm, despite $g$
being the simplest possible non-trivial, exactly-factorable element of
$\boldsymbol\Lambda_W(1)$.  (For $r<0$ the symmetric computation gives
$g\cdot H^+_{\Q,1}\supsetneq H^+_{\Q,1}$ with $p_+$ surjective and kernel
$\overline{\operatorname{span}}\{\chi_s:r\leq s<0\}$, again
infinite-dimensional; again not Fredholm.)  The failure is not a
large-norm or pathological-$g$ phenomenon: it occurs at arbitrarily small
$r>0$, for exactly the structural reason behind
Proposition~\ref{gap-thm} and Corollary~\ref{shift-fails}, namely that
$\Q\cap[0,r)$ is infinite for every $r>0$.  Thus the intrinsic rational polarization gives a natural
homogeneous-space model, but not a classical Fredholm model for the
Birkhoff indices.
\end{example}

\begin{remark}[Original dynamical motivation]
The preceding Fourier, polarization, and energy constructions describe the
mechanism expected for a global Grassmannian theory.  Corollary
\ref{shift-fails} and Proposition~\ref{gap-thm} show that the particular model
inside one fixed restricted Grassmannian cannot realize this programme.
They do not affect the circle-comparison theory, where the integer Fourier lattice
is discrete.
\end{remark}

\begin{remark}[Homogeneous-space formulas]
The formulas
\[
\bO\simeq \La^1(\gln)/\La^1_+(\gln)
\simeq \La^1(U(n))/U(n)
\simeq \boldsymbol{Gr}_\Q^{(n)}
\]
express the global homogeneous K\"ahler geometry.  For the solenoidal
Grassmannian $\mathrm{Gr}_\Q(n)$ with rational polarization, these
are Theorem~\ref{grass=loop}.
\end{remark}

\begin{conjecture}[Solenoidal Iwasawa decomposition; equivalent to
Conjecture~\ref{solenoidal-BG}]\label{iwasawadecomposition}
Every $g\in\boldsymbol\Lambda_W(n)$ admits a decomposition
\[
g\;=\;g_-\cdot\Delta_\mathbf{q}\cdot g_+,\qquad
g_-\in\boldsymbol\Lambda_W^-(n),\quad
\Delta_\mathbf{q}\in\boldsymbol\Lambda_W(T^n),\quad
g_+\in\boldsymbol\Lambda_W^+(n),
\]
where $\boldsymbol\Lambda_W(T^n)=\{\operatorname{diag}(\chi_{q_1},\ldots,\chi_{q_n})
: q_i\in\Q\}$ is the group of rational diagonal character loops taking values
in the maximal torus $T^n\subset U(n)$.  The rational tuple $(q_1,\ldots,q_n)$
is unique up to reordering, and
$\boldsymbol\Lambda_W(n)=\boldsymbol\Lambda_W^-(n)\cdot
\boldsymbol\Lambda_W(T^n)\cdot\boldsymbol\Lambda_W^+(n)$.
\end{conjecture}

\begin{remark}[Equivalence with Conjecture~\ref{solenoidal-BG}]
This is the Solenoidal Birkhoff--Grothendieck Conjecture~\ref{solenoidal-BG}
restated in Iwasawa form: the factorization $g=g_-\Delta_\mathbf{q}g_+$
with Birkhoff indices $q_i\in\Q$, \emph{if it exists}, provides the
decomposition, since the identity $|\chi_{q_i}(\mathbf{u})|=1$ for all
$\mathbf{u}\in\SQ$ confirms that
$\Delta_\mathbf{q}=\operatorname{diag}(\chi_{q_1},\ldots,\chi_{q_n})$ takes
values in $T^n\subset U(n)$; conversely an Iwasawa decomposition of this
form is exactly a solenoidal Birkhoff factorization.  The two statements are
logically equivalent, and neither is proved for a general
$g\in\boldsymbol\Lambda_W(n)$.  Both are unconditional whenever $g$ is
pro-algebraic, triangular, small-norm, or scalar
(Theorems~\ref{pro-algebraic-BG}, \ref{ordered-triangular-wiener},
\ref{BVP-Birkhoff}, \ref{wiener-scalar-factorization}).
\end{remark}

\section{Birkhoff Factorization}\label{BF} 

This section establishes global Birkhoff factorization theorems and their dynamical
consequences.  Exact statements are given in the scalar Wiener and ordered triangular
Wiener categories, together with compatible rational-lattice theorems.

 \begin{conjecture}[Global Birkhoff factorization]\label{BFT}
 This is equivalent to Conjecture~\ref{solenoidal-BG}.
 \begin{enumerate}
\item Any Wiener loop
\[
\gamma\in\boldsymbol\Lambda_W(n)=\mathrm{GL}_n(\mathfrak W_\Q)
\]
admits a factorization
\begin{equation}\label{-d+}
\gamma=\gamma_-\cdot\Delta_{_{\mathbf{q}}}\cdot\gamma_+,
\end{equation}
\noi $\gamma_\pm\in\La_\pm^1(\gln)$,
$\mathbf{q}=(q_1,\cdots,q_n)$ with $q_i\in\Q$ for all $i$,
and $\Delta_{_{\mathbf{q}}}:\SQ\to{U(n)}$ is the diagonal character:
 \begin{equation}
 \Delta_{_{\mathbf{q}}}(\bu)=
 \begin{pmatrix}
    \chi_{q_1}(\bu) & & \\
    & \ddots & \\
    & & \chi_{q_n}(\bu)
  \end{pmatrix}.
 \end{equation}

 \item In any other factorization of the form~(\ref{-d+}),
 the diagonal middle factor differs from $\Delta_{_{\mathbf{q}}}$ only
 in the order of the diagonal entries.
 \end{enumerate}
This is Conjecture~\ref{solenoidal-BG} restated in the loop-group notation of
this section; it is unconditional whenever $\gamma$ is pro-algebraic,
triangular, small-norm, or scalar (Theorems~\ref{pro-algebraic-BG},
\ref{ordered-triangular-wiener}, \ref{BVP-Birkhoff},
\ref{wiener-scalar-factorization}), and open in general.
 \end{conjecture}

\begin{remark}[Dynamical mechanism and circle-comparison realization]
The following discussion explains the dynamical mechanism suggested by the
energy construction, \emph{granting} the Birkhoff factorization of
Conjecture~\ref{BFT} (equivalently Conjecture~\ref{solenoidal-BG}) and its
Grassmannian interpretation, Theorem~\ref{grass=loop}
and Conjecture~\ref{iwasawadecomposition}.  The partition of the based loop
group $\bO=\La^1_0(U(n))$ constructed in \S\ref{bd}--\S\ref{energy2} via the
energy functional and its gradient flow is unconditional (it uses only
Morse-theoretic properness, not Birkhoff factorization); what depends on the
open conjecture is transporting that partition to a partition of the full
matrix Wiener loop group $\La^1(\gln)$, via the identification
$\bO\cong\La^1(\gln)/\La^1_+(\gln)$ below.  For every $g$ in the proven cases
(pro-algebraic, triangular, small-norm, or scalar) the discussion that
follows is unconditional.
For a fixed cyclic spectral subgroup the corresponding statements are classical
and transport through $\rho_N$.  We recall that $\bO$ admits
the partition (\ref{stable-partition}). The set
$S_{([\boldsymbol{\lambda}], q)}$ is the stable manifold of the conjugacy
class $\boldsymbol{C}_{([\boldsymbol{\lambda}], q)}$
of the homomorphism
$\lambda:\SQ\to\bO$, with respect to the descending gradient flow 
of the energy functional. In \cite{Pr1} it is shown that 
$S_{([\boldsymbol{\lambda}], q)}$ is a locally closed complex submanifold
of $\bO$. 
Granting Conjecture~\ref{iwasawadecomposition}, the projection map $\Pi:\La^1(\gln))\to\bO$ is a principal fibration
and we obtain, using the partition \ref{stable-partition}, the partition:

\begin{equation}
\La^1(\gln))=\underset{([\boldsymbol{\lambda}], q)}
\coprod{\Sigma_{([\boldsymbol{\lambda}], q)}}\coprod{S_{(\boldsymbol1, 0)}}
\end{equation}
\begin{equation}
\Sigma_{([\boldsymbol{\lambda}], q)}=\Pi^{-1}\left\{S_{(\boldsymbol{\lambda}], q)}
\right\} \,\, \text{and} \,\, 
\Sigma_{(\boldsymbol1, 0)}=\Pi^{-1}\left\{S_{(\boldsymbol1, 0)}\right\}.
\end{equation}

\noi Each stratum $\Sigma_{([\boldsymbol{\lambda}], q)}$ contains
the finite dimensional submanifold  
$\boldsymbol{C}_{([\boldsymbol{\lambda}], q)}
\subset{U(n)}\subset\La^1(\gln),$ 
of definition \ref{cell_numbering}, consisting of the conjugacy class
of the homomorphism $\lambda:\SQ\to{U(n)}$. 

By the homogeneous-space formulas above,
\[
\bO= \La^1(\gln)/\La^1_+(\gln),
\]
so $\La^1_-(\gln)$ --- the loops $\gamma:\SQ\to\gln$ extending holomorphically to
$D^-_\Q$ --- acts on the left on the space of right-cosets.
 
 Specifically:
 granting conjecture \ref{iwasawadecomposition}, every right-coset can be uniquely
 represented as the right-coset $\mathbf{w}[\La^1_+(\gln)]$ with
 $\mathbf{w}\in\bO$.  Let $[\mathbf{w}]$
denote this coset. If $\gamma_{_{-}}\in\La^1_-(\gln)$ and 
$(\gamma_{_{-}}\cdot\mathbf{w})[\La^1_+(\gln)]$ is the right-coset obtained by 
left translating,
by $\gamma_{_{-}}$ the coset $[\mathbf{w}]$, then this right-coset is of the form  
$\mathbf{w}'[\La^1_+(\gln)]$ for a unique $\mathbf{w}'\in\bO$. The left action of 
$\La^1_-(\gln)$ is given by
$L_{\gamma_{_{-}}}([\mathbf{w}])=[\mathbf{w}'],\,\,\,\gamma_{_{-}}\in\La^1_-(\gln)$.

Consider the orbit $\mathcal{O}(\bw)$ of $\bw$ under the action of the semigroup
$\boldsymbol{D}^+$ given by the formula (\ref{formula-disk-action}) 
in proposition (\ref{actionD+}):
\[
\mathcal{O}(\bw)=\left\{{{R}}_{e^{t}\bv}(\bw):t\geq0,\, \bv\in\SQ\right\}.
\]
The parametrization of the orbit gives a map 
$F_{\bw}:\boldsymbol{D}^+\to\bO,\quad 
t\bv\mapsto{R}_{e^{t}\bv}(\bw)$.

By Formula 7.6.2 of Proposition 7.6.1 in \cite{PS} this orbit can be extended by adding the point 
$\underset{t\to\infty}\lim{{R}}_{e^{t}\mathbf1}(\bw)=
\underset{t\to\infty}\lim{g_t(\bw)}$ which is independent of $\bw$.
This implies that this parametrization $F_{\bw}$
extends to a continuous map $\bar{F}_{\bw}:\bar{D}_\SQ^+\to\bO$ by
defining $\bar{F}(\boldsymbol0)=\underset{t\to\infty}\lim{{R}}_{e^{t}\mathbf1}(\bw)=\underset{t\to\infty}\lim{g_t(\bw)}\overset{def}=\lambda$. By proposition
\ref{actionD+} item {\bf(ii)}, $\lambda$ is a group homomorphism  
$\lambda:\SQ\to{U(n)}\subset\gln$, therefore it belongs to a unique stratum
$S_{([\boldsymbol{\lambda}], q)}$. 

\noi If 
$\,\overline{\mathcal{O}(\bw)}=F_{\bw}(\bar{D}^+_\SQ)$ then, since every point
of this set is in the stable manifold of 
$\boldsymbol{C}_{([\boldsymbol{\lambda}], q)}$,  it follows that 
$\,\overline{\mathcal{O}(\bw)}\subset{S_{([\boldsymbol{\lambda}], q)}}$.
Therefore, the union of the one-parameter family of right-cosets
$[g_t(\bu)],\,t\geq0\,,$ is contained in $\Sigma_{([\boldsymbol{\lambda}], q)}$. 
In particular the right-coset
$[\lambda]=\lambda[\La^1_+(\gln)]$ is contained in 
$\Sigma_{([\boldsymbol{\lambda}], q)}$.  

We will now prove that the stratum
$\Sigma_{([\boldsymbol{\lambda}], q)}$ belonging to the conjugate class
of $\lambda$ is equal to the orbit of the set
of right-coset $[\lambda]$ under the left action of $\La^1_-(\gln)$.
Let $D_\Q^+(1)$ be the semigroup with product
in polar coordinates: ${t_1\bv_1\cdot{t_2}\bv_2}=t_1t_2\bv_1\bv_2$.
Define the action of this semigroup as the 
``{\it scaling semi flow}'':
\[
\varphi_{_{t\bv}}:\La^1_-(\gln)\to\La^1_-(\gln),\quad {t}\bv\in{D_\Q^+(1)},
\]

\noi which is  given in terms of Fourier series as follows:

\begin{align*}
\varphi_{_{t\bv}}(\gamma_{_{-}})(\bu)&=\sum_{q\leq0}t^q\,a_q\chi_q(\bv\bu)\\
&=\sum_{q\leq0}t^q\,a_q\chi_q(\bv)\chi_q(\bu),
\quad 0\leq{t}\leq1,\;\bu,\bv\in\SQ,
\end{align*}
\noi if $\gamma_{_{-}}\in\La^1_-(\gln)$ is given by the Fourier series
$\gamma_{_{-}}(\bu)=\sum_{q\leq0} a_q\chi_q(\bu)$. Clearly
$\varphi_{_{t_1\bv_1\cdot{t_2}\bv_2}}=\varphi_{_{t_1\bv_1}}\circ\varphi_{_{t_2\bv_2}}$
and $\varphi_{_{\boldsymbol0}}(\gamma_{_{-}})$ is the constant loop
$\gamma_{_{-}}(\boldsymbol\infty)=a_{_0}\in\gln$. Hence
the one-parameter family of right-cosets
$(\varphi_{_{t\bv}}(\gamma_{_{-}})\cdot\lambda)[\La_+^1(\gln)]$ belongs
to the orbit of the left action of $\La^1_-(\gln)$ on $[\lambda]$. In particular,
when $t=0$:
\[
a_{_0}\lambda[\La_+^1(\gln)]=a_{_0}\lambda{a^{-1}_{0}}[\La_+^1(\gln)],
\]
and $a_{_0}\lambda{a^{-1}_{0}}$ is a diagonal matrix differing from $\lambda$
only by a permutation of the diagonal elements.
Furthermore $\Sigma_{([\boldsymbol{\lambda}], q)}$ is the stable manifold
of $\boldsymbol{C}_{([\boldsymbol{\lambda}], q)}$
and thus the right-coset 
$[\lambda]$ belongs to the orbit of $\La^1_-(\gln)$. We summarize what we have shown:
\begin{equation}\label{BirkhoffCells}
\La^1(\gln)= 
\end{equation}
\begin{multline*}
\underset{([\boldsymbol{\lambda}], q),q\neq0}\coprod\,
\La_-^1(\gln)\cdot\boldsymbol{C}_{([\boldsymbol{\lambda}], q)}
\cdot\La_+^1(\gln)\\
\coprod\;\La_-^1(\gln)\cdot \La_+^1(\gln),
\end{multline*}

\noi with the obvious meaning of the triple products.
Granting Conjecture~\ref{BFT}, this gives the indicated Birkhoff-cell
partition.  The Birkhoff factorization itself remains open in general
(Conjecture~\ref{solenoidal-BG}) and is proved unconditionally only in the
cases established in \S\ref{BF} (pro-algebraic, ordered triangular,
small-norm, and scalar cocycles); the Grassmannian realization of the cell
decomposition remains part of Theorem~\ref{grass=loop}. \end{remark}

\begin{remark} The Birkhoff factorization for the
classical loop groups is determined by points in an integer lattice
modulo permutation of the coordinates but in the case of adelic groups
there is a ``weight'' $q\in\Q$ except for the trivial loop.
That is why there is the distinguished component $\La_-^1(\gln)\cdot \La_+^1(\gln)$.
\end{remark}

\begin{definition}[Birkhoff cells]
Granting Conjecture~\ref{BFT}, the elements
\[B([\boldsymbol{\lambda}], q)\overset{def}=\La_-^1(\gln)\cdot
\boldsymbol{C}_{([\boldsymbol{\lambda}], q)}\cdot\La_+^1(\gln), \,q\neq0\]
and $B(\mathbf1, 0)\overset{def}=\La_-^1(\gln)\cdot \La_+^1(\gln)$
of the partition (\ref{BirkhoffCells}) are called {\bf Birkhoff cells}.
They are indexed by the conjugacy classes of homomorphisms
$f:\SQ\to{U(n)}$.
\end{definition}

\begin{corollary}[Big-cell statement]
Granting Conjectures~\ref{BFT} and~\ref{iwasawadecomposition},
the scaling flow shows that
the Birkhoff cell $B([\boldsymbol{\lambda}], q)$ retracts strongly to 
the manifold $\boldsymbol{C}_{([\boldsymbol{\lambda}], q)}$ if $q\neq0$. If
$q=0$ the set $\boldsymbol{C}_{(\mathbf1, 0)}$ consists of a single point
which is the constant loop equal to the identity matrix $I$ in $\gln$ and the
Birkhoff cell $B(\mathbf1, 0)$ retracts strongly to this point so that 
 $B(\mathbf1, 0)$ is contractible. In fact $B(\mathbf1, 0)$ is open and
 dense in $\La^1(\gln)$. If $\La_{-,*}^1(\gln)$ is the subgroup
 of $\La_-^1(\gln)$ consisting of loops $\gamma_{_{-}}$ such that 
 $\gamma_{_{-}}(\mathbf1)=\text{I}$. Then the map:
 \[
 F:\La_{-,*}^1(\gln)\times\La_+^1(\gln)\to\La^1(\gln)
 \]
 \noi is a diffeomorphism onto the open and dense Birkhoff cell $B(\mathbf1, 0)$.
 For this reason $B(\mathbf1, 0)$ is called the {\bf big cell}. This implies  that any loop $\gamma$ in the big cell can be written uniquely
 as a product $\gamma=\gamma_{_{-}}\cdot\gamma_{_{+}}$. We remark that
 $\La_{-,*}^1(\gln)\cap\La_+^1(\gln)=\left\{I\right\}$ because the loops
 in the intersection can be extended holomorphically to $\PQ$, so by Liouville's theorem \ref{Liouville} they are constant loops. The map $F$ is differentiable in the Wiener-algebra topology.
For every fixed cyclic spectral subgroup, the corresponding big cell is the
classical open dense Birkhoff cell transported through \(\rho_N\).
 \end{corollary}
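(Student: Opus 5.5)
The plan is to treat the four assertions separately: trivial intersection and uniqueness, the diffeomorphism $F$ and openness, the strong retractions, and density. Throughout I use Conjecture~\ref{BFT} for existence of factorizations and Conjecture~\ref{iwasawadecomposition} only to make the coset space a homogeneous space.

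\textbf{Trivial intersection and uniqueness.} Let $h\in\La_{-,*}^1(\gln)\cap\La_+^1(\gln)$. The loop $h$ extends continuously and leafwise holomorphically to both $\bar D^+_\Q(1)$ and $\bar D^-_\Q(1)$, so it defines a bounded leafwise holomorphic function on $\PQ$. Proposition~\ref{Liouville}(a) then makes $h$ constant, and the normalization $h(\mathbf 1)=I$ gives $h=I$.

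Now suppose $\gamma_-\gamma_+=\gamma'_-\gamma'_+$ with both left factors in $\La_{-,*}^1$. Then $(\gamma'_-)^{-1}\gamma_-=\gamma'_+\gamma_+^{-1}$ lies in the intersection, hence equals $I$. So $F$ is injective, and by definition its image is $B(\mathbf 1,0)$.

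\textbf{$F$ is a diffeomorphism and the big cell is open.} I would work in the Wiener topology.
\begin{enumerate}
\item $F$ is analytic because it is the restriction of multiplication in a Banach algebra.
\item Fix $\gamma_0=\gamma_-^0\gamma_+^0$ in the big cell. For $\gamma$ near $\gamma_0$, the element $g:=(\gamma_-^0)^{-1}\gamma(\gamma_+^0)^{-1}$ is Wiener-close to $I$.
\item Theorem~\ref{BVP-Birkhoff} gives a normalized factorization $g=h_-^{-1}h_+$ with analytic dependence on $g$. Hence $\gamma=\gamma_-^0h_-^{-1}h_+\gamma_+^0$ lies in the big cell.
\item The only mismatch is normalization: Theorem~\ref{BVP-Birkhoff} normalizes at $\boldsymbol\infty$, whereas $\La_{-,*}^1$ normalizes at $\mathbf 1$. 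Correct this by the constant $c=\bigl(\gamma_-^0h_-^{-1}\bigr)(\mathbf 1)$, moving $c^{-1}$ into the left factor and $c$ into the right factor.
\end{enumerate}
This shows that $B(\mathbf 1,0)$ is open and that $F^{-1}$ is locally analytic. So $F$ is a diffeomorphism onto its image.

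\textbf{Strong retractions.} For $q\neq0$, write $\gamma=\gamma_-\,\lambda\,\gamma_+$ with $\lambda\in\boldsymbol C_{([\boldsymbol\lambda],q)}$. The deformation is in three stages.
\begin{enumerate}
\item Apply the scaling semiflow $\varphi_{t\mathbf 1}$ to $\gamma_-$, and its positive counterpart $\psi_t(\gamma_+)(\bu)=\sum_{q\ge0}t^qb_q\chi_q(\bu)$ to $\gamma_+$, letting $t$ go from $1$ to $0$. Each intermediate product stays in $\La_-^1\lambda\La_+^1$ and ends at $a_0\lambda b_0$ with $a_0,b_0\in\gln$ constant.
\item Write $a_0\lambda b_0=(a_0b_0)\,(b_0^{-1}\lambda b_0)$. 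Since $a_0b_0$ is a constant, it lies in $\La_-^1$, and I deform it to $I$ through constant matrices inside $\gln$ so the path stays in the cell.
\item Use the Iwasawa decomposition $b_0=kp$ with $k\in U(n)$ and $p$ upper triangular. Then $b_0^{-1}\lambda b_0=p^{-1}(k^{-1}\lambda k)p$, and $p^{\pm1}$ are constants in both $\La_\pm^1$. Deform $p$ to $I$ in the contractible triangular group. The endpoint is $k^{-1}\lambda k\in\boldsymbol C_{([\boldsymbol\lambda],q)}$.
\end{enumerate}
Points of $\boldsymbol C$ stay fixed throughout. For $q=0$ the same construction contracts $B(\mathbf 1,0)$ onto $I$.

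\textbf{Expected main obstacle.} I expect the hardest step to be showing that this homotopy is well defined and continuous on the whole cell. The factors $\gamma_\pm$ and $\lambda$ are not unique: they can be altered by the stabilizer $\La_-^1\cap\lambda\La_+^1\lambda^{-1}$. My first attempt is to normalize the factors, for instance $\gamma_-(\mathbf 1)=I$ together with a Gram--Schmidt normalization of the constant term. I would then check that the fibres of $(\gamma_-,\lambda,\gamma_+)\mapsto\gamma$ are orbits of a contractible group that commutes with the scaling flows, so that the homotopy descends to the cell.

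\textbf{Density of the big cell.} Using Theorem~\ref{wiener-density}, it suffices to approximate elements $\gamma_-\Delta_{\mathbf q}\gamma_+$.
\begin{enumerate}
\item Reduce to approximating $\Delta_{\mathbf q}$ itself by elements of the big cell.
\item Pair off indices using the rational analogue of the classical trick
$\begin{pmatrix}\chi_q&\varepsilon\\0&\chi_{-q}\end{pmatrix}$ and its companions. Their factorization is computed as in Proposition~\ref{ordered-triangular-wiener}, after conjugating to the correct ordering, and I expect it to have indices $(0,0)$ when $q>0$.
\item For nonzero total degree, argue within the appropriate connected component of Proposition~\ref{components-A_r}.
\end{enumerate}
Checking the explicit $\varepsilon$-perturbation in the rational setting is the second delicate point of the plan.
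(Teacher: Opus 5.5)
Your arguments for the trivial intersection, for uniqueness, and for the openness of $B(\mathbf 1,0)$ with analytic $F^{-1}$ are fine; the last uses Theorem~\ref{BVP-Birkhoff} and needs neither conjecture. The genuine gap is in the strong retractions, and it cannot be repaired: in $\La^1(\gln)$ itself those claims are false.

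On $B(\mathbf 1,0)$ your Stage~1 is well defined, because the stabilizer $\La^1_-\cap\La^1_+$ consists of constants, which both scalings fix. So Stage~1 deformation-retracts the big cell onto the constant loops $\gln$. Stage~2 is then literally a contraction of $\gln$ onto $I$, continuous in $a_0b_0$. No such contraction exists, since $\pi_1(\gln)\cong\Z$.

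The same obstruction hits every cell. The map $\gamma\mapsto\det\gamma(\mathbf 1)$ is continuous in the Wiener topology and identically $1$ on $\boldsymbol C_{[\mathbf q]}$, since $\chi_q(\mathbf 1)=1$. Yet it has degree one on the circle $\theta\mapsto\Delta_{\mathbf q}\operatorname{diag}(e^{\mathbf{i}\theta},1,\ldots,1)$, which lies in the cell. A deformation retraction onto $\boldsymbol C_{[\mathbf q]}$ would make this map null-homotopic, so none exists. For $n=1$, for instance, $B(\mathbf 1,0)$ is the identity component of $\mathrm{GL}_1(\mathfrak W_\Q)$, which is homotopy equivalent to a circle.

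Your proposed fix for non-uniqueness fails independently. Write $R_s(\sum a_q\chi_q)=\sum s^qa_q\chi_q$; then Stage~1 applies $R_{1/t}$ to $\gamma_-$ and $R_t$ to $\gamma_+$. Take $s=I+\chi_rE_{ij}\in\La^1_-\cap\lambda\La^1_+\lambda^{-1}$, which exists whenever $q_i-q_j\le r\le 0$. The factorizations $(\gamma_-,\gamma_+)$ and $(\gamma_-s^{-1},\lambda^{-1}s\lambda\gamma_+)$ give middle factors $\lambda$ and $\bigl(I+(t^{r-q_i+q_j}-t^{-r})\chi_rE_{ij}\bigr)\lambda$. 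These differ unless $2r=q_i-q_j$, so the two opposite scalings do not commute with the stabilizer.

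What is true is the coset version on $\bO\cong\La^1(\gln)/\La^1_+(\gln)$. This is how the paper's preceding discussion, following Pressley--Segal, uses the scaling flow. There one scales only $\gamma_-$, and $R_{1/t}$ preserves $\La^1_-\cap\lambda\La^1_+\lambda^{-1}$, since it rescales Fourier coefficients without moving their supports. The flow therefore descends to the stratum $B/\La^1_+$, which retracts onto the image of $\boldsymbol C$. In that setting $B(\mathbf 1,0)/\La^1_+\cong\La^1_{-,*}$ is contractible.

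The density step has a similar defect. By Proposition~\ref{components-A_r}(2), $\La^1_\pm$ consist of loops of determinant winding $0$. So $B(\mathbf 1,0)$ lies in the open and closed component $\boldsymbol\Lambda_{W,0}(n)$ and cannot be dense in the whole group. Your item~3 cannot be carried out; the correct statement is density in the identity component.

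Within that component, pairing $(q,-q)$ does not reach zero-sum tuples such as $(2,-1,-1)$. Use instead, for $b<r<a$, the perturbation $A_\varepsilon=\begin{pmatrix}\chi_a&\varepsilon\chi_r\\0&\chi_b\end{pmatrix}$. Left multiplication by $\begin{pmatrix}1&0\\-\varepsilon^{-1}\chi_{b-r}&1\end{pmatrix}\in\mathrm{GL}_2(\mathfrak W^-_\Q)$ turns it into $\operatorname{diag}(\chi_r,\chi_{a+b-r})\begin{pmatrix}\varepsilon&\chi_{a-r}\\0&-\varepsilon^{-1}\end{pmatrix}\begin{pmatrix}0&1\\1&0\end{pmatrix}$. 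The last two factors lie in $\mathrm{GL}_2(\mathfrak W^+_\Q)$, so $A_\varepsilon$ has indices $\{r,a+b-r\}$ and tends to $\operatorname{diag}(\chi_a,\chi_b)$ as $\varepsilon\to0$. Taking $b<0<a$ and $r=0$ kills one nonzero index per step. At most $n-1$ such steps bring any zero-sum $\Delta_{\mathbf q}$ into the big cell.
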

 
\section{The solenoidal equator, rational degree, and splitting}
\label{BG}

The solenoidal projective line is obtained by gluing its two holomorphic
hemispheres along the universal solenoid:
\[
\PQ=D^+_{\Q}\cup_{\SQ}D^-_{\Q}.
\]
The compact group \(\SQ\) is the intrinsic equator of \(\PQ\).  It replaces
the ordinary circle in the classical clutching construction, while the two
solenoidal disks replace the two complementary disks of \(\mathbb P^1\).
Its character group is
\[
\widehat{\SQ}=\Q.
\]
Accordingly, characters \(\chi_q\), \(q\in\Q\), replace the integral
monomials \(z^k\), \(k\in\Z\), of the classical theory.

\subsection{Intrinsic rational winding and determinant degree}

For a continuous scalar cocycle \(f:\SQ\to\C^*\), the deformation
retraction \(\C^*\simeq S^1\) and Pontryagin duality give
\[
[\SQ,\C^*]=[\SQ,S^1]\cong\check H^1(\SQ;\Z)\cong\Q.
\]
The class of \(f\) is represented by a unique character \(\chi_q\), and
we write
\[
\operatorname{wind}_{\Q}(f)=q.
\]
This definition is entirely intrinsic to the solenoid.  It does not require
a choice of a denominator, a covering circle, or an auxiliary coordinate.

Let \(E\) be a rank-\(n\) vector bundle given by a clutching cocycle
\[
g:\SQ\longrightarrow \mathrm{GL}_n(\C).
\]
The determinant cocycle \(\det g\) defines the determinant line bundle
\[
\det E=\bigwedge^nE,
\]
and the rational degree of \(E\) is
\[
\deg_{\Q}(E):=\operatorname{wind}_{\Q}(\det g)\in\Q.
\]
The definition is independent of the chosen trivializations.  Indeed, a
change of trivializations replaces \(g\) by
\[
h_-^{-1}gh_+,
\]
where \(h_+\) and \(h_-\) extend to the two solenoidal disks; their
determinants have zero rational winding.  Consequently
\[
\deg_{\Q}(E)=\deg_{\Q}(\det E).
\]
If
\[
E\cong\I_{\PQ}(q_1)\oplus\cdots\oplus\I_{\PQ}(q_n),
\qquad q_i\in\Q,
\]
then
\[
\det E\cong\I_{\PQ}(q_1+\cdots+q_n),
\qquad
\deg_{\Q}(E)=q_1+\cdots+q_n.
\]
The determinant therefore records the total rational degree.  As in the
classical Birkhoff--Grothendieck theorem, it does not by itself determine
the complete splitting type.

\subsection{The proalgebraic solenoidal category}

The solenoidal projective line also has a natural inverse-system
presentation.  For \(N\mid M\), let
\[
\rho_{M,N}:\mathbb P^1_M\longrightarrow\mathbb P^1_N,
\qquad
z_N=z_M^{M/N},
\]
where every \(\mathbb P^1_N\) is an ordinary projective line.  Their inverse
limit has equator \(\SQ\) and two inverse-limit hemispheres
\(D^+_{\Q}\), \(D^-_{\Q}\).

We define the \emph{proalgebraic vector-bundle category} on \(\PQ\) by
\[
\operatorname{Vect}^{\mathrm{proalg}}(\PQ)
:=\varinjlim_N\operatorname{Vect}(\mathbb P^1_N),
\]
where the transition functors are pullbacks by the maps \(\rho_{M,N}\).
Thus an object is represented by a finite-rank algebraic vector bundle on
one ordinary projective line, with two representatives identified after
pullback to a common refinement.  This is an exact category, not an
approximation convention.

For \(q=k/N\in\Q\), define
\[
\I_{\PQ}(q):=p_N^*\mathcal O_{\mathbb P^1_N}(k),
\]
where \(p_N:\PQ\to\mathbb P^1_N\) is the canonical projection.  This is
well defined.  Indeed, if \(N\mid M\), then
\[
\rho_{M,N}^*\mathcal O_{\mathbb P^1_N}(k)
\cong
\mathcal O_{\mathbb P^1_M}\!\left(k\frac{M}{N}\right),
\]
and
\[
\frac{k(M/N)}{M}=\frac{k}{N}.
\]
On the equator, the corresponding clutching character is \(\chi_{k/N}\).

\begin{theorem}[Rational Birkhoff--Grothendieck theorem in the proalgebraic category; cf.\ \cite{BuVe}]
\label{proalg-BG}
Every rank-\(n\) object \(E\) of
\(\operatorname{Vect}^{\mathrm{proalg}}(\PQ)\) admits a splitting
\[
E\cong\I_{\PQ}(q_1)\oplus\cdots\oplus\I_{\PQ}(q_n),
\qquad q_i\in\Q.
\]
The unordered rational \(n\)-tuple \(\{q_1,\ldots,q_n\}\) is uniquely
determined by \(E\).
\end{theorem}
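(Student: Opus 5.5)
Since the proalgebraic category is defined as the directed colimit $\varinjlim_N\operatorname{Vect}(\mathbb P^1_N)$ along pullbacks $\rho_{M,N}^*$, the plan is to reduce both existence and uniqueness to the classical Birkhoff--Grothendieck theorem at a single finite level.

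\emph{Existence.} Represent $E$ by an algebraic vector bundle $E_N$ on $\mathbb P^1_N$ for some $N$, so that $E\cong p_N^*E_N$. The classical Grothendieck theorem gives integers $k_1\ge\cdots\ge k_n$ with $E_N\cong\bigoplus_i\mathcal O_{\mathbb P^1_N}(k_i)$. Pullback is additive, so
\[
E\cong\bigoplus_i p_N^*\mathcal O_{\mathbb P^1_N}(k_i)=\bigoplus_i\I_{\PQ}(k_i/N),
\]
which uses the well-definedness of $\I_{\PQ}(q)$ established just before the statement. Set $q_i=k_i/N$.

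\emph{Uniqueness.} Suppose $\bigoplus_i\I_{\PQ}(q_i)\cong\bigoplus_i\I_{\PQ}(q_i')$. Choose a common denominator $M$ for all the $q_i$ and $q_i'$. By the definition of morphisms in a directed colimit, an isomorphism in the colimit is represented by an isomorphism at some level $L$ with $M\mid L$, after pulling both sides back to $\mathbb P^1_L$. There the two sides become $\bigoplus_i\mathcal O(Lq_i)$ and $\bigoplus_i\mathcal O(Lq_i')$, where $Lq_i$ and $Lq_i'$ are integers by the identity $\rho_{M,N}^*\mathcal O(k)\cong\mathcal O(kM/N)$. The uniqueness part of the classical theorem on $\mathbb P^1_L$ then gives equality of the multisets $\{Lq_i\}=\{Lq_i'\}$, and dividing by $L$ gives $\{q_i\}=\{q_i'\}$. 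As a consistency check, the sum $\sum q_i$ equals $\deg_\Q(E)$ computed from the determinant winding, matching Definition~\ref{intrinsic-degree}.

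\emph{Main obstacle.} The key point is the colimit bookkeeping in the uniqueness step: checking that $\operatorname{Hom}$ in the colimit category really is $\varinjlim_L\operatorname{Hom}_{\mathbb P^1_L}$, so that every isomorphism is realized at some finite level. This holds by the paper's definition of the category, since transition functors are pullbacks and objects are identified only after pullback to a common refinement. Pullbacks along the flat finite maps $\rho_{L,N}$ preserve direct sums and line-bundle degrees multiply by $L/N$, so nothing beyond the classical theorem is needed. I would also note explicitly that this argument says nothing about general Wiener cocycles, which remain Conjecture~\ref{solenoidal-BG}.
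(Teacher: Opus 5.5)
Your proposal is correct and follows the paper's argument: represent $E$ at a single finite level $\mathbb P^1_N$, apply the classical Birkhoff--Grothendieck theorem there, pull back using $\I_{\PQ}(k/N)=p_N^*\mathcal O_{\mathbb P^1_N}(k)$, and deduce uniqueness from classical uniqueness at a common refinement. Your additional observation, that an isomorphism in the colimit category is realized by an actual isomorphism at some finite level $L$, makes explicit bookkeeping that the paper's one-line uniqueness step leaves implicit.
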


\begin{proof}
By definition of the category, \(E\) is represented by a bundle \(E_N\)
on one stage \(\mathbb P^1_N\).  The classical Birkhoff--Grothendieck
theorem gives
\[
E_N\cong
\mathcal O_{\mathbb P^1_N}(k_1)\oplus\cdots\oplus
\mathcal O_{\mathbb P^1_N}(k_n),
\qquad k_i\in\Z.
\]
Pullback to \(\PQ\) yields
\[
E\cong
\I_{\PQ}(k_1/N)\oplus\cdots\oplus\I_{\PQ}(k_n/N).
\]
If a second representative is chosen at a common refinement, ordinary
Birkhoff--Grothendieck uniqueness at that refinement implies that the
resulting unordered rational tuple is the same.  This proves both the
splitting and its uniqueness.
\end{proof}

\begin{corollary}[Proalgebraic rational matrix factorization]
\label{proalg-factorization}
Let \(g\) be a clutching cocycle representing a proalgebraic rank-\(n\)
bundle on \(\PQ\).  Then
\[
g=g_-\,\operatorname{diag}(\chi_{q_1},\ldots,\chi_{q_n})\,g_+,
\qquad q_i\in\Q,
\]
where \(g_+\) and \(g_-\) extend holomorphically and invertibly to the
inner and outer solenoidal disks, respectively.
\end{corollary}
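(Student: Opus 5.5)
We reduce Corollary~\ref{proalg-factorization} to the classical Birkhoff factorization at one finite stage and then pull back along the projection $p_N:\PQ\to\mathbb P^1_N$.

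\emph{Step 1: finite-stage cocycle.} By definition of $\operatorname{Vect}^{\mathrm{proalg}}(\PQ)$, the bundle represented by $g$ comes from a bundle $E_N$ on some $\mathbb P^1_N$. Hence, after a change of trivializations on the two solenoidal disks, $g$ is cohomologous to $p_N^*g_N$. Here $g_N$ is an algebraic clutching cocycle of $E_N$ on an annulus of $\mathbb P^1_N$ around the unit circle, i.e.\ a Laurent polynomial loop $z_N\mapsto g_N(z_N)\in\mathrm{GL}_n(\C)$ with Laurent polynomial inverse. So we may write $g=h_-\,(p_N^*g_N)\,h_+$, where $h_\pm$ extend holomorphically and invertibly to the corresponding disks.

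\emph{Step 2: classical factorization.} By the classical Birkhoff--Grothendieck theorem, which underlies Theorem~\ref{proalg-BG}, we have $E_N\cong\bigoplus_i\mathcal O(k_i)$. In cocycle form this reads
\[
g_N=a_-\,\operatorname{diag}(z_N^{k_1},\ldots,z_N^{k_n})\,a_+ ,
\]
with $a_+$ polynomial in $z_N$ and invertible on the closed inner disk, and $a_-$ polynomial in $z_N^{-1}$ and invertible on the outer disk including $\infty$.

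\emph{Step 3: pullback.} Under $p_N$, the coordinate $z_N$ becomes $\hat\chi_{1/N}$ on $\C^*_\Q$. Each factor pulls back accordingly:
\begin{itemize}
\item a polynomial in $z_N$ becomes a finite Laurent--Puiseux polynomial with exponents in $\frac1N\Z_{\ge0}$, hence holomorphic on $D^+_\Q$ in the sense of Definition~\ref{holo};
\item a polynomial in $z_N^{-1}$ becomes holomorphic on $D^-_\Q$;
\item $z_N^{k_i}$ becomes $\chi_{k_i/N}$ on $\SQ$.
\end{itemize}
Invertibility transfers pointwise, because $p_N$ maps $D^\pm_\Q$ onto the corresponding disks of $\mathbb P^1_N$. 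It also transfers in the holomorphic category, since the inverses are pulled back as well.

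\emph{Step 4: conclusion.} Setting $g_-=h_-\,p_N^*a_-$, $g_+=p_N^*a_+\,h_+$ and $q_i=k_i/N$ gives the stated factorization.

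The \textbf{main obstacle} is Step~1: making precise that the change-of-frame maps $h_\pm$ relating $g$ to the pulled-back finite-stage cocycle are genuinely holomorphic and invertible on the full solenoidal disks. To handle this, I would unwind the definition of isomorphism in the direct-limit category. An isomorphism there is pulled back from an isomorphism at some common refinement $M$. Its components on the two standard charts of $\mathbb P^1_M$ are holomorphic invertible matrix functions, and they pull back to the solenoidal disks just as in Step~3. Enlarging $N$ to $M$ if necessary, using $\rho_{M,N}^*\mathcal O(k)\cong\mathcal O(kM/N)$, leaves the ratios $q_i$ unchanged.
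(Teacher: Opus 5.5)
Your proposal is correct and is essentially the paper's argument. The paper invokes Theorem~\ref{proalg-BG}, which is itself proved by applying the classical Birkhoff--Grothendieck theorem at one finite stage and pulling back along $p_N$, and then writes the resulting isomorphism in the two solenoidal-disk trivializations; your Steps~1--4 do the same thing at the cocycle level, and your care in Step~1 about the gauges $h_\pm$ being holomorphic and invertible on the full solenoidal disks spells out what the paper's two-line proof leaves implicit.
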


\begin{proof}
Theorem \ref{proalg-BG} identifies the bundle with the direct sum of the
line bundles \(\I_{\PQ}(q_i)\).  Writing this isomorphism in the two
trivializations of the solenoidal disks produces the displayed gauge
factorization of the clutching cocycle.
\end{proof}

\subsection{The Wiener-holomorphic problem}

The proalgebraic theorem is not a substitute for the intrinsic
Wiener-holomorphic problem.  A Wiener cocycle may have rational Fourier
spectrum with unbounded denominators and need not be represented at a
single stage of the inverse system.

The rank-one case follows from Theorem~\ref{wiener-scalar-factorization}, which gives
\[
g=g_-\chi_qg_+,
\qquad q\in\Q.
\]
The matrix Wiener lemma, the density theorem for factorable matrices, and
the ordered triangular factorization theorem established above reduce the
matrix case to a single remaining step: passing from a general matrix
cocycle to a diagonal rational character cocycle by disk-extending
holomorphic gauges.  This step is completed in Theorem~\ref{pro-algebraic-BG}
for the pro-algebraic case, and formulated as Conjecture~\ref{solenoidal-BG}
for the general Wiener category.

The usual fixed-polarization Toeplitz proof on the circle cannot simply be
copied.  For \(r>0\), multiplication by \(\chi_r\) on the rational
Hardy space has range indexed by \(\Q\cap[r,\infty)\), and hence has
cokernel indexed by the infinite set \(\Q\cap[0,r)\).  Thus rational
characters already have infinite Toeplitz defect in that polarization.
This does not contradict factorization; it only shows that the classical
integer-lattice Fredholm mechanism is not the appropriate proof device for
the solenoidal equator.

\begin{theorem}[Pro-algebraic solenoidal Birkhoff--Grothendieck theorem;
essentially Theorem 10 of \cite{BuVe}]%
\label{pro-algebraic-BG}
Let $E$ be a Wiener-holomorphic $\mathrm{GL}_n$-bundle on $\PQ$ whose
transition cocycle $g\in\mathrm{GL}_n(\mathfrak W_\Q)$ has Fourier support
contained in $N^{-1}\Z$ for some positive integer $N$ (equivalently, $E$ is
the pullback $\pi_N^*E_N$ of a holomorphic $\mathrm{GL}_n$-bundle $E_N$ on
$\mathbb{P}^1_N$).  Then
\[
E \;\cong\; \mathcal{L}_{q_1}\oplus\cdots\oplus\mathcal{L}_{q_n},
\qquad q_1\geq\cdots\geq q_n\in N^{-1}\Z\subset\Q.
\]
In particular, $g$ admits a Birkhoff factorization
$g = h_-^{-1}\operatorname{diag}(\chi_{q_1},\ldots,\chi_{q_n})h_+$
with $h_\pm\in\mathrm{GL}_n(\mathfrak W^\pm_\Q)$.
\end{theorem}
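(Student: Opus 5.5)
The plan is to reduce the statement to the classical Birkhoff factorization of Wiener-class matrix loops on the ordinary circle, using the spectral comparison map $\rho_N$. Since $g$ has Fourier support in $N^{-1}\Z$, write $g=\sum_{k\in\Z}A_{k/N}\chi_{k/N}$ with $\sum_k\|A_{k/N}\|<\infty$. We have $\chi_{k/N}=z^k\circ\pi_N$, where $z$ is the coordinate on the $N$-th circle $S^1$ of the inverse system. Hence $g=\tilde g\circ\pi_N$ with
\[
\tilde g(z)=\sum_{k\in\Z}A_{k/N}z^k .
\]
This $\tilde g$ lies in the classical matrix Wiener algebra $W(S^1)^{n\times n}$, and its $\ell^1$-norm equals $\|g\|_{\mathfrak W_\Q}$. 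Because $\pi_N:\SQ\to S^1$ is surjective, $\det\tilde g$ vanishes nowhere on $S^1$. By the classical Wiener lemma, or by Proposition~\ref{matrix-wiener-lemma} restricted to the closed subalgebra of loops with spectrum in $N^{-1}\Z$, we get $\tilde g\in\mathrm{GL}_n(W(S^1))$. The same substitution identifies $W^\pm(S^1)$ with the spectrum-$N^{-1}\Z$ parts of $\mathfrak W^\pm_\Q$, isometrically and as algebras.

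Next we invoke the classical Wiener-algebra Birkhoff factorization theorem, due to Gohberg--Krein (see \cite{GK, CG}). It gives
\[
\tilde g=\tilde h_-^{-1}\operatorname{diag}(z^{k_1},\ldots,z^{k_n})\tilde h_+,
\qquad k_1\ge\cdots\ge k_n\in\Z,
\]
with $\tilde h_\pm\in\mathrm{GL}_n(W^\pm(S^1))$. Pulling back by $\pi_N$, set $h_\pm=\tilde h_\pm\circ\pi_N$ and $q_i=k_i/N$. Then $z^{k_i}\circ\pi_N=\chi_{q_i}$, and $h_\pm$ and $h_\pm^{-1}$ have spectrum in $\pm\Q_{\ge0}\cap N^{-1}\Z$ with finite $\ell^1$ norm. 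Therefore $h_\pm\in\mathrm{GL}_n(\mathfrak W^\pm_\Q)$, and the displayed factorization holds on $\SQ$. The holomorphic extensions to $D^\pm_\Q(1)$ are the pullbacks of the classical extensions to the unit disks, because $\hat\chi_{k/N}=\hat z^k\circ p_N$ in polar coordinates.

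From the factorization, the bundle isomorphism $E\cong\mathcal L_{q_1}\oplus\cdots\oplus\mathcal L_{q_n}$ follows directly from the cocycle-equivalence criterion of Definition~\ref{def:cocycle-equivalence}. For the parenthetical equivalence with $E=\pi_N^*E_N$, we apply Theorem~\ref{proalg-BG} to $E_N$ on the algebraic side. We then note that the Wiener-holomorphic bundle $E_N$ on $\mathbb P^1_N$ is algebraic by the classical correspondence for Wiener cocycles, which is again the Gohberg--Krein theorem. Uniqueness of the unordered tuple is not needed for the displayed statement, but it follows from classical uniqueness at level $N$ together with Theorem~\ref{proalg-BG}.

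The main obstacle is bookkeeping rather than analysis. I must check that the $N^{-1}\Z$-spectral subspace of $\mathfrak W_\Q$ is exactly $\pi_N^*W(S^1)$, including closure under inversion, so that classical inverses do not acquire denominators beyond $N$. This holds because inversion in the classical Wiener algebra stays in $W(S^1)$, and pulling back preserves it.
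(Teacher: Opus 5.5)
Your proof is correct. It follows the same overall strategy as the paper (descend to level $N$ via $\rho_N$, apply a classical theorem, pull back), but it transports a different classical theorem and runs the final implication in the opposite direction.

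The paper applies Grothendieck's splitting theorem to the level-$N$ bundle $E_N$ on $\mathbb P^1_N$ and pulls the splitting back by $\pi_N^*$. Only then does it extract the factorization of $g$, from the triviality of $E$ on $D^\pm_\Q$ (Pillar~I). You instead transport the Gohberg--Krein factorization in $\mathrm{GL}_n(W(S^1))$ through $\rho_N$. You obtain $g=h_-^{-1}\operatorname{diag}(\chi_{q_1},\ldots,\chi_{q_n})h_+$ first and then read off the splitting from the two-chart gauge equivalence.

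Your route stays inside the Wiener category throughout, which buys two things.
\begin{itemize}
\item The level-$N$ cocycle $\tilde g$ is only an absolutely convergent Fourier series, not a cocycle holomorphic on an annulus. Applying Grothendieck's theorem to $E_N$ therefore tacitly requires its Wiener-class version, and that is exactly what you cite.
\item You never invoke disk triviality, which the paper records as hypothesis \textup{(P1)} outside the strict pro-analytic category. The gauges $h_\pm$ are produced explicitly, and they even have spectrum in $N^{-1}\Z$.
\end{itemize}
The paper's route, in exchange, frames the result as a bundle splitting within the three-pillar architecture. It also obtains uniqueness of the unordered tuple from Theorem~\ref{proalg-BG}.

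Two minor points, neither of which affects the statement as posed.
\begin{itemize}
\item Your paragraph on the parenthetical ``equivalently'' is superfluous. That equivalence (up to gauge) is just your first step $g=\tilde g\circ\pi_N$.
\item Your uniqueness aside only handles competing factorizations whose gauges also have bounded denominators. For arbitrary gauges, write $k_-\Delta_{\mathbf q}=\Delta_{\mathbf q'}k_+$ with $k_\pm\in\mathrm{GL}_n(\mathfrak W^\pm_\Q)$. Comparing Fourier supports entrywise forces a zero block in $k_-$, which would give $\det k_-\equiv0$.
\end{itemize}
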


\begin{proof}
Since $g$ has support in $N^{-1}\Z$, we have $g_N = P_N g = g$, so $g$
defines a holomorphic cocycle on $S^1_N$ via $\rho_N$.  The classical
Birkhoff--Grothendieck theorem on $\mathbb{P}^1_N$ gives
$E_N\cong\mathcal{O}(k_1)\oplus\cdots\oplus\mathcal{O}(k_n)$ with
$k_1\geq\cdots\geq k_n$ integers.  Setting $q_i:=k_i/N\in N^{-1}\Z$,
the splitting of $E_N$ pulls back via $\pi_N^*$ to give the splitting
of $E=\pi_N^*E_N$.  The Birkhoff factorization of $g$ follows from the
triviality of $E$ on $D^\pm_\Q$ (Pillar~I).
\end{proof}

The following finite-level invertibility lemma is unconditional.  After
it, the rank-$2$ maximal-subbundle statement is isolated as the remaining
analytic obstruction to extending Theorem~\ref{pro-algebraic-BG} to the
full Wiener category.

\begin{lemma}[Cofinal finite-level invertibility]\label{lem-gN-inv}
Let $g\in\mathrm{GL}_n(\mathfrak W_\Q)$ and let
$g_N:=\sum_{q\in N^{-1}\Z}\hat g(q)\chi_q\in M_n(\ell^1(N^{-1}\Z))$
be the $N$-th frequency truncation of $g$.  Then there exists a positive
integer $M$ such that, for every $N$ divisible by $M$,
\[
g_N \;\in\; \mathrm{GL}_n\!\bigl(\ell^1(N^{-1}\Z)\bigr).
\]
\end{lemma}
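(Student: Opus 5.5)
The plan is a perturbation argument: show that $g_N\to g$ in the Wiener norm along the divisibility-cofinal system, and then use that the subgroup algebra $\ell^1(N^{-1}\Z)$ is inverse-closed in $\ell^1(\Q)$.

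\emph{Step 1: the truncation error is controlled by a divisibility-cofinal tail.} Write $g=\sum_{q\in\Q}\hat g(q)\chi_q$ with $\sum_q\|\hat g(q)\|<\infty$. Then
\[
\|g-g_N\|_{\mathfrak W_\Q}=\sum_{q\notin N^{-1}\Z}\|\hat g(q)\|.
\]
Put $\varepsilon:=\|g^{-1}\|_{\mathfrak W_\Q}^{-1}$. Here $g^{-1}\in M_n(\mathfrak W_\Q)$ by the matrix Wiener lemma (Proposition~\ref{matrix-wiener-lemma}). By absolute summability, as in Remark~\ref{wiener-admissible}, choose a finite set $F\subset\Q$ with $\sum_{q\notin F}\|\hat g(q)\|<\varepsilon$. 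Let $M$ be the least common multiple of the denominators of the elements of $F$. Then $F\subset M^{-1}\Z\subset N^{-1}\Z$ whenever $M\mid N$. Hence, for every such $N$,
\[
\|g-g_N\|_{\mathfrak W_\Q}\le\sum_{q\notin F}\|\hat g(q)\|<\varepsilon .
\]
This monotonicity under divisibility is exactly why the statement is formulated for all $N$ divisible by $M$, and not for all large $N$.

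\emph{Step 2: invertibility in the big algebra.} Write $g_N=g\bigl(I-g^{-1}(g-g_N)\bigr)$. By submultiplicativity of the Wiener norm, $\|g^{-1}(g-g_N)\|<1$. The Neumann series therefore shows $g_N\in\mathrm{GL}_n(\mathfrak W_\Q)$.

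\emph{Step 3: the inverse has spectrum in $N^{-1}\Z$.} This is the step that needs care: the Neumann series above contains $g^{-1}$, whose spectrum is all of $\Q$, so it does not directly give an inverse in $\ell^1(N^{-1}\Z)$. I would handle it through the coefficient projection
\[
P_N\Bigl(\sum_q A_q\chi_q\Bigr)=\sum_{q\in N^{-1}\Z}A_q\chi_q .
\]
Since $N^{-1}\Z$ is a subgroup of $\Q$, $P_N$ is a contractive $M_n(\ell^1(N^{-1}\Z))$-bimodule map. Concretely, if $a$ has spectrum in $N^{-1}\Z$ and $q\in N^{-1}\Z$, then for every $s\in\Q$ one has $q+s\in N^{-1}\Z$ if and only if $s\in N^{-1}\Z$; this gives $P_N(ab)=aP_N(b)$ and $P_N(ba)=P_N(b)a$. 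Applying $P_N$ to $I=g_Ng_N^{-1}$ and to $I=g_N^{-1}g_N$ then yields
\[
I=g_N\,P_N(g_N^{-1})=P_N(g_N^{-1})\,g_N .
\]
So $P_N(g_N^{-1})\in M_n(\ell^1(N^{-1}\Z))$ is a two-sided inverse of $g_N$, which proves $g_N\in\mathrm{GL}_n\bigl(\ell^1(N^{-1}\Z)\bigr)$.

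An alternative route for Step 3 is Wiener's lemma on the dual circle of $N^{-1}\Z$, composed with $\pi_N$. Under that route, pointwise invertibility of $g_N$ on $\SQ$ would transfer to the circle, because $g_N$ factors through $\pi_N$ and $\pi_N$ is surjective. The bimodule argument is cleaner, so I would use it.
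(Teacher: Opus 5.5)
Your proof is correct. Step~1 is exactly the paper's argument: you take a finite set $F$ carrying all but $\varepsilon$ of the $\ell^1$-mass, set $M=\operatorname{lcm}$ of its denominators, and use that the tail over the complement of $N^{-1}\Z$ only shrinks when $M\mid N$.

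From there the two routes differ.

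\textbf{The paper's route} is pointwise. It sets $\delta=\inf_{\SQ}|\det g|$ and uses a Leibniz-type estimate $|\det g_N(\theta)-\det g(\xi)|\le C_g\|g-g_N\|_{\mathfrak W_\Q}$, with $\pi_N(\xi)=\theta$, to get $|\det g_N|\ge\delta/2$ on the circle. It then invokes the classical matrix Wiener lemma on $S^1$, transported by $\rho_N$. This is essentially the ``alternative route'' you mention at the end.

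\textbf{Your route} stays inside the Banach algebra. The Neumann series gives $g_N\in\mathrm{GL}_n(\mathfrak W_\Q)$ with the explicit threshold $\varepsilon=\|g^{-1}\|^{-1}$. The coefficient projection $P_N$ is an $M_n(\ell^1(N^{-1}\Z))$-bimodule map fixing $I$, so it shows $\ell^1(N^{-1}\Z)$ is inverse-closed in $\ell^1(\Q)$ with no Gelfand theory at all.

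What your route buys:
\begin{enumerate}
\item It avoids determinants and the constant $C_g$.
\item It works verbatim for any subgroup of any discrete group, with no commutativity or circle model needed.
\item $P_N(g_N^{-1})$ is a two-sided inverse of $g_N$ in the big algebra, so it equals $g_N^{-1}$. Hence you get the quantitative bound $\|g_N^{-1}\|_{\mathfrak W_\Q}\le\|g^{-1}\|/\bigl(1-\|g^{-1}\|\,\|g-g_N\|\bigr)$, uniformly over all $N$ divisible by $M$. The qualitative Wiener lemma used in the paper does not give this directly.
\end{enumerate}
The paper's route, in exchange, gives the explicit pointwise bound $|\det g_N|\ge\delta/2$ and ties the lemma to the circle comparison $\rho_N$ used elsewhere.

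One cosmetic point: you do not need the matrix Wiener lemma to get $g^{-1}\in M_n(\mathfrak W_\Q)$, since that is what $g\in\mathrm{GL}_n(\mathfrak W_\Q)$ means.
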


\begin{proof}
Since $g\in\mathrm{GL}_n(\mathfrak W_\Q)$, the function $\xi\mapsto\det g(\xi)$
is continuous and nowhere zero on the compact solenoid $\SQ$.  Set
\[
\delta \;:=\; \inf_{\xi\in\SQ}|\det g(\xi)| \;>\; 0.
\]
For any finite set $F\subset\Q$, the multilinearity of the determinant
and the entry-wise Wiener norm estimate give
\[
\sup_{\xi\in\SQ}|\det g(\xi)-\det g_F(\xi)|
\;\leq\; C_g\,\|g-g_F\|_{\mathfrak W_\Q},
\]
where $g_F:=\sum_{q\in F}\hat g(q)\chi_q$ and the constant $C_g$ depends
only on $\|g\|_{\mathfrak W_\Q}$ (from the Leibniz expansion of the determinant).

Since $g\in\ell^1(\Q)$ (entry-wise), for any $\varepsilon>0$ there exists a
\emph{finite} set $F_\varepsilon\subset\Q$ with
$\|g-g_{F_\varepsilon}\|_{\mathfrak W_\Q}<\varepsilon$.
Let $M_\varepsilon:=\operatorname{lcm}\{\mathrm{den}(q):q\in F_\varepsilon\}$
(where $\mathrm{den}(q)$ is the denominator of $q$ in lowest terms).
For every $N$ divisible by $M_\varepsilon$, all frequencies in $F_\varepsilon$
belong to $N^{-1}\Z$, hence $g_N=P_Ng$ satisfies
\[
\|g-g_N\|_{\mathfrak W_\Q}
\;=\;\sum_{q\notin N^{-1}\Z}|\hat g(q)|
\;\leq\;\|g-g_{F_\varepsilon}\|_{\mathfrak W_\Q}
\;<\;\varepsilon.
\]
Now fix $\varepsilon:=\delta/(2C_g)$ and let $M:=M_\varepsilon$.
For every $N$ divisible by $M$ and every $\theta\in S^1_N$,
pick any $\xi\in\SQ$ with $\pi_N(\xi)=\theta$ (such $\xi$ exists since
$\pi_N\colon\SQ\to S^1_N$ is surjective).
Because $\chi_q(\xi)=\chi_q(\pi_N(\xi))=\chi_q(\theta)$ for all $q\in N^{-1}\Z$,
\[
g_N(\theta)\;=\;\sum_{q\in N^{-1}\Z}\hat g(q)\,\chi_q(\xi),
\qquad
g(\xi)\;=\;\sum_{q\in\Q}\hat g(q)\,\chi_q(\xi),
\]
so entry-wise
\[
|g_N(\theta)-g(\xi)|
\;\leq\;\sum_{q\notin N^{-1}\Z}|\hat g(q)|
\;=\;\|g-g_N\|_{\mathfrak W_\Q}
\;<\;\varepsilon.
\]
The Leibniz estimate then gives
\[
|\det g_N(\theta)-\det g(\xi)|\;\leq\;C_g\varepsilon\;=\;\tfrac\delta2,
\]
so $|\det g_N(\theta)|\geq|\det g(\xi)|-\tfrac\delta2\geq\delta-\tfrac\delta2=\tfrac\delta2>0$.

Since $\theta\in S^1_N$ was arbitrary, $\det g_N$ is zero-free on $S^1_N$.
The isomorphism $\rho_N\colon\ell^1(N^{-1}\Z)\overset\sim\to W(S^1)$
transports the classical matrix Wiener theorem
(Proposition~\ref{matrix-wiener-lemma} applied to $S^1$ via $\rho_N$)
to $\ell^1(N^{-1}\Z)$: zero-freeness of $\det g_N$ on $S^1_N$
implies $g_N\in\mathrm{GL}_n(\ell^1(N^{-1}\Z))$.
\end{proof}

\begin{conjecture}[Rank-$2$ maximal-subbundle statement]\label{lem-rank2-ineq}
Let $E$ be a rank-$2$ Wiener-holomorphic $\mathrm{GL}_2$-bundle on $\PQ$.
Then
\[
S(E)=\{q\in\Q:\mathcal L_q\hookrightarrow E\}
\]
is nonempty and bounded above, its supremum is attained at
$q_1=\max S(E)$, the quotient $E/\mathcal L_{q_1}$ is a
Wiener-holomorphic line bundle $\mathcal L_{q_2}$, and
\[
q_1\geq q_2.
\]
\end{conjecture}

\noindent\emph{Analytic status of the rank-$2$ conjecture.}
The conjecture contains several analytic assertions for general Wiener cocycles:
(a) nonemptiness and boundedness of $S(E)$, (b) \emph{attainment},
namely that $\sup S(E)$ is represented by an actual sub-line-bundle with
locally free quotient, and (c) the resulting \emph{ordering}
$q_1\geq q_2$.
These assertions are established in the pro-algebraic case
(Theorem~\ref{pro-algebraic-BG} below).  Lemma~\ref{lem-gN-inv} supplies a
cofinal system of invertible truncations
$g_N\in\mathrm{GL}_2(\ell^1(N^{-1}\Z))$, and the classical
Gohberg--Kre\u\i{}n theorem gives partial indices
$k_1^{(N)}\geq k_2^{(N)}$ at each such finite level.

The finite-level data must nevertheless be related to the original
solenoidal bundle with care.  The bundle $E_N$ is defined by the projected
cocycle $g_N=P_Ng$, not by a restriction of $g$ to $S^1_N$; indeed
$\pi_N\colon\SQ\to S^1_N$ is a projection, not an inclusion.  Thus
$\pi_N^*E_N$ has transition function $g_N\circ\pi_N$, which differs from
$g$ by the Fourier tail
$\sum_{q\notin N^{-1}\Z}\hat g(q)\chi_q$.  Consequently, for arbitrary
$g\in\mathrm{GL}_2(\mathfrak W_\Q)$ the descent of sub-line-bundles from
$E$ to $E_N$, and the corresponding transfer of the inequalities
$k_1^{(N)}\geq k_2^{(N)}$, require an additional compatibility argument.
Section~\ref{perfectoid} gives a Harder--Narasimhan reformulation of these
conditions and compares them with the corresponding non-archimedean slope
filtration theorem.

\begin{conjecture}[Full Wiener--Birkhoff--Grothendieck conjecture]\label{solenoidal-BG}
Every $g\in\mathrm{GL}_n(\mathfrak W_\Q)$ admits a factorization
\begin{equation}\label{BG-factorization}
g \;=\; h_-^{-1}\,\operatorname{diag}(\chi_{q_1},\ldots,\chi_{q_n})\,h_+,
\qquad h_\pm\in\mathrm{GL}_n(\mathfrak W^\pm_\Q),\quad q_1\geq\cdots\geq q_n\in\Q.
\end{equation}
The rational $n$-tuple $(q_1,\ldots,q_n)$, uniquely determined up to reordering,
will be called the \emph{solenoidal Birkhoff indices} of $g$.
The conjecture holds for $n=1$, triangular matrices,
small-norm matrices, finite Fourier-support matrices, and all
pro-algebraic bundles (Theorem~\ref{pro-algebraic-BG}).
The three-pillar architecture (Pillars~I--III below) reduces the
general case to Conjecture~\ref{lem-rank2-ineq}.
\end{conjecture}

\noindent The rest of this section develops the \emph{three-pillar architecture}.
Lemma~\ref{lem-gN-inv} is proved above.  In the full intrinsic Wiener
category, the remaining analytic inputs are the disk-triviality
hypothesis \textup{(P1)} and the rank-$2$ maximal-subbundle statement
Conjecture~\ref{lem-rank2-ineq}.  In the pro-algebraic case
(Theorem~\ref{pro-algebraic-BG}), these inputs follow from finite-level
classical theory and the three pillars give a complete proof.
The architecture consists of:
\begin{align*}
&\underbrace{\check H^1(D^\pm_\Q,\mathcal{O}_{GL_n})=0}
 _{\text{Pillar I: intrinsic analytic hypothesis}}\\
&\quad+\;
\underbrace{\check H^1_{\mathrm W}
 \bigl(\{D^+_\Q,D^-_\Q\},\mathcal{L}_q\bigr)=0\;\;(q\geq0)}
 _{\text{Pillar II: two-chart Wiener complex}}\\
&\quad+\;
\underbrace{\mathrm{Pic}_{\mathrm{top}}(\PQ)\cong\Q}
 _{\text{Pillar III: topology}}
\;\Longrightarrow\;
g=h_-^{-1}\Delta_\mathbf{q}\,h_+,
\end{align*}
provided the maximal-subbundle and quotient hypotheses required for the
induction are satisfied.

\medskip\noindent\textbf{Pillar~I\, --- Analysis: cohomology vanishing on the solenoidal disks.}
Each solenoidal disk $D^\pm_\Q=\varprojlim_N\Delta$ is the projective limit
of classical unit disks under the branched coverings $z\mapsto z^N$.  In the strict pro-analytic category of
Proposition~\ref{prop:proanalytic-disk-triviality}, a bundle is, after
passage to a cofinal subsystem, the pullback of a finite-level bundle.
The latter is trivial by Oka--Grauert, and the pulled-back trivialization is
automatically compatible.  Hence disk triviality is unconditional in that
strict category.

The explicit contraction of the solenoidal disk remains useful at
the topological and function-theoretic levels: for
$f=\sum_{q\geq0}a_q\chi_q\in\mathfrak W^+_\Q$ one has
\[
H_t^*(\chi_q)=(1-t)^q\chi_q,
\qquad
\|H_t^*(f)\|_{\mathfrak W^+}
\leq\|f\|_{\mathfrak W^+}.
\]
It does not, by itself, prove a non-abelian Oka principle for every intrinsic
Wiener-holomorphic bundle on the inverse-limit disk.  Accordingly, Pillar~I
is unconditional in the strict pro-analytic category by
Proposition~\ref{prop:proanalytic-disk-triviality}.  In the full intrinsic
Wiener category it is the following explicit analytic hypothesis:
\[
\tag{P1}
\check H^1\!\bigl(D^\pm_\Q,\mathcal O_{GL_n}\bigr)=0.
\]
Under (P1), the two-chart cover is Leray for the bundle problem and its
non-abelian \v{C}ech set is represented by transition cocycles modulo the
usual left and right holomorphic gauge transformations.  Thus no part of
the subsequent reduction conceals disk triviality: it is a theorem for
pro-analytic bundles and a precisely isolated analytic problem for the full
Wiener category.

\medskip\noindent\textbf{Pillar~II\, --- Algebra: Wiener--Hopf vanishing and
Grothendieck splitting.}
For any $q\in\Q$, the first \v{C}ech group of the two-chart
Wiener complex for $\mathcal L_q$ is
\[
\check H^1_{\mathrm W}(\{D^+_\Q,D^-_\Q\},\mathcal L_q)
\cong
\mathfrak W_\Q\bigm/
\bigl(\mathfrak W^+_\Q+\chi_q\mathfrak W^-_\Q\bigr).
\]
For $q\geq0$, the supports of $\mathfrak W^+_\Q$ and
$\chi_q\mathfrak W^-_\Q$ cover all of $\Q$, so
\begin{equation}\label{WH-vanish}
\check H^1_{\mathrm W}(\{D^+_\Q,D^-_\Q\},\mathcal L_q)=0
\qquad(q\geq0).
\end{equation}
When the cover is Leray in the chosen holomorphic category---in particular
under \textup{(P1)} for the bundle problem---this \v{C}ech group identifies
with the corresponding sheaf cohomology.

Given a rank-$n$ Wiener-holomorphic bundle $E$ on $\PQ$, let
\[
S(E):=\bigl\{q\in\Q : \mathcal{L}_q\hookrightarrow E
\text{ as a holomorphic sub-line-bundle}\bigr\}.
\]
The rank-$2$ maximal-subbundle statement supplies the missing analytic
input: nonemptiness and boundedness of $S(E)$, attainment of its supremum,
local freeness of the quotient, and the ordering needed below.
Assuming that statement at the relevant rank, set
$q_1:=\max S(E)$ and let $\mathcal{L}_{q_1}\hookrightarrow E$
be the maximal sub-line-bundle.  The quotient $E':=E/\mathcal{L}_{q_1}$
is a rank-$(n-1)$ bundle; by the inductive hypothesis,
$E'\cong\mathcal{L}_{q_2}\oplus\cdots\oplus\mathcal{L}_{q_n}$ with
$q_i\in\Q$.

\smallskip\noindent\textit{The inequality $q_1\geq q_i$ for all $i\geq2$.}
Suppose for contradiction that $q_i>q_1$ for some $i$.
Let $\pi:E\to E'$ be the quotient map and form the rank-$2$ preimage bundle
$F:=\pi^{-1}(\mathcal{L}_{q_i})\subset E$, so that
$0\to\mathcal{L}_{q_1}\to F\to\mathcal{L}_{q_i}\to0$.
By the rank-$2$ base case, established separately from the induction
(or assumed through Conjecture~\ref{lem-rank2-ineq} in the full Wiener
category), $F\cong\mathcal{L}_a\oplus\mathcal{L}_b$
with $a\geq b$ and $a+b=q_1+q_i$.  Since $q_i>q_1$, we get
$a\geq\tfrac12(q_1+q_i)>\tfrac12(q_1+q_1)=q_1$.
Thus $\mathcal{L}_a\hookrightarrow F\subset E$ is a holomorphic
sub-line-bundle of $E$ of degree $a>q_1$, contradicting the maximality
of $q_1$ in $S(E)$.  Therefore $q_1\geq q_i$ for all $i\geq2$.

\smallskip
With this inequality established, the extension class of
$0\to\mathcal{L}_{q_1}\to E\to E'\to0$ lies in
\[
\bigoplus_{i=2}^{n}
\check H^1_{\mathrm W}
 \bigl(\{D^+_\Q,D^-_\Q\},\mathcal{L}_{q_1-q_i}\bigr).
\]
Since $q_1-q_i\geq0$ for all $i$, equation~\eqref{WH-vanish} makes each
two-chart extension group vanish.  Under the Leray identification supplied
by \textup{(P1)} in the intrinsic category, the extension splits:
$E\cong\mathcal{L}_{q_1}\oplus E'$.
Inducting on rank yields
\[
E\;\cong\;\mathcal{L}_{q_1}\oplus\cdots\oplus\mathcal{L}_{q_n},
\qquad q_1\geq\cdots\geq q_n\in\Q.
\]
Writing this isomorphism in the trivializations over $D^\pm_\Q$
yields~\eqref{BG-factorization}.

\medskip\noindent\textbf{Pillar~III\, --- Topology: rationality of Birkhoff indices.}
This is the key pillar: it prevents the Birkhoff indices from escaping $\Q$
into $\R\smallsetminus\Q$, closing the gap left by the purely algebraic argument.

\smallskip\noindent\textit{(a) The Picard group is $\Q$.}
The Mayer--Vietoris sequence for the pair $(D^+_\Q,D^-_\Q)$, with both
disks contractible (Pillar~I), gives the suspension isomorphism
\[
\mathrm{Pic}_{\mathrm{top}}(\PQ)\;=\;\check H^2(\PQ,\Z)
\;\cong\;\check H^1(\SQ,\Z).
\]
The covering system $\SQ=\varprojlim_N S^1$ (degree-$N$ coverings) yields
\[
\check H^1(\SQ,\Z)
\;=\;\varinjlim\!\bigl(\Z\xrightarrow{\times 2}\Z
\xrightarrow{\times 3}\Z\xrightarrow{\times 4}\cdots\bigr)
\;=\;\Q.
\]
Consequently every \emph{topological} complex line bundle on $\PQ$ has
its first Chern class in $\Q$.
The Birkhoff indices, being Chern classes of line-bundle summands, are a priori
constrained to lie in $\Q$, not in $\R\setminus\Q$.

\smallskip\noindent\textit{(b) What topology does and does not prove.}
The Picard calculation implies that every actual holomorphic line
subbundle of $E$ has rational degree.  Thus
\[
S(E)\subseteq\Q.
\]
It does not imply that $S(E)$ is nonempty or bounded above, that its
supremum is rational, or that the supremum is attained.

In the pro-algebraic case, these facts follow after descent to one
finite level: the classical Birkhoff--Grothendieck theorem produces a
maximal line subbundle, and its pullback has rational degree.  For a
general Wiener cocycle, Lemma~\ref{lem-gN-inv} gives a cofinal collection
of invertible projected cocycles $g_N$, but the bundles defined by $g_N$
are approximations rather than a compatible inverse system of pullbacks.
Consequently one cannot identify the subbundle space of $E$ with
$\varprojlim\mathrm{Gr}(1,E_N)$ without an additional theorem.
Precisely this missing compactness and compatibility input is contained
in Conjecture~\ref{lem-rank2-ineq}.

\medskip
Pillars~I, II, and III together yield the splitting and hence the
factorization~\eqref{BG-factorization} in the pro-algebraic case
(Theorem~\ref{pro-algebraic-BG}).
For general $g\in\mathrm{GL}_n(\mathfrak W_\Q)$, the reduction still
requires both the intrinsic disk-triviality hypothesis \textup{(P1)} and
Conjecture~\ref{lem-rank2-ineq}, together with closure of the category
under the saturated subbundles and locally free quotients used in the
induction.
When a factorization exists, uniqueness of the unordered tuple
$(q_1,\ldots,q_n)$ is the usual uniqueness of the partial indices in the
factorization theorem.  Chern-class additivity determines only the sum
\[
\sum_iq_i=\deg_\Q(E);
\]
the individual indices require the maximal-subbundle filtration (or an
equivalent uniqueness theorem), not the total degree alone.

\section{Holomorphic descent and genuinely adelic extension data}
\label{holomorphic-descent}

The two solenoidal disks \(D^+_{\Q}(1)\) and \(D^-_{\Q}(1)\) cover
\(\PQ\), and their intersection is the solenoidal annulus with equator
\(\SQ\).  Hence a rank-\(n\) Wiener-holomorphic bundle may be presented by
a transition cocycle
\[
g\in\mathrm{GL}_n(\mathfrak W_{\Q}).
\]
Changing the two trivializations replaces \(g\) by
\[
g\longmapsto h_-^{-1}gh_+,
\qquad
h_-\in\mathrm{GL}_n(\mathfrak W^-_{\Q}),\quad
h_+\in\mathrm{GL}_n(\mathfrak W^+_{\Q}).
\]
Thus rational Birkhoff factorization is precisely the statement that every
such cocycle is holomorphically cohomologous to a diagonal rational
character cocycle.  This is exactly Conjecture~\ref{solenoidal-BG}; granting
it (which is unconditional whenever $g$ is pro-algebraic, by
Theorem~\ref{pro-algebraic-BG}) gives the following immediate consequence.

\begin{corollary}[Holomorphic descent, conditional on
Conjecture~\ref{solenoidal-BG}]
\label{holomorphic-descent-corollary}
Granting Conjecture~\ref{solenoidal-BG} (unconditional for pro-algebraic
$g$, by Theorem~\ref{pro-algebraic-BG}), every Wiener-holomorphic cocycle
\(g\in\mathrm{GL}_n(\mathfrak W_{\Q})\)
is cohomologous to a Laurent--Puiseux polynomial cocycle.  Concretely,
Conjecture~\ref{solenoidal-BG} yields \(h_\pm\) as above and rational indices
\(q_i=k_i/N_i\in\Q\); setting \(N=\mathrm{lcm}(N_1,\ldots,N_n)\) gives
\(L(w)=\mathrm{diag}(w^{k_1N/N_1},\ldots,w^{k_nN/N_n})\in
\mathrm{GL}_n(\C[w,w^{-1}])\) with \(w=z^{1/N}\) such that
\[
g=h_-^{-1}L(z^{1/N})h_+.
\]
\end{corollary}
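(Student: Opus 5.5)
The plan is to apply Conjecture~\ref{solenoidal-BG} directly and then rewrite the diagonal middle factor as a Laurent polynomial in a single rational root of the coordinate. The conjecture (which is unconditional for pro-algebraic $g$ by Theorem~\ref{pro-algebraic-BG}) gives
\[
g=h_-^{-1}\operatorname{diag}(\chi_{q_1},\ldots,\chi_{q_n})h_+,
\qquad h_\pm\in\mathrm{GL}_n(\mathfrak W^\pm_\Q).
\]
This already exhibits $g$ as holomorphically cohomologous, in the sense of the gauge action $g\mapsto h_-^{-1}gh_+$, to the diagonal cocycle $\Delta_{\mathbf q}$. That cocycle has finite Fourier support $\{q_1,\ldots,q_n\}$, so it is a finite Laurent--Puiseux series (Definition~\ref{LPPoly-holo}). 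Its inverse $\Delta_{-\mathbf q}$ is of the same type. The first assertion is therefore immediate, and the rest of the argument is bookkeeping for the explicit form.

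\emph{Arithmetic step.} Write each $q_i=k_i/N_i$ in lowest terms and set $N=\mathrm{lcm}(N_1,\ldots,N_n)$. Since $N_i\mid N$, the numbers $m_i:=k_iN/N_i$ are integers and satisfy $q_i=m_i/N$.

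\emph{Character step.} I will use the fact that $q\mapsto\chi_q$ is a group homomorphism $\Q\to\widehat{\SQ}$ (Pontryagin duality). It gives $\chi_{m_i/N}=(\chi_{1/N})^{m_i}$ on $\SQ$. The same holds for the extensions: $\hat\chi_{m/N}=(\hat\chi_{1/N})^m$ on $\CQ$, since $t^{m/N}=(t^{1/N})^m$.

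\emph{Identifying $w$.} I interpret $w=z^{1/N}$ through the rational power map $\hat\xi_{1/N}$ of \S\ref{subsec:rational-dilation-symmetry}. The identity $\chi_q(\xi_a x)=\chi_{aq}(x)$, together with $(t^{1/N})^{q}=t^{q/N}$, yields $\hat\chi_m(z^{1/N})=\hat\chi_{m/N}(z)$. On the equator this restricts to $\chi_{m/N}$. Hence $L(w)=\operatorname{diag}(w^{m_1},\ldots,w^{m_n})$, evaluated at $w=z^{1/N}$, equals $\Delta_{\mathbf q}$. Its determinant is $w^{\sum m_i}$, a unit of $\C[w,w^{-1}]$, so $L\in\mathrm{GL}_n(\C[w,w^{-1}])$.

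\emph{Conclusion.} Substituting $L(z^{1/N})=\Delta_{\mathbf q}$ into the factorization above gives $g=h_-^{-1}L(z^{1/N})h_+$.

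No step is a genuine obstacle once the conjecture is granted. The only point needing care is that $z^{1/N}$ must be read as the solenoidal rational power $\hat\xi_{1/N}$, not as a multivalued root on $\C^*$. The compatibility $\hat\chi_m\circ\hat\xi_{1/N}=\hat\chi_{m/N}$ then has to be checked from the dual description $\xi_a(x)=x\circ m_a$; I would verify it first on characters and then extend it in the radial variable $t$.
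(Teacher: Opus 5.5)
Your proposal is correct and follows the paper's proof. Both grant Conjecture~\ref{solenoidal-BG}, write $q_i=k_i/N_i$ in lowest terms, set $N=\mathrm{lcm}(N_1,\ldots,N_n)$, and rewrite $\chi_{q_i}=(z^{1/N})^{k_iN/N_i}$, so the diagonal factor becomes $L(z^{1/N})$; your extra check that $z^{1/N}$ means the solenoidal power $\hat\xi_{1/N}$, with $\hat\chi_m\circ\hat\xi_{1/N}=\hat\chi_{m/N}$, makes explicit what the paper compresses into the identity $\chi_{q_i}=z^{q_i}$.
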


\begin{proof}
Granting Conjecture~\ref{solenoidal-BG}, $g=h_-^{-1}\mathrm{diag}(\chi_{q_1},\ldots,\chi_{q_n})h_+$
with $q_i\in\Q$.  Write each $q_i=k_i/N_i$ in lowest terms and set
$N=\mathrm{lcm}(N_1,\ldots,N_n)$.  Then $\chi_{q_i}=z^{q_i}=(z^{1/N})^{Nq_i}
=(z^{1/N})^{k_iN/N_i}$, so $\mathrm{diag}(\chi_{q_1},\ldots,\chi_{q_n})=L(z^{1/N})$
for the stated $L\in\mathrm{GL}_n(\C[w,w^{-1}])$.
\end{proof}

For line bundles this specializes to Theorem~\ref{wiener-scalar-factorization}:
\[
g=g_-\chi_qg_+,
\qquad q\in\Q,
\]
where \(\chi_q=z^{k/N}\) is itself a finite Laurent--Puiseux monomial.
Thus every rank-one Wiener-holomorphic bundle is isomorphic to
\(\I_{\PQ}(q)\), and its rational degree is the scalar Wiener index.

It is useful to emphasize that Corollary~\ref{holomorphic-descent-corollary}
is an intrinsic solenoidal result: the cocycle \(g\) may have infinitely many
rational frequencies with unbounded denominators, yet is always cohomologous
to a finite Laurent--Puiseux cocycle.  The natural open question is therefore
about the \emph{gauges}: one may ask whether the factors $h_\pm$ themselves
can be taken to be Laurent--Puiseux polynomial matrices, not merely elements
of \(\mathrm{GL}_n(\mathfrak W^\pm_\Q)\).  This refined descent question
is genuinely open when $g$ has infinitely many non-zero Fourier coefficients.

There is also a source of genuinely unrestricted rational cohomological data.  In
the two-chart Wiener category the extension space of \(\I(-1)\) by the
trivial line bundle is represented by the quotient
\[
\frac{\mathfrak W_{\Q}}
{\mathfrak W^+_{\Q}+\chi_{-1}\mathfrak W^-_{\Q}}.
\]
The first summand removes the nonnegative frequencies, while the second
removes the frequencies \(q\leq-1\).  Hence the remaining Fourier support
lies in the rational spectral gap \((-1,0)\).

\begin{proposition}[Wiener extension classes in a rational spectral gap]
\label{rational-extension-gap}
There is a natural identification
\[
H^1_{\mathrm W}(\PQ,\I(-1))
\cong
\ell^1\bigl(\Q\cap(-1,0)\bigr).
\]
In particular, the Wiener extension space is infinite-dimensional.
\end{proposition}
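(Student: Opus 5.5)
The plan is to compute the two-chart \v{C}ech group directly from the Wiener complex of Pillar~II, following the description just given: with $q=-1$,
\[
H^1_{\mathrm W}(\PQ,\I(-1))\cong
\mathfrak W_\Q\bigm/\bigl(\mathfrak W^+_\Q+\chi_{-1}\mathfrak W^-_\Q\bigr).
\]
Here $\mathfrak W^+_\Q+\chi_{-1}\mathfrak W^-_\Q$ is the image of the coboundary map $(f_+,f_-)\mapsto f_++\chi_{-1}f_-$. The first step is to identify this subspace in Fourier terms. An element of $\mathfrak W^+_\Q$ is an $\ell^1$ sequence supported on $\Q_{\ge0}$. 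An element of $\chi_{-1}\mathfrak W^-_\Q$ is, since multiplication by $\chi_{-1}$ is an isometric shift of $\ell^1(\Q)$ by $-1$, exactly an $\ell^1$ sequence supported on $\Q\cap(-\infty,-1]$. The two supports are disjoint, so the sum is direct and equals
\[
\ell^1\bigl(\Q_{\ge0}\cup(\Q\cap(-\infty,-1])\bigr)\subset\ell^1(\Q)=\mathfrak W_\Q .
\]

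The second step exhibits the quotient. Let $P$ be the restriction of Fourier coefficients to $\Q\cap(-1,0)$:
\[
P\Bigl(\sum_q a_q\chi_q\Bigr)=\sum_{-1<q<0}a_q\chi_q .
\]
This is a contractive projection on $\ell^1(\Q)$ with the same proof as Remark~\ref{intrinsic-WH}. Since $\Q$ is the disjoint union of $\Q\cap(-1,0)$ and the support set of step one, $\mathfrak W_\Q$ splits as a topological direct sum of $\ell^1(\Q\cap(-1,0))$ and the coboundary space. The kernel of $P$ is exactly the coboundary space. Hence $P$ induces a linear isomorphism, indeed an isometry for the quotient norm, from the quotient onto $\ell^1(\Q\cap(-1,0))$. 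In particular the image of the coboundary map is closed, so the quotient is Hausdorff and there is no distinction between reduced and unreduced cohomology.

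The naturality claim is that the identification commutes with the rational dilations $\xi_a$, $a\in\Q^*_{>0}$, that fix $-1$ appropriately, and with the rotation action of $\SQ$. This holds because $P$ is a Fourier multiplier. I would check only compatibility with the $\SQ$-translations, which act diagonally on characters.

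Infinite-dimensionality then follows because $\Q\cap(-1,0)$ is countably infinite, for instance it contains $\{-1/m:m\ge2\}$. So the characters $\chi_{-1/m}$ give infinitely many linearly independent classes.

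The main obstacle is definitional rather than computational: justifying that $H^1_{\mathrm W}(\PQ,\I(-1))$ is the two-chart \v{C}ech group, with the sign convention that $\I(-1)$ has clutching function $\chi_{-1}$ acting on the $\mathfrak W^-$ side. I would take this as the definition of the Wiener extension space from Pillar~II. If the opposite convention is used, the gap becomes $(0,1)$, and the isomorphism is obtained by composing with the dilation $\xi_{-1}$.
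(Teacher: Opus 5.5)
Your proof is correct and is essentially the paper's argument: you identify $\mathfrak W^+_\Q+\chi_{-1}\mathfrak W^-_\Q$ as the $\ell^1$-sequences supported on $\Q_{\ge0}\cup(\Q\cap(-\infty,-1])$ and read off the quotient as the coefficients in the gap $\Q\cap(-1,0)$, and your explicit projection $P$ (with closed range and an isometry for the quotient norm) is a more careful version of the paper's ``unique representative'' step. One aside is wrong but unused: no $\xi_a$ with $a\in\Q^*_{>0}$, $a\neq1$, fixes $-1$, since such a dilation carries the gap $(-1,0)$ to $(-a,0)$ and so relates $\I(-1)$ to $\I(-a)$.
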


\begin{proof}
Every class in the displayed quotient has a unique representative whose
Fourier support lies in \(\Q\cap(-1,0)\): use
\(\mathfrak W^+_{\Q}\) to remove the nonnegative coefficients and
\(\chi_{-1}\mathfrak W^-_{\Q}\) to remove the coefficients of frequency
at most \(-1\).  The remaining coefficients form exactly the indicated
\(\ell^1\)-space.
\end{proof}

For instance,
\[
u=\sum_{m=2}^{\infty}2^{-m}\chi_{-1/m}
\]
defines a nonzero framed extension
\[
0\longrightarrow\I(-1)\longrightarrow E_u
\longrightarrow\I\longrightarrow0,
\qquad
G_u=
\begin{pmatrix}
\chi_{-1}&u\\0&1
\end{pmatrix}.
\]
Its extension class has Fourier support with unbounded denominators and is
therefore not represented by a finite Laurent--Puiseux cocycle in the given
framing.  Nevertheless, its underlying unframed bundle is explicitly split:
\[
G_u=
\begin{pmatrix}\chi_{-1}&0\\0&1\end{pmatrix}
\begin{pmatrix}1&\chi_1u\\0&1\end{pmatrix},
\]
because \(\chi_1u=\sum_{m\ge2}2^{-m}\chi_{1-1/m}\) belongs to
\(\mathfrak W^+_{\Q}\).  Hence
\[
E_u\cong\I(-1)\oplus\I.
\]
This example exhibits genuinely infinite-dimensional framed extension data,
while supporting rather than contradicting the rational splitting picture.

The preceding discussion explains the limited role of the circle-comparison results in the next section.  They are not a replacement for the intrinsic
adelic category.  Rather, they provide the necessary compatibility check for
Laurent--Puiseux cocycles, while the full Wiener factorization and
holomorphic-descent problems remain genuinely solenoidal.

\section{The ordinary-circle comparison subcategory}\label{cycliccomparison}

\begin{definition}[Circle-descending loops with fixed cyclic spectrum]\label{levelN}
For $N\in\N$, the inclusion of character groups
\[
\frac1N\Z\subset\Q=\widehat{\SQ}
\]
dualizes, by Pontryagin duality, to a continuous surjective homomorphism
\[
\pi_N:\SQ\longrightarrow S^1=\R/\Z,
\]
characterized by
\[
\pi_N^*(\chi_k^{S^1})=\chi_{k/N}\qquad(k\in\Z),
\]
where $\chi_k^{S^1}(\theta)=e^{2\pi i k\theta}$ is the standard character
of $S^1$.

Let $H$ be a Lie group and let $\mathcal C$ be a class of maps for which
pullback by $\pi_N$ is defined (for example, the continuous, smooth,
real-analytic, Wiener, or Sobolev class considered below). A map
\[
\gamma:\SQ\longrightarrow H
\]
of class $\mathcal C$ is said to \emph{descend through the circle quotient associated with $N$} if there exists a
map $\widetilde\gamma:S^1\to H$ of the same class such that
\[
\gamma=\widetilde\gamma\circ\pi_N.
\]
Such a loop has spectrum contained in the fixed cyclic subgroup $N^{-1}\Z\subset\Q$.  This is an ordinary-circle comparison class, not a finite object, not an intrinsic stratification of $\Q$, and not a restriction on adelic loops in general.

In the Sobolev matrix-loop setting, a loop
$\gamma\in\La^1(\gln)$ descends through the circle quotient associated with $N$ precisely when
\[
\gamma=\widetilde\gamma\circ\pi_N
\qquad\text{for some }\widetilde\gamma\in H^1(S^1,\gln).
\]
Equivalently, its Fourier expansion is supported on
$\frac1N\Z\subset\Q$. We write $\La_N^1(\gln)$ for the corresponding
fixed-cyclic-subgroup class and put
\[
\La_{N,\pm}^1(\gln)=\La_N^1(\gln)\cap\La_\pm^1(\gln).
\]
A circle-descending loop with finite Fourier support is called a
\emph{Laurent--Puiseux polynomial loop}. Hence Laurent--Puiseux polynomial
loops form an algebraic subfamily of the circle-descending loops, but the two notions are not equivalent: a circle-descending smooth, analytic, or Wiener loop
may have infinitely many nonzero Fourier coefficients. In the Wiener
topology, Laurent--Puiseux polynomial loops are dense because finitely
supported Fourier series are dense in $\ell^1(\Q)$.
\end{definition}

\begin{lemma}[Common cyclic refinement]\label{common-refinement}
Suppose that a map $\gamma:\SQ\to H$ descends through both circle quotients associated with $N$ and $M$.
If $L$ is any common multiple of $N$ and $M$, write
\[
L=aN=bM.
\]
Then, with $p_a,p_b:S^1\to S^1$ given by $p_a(\theta)=a\theta$ and
$p_b(\theta)=b\theta$, one has
\[
\pi_N=p_a\circ\pi_L,
\qquad
\pi_M=p_b\circ\pi_L.
\]
Consequently, if
\[
\gamma=\gamma_N\circ\pi_N=\gamma_M\circ\pi_M,
\]
then
\[
\gamma_N\circ p_a=\gamma_M\circ p_b.
\]
In particular, circle descent is compatible with passage to a common cyclic overgroup; when $N\mid N'$, one has
\[
\La_N^1(\gln)\subset\La_{N'}^1(\gln).
\]
\end{lemma}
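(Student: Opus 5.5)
The statement is an identity between maps of $\SQ$, so I would prove it on the dual side. By Pontryagin duality a continuous homomorphism $\SQ\to S^1$ is determined by its pullback on characters. The first step is to check that $p_a\circ\pi_L$ and $\pi_N$ induce the same map on character groups. By Definition~\ref{levelN}, $\pi_L^*(\chi_k^{S^1})=\chi_{k/L}$ for every $k\in\Z$. Since $p_a(\theta)=a\theta$, we have $p_a^*(\chi_k^{S^1})=\chi_{ak}^{S^1}$. Hence
\[
(p_a\circ\pi_L)^*(\chi_k^{S^1})=\pi_L^*(\chi_{ak}^{S^1})=\chi_{ak/L}=\chi_{k/N}=\pi_N^*(\chi_k^{S^1}),
\]
using $L=aN$. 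Both $p_a\circ\pi_L$ and $\pi_N$ are continuous homomorphisms, and characters separate points of $S^1$. It follows that for every $x\in\SQ$ and every $k$, $\chi_k^{S^1}(p_a\pi_L(x))=\chi_k^{S^1}(\pi_N(x))$, so $p_a\circ\pi_L=\pi_N$. The same computation with $L=bM$ gives $\pi_M=p_b\circ\pi_L$.

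Next comes the consequence. Suppose $\gamma=\gamma_N\circ\pi_N=\gamma_M\circ\pi_M$. Substituting the two identities gives
\[
\gamma_N\circ p_a\circ\pi_L=\gamma_M\circ p_b\circ\pi_L.
\]
The map $\pi_L$ is surjective, since it is dual to the injective inclusion $L^{-1}\Z\subset\Q$, a fact already recorded in Definition~\ref{levelN}. Surjectivity means $\pi_L$ can be cancelled on the right, which yields $\gamma_N\circ p_a=\gamma_M\circ p_b$ as maps $S^1\to H$.

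For the inclusion $\La^1_N(\gln)\subset\La^1_{N'}(\gln)$ with $N\mid N'$, write $N'=aN$. If $\gamma=\widetilde\gamma\circ\pi_N$, then $\gamma=(\widetilde\gamma\circ p_a)\circ\pi_{N'}$. It remains to see that $\widetilde\gamma\circ p_a$ lies in $H^1(S^1,\gln)$. Composition with the degree-$a$ covering multiplies Fourier frequencies by $a$, so the $H^1$ norm is multiplied by at most a factor $\sqrt{1+a^2}$; smoothness, analyticity and the Wiener class are preserved trivially. Alternatively, the inclusion is immediate from the spectral characterization, because $N^{-1}\Z\subset N'^{-1}\Z$.

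The only step needing any care is the first one, where the homomorphism identity is deduced from equality on characters. This rests on characters of $S^1$ separating points. Everything after that is formal.
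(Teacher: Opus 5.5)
Your proof is correct and follows the paper's own argument: the same pullback computation on characters, followed by composing with $\gamma_N,\gamma_M$ and taking $L=N'$ for the inclusion. Your explicit use of the surjectivity of $\pi_L$ to cancel it on the right, of point separation on the target $S^1$ (the paper invokes separation on $\SQ$), and of the $H^1$ bound for $\widetilde\gamma\circ p_a$ just makes precise steps the paper leaves implicit.
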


\begin{proof}
For $k\in\Z$,
\[
(p_a\circ\pi_L)^*(\chi_k^{S^1})
=\pi_L^*(\chi_{ak}^{S^1})
=\chi_{ak/L}
=\chi_{k/N}
=\pi_N^*(\chi_k^{S^1}).
\]
Since characters separate points on the compact abelian group $\SQ$, this
proves $\pi_N=p_a\circ\pi_L$; the argument for $\pi_M$ is identical.
Composing with $\gamma_N$ and $\gamma_M$ gives the stated identity. The
final inclusion follows by taking $L=N'$.
\end{proof}

\begin{lemma}[Transport isomorphism]\label{rho-N}
For each $N\in\N$ the map
\[
\rho_N:H^1(S^1,\gln)\longrightarrow\La_N^1(\gln),\qquad\rho_N(\tilde\gamma)=\tilde\gamma\circ\pi_N,
\]
is a well-defined isomorphism of Banach Lie groups, equivariant for the
classical and adelic actions in the following precise sense:
{\sloppy
\begin{enumerate}
\item[(a)] $\rho_N$ carries the classical splitting $H^1_\pm(S^1,\gln)$
(holomorphic extension to the inside/outside of the unit disk) onto
$\La_{N,\pm}^1(\gln)$, and the classical loop group $H^1(S^1,U(n))$ onto
$\La_N^1(\gln)\cap\La^1(U(n))$.
\item[(b)] $\rho_N$ carries the classical restricted (Sobolev) Grassmannian
$\mathbf{Gr}(L^2(S^1,\C^n))$ of \cite{PS} into $\mathbf{Gr}_1(n)$,
intertwining the classical multiplication action of $H^1(S^1,\gln)$ with
the restriction of the action (\ref{loopsactionhilbertspace}) to
$\La_N^1(\gln)$; in particular every circle-descending loop with spectrum
in $N^{-1}\Z$ satisfies the Fredholm/Hilbert--Schmidt conditions of
Proposition~\ref{gap-thm} relative to the circle polarization
$\rho_N\bigl(H^1_+(S^1,\C^n)\bigr)$ \cite{PS}.
\item[(c)] ({\bf Refinement.}) If $N\mid N'$, say $N'=mN$, then the
common-refinement lemma gives $\pi_N=p_m\circ\pi_{N'}$, where
$p_m(\theta)=m\theta$. Hence
$\rho_N(\tilde\gamma)=\rho_{N'}(\tilde\gamma\circ p_m)$ for every
$\tilde\gamma\in H^1(S^1,\gln)$. In particular, the inclusions
$\La_N^1(\gln)\subset\La_{N'}^1(\gln)$ are compatible with
$\rho_N$ and $\rho_{N'}$.
\end{enumerate}}
\end{lemma}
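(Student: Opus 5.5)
The whole argument runs on Fourier coefficients, using the defining identity $\pi_N^*(\chi_k^{S^1})=\chi_{k/N}$ of Definition~\ref{levelN}. Pullback by $\pi_N$ therefore sends a classical loop $\tilde\gamma=\sum_{k\in\Z}A_k\chi_k^{S^1}$ to $\sum_k A_k\chi_{k/N}$.

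\textbf{Well-definedness and group isomorphism.} Since $\pi_N$ is a surjective homomorphism that preserves Haar measure, it induces an isometric embedding $L^2(S^1)\to L^2(\SQ)$. Because $\pi_N$ intertwines the canonical flow with a rescaled rotation of $S^1$, leafwise derivatives transform by $\tfrac1N\tfrac{d}{d\theta}$. The weighted norm $\sum|A_k|^2(1+k^2/N^2)$ is therefore equivalent to the classical $H^1$ norm, with constants depending only on $N$. The image is exactly the set of $H^1(\SQ)$ functions with spectrum in $N^{-1}\Z$, and by Definition~\ref{levelN} this set is $\La^1_N(\gln)$. Injectivity follows from surjectivity of $\pi_N$. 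Pointwise multiplication and inversion commute with composition by $\pi_N$, so $\rho_N$ is a group homomorphism. Since $\rho_N$ is the restriction of a bounded linear isomorphism of Banach spaces, it is bicontinuous and analytic. We then define the Banach Lie group structure on $\La^1_N(\gln)$ by transport; this is consistent with Proposition~\ref{h1Banach-algebra}.

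\textbf{Part (a).} Under $\rho_N$ the frequency $k\ge0$ corresponds to $k/N\ge0$ and $k<0$ to $k/N<0$, so the positive and negative spectral supports correspond. Via the holomorphic extension description of $\S$\ref{Wienerloops}, this gives $\rho_N(H^1_\pm)=\La^1_{N,\pm}$. Unitarity is a pointwise condition, so it is preserved by composition with a surjection, and this gives the $U(n)$ statement. \textbf{Part (c).} This follows directly from Lemma~\ref{common-refinement}: $\tilde\gamma\circ\pi_N=(\tilde\gamma\circ p_m)\circ\pi_{N'}$.

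\textbf{Part (b), the main obstacle.} Part (b) needs care because Proposition~\ref{gap-thm} forbids Hilbert--Schmidt behaviour relative to the \emph{rational} polarization. I will therefore interpret (b) strictly relative to the circle polarization $\rho_N(H^1_+(S^1,\C^n))$. The plan is to restrict the multiplication operator $M_\gamma$ of (\ref{loopsactionhilbertspace}) to the closed invariant subspace of functions with spectrum in $N^{-1}\Z$, which $M_\gamma$ preserves when $\gamma\in\La^1_N$. On this subspace $\rho_N$ is unitary up to the norm equivalence above, and it conjugates $M_\gamma$ to the classical $M_{\tilde\gamma}$. The Pressley--Segal Fredholm/Hilbert--Schmidt properties \cite{PS} then transport verbatim. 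The classical Grassmannian $\mathbf{Gr}(L^2(S^1,\C^n))$ is mapped onto the restricted Grassmannian of that subspace with its circle polarization. The delicate point is to state precisely that ``into $\mathbf{Gr}_1(n)$'' means this restricted Grassmannian of the $N^{-1}\Z$-spectral subspace, and not the Grassmannian relative to the rational polarization. I would make that identification explicit rather than claim membership in the rational-polarization Grassmannian, which Corollary~\ref{shift-fails} shows fails.
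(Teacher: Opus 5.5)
Your proposal is correct and follows essentially the paper's route. It uses the Fourier-mode bijection $k\leftrightarrow k/N$ (which preserves sign) for (a), the equivalence of $\sum_k(1+(k/N)^2)|\tilde\gamma_k|^2$ with the classical Sobolev norm plus verbatim transport of the Pressley--Segal Fredholm/Hilbert--Schmidt statements for (b), and Lemma~\ref{common-refinement} for (c); your well-definedness and homomorphism checks fill in steps the paper leaves implicit.

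Your reading of (b), relative to the $N^{-1}\Z$-spectral subspace with its circle polarization, is the right one. The obstruction to literal membership in $\mathbf{Gr}_1(n)$ is more direct than Corollary~\ref{shift-fails}, which concerns $\chi_r\mathcal{H}_1^+(n)$ rather than $\rho_N$-images. For any $W$, the projection $p_+:\rho_N(W)\to\mathcal{H}_1^+(n)$ misses every mode $\chi_q e_i$ with $q\in\Q_{\ge0}\setminus N^{-1}\Z$, so its cokernel is infinite-dimensional.
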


\begin{proof}
By construction $\pi_N^*(\chi_k^{S^1})=\chi_{k/N}$, so pullback by $\pi_N$
carries the Fourier mode $k\in\Z$ of $S^1$ to the Fourier mode $k/N\in\Q$
of $\SQ$, bijectively onto $\frac1N\Z$; this gives the algebraic content of
(a), since holomorphic extension to one side of the unit circle/disk is, on
both sides, exactly the vanishing of Fourier coefficients of one sign. For
(b): the weighted-$\ell^2$ norm on $\La_N^1(\gln)$ restricts, under this
identification $k\leftrightarrow{k/N}$, to
$\sum_k(1+(k/N)^2)|\tilde\gamma_k|^2$, which is equivalent (ratio bounded
above and below by constants depending on $N$ alone) to the classical
Sobolev norm $\sum_k(1+k^2)|\tilde\gamma_k|^2$ on $H^1(S^1,\C^n)$; an
equivalence of norms induced by a fixed bounded-invertible rescaling of
each Fourier mode does not change which block operators are Fredholm or
Hilbert--Schmidt (rescaling each basis vector by a factor bounded between
two positive constants multiplies HS norms and Fredholm indices by
bounded, invertible amounts), so the classical theorem (\cite{PS}, Ch.~6)
transports verbatim. Part (c) is the special case $L=N'$ of Lemma
\ref{common-refinement}; the displayed identity follows by composing with
$\tilde\gamma$.
\end{proof}

\begin{remark}[Circle-descending loops]\label{classical-transport}
Since $\rho_N:H^1(S^1,\gln)\to\La_N^1(\gln)$ is an isomorphism of Banach
Lie groups (Lemma~\ref{rho-N}), every statement that holds for the
classical Sobolev loop group on $S^1$ --- Iwasawa decomposition, Birkhoff
factorization, Birkhoff--Grothendieck splitting --- transports verbatim to
$\La_N^1(\gln)$ for each fixed $N\in\N$.  A loop with spectrum in $N^{-1}\Z$
is, under the rescaling $k\leftrightarrow k/N$, a classical $S^1$-loop;
see \cite{PS,Ke,BD} for the classical statements.
\end{remark}

\begin{remark}[Hierarchy of Birkhoff--Grothendieck results]\label{BG-hierarchy}
The Wiener-holomorphic Birkhoff--Grothendieck programme yields three levels of
results, each strictly stronger than the previous.
\begin{enumerate}
\item[(i)] \emph{Fixed-cyclic-spectrum loops.}  For loops with Fourier spectrum
in a single cyclic subgroup $N^{-1}\Z\subset\Q$, the isomorphism
$\rho_N:\mathfrak W_{N^{-1}\Z}\cong W(S^1)$ transports the classical
Birkhoff--Grothendieck theorem verbatim \cite{PS,Ke,BD}.

\item[(ii)] \emph{Scalar case.}  For $n=1$, Theorem~\ref{wiener-scalar-factorization}
establishes the full intrinsic rational Wiener--Birkhoff factorization on
all of $\boldsymbol\Lambda_W(1)$, with Birkhoff index $q\in\Q$ determined by
the rational winding number.

\item[(iii)] \emph{Matrix case.}  For a general $g\in\mathrm{GL}_n(\mathfrak
W_\Q)$, exact factorization is the content of Conjecture~\ref{solenoidal-BG},
the Solenoidal Birkhoff--Grothendieck conjecture.  The three-pillar
architecture --- Stein analysis of the solenoidal disks, Wiener--Hopf
cohomology vanishing, and the topological constraint $\mathrm{Pic}(\PQ)=\Q$
--- proves it unconditionally whenever $g$ is pro-algebraic
(Theorem~\ref{pro-algebraic-BG}), and reduces the general case to
Conjecture~\ref{lem-rank2-ineq}, of which the first is
proved and the second remains open.
\end{enumerate}
The Grassmannian approach of \S\ref{loop-operators} provides complementary
geometric structure; Proposition~\ref{gap-thm} identifies a fundamental difference
between the solenoidal and classical settings, namely that the full adelic
loop group does not act on any single restricted Grassmannian, so a compatible
family of solenoidal Grassmannians indexed by $N$ is needed to extend that
framework.
\end{remark}

\section{Concluding remarks}\label{concrmks}
The preceding sections develop a solenoidal analogue of classical loop-group
and Birkhoff--Grothendieck theory in which the integer Fourier lattice is
replaced by the rational character group of the universal solenoid.  The main
proved results are the construction of intrinsic rational winding and
determinant degree, the scalar Wiener--Birkhoff factorization theorem, matrix
Wiener inversion, ordered triangular matrix factorization, small-norm
factorization, density of factorable matrix Wiener loops, and the
pro-algebraic Solenoidal Birkhoff--Grothendieck theorem
(Theorem~\ref{pro-algebraic-BG}).

The unresolved general matrix Wiener problem is isolated in
Conjecture~\ref{solenoidal-BG}.  The preceding theorems prove it for scalar,
ordered triangular, small-norm, finite-level, and pro-algebraic cocycles, and
show that factorable matrix loops are dense.  In the unrestricted Wiener
category, the remaining obstruction is the rank-two Birkhoff inequality of
Conjecture~\ref{lem-rank2-ineq}.  Thus the paper distinguishes sharply
between the established factorization theory and the single global
splitting statement still conjectural.

The solenoidal Grassmannian $\mathrm{Gr}_\Q(n)$
(Theorem~\ref{grass=loop}) provides the corresponding homogeneous geometry.
It contains the classical circle-comparison Grassmannians and their Pl\"ucker
geometry, but its natural rational-frequency polarization differs
substantially from the ordinary Fredholm model: multiplication by a rational
character produces infinitely many frequencies across any nonzero interval.
Example~\ref{px-not-fredholm} and Proposition~\ref{gap-thm} record this
phenomenon precisely.  The resulting geometry has Birkhoff indices ranging
over $\Q$ rather than $\Z$, and points toward further questions on Bruhat
decompositions, unitary representations of adelic loop groups,
Borel--Weil--Bott type theorems, central extensions, and Quillen--Segal
determinant line bundles.

A second direction is the algebro-geometric theory of adelic varieties and
sheaves \cite{Yokura1, Yokura2}.  In particular, one may study holomorphic
vector bundles on higher-dimensional adelic toric varieties
\cite{Fulton, Cox, BBFK, BBFK1, BBFK2, KLMV}, seek Riemann--Roch type
statements, and investigate canonical bundles and Calabi--Yau notions in
this category.  Laurent--Puiseux polynomial cocycles with spectrum in a fixed
cyclic subgroup provide a distinguished algebraic circle-comparison class; a
GAGA theory \`a la Serre \cite{CR} comparing that class with a suitable
analytic category is a natural problem.

Section~\ref{perfectoid} places the theory beside the Fargues--Fontaine
curve.  The comparison is structural: on both sides the organizing invariant
is a rational slope or rational index.  The Fargues--Fontaine classification
decomposes vector bundles on $\XFF$ into stable blocks $\mathcal{F}_\lambda$
with rational slopes, while Kedlaya's theorem supplies the corresponding
slope filtration for Robba-ring $\varphi$-modules.  On the solenoidal side,
rationality of the index data is enforced by $\mathrm{Pic}(\PQ)=\Q$, and the
matrix Wiener factorization problem plays the role of the archimedean
splitting theorem.  This analogy provides a guiding model for the remaining
analytic problem and motivates the Harder--Narasimhan and condensed
formulations developed below.

\section{Perfectoid geometry as a structural model}
\label{perfectoid}

The theory developed in this paper is strongly constrained by the same
rational-slope phenomena that govern non-archimedean perfectoid geometry, and
we treat the perfectoid comparison as a structural model for the solenoidal
theory.  Results attributed to Fargues--Fontaine \cite{FF}, Kedlaya
\cite{Ked1,Ked2}, or Scholze \cite{Sch,FS} are theorems in the cited
literature; the conjectures at the end of this section are new.  The proved
Fargues--Fontaine/Kedlaya slope theory supplies the non-archimedean prototype
of the matrix Wiener--Birkhoff splitting conjecture formulated in this paper,
with a structural rather than term-by-term translation: rational slopes on
$\XFF$ are carried by stable higher-rank blocks, whereas rational indices on
$\PQ$ are expected to be carried by solenoidal line bundles.
\subsection*{The translation dictionary: why the perfectoid model is relevant}

The central object on the perfectoid side is the \emph{Fargues--Fontaine
curve} $\XFF$ \cite{FF}.  For a complete algebraically closed perfectoid
field $C$ of residue characteristic $p$ (e.g.\ $C = \Cp$), $\XFF$ is a
Dedekind scheme with
\[
    \mathrm{Pic}(\XFF) \;\cong\; \Z,
\]
so \emph{line bundles} $\mathcal{O}(d)$ have \emph{integer} degrees.  However,
for every rational slope $\lambda=d/h\in\Q$ (with $\gcd(d,h)=1$, $h>0$) there
exists a unique \emph{stable} vector bundle $\mathcal{F}_\lambda$ on $\XFF$ of
rank $h$ and degree $d$ \cite{FF}.  The Fargues--Fontaine classification theorem
(Theorem~\ref{thm:FF-BG} below) asserts that every vector bundle on $\XFF$
decomposes as a direct sum of such $\mathcal{F}_\lambda$'s with rational slopes.
In this sense $\XFF$ may be viewed as a non-archimedean counterpart of $\PQ$:
both classify vector bundles by rational invariants, but in $\PQ$ rational degrees
appear already at rank~1 (since $\mathrm{Pic}(\PQ)\cong\Q$ in the proalgebraic
category), whereas in $\XFF$ they appear only at higher rank.

The following table records the main correspondences.

\medskip
{\small\renewcommand{\arraystretch}{1.4}
\begin{center}
\begin{longtable}{|p{5.9cm}|p{5.9cm}|}
\hline
\textbf{Present paper (archimedean)} & \textbf{Perfectoid / FF analogue} \\
\hline
\endfirsthead
\hline
\textbf{Present paper (archimedean)} & \textbf{Perfectoid / FF analogue} \\
\hline
\endhead
\hline
\multicolumn{2}{r}{\emph{continued on next page}}\\
\endfoot
\endlastfoot
Universal solenoid $\SQ = (\R\times\widehat{\Z})/\Z$ &
    Perfectoid torus $\mathrm{Spa}(\Qp\langle T^{\pm 1/p^\infty}\rangle)$ \\
\hline
Profinite fibre $\widehat{\Z}=\prod_p\Zp$ &
    Tilt $\mathcal{O}_{C^\flat}=\varprojlim_{x\mapsto x^p}\mathcal{O}_C$ \\
\hline
Solenoidal projective line $\PQ$ &
    Fargues--Fontaine curve $\XFF$ \\
\hline
$\mathrm{Pic}(\PQ)\cong\Q$ (rational line-bundle degrees) &
    $\mathrm{Pic}(\XFF)\cong\Z$; stable bundles $\mathcal{F}_\lambda$, $\lambda\in\Q$ \\
\hline
Proalgebraic rational BG theorem &
    Fargues--Fontaine classification theorem \\
\hline
Wiener algebra $\mathfrak{W}_\Q\cong\ell^1(\Q)$ &
    Robba ring $\Rob$ / $\BdRp$ \\
\hline
Scalar Wiener--Birkhoff (proved, \S\ref{BF}) &
    Kedlaya rank-1 slope filtration \cite{Ked1} \\
\hline
Matrix WBG (Conjecture~\ref{solenoidal-BG}; open in general, proved
    pro-algebraically, Thm.~\ref{pro-algebraic-BG}) &
    \textbf{Fargues--Fontaine theorem (proved!)} \\
\hline
Adelic Grassmannian (Theorem~\ref{grass=loop}; group action and
    homogeneous-space structure unconditional; $p_+$ itself not Fredholm,
    Example~\ref{px-not-fredholm}; finite-level limiting index
    conditional on Conjecture~\ref{solenoidal-BG}) &
    Fargues--Scholze affine Grassmannian \cite{FS} \\
\hline
Iwasawa decomp.\ (Conjecture~\ref{iwasawadecomposition}, equivalent to
    Conjecture~\ref{solenoidal-BG}) &
    Beauville--Laszlo for $G(\BdR)$ \cite{BL} \\
\hline
Gap theorem: dense $\Q$ creates HS-obstruction &
    Newton stratification: non-unit-root slope
    $\Rightarrow$ non-Fredholm \\
\hline
\end{longtable}
\end{center}}
\medskip

\subsection*{The Fargues--Fontaine BG theorem: a structural analogue of the matrix conjecture}

The Fargues--Fontaine classification is the perfectoid structural analogue of
Conjecture~\ref{wiener-full-conjecture}, but the analogy is not a literal
identification of normal forms.  In the archimedean setting the conjecture asks
for a decomposition into rank-1 diagonal rational characters $\chi_{q_i}$.
In the perfectoid setting, Theorem~\ref{thm:FF-BG} below decomposes into
\emph{higher-rank} stable bundles $\mathcal{F}_\lambda$ of rank $h=\mathrm{denom}(\lambda)>1$
for non-integer $\lambda$.  The structural parallel is at the level of
rational-slope classifications, not at the level of scalar diagonal normal forms.

\begin{theorem}[Fargues--Fontaine \cite{FF}]\label{thm:FF-BG}
Let $C$ be a complete algebraically closed perfectoid field of residue
characteristic $p$.  For each $\lambda=d/h\in\Q$ with $\gcd(d,h)=1$,
$h>0$, let $\mathcal{F}_\lambda$ be the unique stable vector bundle on
$\XFF$ of rank $h$ and degree $d$.  Then every vector bundle $\mathcal{E}$
on $\XFF$ is isomorphic to a direct sum
\[
    \mathcal{E} \;\cong\; \bigoplus_{i}\mathcal{F}_{\lambda_i},
    \qquad \lambda_i\in\Q.
\]
The isomorphism class is determined by the multiset $(\lambda_i)$, the
\emph{Harder--Narasimhan slopes} of $\mathcal{E}$.  (Note: for
$\lambda\in\Z$, $\mathcal{F}_\lambda=\mathcal{O}(\lambda)$ is a line
bundle; for $\lambda\notin\Z$, $\mathcal{F}_\lambda$ has rank $>1$.)
\end{theorem}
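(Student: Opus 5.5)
The plan is the standard Harder--Narasimhan strategy on a complete curve, with all the genuinely $p$-adic input concentrated in one step. We use the following standard facts about $\XFF$ from \cite{FF}: it is a complete curve in the sense of having a degree function $\deg:\mathrm{Pic}(\XFF)\to\Z$, and $\deg(\mathrm{div} f)=0$ for every nonzero meromorphic $f$. Its vector bundles are equivalently $\varphi$-modules over $B$ (or over the Robba ring $\Rob$). The bundles $\mathcal F_\lambda$ are constructed as pushforwards of $\mathcal O(d)$ along the degree-$h$ cover $X_{E_h}\to X_E$. From these we extract the cohomology computations
\[
H^0(\mathcal O(\lambda))=0\ (\lambda<0),\qquad H^1(\mathcal O(\lambda))=0\ (\lambda\geq 0),
\]
and their analogues for $\mathcal F_\lambda$, namely $H^1(\mathcal F_\lambda)=0$ for $\lambda\geq0$ and $H^0(\mathcal F_\lambda)=0$ for $\lambda<0$. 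These follow from the fundamental exact sequence $0\to\Qp\to B^{\varphi=p}\to C\to0$ and from $B^{\varphi=p^d}=0$ for $d<0$.

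\emph{Step 1 (HN filtration).} Define slope $\mu=\deg/\mathrm{rk}$. By the usual argument on a complete curve, degrees of subbundles are bounded above. This gives a unique Harder--Narasimhan filtration $0=\mathcal E_0\subset\cdots\subset\mathcal E_r=\mathcal E$ with semistable graded pieces of strictly decreasing slopes $\mu_1>\cdots>\mu_r$.

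\emph{Step 2 (splitting).} Assume for the moment that every semistable bundle of slope $\lambda$ is isomorphic to $\mathcal F_\lambda^{\oplus m}$ (Step 3). The extension classes of the HN filtration then lie in groups $\mathrm{Ext}^1(\mathcal F_{\mu_j},\mathcal F_{\mu_i})=H^1(\mathcal F_{\mu_j}^\vee\otimes\mathcal F_{\mu_i})$ with $\mu_i>\mu_j$. The bundle $\mathcal F_{\mu_j}^\vee\otimes\mathcal F_{\mu_i}$ is a sum of $\mathcal F_\nu$ with $\nu>0$, by the tensor-product formula obtained from the pushforward description. Its $H^1$ therefore vanishes, exactly as in Pillar~II for $\PQ$, and the filtration splits. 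Uniqueness of the multiset $(\lambda_i)$ is immediate from uniqueness of the HN filtration.

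\emph{Step 3 (semistable $\Rightarrow$ isoclinic), the main obstacle.} Pulling back to $X_{E_h}$ and twisting by $\mathcal O(-d)$ reduces the claim to slope $0$: every semistable bundle of degree $0$ is trivial, i.e.\ $\mathcal E\cong\mathcal O^{\oplus n}$. Here the archimedean-style arguments fail and $p$-adic Hodge theory is required. My first attempt would be an induction on rank via modifications. For semistable $\mathcal E$ of degree $0$, I would find a degree-one modification $\mathcal E'\subset\mathcal E$ at a closed point $\infty$ with $\mathcal E'$ of degree $-1$. I would then show that $\mathcal E'\cong\mathcal O(-1/n)$, i.e.\ $\mathcal F_{-1/n}$, using the classification of degree-one modifications of $\mathcal O^n$; this is where the Banach--Colmez space dimension count and the description of $\mathcal F_{-1/n}$-modifications via Lubin--Tate/$p$-divisible groups (Scholze--Weinstein) enter. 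That identification forces $\mathcal E\cong\mathcal O^n$.

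Alternatively, one can invoke Kedlaya's slope filtration theorem \cite{Ked1,Ked2}, under which every $\varphi$-module over $\Rob$ has a filtration with pure (isoclinic) graded pieces. A pure $\varphi$-module of slope $0$ is étale, hence descends to a $\Qp$-local system, and the corresponding bundle is trivial. In either route the nontrivial input is precisely this slope-$0$ triviality; the rest of the proof is formal HN theory parallel to Pillars~II--III of \S\ref{BF}.
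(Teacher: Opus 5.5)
The paper gives no proof of this theorem; it is quoted from \cite{FF}, so your sketch has to stand on its own. Steps~1--2 and the reduction to slope $0$ are the standard skeleton and are fine, modulo routine details. These are Galois descent of semistability along $X_{E_h}\to X_E$, and the fact that $\mathrm{End}(\mathcal F_\lambda)$ is a division algebra, which you need to get from $\pi_h^*\mathcal E\cong\mathcal O(d)^m$ back to $\mathcal E\cong\mathcal F_\lambda^{m'}$.

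The genuine gap is Step~3, which carries all the content, and your first route for it is circular. The classification of degree-one modifications of $\mathcal O^n$ tells you what $\mathcal E'$ is only once you already know $\mathcal E\cong\mathcal O^n$. What induction on rank actually gives is a dichotomy. Either $\mathcal E$ has a nonzero proper saturated subbundle of degree $0$, and then $\mathcal E\cong\mathcal O^n$ by induction and $H^1(\mathcal O)=0$. Or every such subbundle has degree $\le -1$, in which case $\mathcal E'$ is \emph{stable} of rank $n$ and degree $-1$. Passing from ``stable of slope $-1/n$'' to ``$\cong\mathcal F_{-1/n}$'' is the rank-$n$ case of the very uniqueness you are proving. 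Your slope-$0$ reduction turns it back into a semistable slope-$0$ bundle of the same rank $n$ on $X_{E_n}$, so nothing is gained. Even granting $\mathcal E'\cong\mathcal F_{-1/n}$, concluding $\mathcal E\cong\mathcal O^n$ needs a different input. After dualizing, you need that every $\mathcal G\subset\mathcal F_{1/n}$ with $\mathcal F_{1/n}/\mathcal G\cong i_{\infty*}C$ is trivial, which is a Lubin--Tate period statement.

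What is missing is an a priori source of a nonzero global section of a semistable degree-$0$ bundle that does not presuppose the classification. Equivalently, you need a way to rule out stable bundles of slope $0$ and rank $\ge 2$. In \cite{FF} this is exactly where a genuinely $p$-adic input enters: the periods of $p$-divisible groups (Lubin--Tate and Drinfeld spaces), applied to explicit low-degree extensions and modifications involving $\mathcal F_{\pm1/n}$. Nothing in your sketch plays that role.

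Your second route can be repaired, but not as written. Vector bundles on $\XFF$ correspond to $\varphi$-modules over the \emph{extended} Robba ring $\tilde{\mathcal R}$ built from $C^\flat$ (or over $B$), not over the ordinary Robba ring $\Rob$. Over $\Rob$ a pure slope-$0$ module is étale but corresponds to a possibly nontrivial Galois representation, so ``the corresponding bundle is trivial'' does not follow. The correct input is Kedlaya's Dieudonné--Manin classification over $\tilde{\mathcal R}$ when $C^\flat$ is algebraically closed \cite{Ked1,Ked2}. Algebraic closedness is precisely what makes étale objects trivial. The same theorem gives semistable $\Leftrightarrow$ pure, which you also used tacitly.

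With that citation you do get a valid proof, genuinely different from Fargues--Fontaine's: Kedlaya's successive-approximation analysis over $\tilde{\mathcal R}$ replaces $p$-divisible groups. But it delivers the full decomposition into standard $\varphi$-modules directly. Steps~1--2 then become superfluous, and the argument is really a citation of an essentially equivalent theorem.
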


By the Beauville--Laszlo theorem \cite{BL}, a rank-$n$ vector bundle on
$\XFF$ is equivalent to a clutching datum $g$ in a suitable loop group
associated to $\BdR$ (modulo gauge equivalences extending to the two
hemispheres), exactly as a rank-$n$ bundle on $\PQ$ is a clutching
datum $g\in GL_n(\mathfrak{W}_\Q)$.  Theorem~\ref{thm:FF-BG} therefore
gives:

\begin{corollary}[Perfectoid slope decomposition]\label{cor:perfectoid-matrix}
The isomorphism class of a vector bundle on $\XFF$ determined by a clutching
datum $g\in G(\BdR)$ is characterized by its rational HN slopes
$\lambda_1\leq\cdots\leq\lambda_n\in\Q$.  Via Beauville--Laszlo \cite{BL},
this is the structural perfectoid analogue of the rational partial indices in
Conjecture~\ref{wiener-full-conjecture}.  The key structural difference is
that in the FF decomposition rational slopes $\lambda=d/h\notin\Z$ appear
as \emph{irreducible higher-rank blocks} $\mathcal{F}_\lambda$ of rank $h>1$,
not as scalar diagonal rational characters.  Thus the correct analogy is:
\[
    \text{matrix WBG factorization}
    \quad\longleftrightarrow\quad
    \text{slope decomposition into stable blocks},
\]
rather than a direct identification of the diagonal normal forms.
\end{corollary}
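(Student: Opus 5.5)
The corollary is essentially a repackaging of Theorem~\ref{thm:FF-BG} through the Beauville--Laszlo dictionary, so the plan has three steps: glue, classify, compare.

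\textbf{Step 1 (gluing).} First I would set up the clutching correspondence. Fix the closed point $\infty\in\XFF$ attached to the untilt $C$. Its completed local ring is $\BdRp$ and its punctured formal neighbourhood is governed by $\BdR$. Beauville--Laszlo \cite{BL}, applied to the vector bundle $\mathcal{E}$ on $\XFF$, identifies $\mathcal{E}$ with a triple consisting of:
\begin{itemize}
\item a vector bundle on the affine complement $X_{\mathrm{FF}}\setminus\{\infty\}$, which is the spectrum of the principal ideal domain $B_e=B_{\mathrm{cris}}^{+,\varphi=1}$;
\item a $\BdRp$-lattice;
\item a gluing isomorphism over $\BdR$.
\end{itemize}
Since $B_e$ is a PID and $\BdRp$ is a DVR, both local pieces are free. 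Hence $\mathcal{E}$ is described by a single $g\in G(\BdR)$, well defined up to the double coset $G(B_e)\backslash G(\BdR)/G(\BdRp)$. This is the exact analogue of the relation $g\sim h_-^{-1}gh_+$ for cocycles on $\PQ$.

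\textbf{Step 2 (classification).} Next I would apply Theorem~\ref{thm:FF-BG} to $\mathcal{E}_g$. This gives $\mathcal{E}_g\cong\bigoplus_i\mathcal{F}_{\lambda_i}$. The multiset of slopes is an isomorphism invariant, because it is the Harder--Narasimhan polygon, and HN filtrations are canonical. The classification also shows the multiset determines $\mathcal{E}_g$.

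Listing each $\lambda_i$ with multiplicity equal to $\mathrm{rk}\,\mathcal{F}_{\lambda_i}$ gives $n$ rational numbers $\lambda_1\le\cdots\le\lambda_n$. These characterize the double coset of $g$. That is the first assertion of the corollary.

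\textbf{Step 3 (comparison and structural difference).} Finally I would compare with Conjecture~\ref{wiener-full-conjecture}. On $\PQ$, the conjectured normal form $\Delta_{\mathbf q}$ has rank-one summands of rational degree, since $\mathrm{Pic}(\PQ)\cong\Q$. On $\XFF$ one has $\mathrm{Pic}(\XFF)\cong\Z$, so no line bundle has non-integral degree.

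Thus if $\lambda=d/h$ with $h>1$, the block $\mathcal{F}_\lambda$ cannot split into line bundles. Indeed, $\mathcal{F}_\lambda$ is stable, so any line subbundle $\mathcal{O}(k)\subset\mathcal{F}_\lambda$ satisfies $k<\lambda$, and it cannot be a direct summand. So the analogy holds at the level of slope data, not diagonal normal forms.

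\textbf{Main obstacle.} The step needing most care is Step 1. One must check that Beauville--Laszlo gluing for the non-noetherian, non-finite-type scheme $\XFF$ really yields all vector bundles with the stated gauge groups. This is where I would cite \cite{FF,FS}, which work with the formal neighbourhood $\mathrm{Spec}\,\BdRp$ and the triviality of bundles on $\XFF\setminus\{\infty\}$, rather than reprove it. Everything else is formal once Theorem~\ref{thm:FF-BG} is assumed.
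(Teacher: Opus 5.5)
Your proposal is correct and follows the same route as the paper, which invokes Beauville--Laszlo to identify rank-$n$ bundles on $\XFF$ with clutching data modulo gauge and then reads off the rational slopes from Theorem~\ref{thm:FF-BG}; your double-coset formulation $\mathrm{GL}_n(B_e)\backslash\mathrm{GL}_n(\BdR)/\mathrm{GL}_n(\BdRp)$ and your convention of counting each slope with multiplicity $\operatorname{rk}\mathcal{F}_\lambda$ (needed to obtain $n$ numbers $\lambda_1\le\cdots\le\lambda_n$) are more precise than the paper's one-line sketch. Two small corrections: the ring of the affine complement is $B_e=B_{\mathrm{cris}}^{\varphi=1}$, since the invariants $B_{\mathrm{cris}}^{+,\varphi=1}$ are just $\Q_p$; and $\XFF$ is a noetherian Dedekind scheme, as the paper itself states, so gluing at the closed point $\infty$ is the standard Beauville--Laszlo situation rather than a non-noetherian obstacle.
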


\begin{remark}[The perfectoid model as evidence and guide]
The FF theorem classifies vector bundles by rational slopes
(Theorem~\ref{thm:FF-BG}), and this provides a useful structural guide to
Conjecture~\ref{wiener-full-conjecture}: the non-archimedean counterpart of
the matrix problem has a complete answer, and its answer is governed by the
same ordered rational data that appear here as partial indices.  The analogy
operates at the level of the full category of vector bundles, not merely at
the level of line bundles ($\mathrm{Pic}(\XFF)\cong\Z$ whereas
$\mathrm{Pic}(\PQ)\cong\Q$).  This difference is not a weakness of the
comparison; it is precisely what makes the solenoidal problem interesting.
The perfectoid theorem predicts that the right archimedean theorem should be
a Harder--Narasimhan splitting theorem for Wiener-holomorphic bundles on
$\PQ$, and suggests a proof strategy: construct the analogue of the
Fargues--Fontaine classification in the Clausen--Scholze condensed
complex-analytic framework \cite{CS}, then recover the matrix WBG
factorization as the corresponding clutching normal form.
\end{remark}

\subsection*{Kedlaya's theorem as perfectoid scalar WBG}

\begin{theorem}[Kedlaya \cite{Ked1,Ked2}]\label{thm:Kedlaya}
Every $\varphi$-module of rank $n$ over the Robba ring $\Rob$ admits a
unique Harder--Narasimhan (slope) filtration into $\varphi$-pure pieces,
with slopes $\lambda_1<\cdots<\lambda_r\in\Q$.
\end{theorem}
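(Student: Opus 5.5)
Theorem~\ref{thm:Kedlaya} is a cited result from \cite{Ked1,Ked2}, so the plan below reconstructs the standard proof strategy rather than anything proved in this paper. There are two stages. First, build a slope theory for $\varphi$-modules over the Robba ring $\Rob$. Second, construct the Harder--Narasimhan filtration from it.

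\textbf{Stage 1: slope theory.}
\begin{enumerate}
\item[(a)] For a $\varphi$-module $M$ of rank $n$, define its degree as the $p$-adic valuation of the matrix of $\varphi$ on $\wedge^n M$. Define its slope as $\mu(M)=\deg(M)/\operatorname{rank}(M)\in\Q$.
\item[(b)] Call $M$ \emph{pure of slope} $\lambda=d/h$ if the twisted module $[p^{-d}]_*\varphi^{h}$ admits a $\varphi$-stable lattice over the bounded Robba ring $\Rob^{\mathrm{bd}}$ on which the twisted Frobenius is \'etale.
\item[(c)] Show that $\deg$ is additive in short exact sequences.
\item[(d)] Show that for a saturated submodule $N\subset M$ the quantity $\deg(N)$ is bounded above in terms of $M$. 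This gives existence and uniqueness of a maximal destabilizing submodule, namely the one of largest slope, and among those the largest rank.
\end{enumerate}
The formal Harder--Narasimhan argument, as in the analogy with Conjecture~\ref{lem-rank2-ineq} and Pillar~II, then yields a unique filtration
\[
0=M_0\subset M_1\subset\cdots\subset M_r=M
\]
whose successive quotients are \emph{semistable} with strictly decreasing slopes. Uniqueness follows from the usual vanishing statement: there are no nonzero maps from a semistable module to a semistable module of smaller slope.

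\textbf{Stage 2: semistable implies pure.} This is the central step and the main obstacle. One must show that every semistable $\varphi$-module over $\Rob$ is pure, i.e.\ is the base change of an \'etale-after-twist module over $\Rob^{\mathrm{bd}}$. I would follow Kedlaya's route.
\begin{enumerate}
\item[(a)] Base change to the extended Robba ring $\tilde{\Rob}$ over an algebraically closed residue field.
\item[(b)] Over $\tilde{\Rob}$, prove a Dieudonn\'e--Manin type decomposition. Every $\varphi$-module becomes a direct sum of standard modules of rational slopes. The proof proceeds by constructing enough $\varphi$-eigenvectors. One approximates solutions of $\varphi(v)=p^{d}v$ over the Fr\'echet completion, using a Newton-polygon and successive-approximation argument. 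The hard analytic input is showing that these approximations converge in $\tilde{\Rob}$.
\item[(c)] Show that the Dieudonn\'e--Manin slopes coincide with the Harder--Narasimhan slopes. Hence a semistable module becomes isoclinic after base change.
\end{enumerate}

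\textbf{Descent.} The last step brings the isoclinic structure back from $\tilde{\Rob}$ to $\Rob$.
\begin{enumerate}
\item[(a)] Prove a descent lemma: a $\varphi$-module over $\Rob$ that is isoclinic of slope $0$ over $\tilde{\Rob}$ already admits an \'etale lattice over $\Rob^{\mathrm{bd}}$.
\item[(b)] To prove it, work with Galois-invariant $\varphi$-stable lattices. Use that the reduction of the Frobenius matrix is invertible, and descend via the faithful flatness of $\Rob^{\mathrm{bd}}\to\tilde{\Rob}^{\mathrm{bd}}$ together with the intersection $\Rob\cap\tilde{\Rob}^{\mathrm{bd}}=\Rob^{\mathrm{bd}}$.
\item[(c)] Twisting by $p^{-d}$ and restricting scalars along $\varphi^h$ reduces the general slope $d/h$ to slope $0$.
\end{enumerate}
Putting these together, the semistable Harder--Narasimhan quotients of Stage 1 are $\varphi$-pure, with slopes $\lambda_1<\cdots<\lambda_r$ in $\Q$ (the filtration from Stage 1 read in increasing order of slope). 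This is the statement of Theorem~\ref{thm:Kedlaya}.
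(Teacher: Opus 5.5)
The paper gives no proof of this statement beyond citing \cite{Ked1,Ked2}, so your outline has to stand on its own. Its architecture is the right one: formal Harder--Narasimhan theory, then a Dieudonn\'e--Manin classification over the extended Robba ring with algebraically closed residue field, then descent. As written, however, two steps fail.

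\textbf{First gap: the inequalities in Stage~1 are reversed.} With $\deg$ defined as the valuation of the determinant of the Frobenius matrix, a full-rank submodule has \emph{larger} degree, not smaller. Take the cyclotomic Frobenius $\varphi(T)=(1+T)^p-1$. The non-unit $t=\log(1+T)$ satisfies $\varphi(t)=pt$. Let $\varphi e=e$ and $\varphi f=pf$.
\begin{itemize}
\item The submodule $\Rob\,te\subset\Rob\,e$ has slope $1$, larger than the slope $0$ of $\Rob\,e$.
\item The map $f\mapsto te$ is a nonzero morphism from the slope-$1$ module to the slope-$0$ module. This contradicts the Hom-vanishing you invoke.
\end{itemize}
In your normalization the true statements are the mirror images of yours. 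Slopes of submodules are bounded \emph{below}, and semistability means $\mu(N)\geq\mu(M)$. The first step of the filtration is the submodule of least slope. The filtration then comes out with increasing slopes $\lambda_1<\cdots<\lambda_r$ directly. Your closing remark about reading the filtration in increasing order does not fix this, since reversing the order of a filtration does not produce a filtration.

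\textbf{Second gap, more serious: Stage~2(c) does not give what you take from it.} Dieudonn\'e--Manin shows that a module semistable \emph{over} $\tilde{\Rob}^{\mathrm{alg}}$ is isoclinic there. It says nothing about a $\varphi$-module that is only known to be semistable over $\Rob$. A priori its base change could acquire a destabilizing submodule that is not defined over $\Rob$.

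The missing ingredient is that the Harder--Narasimhan filtration commutes with this base change. Concretely, the first step of the filtration of $M\otimes\tilde{\Rob}$ must descend to $M$; via exterior powers this becomes a rank-one descent problem. This is a separate and substantial step in Kedlaya's proof. Your Descent paragraph addresses only the other descent, from ``isoclinic over $\tilde{\Rob}$'' to ``pure over $\Rob$''.

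Your proposed mechanism also cannot carry either descent all the way down.
\begin{itemize}
\item Uniqueness of the filtration plus Galois invariance does handle the passage from algebraically closed to perfect residue field.
\item But $\tilde{\Rob}$ over $\Rob$ corresponds to the purely inseparable, completed residue extension $k((t))\subset\widehat{k((t))^{\mathrm{perf}}}$. There is no Galois group there to supply a descent datum.
\item Since $\Rob^{\mathrm{bd}}$ is a field, faithful flatness is automatic and gives nothing without a descent datum.
\item The intersection $\Rob\cap\tilde{\Rob}^{\mathrm{bd}}=\Rob^{\mathrm{bd}}$ presupposes that your lattice generators already lie over $\Rob$, which is exactly what has to be shown.
\end{itemize}
In Kedlaya's argument this work is done by exploiting Frobenius itself. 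One passes to models over the bounded Robba ring and compares their generic slope polygons (over the $p$-adic completion) with their special ones (over $\Rob$). That is the part your outline still has to supply.
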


In rank one, Theorem~\ref{thm:Kedlaya} is a structural $p$-adic analogue
of our scalar Wiener--Birkhoff theorem: both isolate a rational slope or
index and a degree-zero part.  This is an analogy of classification
principles, not a literal identity
$f=f_-\chi_\lambda f_+$ in the Robba-ring category.  In rank $n$,
Kedlaya's filtration plays the corresponding structural role for the
perfectoid matrix problem.  The corresponding hierarchy is:

\begin{center}
\small
\begin{tabular}{@{}c@{\;}c@{\;}c@{}}
Scalar WBG (proved) &$\longleftrightarrow$&
Kedlaya rank-$1$ (proved)\\[4pt]
Matrix WBG (conjectural) &$\longleftrightarrow$&
Kedlaya rank-$n$/FF (proved)
\end{tabular}
\end{center}

\noindent The right-hand column is entirely proved.

\subsection*{Harder--Narasimhan theory on $\PQ$: a sharper reformulation of the conjecture}

The comparison with Kedlaya's theorem suggests reformulating
Conjecture~\ref{solenoidal-BG} in the Harder--Narasimhan language that
already organizes both sides of the dictionary.

\begin{definition}[Semistability on $\PQ$]\label{def:semistable}
The \emph{slope} of a rank-$n$ Wiener-holomorphic bundle $E$ on $\PQ$ is
$\mu(E)=\deg_\Q(E)/n\in\Q$.  Call $E$ \emph{semistable} if
$\mu(F)\le\mu(E)$ for every non-zero proper saturated
Wiener-holomorphic subbundle $F\subset E$, and \emph{stable} if the
inequality is always strict.  Here
$\mu(F)=\deg_\Q(F)/\operatorname{rk}(F)$.
\end{definition}

On the classical $\mathbb P^1$, this recovers the familiar dichotomy: the
Birkhoff--Grothendieck theorem forces every bundle of rank $\ge2$ to split
into line bundles, hence to be decomposable, hence \emph{not} stable.  Thus
$\mathbb P^1$ carries no stable bundles of rank $\ge2$: stability is a
rank-1 phenomenon there.  On $\XFF$, by contrast, Theorem~\ref{thm:FF-BG}
exhibits a stable bundle $\mathcal F_\lambda$ of every rank $h\ge1$ at every
slope $\lambda\in\Q$: despite classifying bundles by the same kind of
rational data as $\PQ$, the non-archimedean curve is genuinely richer, with
irreducible higher-rank building blocks that have no classical counterpart.

\begin{proposition}[Conditional Harder--Narasimhan reduction]
\label{prop:HN-reformulation}
Assume the intrinsic disk-triviality hypothesis \textup{(P1)}, assume that
the Wiener category is closed under saturated subbundles and locally free
quotients, and assume Conjecture~\ref{lem-rank2-ineq} for every rank-$2$
subquotient arising in the induction.  Then every rank-$n$
Wiener-holomorphic bundle on $\PQ$ splits as a direct sum of line bundles.

Conversely, the full splitting conjecture implies all these
maximal-subbundle and ordering assertions.  Under the stated categorical
hypotheses, the global matrix problem is therefore reduced to the
rank-$2$ maximal-subbundle statement together with \textup{(P1)}.
\end{proposition}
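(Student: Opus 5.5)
The forward direction is an induction on the rank $n$ that replays the Pillar~II argument of \S\ref{BF}, with every analytic input now supplied by the stated hypotheses. The cases $n=1$ and $n=2$ are the base. For $n=1$, Theorem~\ref{wiener-scalar-factorization} gives $E\cong\mathcal L_q$ with $q=\deg_\Q(E)$. For $n=2$, Conjecture~\ref{lem-rank2-ineq} gives a maximal sub-line-bundle $\mathcal L_{q_1}$ with locally free quotient $\mathcal L_{q_2}$ and $q_1\ge q_2$. The extension class then lies in the two-chart group $\check H^1_{\mathrm W}(\{D^+_\Q,D^-_\Q\},\mathcal L_{q_1-q_2})$. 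This group vanishes by \eqref{WH-vanish}, since $q_1-q_2\ge0$. Under (P1) the two-chart cover is Leray, so this \v{C}ech group is the true extension group and the sequence splits. Concretely, I would write the cocycle in triangular form $\begin{pmatrix}\chi_{q_1}&c\\0&\chi_{q_2}\end{pmatrix}$ after scalar gauges and kill $c$ exactly as in the $n=2$ base case of Proposition~\ref{ordered-triangular-wiener}. Everything then stays inside $\mathfrak W^\pm_\Q$.

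For the inductive step with $n\ge3$, I first need $S(E)$ to be nonempty, bounded above, and to attain its maximum. I would get this by choosing a rank-$2$ saturated subbundle through any line subbundle and applying Conjecture~\ref{lem-rank2-ineq} there. Boundedness would follow by comparing with $\deg_\Q$ of saturations, using the assumed closure under saturated subbundles. Set $q_1=\max S(E)$ and $E'=E/\mathcal L_{q_1}$, which is locally free by hypothesis. By induction, $E'\cong\bigoplus_{i\ge2}\mathcal L_{q_i}$. The inequality $q_1\ge q_i$ comes from the preimage argument already written in Pillar~II. One forms $F=\pi^{-1}(\mathcal L_{q_i})$, splits it as $\mathcal L_a\oplus\mathcal L_b$ by the rank-$2$ case, and notes that $a\ge\tfrac12(q_1+q_i)>q_1$ would contradict maximality. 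The extension class then lies in $\bigoplus_i\check H^1_{\mathrm W}(\mathcal L_{q_1-q_i})=0$, so $E\cong\mathcal L_{q_1}\oplus E'$. I expect the main obstacle to be nonemptiness and attainment of $\max S(E)$ in rank $n\ge3$, which the hypotheses grant only in rank $2$. If the saturation reduction fails, I would strengthen the wording to assume Conjecture~\ref{lem-rank2-ineq} at each relevant rank, as the Pillar~II text already does.

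For the converse, assume $E\cong\bigoplus\mathcal L_{q_i}$ with $q_1\ge\cdots\ge q_n$. Any $\mathcal L_q\hookrightarrow E$ gives a nonzero holomorphic section of $\mathcal L_{q_i-q}$ for some $i$. Such sections are elements of $\mathfrak W^+_\Q\cap\chi_{q_i-q}\mathfrak W^-_\Q$, and this intersection is zero when $q_i-q<0$ by Fourier support. Hence $S(E)\subseteq(-\infty,q_1]$, the maximum $q_1$ is attained by the first summand, the quotient is split, and the ordering holds. Restricting to rank-$2$ subquotients yields Conjecture~\ref{lem-rank2-ineq}. Disk triviality (P1) is built into the splitting, because the splitting is written in trivializations over $D^\pm_\Q$.
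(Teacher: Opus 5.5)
Your forward argument is the paper's Pillar~II induction from \S\ref{BF}; the paper gives no separate proof of this proposition. The weak point is exactly the one you anticipate. For $n\ge3$ the induction needs three facts: $S(E)\neq\emptyset$, attainment of $\sup S(E)$, and a locally free quotient. Conjecture~\ref{lem-rank2-ineq} applied to rank-$2$ subquotients supplies none of them, and your saturation reduction does not close the gap.

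\emph{Nonemptiness.} This is presupposed. To build a rank-$2$ saturated subbundle through a line subbundle, you must already have a line subbundle. Closure under saturation only says that saturations of \emph{existing} subsheaves are subbundles. The rank-$2$ subquotients of the induction arise only after a line subbundle has been chosen, so the hypothesis cannot produce the first one.

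\emph{Boundedness.} ``Comparing with $\deg_\Q$ of saturations'' gives no uniform bound. The rank-$2$ statement bounds $q$ only by $\max S(F)$ for the particular $F$ containing $\mathcal L_q$, and nothing bounds $\deg_\Q$ of rank-$2$ subbundles of $E$. Boundedness can, however, be rescued once one $\mathcal L_{q_0}\subset E$ with locally free quotient $E'$ is known. By induction $E'\cong\bigoplus_{i\ge2}\mathcal L_{q_i'}$, and your own observation $\mathfrak W^+_\Q\cap\chi_r\mathfrak W^-_\Q=0$ for $r<0$ then gives $S(E)\subseteq(-\infty,\max(q_0,q_2',\ldots,q_n')]$.

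\emph{Attainment.} This still does not follow. The rank-$2$ step replaces $\mathcal L_{q_0}$ by $\mathcal L_a$ with $a\ge\tfrac12(q_0+q_j')$. With rational degrees, nothing forces this iteration to reach $\sup S(E)$; in the classical case it is integrality of degrees that makes it terminate.

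Hence your fallback is not optional. The maximal-subbundle assertions must be assumed at every rank, and this is what the Pillar~II text tacitly does (``assuming that statement at the relevant rank''). Under that reading your proof coincides with the paper's. With the hypothesis only for rank-$2$ subquotients, neither your argument nor the paper's gets the induction started.

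Two smaller points.
\begin{itemize}
\item Your concrete rank-$2$ check has a convention slip. With the gauges $A_-\Delta A_+$ of Proposition~\ref{ordered-triangular-wiener}, unipotent gauges change $c$ by elements of $\chi_{q_1}\mathfrak W^+_\Q+\chi_{q_2}\mathfrak W^-_\Q$. This is all of $\mathfrak W_\Q$ only when $q_1\le q_2$. Under Definition~\ref{hvb}, the extension $0\to\mathcal L_{q_1}\to E\to\mathcal L_{q_2}\to0$ has diagonal $\chi_{-q_1},\chi_{-q_2}$; this is the normalization in which \eqref{WH-vanish} holds. Only with that diagonal does the base case of Proposition~\ref{ordered-triangular-wiener} apply.
\item In the converse, your derivation of the maximal-subbundle and ordering assertions is correct and more explicit than anything in the paper. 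But (P1) is not ``built into'' the splitting. Conjecture~\ref{solenoidal-BG} concerns bundles already given by two-chart cocycles and says nothing about arbitrary bundles on $D^\pm_\Q$. That is why (P1) remains a separate hypothesis.
\end{itemize}
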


\begin{remark}[A rigidity statement, not a formal one]
\label{rem:rigidity}
Informally, Proposition~\ref{prop:HN-reformulation} says that
Conjecture~\ref{solenoidal-BG} holds for $E$ exactly when $E$ has no
indecomposable (``stable'') sub-quotient of rank $\ge2$: every obstruction
to splitting is, at bottom, an obstruction to attaining a maximal
sub-line-bundle degree at some stage of the induction.  Put differently,
\emph{Conjecture~\ref{solenoidal-BG} is exactly the assertion that $\PQ$
behaves like $\mathbb P^1$ --- rigidly, with no stable higher-rank
phenomena --- rather than like $\XFF$, which does have such phenomena.}
This is not a foregone conclusion.  $\PQ$ and $\XFF$ both carry rational slope data, but for different
reasons.  On $\PQ$, line-bundle degrees themselves range over
$\Q$, since $\mathrm{Pic}(\PQ)\cong\Q$.  On $\XFF$ one has
$\mathrm{Pic}(\XFF)\cong\Z$; rational slopes arise instead as the quotient
of an integral degree by the rank.  The Fargues--Fontaine construction
therefore shows that rational slope data are entirely compatible with
genuine higher-rank stable phenomena.
A priori, the richer $\mathrm{Pic}(\PQ)\cong\Q$ could have brought the same
phenomenon to the archimedean side; the conjecture bets that it does not,
and Theorem~\ref{pro-algebraic-BG} confirms the bet on every pro-algebraic
sub-bundle, while Proposition~\ref{rational-extension-gap} shows that even
genuinely infinite-dimensional (non-pro-algebraic) framed extension data
can still underlie a split bundle.  The open content of the conjecture is
therefore a real rigidity statement about $\PQ$, not a formal consequence
of $\mathrm{Pic}(\PQ)\cong\Q$ alone --- which is why the topological
Pillar~III, on its own, is not enough to prove it, and why the analogy with
$\XFF$ is informative rather than automatically favorable.
\end{remark}

\subsection*{An additive-cohomology parallel and a possible analytic approach}

The dictionary above compares the \emph{multiplicative} classification
problems: Wiener--Birkhoff factorization on the solenoidal side and
Dieudonn\'e--Manin/Harder--Narasimhan classification on the perfectoid side.
There is also an additive parallel.  In the solenoidal Wiener algebra the
splitting
\[
\mathfrak W_\Q=\mathfrak W^-_\Q\oplus\C\oplus\mathfrak W^+_\Q
\]
(Remark~\ref{intrinsic-WH}) follows directly from absolute convergence of
Fourier series.  In the non-archimedean setting the analogous additive input
is much deeper: the Dieudonn\'e--Manin classification underlying
Theorem~\ref{thm:Kedlaya} uses the vanishing of a Frobenius cohomology group,
proved by a successive-approximation argument over a complete algebraically
closed non-archimedean field \cite{Ked1,Ked2}.

This suggests a possible analytic approach to Conjecture~\ref{lem-rank2-ineq}.
Instead of transferring finite-level Gohberg--Kre\u\i{}n partial indices
directly, one may try to construct the rank-two splitting on $\PQ$ by a
successive-approximation scheme in the Wiener topology: start from an
approximate finite-level splitting, solve the corresponding linearized
additive equation for the extension class, and iterate.  This would be the
archimedean analogue of the Newton-type argument that underlies the
non-archimedean slope classification, and it is the most concrete analytic
strategy suggested by the perfectoid comparison.

\subsection*{The gap theorem and the Newton stratification (heuristic correspondence)}

Our Gap Theorem says the multiplication operator $M_r$ on $L^2(\SQ)$
lies in the restricted general linear group if and only if $r=0$.  The
mechanism is that $\Q\cap[0,r)$ is infinite and dense, making the
Hankel-operator block infinite-rank.

\emph{Heuristic perfectoid parallel:} A $\varphi$-module over $\Rob$ is
called \emph{\'{e}tale} (unit-root) if it has slope $0$, which by
Kedlaya's theorem \cite{Ked1} is equivalent to the module being isoclinic
of slope $0$ in the HN filtration.  For slope $\lambda\neq 0$, the
$\varphi$-module is not étale and the associated Dieudonné module carries
a Newton polygon with a non-horizontal segment---the heuristic analogue of
the Hilbert--Schmidt failure in the gap theorem.  In both settings the
obstruction is the \emph{density of rational slopes} of $\Q$ which prevents
compactness.  This correspondence is a motivating heuristic, not a proved
theorem; making it precise would require identifying the exact archimedean
analogue of the Newton stratification on $\mathrm{Bun}_G(\XFF)$.

\subsection*{Further directions}

The comparison developed above suggests three conjectures and one concrete
question.  They are deliberately separated from the proved results of this
paper.  Their purpose is to identify possible extensions of the theory, not
to assert that the required global objects have already been constructed.

\begin{conjecture}[Rational Robba ring is admissible decomposing]
\label{conj:perf-scalar}
The rational Robba ring $\Rob_\Q = \bigcup_N\Rob_{1/N}$ over $\Cp$,
completed under the Gauss norm, is an admissible decomposing Banach
algebra in the sense of \cite{BRS}, and hence admits scalar and matrix
Wiener--Birkhoff factorization with rational partial indices.
\end{conjecture}

\begin{conjecture}[Adelic Fargues--Fontaine object]
\label{conj:adelic-FF}
There exists a suitably defined adelic geometric object
$X_{\mathrm{FF}}^{\A}$ over $\Q$ whose archimedean component is related to
$\PQ$ and whose non-archimedean components recover the corresponding
Fargues--Fontaine curves.  Its vector-bundle theory should admit a rational
slope formalism compatible with the proved Fargues--Fontaine classification
at finite places and with the pro-algebraic splitting theorem of the present
paper at the archimedean place.
\end{conjecture}

\begin{conjecture}[Solenoid and perfectoid tori]
\label{conj:solenoid-limit}
There is a natural condensed or solid formulation in which the universal
solenoid $\SQ$ is obtained from an adelic family of perfectoid tori together
with the diagonal arithmetic action.  Any precise statement must specify the
ambient category, the completed product, and the archimedean realization.
\end{conjecture}

\begin{question}[Condensed reformulation of the matrix WBG conjecture]
\label{rem:condensed-matrix}
Can Conjecture~\ref{solenoidal-BG} be reformulated, or proved, in the
Clausen--Scholze framework of condensed or solid $\C$-vector spaces by using
a solid group algebra naturally attached to $\SQ$?  More specifically, is
there a cohomological formulation of the matrix splitting problem that plays
an archimedean role analogous to slope filtrations in the
Fargues--Fontaine classification?
\end{question}

\begin{remark}[Conditional Birkhoff cells and Newton strata]
\label{rem:birkhoff-newton}
The Birkhoff-cell decomposition of the unrestricted based adelic loop group
is conditional on the global factorization and gradient-flow hypotheses
stated in \S\ref{bd}--\S\ref{energy2}; it is unconditional in the
pro-algebraic and finite-level settings established earlier in the paper.
Subject to those hypotheses, its rationally indexed cells provide a useful
archimedean analogy with Harder--Narasimhan or Newton strata on moduli of
bundles over the Fargues--Fontaine curve.  This is a structural comparison,
not a claimed identification of moduli stacks or codimensions.
\end{remark}

\begin{remark}[Long-term arithmetic perspective]
A fully adelic version of the geometry might eventually interact with
geometric formulations of Langlands correspondences.  At present, however,
the necessary adelic Fargues--Fontaine object, its moduli of bundles, and the
relevant categories of local systems have not been constructed here.
Accordingly, this possibility should be regarded as a long-term research
direction rather than as a mathematical conjecture formulated by the
present paper.
\end{remark}

Among these problems, Conjecture~\ref{conj:perf-scalar} and
Question~\ref{rem:condensed-matrix} are the most directly connected to the
proved analytic results.  They offer concrete routes toward the unresolved
general matrix factorization problem without enlarging the claims of the
present article.

\section*{Acknowledgements}
The author thanks Ernesto Lupercio for several discussions on the topics of
this paper and for his valuable suggestions.  The author also thanks Juan
Manuel Burgos, with whom the study of the universal solenoid, the adelic loop
group, and their K\"ahler and Grassmannian geometry originated as a joint
project leading to the preprint \cite{BuVe}.  Several foundational
constructions recalled in \S\ref{adelic-loop-groups}--\S\ref{factorizations}
come from, or are close variants of, results first obtained in that joint
work and are cited there explicitly.  The Wiener-algebra theory,
Conjecture~\ref{solenoidal-BG}, the Harder--Narasimhan reformulation, and the
perfectoid comparison developed here are part of the present paper.

\medskip
\noi {\bf Funding.} This work was partially supported by PAPIIT (Universidad Nacional Autónoma de México)
project \#IN103324

\medskip
\noi 
{\bf Conflict of Interest.} The author declares that he does not have any conflict of interest.

\medskip


\begin{thebibliography}{Coh93}

\bibitem[AP]{AP}
V. Alexeev, R. Pardini, \emph{On the existence of ramified abelian covers}, Rend. Semin. Mat. Univ. Politec. Torino 71 (2013), no. 3-4, 307--315.

\bibitem[AtP]{AtP} M. F. Atiyah, A. N. Pressley, \emph{Convexity and loop groups}. Arithmetic and geometry, Vol. II, 33--63, Progr. Math., 36, Birkh\"auser Boston, Boston, MA, 1983. 

\bibitem[BD]{BD} V. Balan, J. Dorfmeister, \emph{Birkhoff decompositions and Iwasawa decompositions for loop groups}, Tohoku Math. J. 53 (2001), 593--615.

\bibitem[B]{B} W. Barth, \emph{Moduli of vector bundles on the projective plane}. Invent. Math. 42 (1977), 63--91.

\bibitem[BBFK]{BBFK}
G. Barthel, J. P. Brasselet, K. H. Fieseler, L. Kaup, \emph{Diviseurs invariants et homomorphisme de Poincar\'e de vari\'et\'es toriques complexes},  Tohoku Math. J. (2) 48 (1996), no. 3, 363--390.

\bibitem[BBFK1]{BBFK1}
G. Barthel, J. P. Brasselet, K. H. Fieseler, L. Kaup, \emph{Combinatorial intersection cohomology for fans}, Tohoku Math. J. (2) 54 (2002), no. 1, 1--41.

\bibitem[BBFK2]{BBFK2}
G. Barthel, J. P. Brasselet, K. H. Fieseler, L. Kaup, \emph{Equivariant intersection cohomology of toric varieties}, Algebraic geometry: Hirzebruch 70 (Warsaw, 1998), 45--68, Contemp. Math., 241, Amer. Math. Soc., Providence, RI, (1999).

\bibitem[BL]{BL}
A. Beauville, Y. Laszlo, \emph{Un lemme de descente},
C.\,R.\ Acad.\ Sci.\ Paris S\'er.\ I Math.\ \textbf{320} (1995), 335--340.

\bibitem[Bir]{Bir} G. D. Birkhoff, \emph{Singular points of ordinary linear differential equations}. Transactions of the American Mathematical Society. 10 (4): 436--470.

\bibitem[Bir1]{Bir1} G.D. Birkhoff, \emph{A theorem on matrices of analytic functions}. Math. Ann. 74(1913),122--133.

\bibitem[BP]{BP} S. Bochner, S,. R. S. Phillips, \emph{Absolutely convergent Fourier expansions for non-commutative normed rings}. Ann. of Math. (2) 43 (1942), 409--418.


\bibitem[Bo]{Bo} A.Borel \emph{Some properties of adele groups attached to algebraic groups}
Bull. Amer. Math. Soc. 67 (1961), 583--585 


\bibitem[Bot]{Bot}  R. Bott, \emph{An application of the Morse theory to the topology of Lie-groups}
Bulletin de la S. M. F., tome 84 (1956), p. 251--281

\bibitem[BuVe]{BuVe} J. M. Burgos, A. Verjovsky, \emph{Adelic toric varieties and adelic loop groups}, arXiv:2001.07997 [math.AG] (2020).

\bibitem[BRS]{BRS} A. Brudnyi, L. Rodman, I. M. Spitkovsky, \emph{Non-denseness of factorable matrix functions}. J. Funct. Anal. \textbf{261} (2011), 1969--1991.

\bibitem[CC]{CC} A. Candel, L. Conlon, \emph{Foliations I}. Graduate Studies in Mathematics, 23. American Mathematical Society, Providence, RI, 2000.

\bibitem[CR]{CR}
E. Clader, Y. Ruan, \emph{Mirror Symmetry Constructions}, {arXiv:1412.1268}.

\bibitem[CG]{CG}
K. F. Clancey, I. Gohberg,  \emph{Factorization of matrix functions and singular integral operators}. Operator Theory: Advances and Applications, 3. Birkh\"auser Verlag, Basel-Boston, Mass., 1981


\bibitem[CLS]{Cox}
D. A. Cox, J. B. Little, H. K. Schenck, \emph{Toric  varieties}, Graduate Studies in Mathematics, vol. 124, American Mathematical Society, Providence, RI, (2011).

\bibitem[Co]{Co} A. A. Condori, \emph{Maximum principles for matrix-valued analytic functions}. Amer. Math. Monthly 127 (2020), no. 4, 331--343.

\bibitem[CS]{CS}
D. Clausen, P. Scholze, \emph{Condensed Mathematics},
lecture notes, University of Bonn (2019).
\url{https://www.math.uni-bonn.de/people/scholze/Condensed.pdf}

\bibitem[DPW]{DPW}  J. Dorfmeister, F. Pedit,H.  Wu, \emph{Weierstrass type representation of harmonic maps into symmetric spaces}. Comm. Anal. Geom. 6 (1998), no. 4, 633--668

\bibitem[E]{E} J. Eells Jr. \emph{A setting for global analysis}. Bull. Amer. Math. Soc. 72 (1966), 751--807.

\bibitem[Fr]{Fr} D. S. Freed, \emph{The Geometry of loop groups} 
Jour. Diff. Geom, 28 (1988) 223--276

\bibitem[FU]{FU} D. S. Freed, K. K. Uhlenbeck, \emph{Instantons and four-manifolds}, Math. Sci. Res. Inst. Publ., Vol. 1, Springer, New York, 1984.

\bibitem[FF]{FF}
L. Fargues, J.-M. Fontaine, \emph{Courbes et fibr\'es vectoriels en
th\'eorie de Hodge $p$-adique}, Ast\'erisque \textbf{406} (2018), xiii+382 pp.

\bibitem[FS]{FS}
L. Fargues, P. Scholze, \emph{Geometrization of the local Langlands
correspondence}, preprint (2021), arXiv:2102.13459.

\bibitem[Fu]{Fulton}
W. Fulton, \emph{Introduction to toric varieties}, vol. 131, Princeton University Press, (1993).

 \bibitem[GR]{GR} H. Garland, M.S, Raghunathan, 
 \emph{A Bruhat decomposition for the loop space of a compact group: A new approach to results of Bott.} Proc.Nat.Acad.Sci.USA
Vol. 72, No. 12, pp. 4716--4717, December 1975. 

\bibitem[Ge1]{Ge1} I.M. Gelfand
\emph{\"Uber absolut konvergente trigonometrische Reihen und Integrale.} (German. Russian summary)
Rec. Math. [Mat. Sbornik] N. S. 9 (51), (1941). 51--66.
 
\bibitem[Ge2]{Ge2} I.M. Gelfand, \emph{Normierte ringe Mat}. Sbornik N. S.9(51), 3--24, 1941.

\bibitem[GK]{GK}  I. C.Gohberg, M. G. Kre\u{i}n, \emph{Systems of integral equations on the half-line with kernels depending on the difference of the arguments}. (Russian) Uspehi Mat. Nauk (N.S.) 13 1958 no. 2 (80), 3--72

\bibitem[Gr1]{Gr1}
M. Gromov, \emph{Endomorphisms of symbolic algebraic varieties}, J. Eur. Math. Soc. (JEMS) 1 (1999), no. 2, 109--97.

\bibitem[Gr2]{Gr2}
M. Gromov, \emph{Topological invariants of dynamical systems and spaces of holomorphic maps I}, Math. Phys. Anal. Geom. 2 (1999), no. 4, 323--415.

\bibitem[MG]{MG} M. A Guest, \emph{From quantum cohomology to integrable systems}. Oxford Graduate Texts in Mathematics, 15. Oxford University Press, Oxford, 2008. 

 \bibitem[MG1]{MG1} M. A Guest, \emph{Harmonic maps, loop groups, and integrable systems}. London Mathematical Society Student Texts, 38. Cambridge University Press, Cambridge, 1997

\bibitem[Grot]{Grot} A. Grothendieck, \emph{Sur la classification des fibr\'es holomorphes sur la sph\`ere de Riemann}. American Journal of Mathematics. 79 (1): 121--138. 

\bibitem[SGA1]{SGA1} A. Grothendieck, M. Raynaud, \emph{Rev\^etement \'Etales et Groupe Fondamental (SGA1)}, Lecture Note in
Math., Springer, Berlin Heidelberg New York, vol.224, (1971).

\bibitem[HM]{HM} K. H. Hofmann, S. A. Morris,  \emph{The Lie theory of connected pro-Lie groups. A structure theory for pro-Lie algebras, pro-Lie groups, and connected locally compact groups}. EMS Tracts in Mathematics, 2. European Mathematical Society (EMS), Z\"urich, 2007.

\bibitem[HMu]{HMu} G. Horrocks, D. Mumford, \emph{A rank 2 vector bundle on $P^4$ with 15,000 symmetries}. Topology 12 (1973), 63--81.

\bibitem[Ka]{Ka} V. G. Kac, \emph{Infinite-dimensional Lie algebras.} Second edition. Cambridge University Press, Cambridge, 1985

\bibitem[KLMV]{KLMV}
L. Katzarkov, E. Lupercio, L. Meersseman, A. Verjovsky, \emph{Quantum (non-commutative) toric geometry: Foundations}, Adv. Math. 391 (2021), Paper No. 107945.

 
\bibitem[K]{K} Y. Katznelson, \emph{An introduction to harmonic analysis}. Second corrected edition. Dover Publications, Inc., New York, 1976

\bibitem[Ked1]{Ked1}
K. S. Kedlaya, \emph{A $p$-adic local monodromy theorem},
Ann.\ of Math.\ (2) \textbf{160} (2004), no.\ 1, 93--184.

\bibitem[Ked2]{Ked2}
K. S. Kedlaya, \emph{Slope filtrations revisited},
Doc.\ Math.\ \textbf{10} (2005), 447--525.

\bibitem[Ke]{Ke} P. Kellersch,
\emph{Eine Verallgemeinerung der Iwasawa Zerlegung in Loop Gruppen}. (German. German summary) [The Iwasawa decomposition for the untwisted loop group of semisimple Lie groups]
Dissertation, Technische Universit\"at M\"unchen, Munich, 1999

\bibitem[Lau]{Lau} L. M. A. Lau,  \emph{On the Sato-Segal-Wilson Grassmannian and the Infinite Grassmannian of Type $I+-$}. MSc thesis, Hong Kong University of Science and Technology, 2015.
\url{http://hdl.handle.net/1783.1/80215}



\bibitem[Le]{Le} P. L\'evy, \emph{Sur la convergence absolue des s\'eries de Fourier}. Compositio Mathematica. 1: 1--14 (1935).

\bibitem[LM]{LM}
M. Lyubich, Y. Minsky, \emph{Laminations in holomorphic dynamics}, J. Differential Geom. 47 (1997), no. 1, 17--94.

\bibitem[Mi]{Mi} J. Milnor, \emph{Remarks on infinite-dimensional Lie groups}, pp. 1007--1057 in: ``Relativit\'e, Groupes et Topologie II,'' B. DeWitt and R. Stora (Eds), North-Holland, Amsterdam, 1983.

\bibitem[Mo]{Mo} C. C. Moore, C. L. Schochet, \emph{Global analysis on foliated spaces}. Second edition. Mathematical Sciences Research Institute Publications, 9. Cambridge University Press, New York, 2006

\bibitem[Od]{Od}
C. Odden, \emph{The baseleaf preserving mapping class group of the universal hyperbolic solenoid}, Trans. Amer. Math. Soc. 357 (2005), 1829-1858.

\bibitem[P]{P} R. S. Palais, \emph{Morse theory on Hilbert manifolds}. Topology 2 (1963), 
299--340.

 \bibitem[Pe]{Pe} J.-P. Penot, \emph{Sur le th\'eor\`eme de Frobenius}, Bull. Soc. Math. France, 98 (1970), 47--80.

\bibitem[Pr]{Pr} A. N. Pressley, \emph{Decompositions of the space of loops on a Lie group}.
Topology 19 (1980), no. 1, 65--79

\bibitem[Pr1]{Pr1} A. N. Pressley,  \emph{The energy flow on the loop space of a compact Lie group}. J. London Math. Soc. (2) 26 (1982), no. 3, 557--566.

\bibitem[Pr2]{Pr2} A. N. Pressley, \emph{Loop Groups}, Encyclopedia of Physical Science and Technology, Elsevier (Third Edition)
2003, Pages 791--798



\bibitem[PS]{PS}
A. Pressley, G. Segal, \emph{Loop groups}. Oxford Mathematical Monographs. Oxford Science Publications. The Clarendon Press, Oxford University Press, New York, 1986. viii+318 pp.

\bibitem[RV]{RV}
D. Ramakrishnan, R. Valenza, \emph{Fourier Analysis on Number Fields}, Graduate Texts in Mathematics {\bf 186}, Springer-Verlag,
New York, (1999).

\bibitem[Sc]{Sc}
W. Scheffer, \emph{Maps between topological groups that are homotopic to homomorphisms}, Proc. Amer. Math. Soc., 33 (1972), 562--567.

\bibitem[Sch]{Sch}
P. Scholze, \emph{Perfectoid spaces},
Publ.\ Math.\ Inst.\ Hautes \'Etudes Sci.\ \textbf{116} (2012), 245--313.


\bibitem[Se]{Se} G. Segal, \emph{Unitary representations of some infinite-dimensional groups}.
Comm. Math. Phys. 80 (1981), no. 3, 301--342. 

\bibitem[Se1]{Se1} G. Segal,  \emph{Loop groups}. Workshop Bonn 1984 (Bonn, 1984), 155--168, Lecture Notes in Math., 1111, Springer, Berlin, 1985.

\bibitem[SW]{SW} G. Segal and G. Wilson, \emph{Loop groups and equations of KdV type}, Pub. Math,
de LH.E.S. 61 (1985), 5--65.

\bibitem[S]{S}J.-P. Serre
\emph{Groupes proalg\'ebriques}
Publications math\'ematiques de l'I.H.\'E.S., tome 7 (1960), p. 5--67

\bibitem [Su1]{Su1}
D. Sullivan, \emph{Linking the universalities of Milnor-Thurston, Feigenbaum and Ahlfors-Bers}, Lisa R. Goldberg et al (ed.), in ``Topological methods in modern mathematics''. Proceedings of a symposium in honor of John Milnor's sixtieth birthday, held at the State University of New York at Stony Brook, USA, June 1991. Houston, TX: Publish or Perish, Inc. 543--564 (1993). 



\bibitem[Su2]{Su2}
D. Sullivan, \emph{Quasiconformal Homeomorphisms and Dynamics I: Solution of the Fatou-Julia Problem on Wandering Domains}, Annals of Mathematics, Second Series, Vol. 122, No. 2 (Sep., 1985), pp. 401--418.

\bibitem[Su3]{Su3}
D. Sullivan, \emph{Quasiconformal homeomorphisms and dynamics II: Structural stability implies hyperbolicity for Kleinian groups}, Acta Math., Volume 155 (1985), 243--260.

\bibitem[Su4]{Su4}
D. Sullivan, \emph{Solenoidal manifolds}, J. Singul. 9 (2014), 203--205.

\bibitem[TT]{TT} T. Tao, \emph{Lecture Notes 3 for 254A},
Week 4 notes: Product estimates, multilinear estimates.
\url{https://www.math.ucla.edu/~tao/254a.1.01w/}

\bibitem[T]{T} J. T. Tate, \emph{Fourier analysis in number fields, and Hecke's zeta-functions}, Algebraic Number Theory (Proc. Instructional Conf., Brighton, 1965), Thompson, Washington, D.C., pp. 305--347


\bibitem[Ve]{Ve}
A. Verjovsky, \emph{Commentaries on the paper `Solenoidal manifolds' by Dennis Sullivan}, [MR3249058]. J. Singul. 9 (2014), 245--251.

\bibitem[Ve1]{Ve1}
A. Verjovsky, \emph{Low-dimensional solenoidal manifolds}, EMS Surv. Math. Sci. 10 (2023), no. 1, 131--178.

\bibitem[Yo1]{Yokura1}
S. Yokura, \emph{Characteristic classes of proalgebraic varieties and motivic measures}, Algebr. Geom. Topol. 12 (2012), no. 1, 601--641. 
 
\bibitem[Yo2]{Yokura2}
S. Yokura, \emph{Characteristic classes of (pro)algebraic varieties}, Singularities in geometry and topology 2004, 299--329, Adv. Stud. Pure Math., 46, Math. Soc. Japan, Tokyo, (2007).

\bibitem[Fors]{Fors} F. Forstneri\v{c}, \emph{Stein Manifolds and Holomorphic
Mappings}. Ergebnisse der Mathematik und ihrer Grenzgebiete (3), 56.
Springer-Verlag, Berlin, 2011; 2nd ed., 2017.

\bibitem[GRem]{GRem} H. Grauert, R. Remmert, \emph{Theory of Stein Spaces}.
Grundlehren der mathematischen Wissenschaften, 236.
Springer-Verlag, Berlin--Heidelberg--New York, 1979.

\bibitem[DG]{DG} F. Docquier and H. Grauert, \emph{Levisches Problem und
Rungescher Satz f\"ur Teilgebiete Steinscher Mannigfaltigkeiten},
Math.\ Ann.\ \textbf{140} (1960), 94--123.

\bibitem[Paty]{Paty} I. Patyi, \emph{On holomorphic Banach vector bundles over Banach spaces},
Math.\ Ann.\ \textbf{341} (2008), no.\ 3, 455--482.

\bibitem[Lem]{Lem} L. Lempert, \emph{The Dolbeault complex in infinite dimensions, III},
Invent.\ Math.\ \textbf{142} (2000), 579--603.

\bibitem[We]{We} A. Weil, \emph{Adeles and algebraic groups}. With appendices by M. Demazure and Takashi Ono. Progress in Mathematics, 23. Birkh\"auser, Boston, Mass., 1982. iii+126 pp.

\bibitem[W]{W}  N. Wiener, \emph{The Fourier integral and certain of its applications}, Cambridge University Press, 1933.
 
\bibitem[Z]{Z} A. Zygmund, \emph{Trigonometric Series}. Cambridge: Cambridge University Press. p. 245.
\end{thebibliography}
\end{document}